\documentclass[11pt]{article}
\usepackage[a4paper,margin=1in]{geometry}
\usepackage{amsmath,amssymb,amsthm,mathtools}
\usepackage{mathrsfs}
\usepackage{microtype}
\usepackage{enumitem}
\usepackage{booktabs}

\usepackage[
    colorlinks=true,
    linkcolor=blue,
    citecolor=blue,
    urlcolor=magenta,
    filecolor=blue
]{hyperref}

\numberwithin{equation}{section}

\newtheorem{theorem}{Theorem}[section]
\newtheorem{proposition}[theorem]{Proposition}
\newtheorem{lemma}[theorem]{Lemma}
\newtheorem{corollary}[theorem]{Corollary}
\newtheorem{claim}{Claim}[section]

\theoremstyle{definition}
\newtheorem{definition}[theorem]{Definition}

\newtheorem{conjecture}[theorem]{Conjecture}

\newcommand{\one}{\mathbf 1}
\newcommand{\Q}{\mathbb Q}
\newcommand{\R}{\mathbb R}
\newcommand{\N}{\mathbb N}
\newcommand{\Z}{\mathbb Z}
\newcommand{\Pp}{\mathbb P}
\newcommand{\E}{\mathbb E}
\newcommand{\cV}{\mathscr V}
\newcommand{\cW}{\mathscr W}
\newcommand{\cL}{\mathscr L}
\newcommand{\cP}{\mathscr P}
\newcommand{\eps}{\varepsilon}
\newcommand{\ee}{\mathrm e}
\newcommand{\supp}{\operatorname{supp}}
\newcommand{\Span}{\operatorname{span}}

\newcommand{\rank}{\operatorname{rank}}
\newcommand{\Hh}{\mathbb H}

\newcommand{\lcm}{\operatorname{lcm}}

\title{Close Divisors of Typical Integers:\\
The Ford--Green--Koukoulopoulos Conjecture
\thanks{
This work was supported by the National Science Foundation
of China (Nos.~12471329 and 12061059).}}

\author{Yaping Mao \footnote{Academy of Plateau
Science and Sustainability, and School of Mathematics and Statistics, Qinghai Normal University, Xining, Qinghai 810008, China. {\tt yapingmao@outlook.com; myp@qhnu.edu.cn}}, \ \ Yanyan Song\footnote{Corresponding author: School of Mathematics and Statistics, Qinghai Normal University, Xining, Qinghai 810008, China. {\tt songyanyan@hrbeu.edu.cn}}
}
\date{}
\hypersetup{
 pdftitle={Close divisors of typical integers:
the Ford--Green--Koukoulopoulos conjecture},
  pdfauthor={Yaping Mao, Yanyan Song},
  pdfsubject={Multiplicative number theory; close divisors; logarithmic random sets; entropy systems},
  pdfkeywords={close divisors, logarithmic random sets, equal subset sums, entropy method, harmonic reconstruction}
}

\begin{document}
\maketitle

\begin{abstract}
For an integer $k\geq2$, let $\alpha_k$ be the supremum of the real
numbers $a$ for which almost every integer $n\geq2$ has divisors
$d_1<\cdots<d_k\mid n$ satisfying
$d_k\leq d_1\bigl(1+(\log n)^{-a}\bigr).$
Let $\mathcal A\subseteq\N$ be the logarithmic random set in which
the events $m\in\mathcal A$ are mutually independent and
$\Pp(m\in\mathcal A)=1/m$ for every $m\geq1$.
For a finite set $B\subseteq\N$, write $\Sigma(B)=\sum_{b\in B}b,
\Sigma(\varnothing)=0$ and 
$m(B)=\max_{s\in\Z}
\#\{C\subseteq B\mid\Sigma(C)=s\},$
and define
\[
\beta_k=
\sup\left\{
c<1\,\middle|\,
\lim_{D\to\infty}
\Pp\bigl(m(\mathcal A\cap(D^c,D])\geq k\bigr)=1
\right\}.
\]
Ford, Green and Koukoulopoulos proved
$\alpha_k\geq\beta_k/(1-\beta_k)$ and conjectured that equality
holds for every fixed $k\geq2$. In this paper, we confirm their
conjecture. More precisely, for every fixed
$a>\beta_k/(1-\beta_k)$, almost every integer $n\geq2$ has no
divisors $d_1<\cdots<d_k\mid n$ satisfying
$d_k\leq d_1\bigl(1+(\log n)^{-a}\bigr)$.
We also correct local errors in their paper
[\emph{Invent. Math.} 232 (2023), 1027--1160], concerning the
finite-subflag reduction, the residual-sum count, the moment
estimate and the lattice adjustment. These corrections preserve the
entropy-threshold
comparison used in our proof.\\[0.3mm]
\textbf{2020 Mathematics Subject Classification:} 11N25; 11N37, 05A20, 60C05.\\[0.3mm]
\textbf{Keywords:} Close divisors; logarithmic random sets; equal subset sums; entropy systems; probabilistic number theory.
\end{abstract}

\tableofcontents

\section{Introduction}\label{sec:intro}

The distribution and concentration of divisors have been studied by
Hooley \cite{Hooley1979}, Hall and Tenenbaum
\cite{HallTenenbaum1982}, and Maier and Tenenbaum
\cite{MaierTenenbaum1985,MaierTenenbaum2009}.
For general accounts, see \cite{HallTenenbaum1988,Tenenbaum2013}.
The present paper concerns a fixed number of distinct divisors in
a multiplicative interval whose relative length tends to zero.

\subsection{Close divisors and logarithmic random sets}
\label{subsec:close-divisors}

For an integer $k\geq2$, let $\alpha_k$ be the supremum of the real
numbers $a$ for which almost every integer $n\geq2$ has divisors
$d_1<\cdots<d_k\mid n$ satisfying
\begin{equation}\label{eq:alpha-window}
d_k\leq d_1\bigl(1+(\log n)^{-a}\bigr).
\end{equation}
Here and throughout, a property holds for almost every integer if the
number of exceptions in $[1,x]$ is $o(x)$ as $x\to\infty$.
Thus $\alpha_k$ measures the smallest relative scale, in powers of
$\log n$, on which $k$ distinct divisors occur for almost every $n$.

The case $k=2$ originates in work of Erd\H{o}s
\cite{Erdos1964}. Erd\H{o}s and Hall \cite{ErdosHall1979}
proved the upper bound $\alpha_2\leq\log3-1$, and Maier and
Tenenbaum \cite{MaierTenenbaum1984} established the lower
bound. Hence
$\alpha_2=\log3-1.$

The structure of the ordered divisor sequence and the ratios of
consecutive divisors were also studied by Erd\H{o}s and Tenenbaum
\cite{ErdosTenenbaum1981,ErdosTenenbaum1983}.
For larger fixed $k$, simultaneous relations among several divisors
lead to an additive problem involving several subset sums.

Let $\mathcal A\subseteq\N$ be the \emph{logarithmic random set} in which
\[
\Pp(m\in\mathcal A)=\frac1m
\qquad(m\geq1),
\]
and the events are mutually independent. For any finite set $S$, let $\#S$ denote the number of elements of $S$.
For a finite set $B\subseteq\N$, write
\[
\Sigma(B)=\sum_{b\in B}b,
\qquad
\Sigma(\varnothing)=0,
\qquad
m(B)=\max_{s\in\Z}
\#\{C\subseteq B\mid\Sigma(C)=s\}.
\]
Thus $m(B)\geq k$ precisely when $B$ has $k$ pairwise distinct
subsets with the same sum. Ford, Green and Koukoulopoulos
\cite{FGK} introduced the threshold
\begin{equation*}
\beta_k=
\sup\left\{
 c<1\,\middle|\,
 \lim_{D\to\infty}
 \Pp\bigl(m(\mathcal A\cap(D^c,D])\geq k\bigr)=1
\right\}.
\end{equation*}

Ford, Green and Koukoulopoulos proved in
\cite[Theorem~6]{FGK} that
\begin{equation}\label{eq:fgk-lower}
\alpha_k\geq\frac{\beta_k}{1-\beta_k}
\qquad(k\geq2).
\end{equation}
They formulated the converse as the following conjecture
\cite[Conjecture~3]{FGK}.

\begin{conjecture}{\upshape \cite{FGK}}
For every fixed integer $k\geq2$, we have $\alpha_k=\frac{\beta_k}{1-\beta_k}.$
\end{conjecture}

For each real number $a$, define
\begin{equation}\label{eq:E-ak-definition}
\mathcal E_{a,k}
=
\left\{n\in\N\,\middle|\,
\begin{gathered}
 n\geq2,\quad \exists\,d_1<\cdots<d_k\mid n, \;
 d_k\leq d_1\bigl(1+(\log n)^{-a}\bigr)
\end{gathered}
\right\}.
\end{equation}
Then $\alpha_k$ is the supremum of the real numbers $a$ for which
$\mathcal E_{a,k}$ has natural density one, that is, 
$$
\lim_{x\to\infty}
\frac{\#\bigl(\mathcal E_{a,k}\cap[1,x]\bigr)}{x}=1.
$$
Our main result gives the reverse inequality in
\eqref{eq:fgk-lower} and the stronger density-zero conclusion above the threshold.

\begin{theorem}\label{thm:main}
For every fixed integer $k\geq2$, we have $$\alpha_k=\frac{\beta_k}{1-\beta_k}.$$
Moreover, for every fixed $a>\beta_k/(1-\beta_k)$, we have
\[
\lim_{x\to\infty}
\frac{\#\bigl(\mathcal E_{a,k}\cap[1,x]\bigr)}{x}=0.
\]
\end{theorem}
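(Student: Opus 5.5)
The first claim follows from the second together with \eqref{eq:fgk-lower}. If $\mathcal E_{a,k}$ has density zero for every $a>\beta_k/(1-\beta_k)$, then no such $a$ lies in the set defining $\alpha_k$. Since $\mathcal E_{a,k}\supseteq\mathcal E_{a',k}$ for $a\le a'$, we get $\alpha_k\le\beta_k/(1-\beta_k)$, and \eqref{eq:fgk-lower} gives equality. So the real task is the density-zero statement. I would prove it by a first-moment count over prime factorizations, transferred to the logarithmic random set $\mathcal A$ via the standard dictionary $\log\log p\leftrightarrow$ position in $\mathcal A$.

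Fix $a>\beta_k/(1-\beta_k)$ and choose $c>\beta_k$ with $a=c/(1-c)+\eta$ for some $\eta>0$. Write $n=p_1p_2\cdots$ with primes in increasing order. After discarding $o(x)$ integers, we may assume the following:
\begin{itemize}
\item $n$ is squarefree apart from tiny primes.
\item $\omega(n;t)=\log\log t+O(\sqrt{\log\log t}\,\psi)$ uniformly, for a slowly growing $\psi$.
\item $n$ has no prime factors in certain thin sets.
\end{itemize}

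Set $D=\log x$ roughly. Primes up to about $\exp((\log x)^{c})$ produce divisors whose ratios cannot beat $(\log n)^{-a}$ by mere multiplicity; this is the counting heuristic $2^{\#\{p\le y\}}\cdot(\log x)^{-a}\ll 1$. A coincidence $d_1<\dots<d_k$ within ratio $1+(\log n)^{-a}$ therefore forces a structured additive relation. Write $d_i=m\prod_{p\in C_i}p$ with the $C_i$ subsets of the prime factors of $n$ in a window $(y^{c'},y]$. The relation says that the $k$ vectors $\sum_{p\in C_i}\log p$ agree to within $(\log x)^{-a}$.

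The core step is a union bound. For a fixed configuration (an incidence pattern of $k$ distinct subsets, i.e.\ a $k$-flag in the FGK language), the probability over $n\le x$ that the prime factors realize it with the required closeness is bounded by the volume factor $(\log x)^{-a}$ times the number of near-solutions. Using the Kubilius model (primes of $n$ behave like independent events with probability $1/p$), and mapping $\log\log p\mapsto$ integer positions in $\mathcal A$, this count reduces to the probability that $\mathcal A\cap(D^{c},D]$ has $k$ distinct subsets with nearly equal sums. By the definition of $\beta_k$ and $c>\beta_k$, that probability does not tend to $1$. That alone is insufficient, though: I need quantitative decay. So I would invoke the FGK entropy-system machinery, which identifies $\beta_k$ with an optimization over flags. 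For $c>\beta_k$, the expected number of $k$-tuples of subsets with approximately equal sums, weighted by the lattice-adjusted volume, is $\ll D^{-\delta}$ for some $\delta=\delta(c)>0$. This is the "entropy-threshold comparison", and I would assume it (in its corrected form) from the later sections.

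The main obstacle is precisely this converse first-moment bound. Naive expectation overcounts, because $m(B)\ge k$ can be driven by rare highly structured sets with many coincidences. The standard fix is to condition on a typical "flag profile". For each subflag, restrict to $n$ whose prime counts in the relevant windows match the entropy-optimal profile up to $o$-errors. Then apply the Cauchy--Schwarz or moment estimate to the residual-sum count, and sum over the finitely many subflags, which finite-subflag reduction permits. Summing the resulting bound $(\log x)^{-\eta+o(1)}$ over $O((\log\log x)^{O(1)})$ dyadic scales of $d_1$ gives $o(x)$ exceptions, completing the proof.
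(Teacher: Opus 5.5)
Your deduction of $\alpha_k=\beta_k/(1-\beta_k)$ from the density-zero statement and \eqref{eq:fgk-lower} is correct, and it is exactly how the paper finishes. The gaps are all in the density-zero part.

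\textbf{The dictionary to $\mathcal A$ uses the wrong scaling.} The position of a prime must be its bin index $i$, where $p\in\cP_i=(\ee^{i/K},\ee^{(i+1)/K}]$ and $K=(\log x)^a$. Bin $i$ then carries weight $\sum_{p\in\cP_i}1/p=1/i+O(i^{-2})$.
\begin{itemize}
\item Only with this choice does $K\log d$ become an integer subset sum of positions, up to an error $\Omega(m)\le(\log\log x)^2$. The window $(\log x)^{-a}$ then becomes an $O(1)$ discrepancy (Proposition~\ref{prop:AMR}).
\item This forces $D\approx K\log x=(\log x)^{1+a}$, and the critical exponent $a/(1+a)$ is simply $\log K/\log D$.
\item If the position is $\log\log p$ (or $\log p$ with $D=\log x$), sums of positions do not track $\log d$ at resolution $(\log x)^{-a}$, and the threshold $a/(1+a)$ never appears.
\item Your heuristic for discarding primes up to $\exp((\log x)^c)$ belongs to this wrong scaling. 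In the paper, small primes are not discarded; they are placed in the external part.
\end{itemize}

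\textbf{The common-cofactor ansatz fails, and this is the main gap.} You write $d_i=m\prod_{p\in C_i}p$ with a common cofactor $m$. In fact each $d_i$ has its own part $r_i$ supported on primes outside the modelled range: $p\le y$, where the bin weights are not yet $\approx1/i$, and $p>z=X^{o(1)}$, where the Kubilius model for $n\le X$ breaks down. So closeness only yields the affine relation $\Sigma(A_s)-\Sigma(A_1)=-K\log(r_s/r_1)+O((\log\log x)^2)$, with an arbitrary real shift. This creates three needs your sketch does not meet.
\begin{itemize}
\item You need a decay bound for near-equal sums that is uniform in the shift. This is Theorem~\ref{thm:affine}, which rests on the new identity $\gamma_k=\beta_k$ of Theorem~\ref{thm:entropy-equality}; FGK only had $\widetilde\gamma_k\le\beta_k\le\gamma_k$. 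The decay comes from pivot extraction and the residual-class count of Lemma~\ref{lem:residual-sum-bound}, not from a moment or Cauchy--Schwarz argument.
\item You must exclude repeated medium parts (Lemma~\ref{lem:EPS}).
\item You must sum over the external parts. Fixing $(r_1,\dots,r_k)$ and summing with weight $1/\lcm(r_1,\dots,r_k)$ costs only $D^{o(1)}$. But the Poisson comparison for the cofactor $n/Q$ needs $Q=\lcm(r_1,\dots,r_k)\le X^{1-\eps}$ (Lemma~\ref{lem:nonsat}).
\end{itemize}
When $Q>X^{1-\eps}$, large primes above $z$ are distributed unevenly among the $r_i$ and there is no long free cofactor. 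The paper needs all of Section~\ref{sec:sat} for this case: the rank lemma for the vectors $\binom{1}{u(p)}$, localization of the large primes and pivot bins, and a harmonic-weight reconstruction whose saving $D^{-s-\sum_{j\in J}c_j}$ beats the $|F|\le s$ free pivots. Your sketch has no mechanism for this case.

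\textbf{The final summation is miscounted.} $d_1\le x$ ranges over $\asymp\log x$ dyadic scales, not $(\log\log x)^{O(1)}$. Since $\eta$ may be tiny, $\log x\cdot(\log x)^{-\eta}$ is not $o(1)$. The paper instead decomposes $n$ dyadically and sums over external data with harmonic weights.
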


\subsection{Entropy systems and threshold identification}
\label{subsec:entropy-systems}

We use the entropy framework of Ford, Green and Koukoulopoulos
\cite[Definitions~3.2--3.6]{FGK}.
All vector spaces in this subsection are over $\Q$, and
$\one=(1,\ldots,1)\in\Q^k$.
We write
\[
\begin{aligned}
\{0,1\}^k
&=
\left\{
(x_1,\ldots,x_k)\in\Q^k
\,\middle|\,
x_i\in\{0,1\}\quad(1\leq i\leq k)
\right\},\\
\{-1,0,1\}^k
&=
\left\{
(x_1,\ldots,x_k)\in\Q^k
\,\middle|\,
x_i\in\{-1,0,1\}\quad(1\leq i\leq k)
\right\}.
\end{aligned}
\]

\begin{definition}[Finite nonnegative measures]
\label{def:finite-nonnegative-measure}
Let $S\subseteq\Q^k$ be finite. For each $\omega\in S$,
let $\mu(\omega)$ be a nonnegative real number.
The family $\mu=(\mu(\omega))_{\omega\in S}$ is called a \emph{finite
nonnegative measure} on $S$. For every $A\subseteq S$, define
\[
\mu(A)=\sum_{\omega\in A}\mu(\omega).
\]
The measure $\mu$ is a probability measure if $\mu(S)=1$.
It is the zero measure if $\mu(\omega)=0$ for every $\omega\in S$.
\end{definition}

All logarithms are natural. All measures are extended by zero outside
their specified finite sets. We use $0\log0=0$ in every entropy formula.

\begin{definition}[Flags and entropy systems]\label{def:system}
A \emph{flag} $\cV=(V_0,\ldots,V_r)$ is a sequence of
vector subspaces satisfying
\[
\langle\one\rangle=V_0\leq V_1\leq\cdots\leq V_r\leq\Q^k.
\]
It is called \emph{complete} if
\[
\dim(V_j/V_{j-1})=1
\qquad(1\leq j\leq r).
\]

An \emph{entropy system} is a triple
$(\cV,\mathbf c,\boldsymbol\mu)$, where
$\cV=(V_0,\ldots,V_r)$ is a flag,
$\mathbf c=(c_1,\ldots,c_{r+1})$, and
$\boldsymbol\mu=(\mu_1,\ldots,\mu_r)$,
satisfying the following conditions.
\begin{enumerate}[label=\textup{(\alph*)},leftmargin=2.2em]
\item The spaces $V_0,\ldots,V_r$ are pairwise distinct, that is, $\langle\one\rangle=V_0<V_1<\cdots<V_r\leq\Q^k.$
and each $V_j$ is spanned by points of $\{0,1\}^k$.

\item The terminal space $V_r$ is \emph{non-degenerate},
meaning that, for every $1\leq s<t\leq k$, there exists
$x=(x_1,\ldots,x_k)\in V_r$ with $x_s\neq x_t$.

\item The thresholds satisfy
$1\geq c_1\geq c_2\geq\cdots\geq c_{r+1}\geq0$.

\item For every $1\leq j\leq r$, the measure $\mu_j$
is a probability measure on $V_j\cap\{0,1\}^k$.
\end{enumerate}
An entropy system $(\cV,\mathbf c,\boldsymbol\mu)$
is called \emph{complete} if $\cV$ is complete.
\end{definition}
For a finitely supported probability measure $\mu$ and a subspace
$W\leq\Q^k$, put
\begin{equation}\label{eq:defini-mu}
\mu(W+x)=\sum_{y\in W+x}\mu(y).
\end{equation}
The entropy of the induced distribution on the cosets of $W$ is
\begin{equation}\label{eq:coset-entropy}
\Hh_\mu(W)=-\sum_x\mu(x)\log\mu(W+x).
\end{equation}
This is Shannon entropy applied to the coset distribution.
We use its standard concavity and continuity properties, for which
see \cite[Chapter~2]{CoverThomas2006}.

A \emph{subflag} of $\cV$, denoted by $\cV'\leq\cV$,
is a sequence
\[
\langle\one\rangle=V'_0\leq V'_1\leq\cdots\leq V'_r,
\qquad
V'_j\leq V_j\quad(0\leq j\leq r).
\]
It is called a \emph{proper subflag}, denoted by
$\cV'<\cV$, if $V'_j<V_j$ for at least one
$1\leq j\leq r$. For subspaces $U\leq V$, the notation $V/U$ denotes
the quotient space, with
\[
\dim(V/U)=\dim V-\dim U.
\]
For a subflag $\cV'\leq\cV$, define
\begin{equation}\label{eq:e-value}
\mathrm e(\cV',\mathbf c,\boldsymbol\mu)
=
\sum_{j=1}^{r}(c_j-c_{j+1})\Hh_{\mu_j}(V'_j)
+
\sum_{j=1}^{r}c_j\dim(V'_j/V'_{j-1}).
\end{equation}
Since $\mu_j$ is supported on $V_j$, one has
$\Hh_{\mu_j}(V_j)=0$, and therefore
\begin{equation}\label{eq:full-e}
\mathrm e(\cV,\mathbf c,\boldsymbol\mu)
=
\sum_{j=1}^{r}c_j\dim(V_j/V_{j-1}).
\end{equation}

The system satisfies the entropy condition if
\begin{equation}\label{e1}
\mathrm e(\cV',\mathbf c,\boldsymbol\mu)
\geq
\mathrm e(\cV,\mathbf c,\boldsymbol\mu)
\qquad(\cV'\leq\cV).
\end{equation}
It satisfies the strict entropy condition if $\mathrm e(\cV',\mathbf c,\boldsymbol\mu)
>
\mathrm e(\cV,\mathbf c,\boldsymbol\mu) $ for $\cV'<\cV.$

Define
\[
\gamma_k
=
\sup\left\{
 c_{r+1}\,\middle|\,
 (\cV,\mathbf c,\boldsymbol\mu)
 \text{ satisfies the entropy condition}
\right\}
\]
and
\[
\widetilde\gamma_k
=
\sup\left\{
 c_{r+1}\,\middle|\,
 (\cV,\mathbf c,\boldsymbol\mu)
 \text{ satisfies the strict entropy condition}
\right\}.
\]
The suprema range over all entropy systems for the given $k$.
Ford, Green and Koukoulopoulos 
\cite[Theorem~7]{FGK} 
state the comparison
\begin{equation}\label{eq:FGK-sandwich}
\widetilde\gamma_k\leq\beta_k\leq\gamma_k.
\end{equation}

Proposition~\ref{prop:threshold-comparison} below establishes
this comparison using the finite-subflag reduction of
Lemma~\ref{lem:finite-subflags} and the residual-class estimate of
Lemma~\ref{lem:residual-sum-bound}, with the local corrections
explained in Section~\ref{sec:FGK-formalism}.
For the lower inequality, we verify the hypotheses of
\cite[Proposition~5.5]{FGK} and use its proof with the local
adjustments recorded in Appendix~\ref{app:fgk-lower-corrections}.
These corrections retain the threshold inequalities in
\eqref{eq:FGK-sandwich}.

They asked whether equality holds throughout
\cite[Remark~3.1]{FGK}.
The entropy part of our argument establishes the following result.

\begin{theorem}\label{thm:entropy-equality}
For every fixed integer $k\geq2$, we have
\begin{equation}\label{eq:entropy-equality}
\widetilde\gamma_k=\beta_k=\gamma_k.
\end{equation}
\end{theorem}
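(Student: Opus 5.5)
Given the sandwich \eqref{eq:FGK-sandwich}, it suffices to prove $\gamma_k\leq\widetilde\gamma_k$. Fix an entropy system $(\cV,\mathbf c,\boldsymbol\mu)$ satisfying the (non-strict) entropy condition \eqref{e1}, with $c_{r+1}=\gamma$. For every $\eps>0$ I will produce a system satisfying the strict entropy condition with last threshold at least $\gamma-\eps$. Taking suprema then gives $\widetilde\gamma_k\geq\gamma_k$, and the theorem follows. The natural route is a perturbation. The quantity $\mathrm e(\cV',\mathbf c,\boldsymbol\mu)-\mathrm e(\cV,\mathbf c,\boldsymbol\mu)$ is a finite-dimensional object. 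By Lemma~\ref{lem:finite-subflags}, only finitely many subflags matter: each $V'_j$ may be taken spanned by points of $\{0,1\}^k$, or at least it ranges over finitely many relevant candidates. So strictness needs to be arranged only for finitely many inequalities.

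First, I reduce to the case where equality in \eqref{e1} is attained by some proper subflag $\cV'$; otherwise the system is already strict. For such a "tight" subflag, I split the flag. Replace $\cV$ by a refined flag that inserts the tight spaces, or pass to the quotient by $V'_j$. The idea, as in FGK's "basic" and "non-degenerate" reductions, is that a tight proper subflag lets one either shorten the flag or restrict the measures to $V'_j\cap\{0,1\}^k$ without lowering $c_{r+1}$. I will proceed by induction on $\sum_j\dim V_j$ (or on the length $r$ and $k$), so this process terminates in a system where every proper subflag is non-tight. One must check that non-degeneracy (b) survives. If the tight subflag has degenerate top space $V'_r$, I instead merge coordinates $s,t$ with $x_s=x_t$ on $V'_r$. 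That yields an entropy system for a smaller effective $k$, which can be lifted back, since $m(B)\geq k$ is monotone in $k$.

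Second, once no proper subflag is tight, I make the gap quantitatively positive and then lower the thresholds slightly. Replace $c_j$ by $c_j-\eps$, or by $(1-\eps)c_j$, and mix each $\mu_j$ with a small multiple of the uniform measure on $V_j\cap\{0,1\}^k$. Continuity of $\Hh_\mu$ (finitely many atoms) changes each $\mathrm e$-value by $O(\eps)$. The strict gaps, of which there are finitely many by Lemma~\ref{lem:finite-subflags}, have a positive minimum, so strictness persists for $\eps$ small, while $c_{r+1}$ drops by at most $\eps$. Concavity of entropy, in the sense that mixing with the uniform measure increases $\Hh_{\mu_j}(V'_j)$ for every $V'_j<V_j$ spanned by $0/1$ points, should in fact supply strictness directly in many cases. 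Mixing makes $\mu_j$ charge every coset in a way that strictly raises the coset entropy of proper subspaces. Compare the $\eps$-derivative of the entropy gain, which is of order $(c_j-c_{j+1})\eps\log(1/\eps)$, with the linear loss $\eps\dim$ from lowering thresholds. The $\eps\log(1/\eps)$ term wins, giving strictness for all subflags with $V'_j<V_j$ at an index where $c_j>c_{j+1}$.

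The main obstacle is the indices with $c_j=c_{j+1}$, where the entropy term carries zero weight and the perturbation gives no gain. This is also where tight subflags can persist. There I first merge consecutive equal thresholds, deleting $V_j$ from the flag, which keeps $\mathrm e(\cV,\cdot)$ and all constraints since the $\dim$ terms telescope. I then also ensure $c_{r+1}<c_r$ after a small decrease of $c_{r+1}$ alone. After merging, all remaining weights $c_j-c_{j+1}$ are positive and the $\eps\log(1/\eps)$ argument applies uniformly. I would verify carefully that deleting a space preserves the entropy condition for subflags of the shorter flag; this check is the delicate point of the argument.
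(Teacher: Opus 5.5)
Your reduction to $\gamma_k\leq\widetilde\gamma_k$ via \eqref{eq:FGK-sandwich} matches the paper, but neither of your two steps produces strictness.

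\textbf{Step 1 fails.} You propose to eliminate tight proper subflags by shortening the flag, restricting measures to $V'_j\cap\{0,1\}^k$, quotienting, or merging coordinates. This already fails for $k=2$. Take $V_0<V_1=\Q^2$, $\mathbf c=(1,1-1/\log3)$, and $\mu_1$ inducing the uniform distribution on the three cosets of $\langle\one\rangle$ meeting the cube.
\begin{itemize}
\item The subflag $(V_0,V_0)$ is tight.
\item The flag cannot be shortened without making the terminal space degenerate.
\item Restricting $\mu_1$ to $\{\mathbf0,\one\}$ makes that subflag's gap equal to $-1$.
\item Quotienting by $V'_1=V_0$ changes nothing.
\end{itemize}
Merging coordinates gives a system in $\Q^{k-1}$, and it cannot be lifted back. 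Monotonicity of $m(B)\geq k$ in $k$ gives $\beta_k\leq\beta_{k-1}$, which is the wrong direction. Re-embedding by duplicating a coordinate violates condition (b).

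\textbf{Step 2 rests on false claims.} Mixing $\mu_j$ with the uniform measure $\sigma_j$ on $V_j\cap\{0,1\}^k$ need not increase $\Hh_{\mu_j}(V'_j)$. Concavity only gives a lower bound of $(1-\zeta)\Hh_{\mu_j}+\zeta\Hh_{\sigma_j}$, and $\sigma_j$ usually induces a non-uniform coset distribution. For $\langle\one\rangle<\Q^2$ it is $(\tfrac12,\tfrac14,\tfrac14)$, and mixing this into the coset distribution $(0.4,0.3,0.3)$ lowers the entropy at first order. An $\eps\log(1/\eps)$ gain appears only when $\mu_j$ gives zero mass to some coset of $V'_j$. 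If $\mu_j$ already charges every coset, the change is $O(\eps)$ of either sign.

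The threshold moves do not compensate:
\begin{itemize}
\item $c_j\mapsto(1-\eps)c_j$ multiplies every gap by $1-\eps$, so tight gaps stay at zero.
\item $c_j\mapsto c_j-\eps$ for all $j$ adds $\eps\dim(V_r/V'_r)$.
\item Lowering $c_{r+1}$ alone adds $\eps\Hh_{\mu_r}(V'_r)$.
\end{itemize}
Both of the last two gains vanish for tight subflags with $V'_r=V_r$. Your merging of consecutive equal thresholds is valid, but it does not touch this problem.

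\textbf{The missing idea} is a single direction that increases every tight gap at once. The paper supplies it explicitly:
\begin{itemize}
\item Refine to a complete flag (Lemma~\ref{lem:completion}).
\item Build on it a strict seed system with final threshold $0$ (Lemma~\ref{lem:strict-seed}). Its coset entropy is $\log3>1$ at the first deficient level, and its layer widths decay geometrically.
\item Interpolate linearly: thresholds with weight $t=c^*/b$, and measures weighted by layer widths.
\end{itemize}
Joint concavity and homogeneity of $\mathfrak H_U(\mu)=\lambda\Hh_{\mu/\lambda}(U)$ (Lemma~\ref{lem:perspective}) then bound every proper subflag's gap below by $t$ times a nonnegative gap plus $1-t$ times a positive one. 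Hence the gap is positive, and the final threshold is exactly $tb=c^*$.
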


Theorem~\ref{thm:strictification} proves that a system satisfying
the entropy condition with positive final threshold $b$ can be
replaced by a strict system on a complete refinement with any
prescribed final threshold in $(0,b)$.
Taking suprema and applying \eqref{eq:FGK-sandwich} gives
Theorem~\ref{thm:entropy-equality}.
The same result, together with a finite reduction of the subflags
and compactness of the system parameters, yields a uniform negative
gap when the final threshold is bounded away from $\beta_k$ on
the right.

This paper is organized as follows.
Section~\ref{sec:FGK-formalism} collects the entropy and counting
preliminaries, explains the required local corrections to the FGK
framework, and establishes the threshold comparison.
Section~\ref{sec:arithmetic-setup} develops the arithmetic
reduction.
Sections~\ref{sec:nonsat} and \ref{sec:sat} give the counting
estimates for the two ranges of the least common multiple
of the external parts.
In Section~\ref{sec:main-proof}, we complete the proof of
Theorem~\ref{thm:main}.

\section[Entropy preliminaries and local corrections to the FGK framework]{Entropy preliminaries and local corrections to the FGK framework}\label{sec:FGK-formalism}
We first normalize the thresholds and then give corrected versions
of the finite-subflag reduction and the residual-sum count used in
\cite[Section~4]{FGK}. The examples below explain why the differences
of cube points and the additional support condition are needed.
Proposition~\ref{prop:threshold-comparison} then makes explicit the
connection with the threshold inequalities.

\subsection{Normalization}

\begin{lemma}\label{lem:threshold-normalization}
Let $(\cV,\mathbf c,\boldsymbol\mu)$ be an entropy system with
$c_1>0$.  Define the normalized threshold sequence $\widetilde{\mathbf c}
=
(\widetilde c_1,\ldots,\widetilde c_{r+1}),$ where $\widetilde c_j=\frac{c_j}{c_1}.$
Then the system $(\cV,\widetilde{\mathbf c},\boldsymbol\mu)$ has the same entropy or strict entropy property as
$(\cV,\mathbf c,\boldsymbol\mu)$, and every entropy gap is multiplied by
the factor $c_1^{-1}$.  In particular, in the optimization defining
$\gamma_k$ or $\widetilde\gamma_k$, one may restrict to systems satisfying
$c_1=1$ without decreasing the final threshold.
\end{lemma}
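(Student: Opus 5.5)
The plan is to show that $\mathrm e(\cdot,\mathbf c,\boldsymbol\mu)$ is linear in $\mathbf c$ for fixed $\cV'$ and $\boldsymbol\mu$. Rescaling $\mathbf c$ by $c_1^{-1}$ then rescales every value, and hence every gap, by the same positive factor.

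First check that $(\cV,\widetilde{\mathbf c},\boldsymbol\mu)$ is an entropy system. Conditions (a), (b) and (d) of Definition~\ref{def:system} involve only $\cV$ and $\boldsymbol\mu$, which are unchanged. For (c), dividing the chain $1\geq c_1\geq\cdots\geq c_{r+1}\geq0$ by $c_1>0$ gives $1=\widetilde c_1\geq\widetilde c_2\geq\cdots\geq\widetilde c_{r+1}\geq0$, so $\widetilde c_1=1\leq1$ as required.

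Next, in \eqref{eq:e-value} every term is either $(c_j-c_{j+1})\Hh_{\mu_j}(V'_j)$ or $c_j\dim(V'_j/V'_{j-1})$. The factors $\Hh_{\mu_j}(V'_j)$ and $\dim(V'_j/V'_{j-1})$ do not depend on $\mathbf c$. Hence, for every subflag $\cV'\leq\cV$,
\[
\mathrm e(\cV',\widetilde{\mathbf c},\boldsymbol\mu)=c_1^{-1}\,\mathrm e(\cV',\mathbf c,\boldsymbol\mu).
\]
This applies in particular to $\cV'=\cV$, consistent with \eqref{eq:full-e}. Subtracting the two cases gives
\[
\mathrm e(\cV',\widetilde{\mathbf c},\boldsymbol\mu)-\mathrm e(\cV,\widetilde{\mathbf c},\boldsymbol\mu)=c_1^{-1}\bigl(\mathrm e(\cV',\mathbf c,\boldsymbol\mu)-\mathrm e(\cV,\mathbf c,\boldsymbol\mu)\bigr).
\]
Since $c_1^{-1}>0$, each gap keeps its sign and its vanishing, so the inequalities $\geq$ and $>$ in \eqref{e1} and in the strict condition hold for one system exactly when they hold for the other.

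For the final claim, note that $\widetilde c_{r+1}=c_{r+1}/c_1\geq c_{r+1}$ because $0<c_1\leq1$. So replacing a system by its normalization never lowers the final threshold. Systems with $c_1=0$ have $c_{r+1}=0$ and contribute nothing beyond the value $0$ to either supremum. That value is attained by normalized systems as well, for instance by normalizing any system with $c_1>0$ and $c_{r+1}=0$, or it is irrelevant whenever the suprema are positive. The remaining point needs care only as a boundary issue: one should state explicitly that the suprema are nonnegative and that admissible systems with $c_1>0$ exist, such as the trivial-measure choices. Apart from this there is no real obstacle, since the whole lemma rests on the homogeneity of \eqref{eq:e-value}.
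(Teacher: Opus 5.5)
Your argument is correct and is essentially the paper's proof. Both factor $c_1^{-1}$ out of every term of \eqref{eq:e-value}, so that every gap scales by $c_1^{-1}>0$, then check $0\leq c_j/c_1\leq1$ and use $c_{r+1}/c_1\geq c_{r+1}$. One small fix to your boundary aside, which the paper does not address: point-mass (``trivial'') measures with $c_1>0$ actually violate the entropy condition, because the subflag with $V'_j=\langle\one\rangle$ for all $j$ has $\mathrm e$-value $0<c_1\leq\mathrm e(\cV,\mathbf c,\boldsymbol\mu)$. So to see that systems with $c_1=0$ add nothing to $\gamma_k$, you should instead cite an entropy-condition system with $c_1=1$, such as the one from Lemma~\ref{lem:strict-seed}; for $\widetilde\gamma_k$, systems with $c_1=0$ are never strict.
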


\begin{proof}
Fix an arbitrary subflag $\cV'\leq\cV$.
Replacing each threshold $c_j$ by
$\widetilde c_j=\frac{c_j}{c_1},$
we obtain $\widetilde c_j-\widetilde c_{j+1}
=\frac{c_j-c_{j+1}}{c_1}.$
Therefore, applying \eqref{eq:e-value} gives
\[
\begin{aligned}
\mathrm e(\cV',\widetilde{\mathbf c},\boldsymbol\mu)
&=
\sum_{j=1}^r
\frac{c_j-c_{j+1}}{c_1}
\Hh_{\mu_j}(V'_j)
+
\sum_{j=1}^r
\frac{c_j}{c_1}
\dim(V'_j/V'_{j-1})\\
&=
\frac1{c_1}
\mathrm e(\cV',\mathbf c,\boldsymbol\mu).
\end{aligned}
\]
For the full flag, the same identity gives $\mathrm e(\cV,\widetilde{\mathbf c},\boldsymbol\mu)
=\frac1{c_1}
\mathrm e(\cV,\mathbf c,\boldsymbol\mu).$
Consequently, the entropy gap satisfies
\[
\begin{aligned}
&
\mathrm e(\cV',\widetilde{\mathbf c},\boldsymbol\mu)
-
\mathrm e(\cV,\widetilde{\mathbf c},\boldsymbol\mu)
=
\frac1{c_1}
\left(
\mathrm e(\cV',\mathbf c,\boldsymbol\mu)
-
\mathrm e(\cV,\mathbf c,\boldsymbol\mu)
\right).
\end{aligned}
\]
Since $c_1>0$, multiplication by $c_1^{-1}$ preserves the sign of every
entropy gap.  Hence the entropy condition and the strict entropy condition
are both preserved under this normalization.

It remains to verify that the normalized thresholds are admissible.  Since $1\geq c_1\geq c_j\geq0,$
we have $0\leq\frac{c_j}{c_1}\leq1,$
and the normalized sequence still satisfies
$1=\widetilde c_1
\geq\cdots\geq
\widetilde c_{r+1}\geq0.$
Moreover, the final threshold becomes
$\widetilde c_{r+1}=\frac{c_{r+1}}{c_1}.$
Because $c_{r+1}\leq c_1$ and $c_1\leq1$, we have $\frac{c_{r+1}}{c_1}\geq c_{r+1}.$
Thus the normalization does not decrease the final threshold.  Therefore,
in both optimization problems defining $\gamma_k$ and $\widetilde\gamma_k$, it is
sufficient to consider systems with $c_1=1$.
\end{proof}

\subsection{Finite classification of coset entropies}\label{subsec:finite-correction}

The equivalence relation used in the proof of
\cite[Lemma~4.6]{FGK} identifies subflags whose corresponding
spaces have the same dimensions and the same intersections
with $\{0,1\}^k$. The following example shows that these data
do not determine the induced coset entropies or the
$\mathrm e$-values. Thus this criterion does not justify replacing all subflags in an equivalence class by a single representative.

\paragraph{A counterexample to the original classification.}
In $\Q^3$, define
$$
U=\Span_{\Q}\{\one,(1,-1,0)\},
\qquad
W=\Span_{\Q}\{\one,(1,0,-1)\}.
$$
Both spaces have dimension two. Their spanning vectors satisfy
$x_1+x_2=2x_3$ and $x_1+x_3=2x_2$, respectively. Since each
of these equations defines a two-dimensional subspace, we have
$$
\begin{aligned}
U&=\{(x_1,x_2,x_3)\in\Q^3:x_1+x_2=2x_3\},\\
W&=\{(x_1,x_2,x_3)\in\Q^3:x_1+x_3=2x_2\}.
\end{aligned}
$$
For a point of $U\cap\{0,1\}^3$, the case $x_3=0$ forces
$x_1=x_2=0$, while $x_3=1$ forces $x_1=x_2=1$.
The same argument for $W$, according to whether $x_2=0$ or
$x_2=1$, gives
$$
U\cap\{0,1\}^3=W\cap\{0,1\}^3=\{\mathbf0,\one\}.
$$

Set $\mathbf a=(1,0,0)$ and $\mathbf b=(0,1,0)$, and let
$\mu$ be the probability measure supported on
$\{\mathbf a,\mathbf b\}$ with
$\mu(\mathbf a)=\mu(\mathbf b)=\frac12.$
Thus $\mu(x)=0$ for every
$x\in\Q^3\setminus\{\mathbf a,\mathbf b\}$.

Since $\mathbf a-\mathbf b=(1,-1,0)\in U$ and $U$ is a
vector subspace, we also have $\mathbf b-\mathbf a\in U$.
Consequently,
$$
\mathbf a=\mathbf a+\mathbf0\in\mathbf a+U,
\qquad
\mathbf b=\mathbf a+(\mathbf b-\mathbf a)\in\mathbf a+U.
$$
Hence both points in the support of $\mu$ belong to
$\mathbf a+U$. By \eqref{eq:defini-mu}, we have
$$
\begin{aligned}
\mu(\mathbf a+U)
=\sum_{x\in\mathbf a+U}\mu(x)
=\mu(\mathbf a)+\mu(\mathbf b)
  +\sum_{\substack{x\in\mathbf a+U\\
x\notin\{\mathbf a,\mathbf b\}}}\mu(x)
=1.
\end{aligned}
$$
Moreover, $\mathbf a-\mathbf b\in U$ implies
$\mathbf a+U=\mathbf b+U$. Therefore,
$$
\mu(\mathbf a+U)=\mu(\mathbf b+U)=1.
$$

By \eqref{eq:coset-entropy},
$$
\Hh_\mu(U)=-\frac12\log1-\frac12\log1=0.
$$
On the other hand, $(1,-1,0)\notin W$, since
$1+0\neq2(-1)$. Hence $\mathbf a+W$ and $\mathbf b+W$ are
distinct cosets, each containing exactly one point of the support
of $\mu$. Therefore,
$$
\mu(\mathbf a+W)=\mu(\mathbf b+W)=\frac12,
\qquad
\Hh_\mu(W)
=-\frac12\log\frac12-\frac12\log\frac12=\log2.
$$

To place the example within the class of subflags under
consideration, define
$$
V_0=\langle\one\rangle,
\qquad
V_1=\Span_{\Q}\{\one,\mathbf a\},
\qquad
V_2=\Q^3.
$$
We have $\mathbf a\notin V_0$. Also, every vector of $V_1$
has equal second and third coordinates, so $\mathbf b\notin V_1$.
Thus $\one,\mathbf a,\mathbf b$ form a basis of $\Q^3$
consisting of cube points, and
$$
\cV:V_0<V_1<V_2
$$
is a complete cube-spanned flag with non-degenerate terminal
space. Since $V_0\leq U,W\leq V_2$, the sequences
$$
\mathcal U=(V_0,V_0,U),
\qquad
\mathcal W=(V_0,V_0,W)
$$
are proper subflags of $\mathcal V$.
Here the definition of a subflag permits repeated spaces and
does not require its component spaces to be cube-spanned.
These two subflags have the same dimension sequence $(1,1,2)$
and the same intersections with the cube at every level,
but their final coset entropies under $\mu$ differ.

This also yields different $\mathrm e$-values. Let $\mu_1$
be any probability measure supported on $V_1\cap\{0,1\}^3$,
set $\mu_2=\mu$ and $\boldsymbol\mu=(\mu_1,\mu_2)$, and choose
$\mathbf c=(c_1,c_2,c_3)$ with
$1\geq c_1>c_2>c_3\geq0$.
Both subflags have dimension increments $0$ and $1$, and their
first entropy terms coincide. Consequently,
$$
\begin{aligned}
\mathrm e(\mathcal W,\mathbf c,\boldsymbol\mu)
-\mathrm e(\mathcal U,\mathbf c,\boldsymbol\mu)
=(c_2-c_3)\bigl(\Hh_\mu(W)-\Hh_\mu(U)\bigr)
=(c_2-c_3)\log2>0.
\end{aligned}
$$

To obtain a classification preserving these quantities, we use
differences of cube points. For $T\leq\Q^k$ and
$\omega,\omega'\in\{0,1\}^k$, the points $\omega$ and $\omega'$
belong to the same $T$-coset if and only if $\omega'-\omega\in T$.
Since every such difference lies in $\{-1,0,1\}^k$,
$$
(\omega+T)\cap\{0,1\}^k
=
\left\{
\omega'\in\{0,1\}^k:
\omega'-\omega\in T\cap\{-1,0,1\}^k
\right\}.
$$
Thus $T\cap\{-1,0,1\}^k$ determines the coset partition of the
cube and hence the induced entropy for every probability measure
supported on the cube. We therefore record
$$
\bigl(\dim T,\;T\cap\{-1,0,1\}^k\bigr)
$$
at each level of a subflag. These data preserve both the entropy
terms and the dimension increments in $\mathrm e$.
The following lemma establishes the resulting finite-subflag
reduction.

\begin{lemma}\label{lem:finite-subflags}
Let $\cV$ be a flag
$\langle\one\rangle=V_0<V_1<\cdots<V_r\leq\Q^k.$
There exists a finite family $\mathscr F(\cV)$ of subflags
of $\cV$ such that
\[
\#\mathscr F(\cV)
\leq
\bigl((k+1)2^{3^k}\bigr)^k
=O_k(1),
\]
and, for every subflag $\cV'\leq\cV$, there exists
$\cV''\in\mathscr F(\cV)$ satisfying
\[
\dim V'_j=\dim V''_j,
\qquad
\Hh_\mu(V'_j)=\Hh_\mu(V''_j)
\qquad(0\leq j\leq r)
\]
for every probability measure $\mu$ supported on
$\{0,1\}^k$.

Consequently, for every threshold sequence $\mathbf c$
and every family $\boldsymbol\mu=(\mu_1,\ldots,\mu_r)$
of probability measures supported on $\{0,1\}^k$, we have
$\mathrm e(\cV',\mathbf c,\boldsymbol\mu)
=\mathrm e(\cV'',\mathbf c,\boldsymbol\mu).$
Moreover, if $\cV'<\cV$, then
$\cV''<\cV$.
\end{lemma}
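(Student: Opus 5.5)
Attach to every subflag $\cV'=(V'_0,\ldots,V'_r)\leq\cV$ the signature
\[
\sigma(\cV')=\Bigl(\dim V'_j,\;V'_j\cap\{-1,0,1\}^k\Bigr)_{0\leq j\leq r}.
\]
For each signature that actually occurs, choose one subflag realizing it, and let $\mathscr F(\cV)$ be the set of these representatives.

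The count comes from the number of possible signatures. At a single level there are at most $k+1$ values for the dimension and at most $2^{3^k}$ subsets of $\{-1,0,1\}^k$. Level $0$ is fixed, since $V'_0=\langle\one\rangle$. This bounds the number of signatures by a product over the levels. The number of levels $r$ is at most $k-1$ because the $V_j$ are strictly increasing, so the product is at most $\bigl((k+1)2^{3^k}\bigr)^{k}$.

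Next I show that equal signatures give equal coset entropies. This uses the observation made just before the lemma. If $T\leq\Q^k$ and $\omega\in\{0,1\}^k$, then $(\omega+T)\cap\{0,1\}^k$ is determined by $T\cap\{-1,0,1\}^k$. So two subspaces $T,T''$ with the same intersection with $\{-1,0,1\}^k$ induce the same partition of the cube. For $\mu$ supported on $\{0,1\}^k$ and $x$ in its support, $\mu(T+x)=\mu\bigl((T+x)\cap\{0,1\}^k\bigr)$. Hence $\mu(T+x)=\mu(T''+x)$ for every $x$ in the support of $\mu$. Terms with $\mu(x)=0$ contribute nothing to \eqref{eq:coset-entropy}, so $\Hh_\mu(T)=\Hh_\mu(T'')$.

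Applying this levelwise to $\cV'$ and its representative $\cV''$, both the entropy terms and the dimension increments $\dim V'_j-\dim V'_{j-1}$ in \eqref{eq:e-value} agree. Therefore $\mathrm e(\cV',\mathbf c,\boldsymbol\mu)=\mathrm e(\cV'',\mathbf c,\boldsymbol\mu)$.

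The step needing the most care is the final claim about properness. A subflag satisfies $V'_j\leq V_j$, so $V'_j=V_j$ holds exactly when $\dim V'_j=\dim V_j$. Thus $\cV'<\cV$ means that $\dim V'_j<\dim V_j$ for some $j\geq1$. The representative $\cV''$ is itself a subflag with the same dimensions, so $\dim V''_j<\dim V_j$ and hence $V''_j<V_j$. This shows $\cV''<\cV$.

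This is exactly why the signature records dimensions and not only the cube intersections. A proper subspace can have the same intersection with $\{-1,0,1\}^k$ as $V_j$, for instance when $V_j$ is spanned by cube points but contains few difference vectors. Two such subspaces have identical entropies for every $\mu$ but differ in dimension, so recording the dimension is what keeps proper subflags represented by proper subflags.
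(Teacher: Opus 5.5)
Your proof is correct and is essentially the paper's argument: the same signature $(\dim V'_j,\,V'_j\cap\{-1,0,1\}^k)_{0\leq j\leq r}$, one representative per signature, the observation that $T\cap\{-1,0,1\}^k$ determines the coset partition of the cube, and a dimension comparison for properness (your count, which uses that level $0$ is fixed, is even slightly sharper). One correction to your closing remark: if $V_j$ is spanned by cube points, then $V_j\cap\{-1,0,1\}^k$ already spans $V_j$, so no proper subspace of $V_j$ has the same intersection; the phenomenon you describe needs a space that is not cube-spanned (e.g.\ $\Span_{\Q}\{\one,(1,2,4)\}$ versus $\langle\one\rangle$ in $\Q^3$), and the more basic reason for recording dimensions is that the increments $\dim(V'_j/V'_{j-1})$ in $\mathrm e$ are not determined by the cube intersections.
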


\begin{proof}
For two subflags $\cV',\cV''\leq\cV$, write
$\cV'\sim\cV''$ if
\[
\dim V'_j=\dim V''_j,
\qquad
V'_j\cap\{-1,0,1\}^k
=
V''_j\cap\{-1,0,1\}^k
\qquad(0\leq j\leq r).
\]
These equalities define an equivalence relation on the set
of subflags of $\cV$.

For each $0\leq j\leq r$, we have
\[
\dim V'_j\in\{0,1,\ldots,k\},
\qquad
V'_j\cap\{-1,0,1\}^k\subseteq\{-1,0,1\}^k.
\]
Since $\#\{-1,0,1\}^k=3^k$, this set has $2^{3^k}$
subsets. Hence there are at most
$(k+1)2^{3^k}$ possible pairs
$\bigl(\dim V'_j,\,
V'_j\cap\{-1,0,1\}^k\bigr)$
at each index $j$. There are therefore at most
$\bigl((k+1)2^{3^k}\bigr)^{r+1}$
equivalence classes.

Since the inclusions in $\cV$ are strict and
$\dim V_0=1$,
\[
r
\leq
\sum_{j=1}^{r}
\bigl(\dim V_j-\dim V_{j-1}\bigr)
=
\dim V_r-1
\leq k-1.
\]
Choose one subflag from each equivalence class, and let
$\mathscr F(\cV)$ be the family of these subflags.
Then
\[
\#\mathscr F(\cV)
\leq
\bigl((k+1)2^{3^k}\bigr)^{r+1}
\leq
\bigl((k+1)2^{3^k}\bigr)^k.
\]
This construction depends only on $\cV$ and $k$.

Fix $\cV'\leq\cV$, and let
$\cV''\in\mathscr F(\cV)$ be the chosen subflag in the
same equivalence class. Thus
\[
\dim V'_j=\dim V''_j,
\qquad
V'_j\cap\{-1,0,1\}^k
=
V''_j\cap\{-1,0,1\}^k
\qquad(0\leq j\leq r).
\]
We next prove the entropy equalities. Fix $0\leq j\leq r$ and $\omega\in\{0,1\}^k$.
For every $\omega'\in\{0,1\}^k$, we have $\omega'-\omega\in\{-1,0,1\}^k.$
Consequently,
\[
\begin{aligned}
(V'_j+\omega)\cap\{0,1\}^k
&=
\left\{
\omega'\in\{0,1\}^k
\,\middle|\,
\omega'-\omega\in
V'_j\cap\{-1,0,1\}^k
\right\}\\
&=
\left\{
\omega'\in\{0,1\}^k
\,\middle|\,
\omega'-\omega\in
V''_j\cap\{-1,0,1\}^k
\right\}\\
&=
(V''_j+\omega)\cap\{0,1\}^k.
\end{aligned}
\]

Let $\mu$ be any probability measure supported on
$\{0,1\}^k$. Then \eqref{eq:defini-mu} gives
\[
\begin{aligned}
\mu(V'_j+\omega)
&=
\sum_{\omega'\in(V'_j+\omega)\cap\{0,1\}^k}
\mu(\omega')=
\sum_{\omega'\in(V''_j+\omega)\cap\{0,1\}^k}
\mu(\omega')
=
\mu(V''_j+\omega)
\qquad(\omega\in\{0,1\}^k).
\end{aligned}
\]
Since $\mu$ is a probability measure, $\mu(\omega)\geq0$
for every $\omega\in\{0,1\}^k$.
Terms with $\mu(\omega)=0$ contribute zero to the entropy sum.
For every $\omega\in\{0,1\}^k$ with $\mu(\omega)>0$,
the preceding equality gives
\[
\mu(V'_j+\omega)
=
\mu(V''_j+\omega)
\geq\mu(\omega)>0.
\]
Thus all logarithms in the following sums are defined.
By \eqref{eq:coset-entropy},
\[
\begin{aligned}
\Hh_\mu(V'_j)
=
-\sum_{\substack{
\omega\in\{0,1\}^k\\
\mu(\omega)>0
}}
\mu(\omega)\log\mu(V'_j+\omega)
=
-\sum_{\substack{
\omega\in\{0,1\}^k\\
\mu(\omega)>0
}}
\mu(\omega)\log\mu(V''_j+\omega)
=
\Hh_\mu(V''_j).
\end{aligned}
\]
Since $j$ and $\mu$ were arbitrary, these equalities hold
for every $0\leq j\leq r$ and every probability measure
supported on $\{0,1\}^k$.

Moreover, for $1\leq j\leq r$, the dimension equalities give
\[
\begin{aligned}
\dim(V'_j/V'_{j-1})
&=
\dim V'_j-\dim V'_{j-1}=
\dim V''_j-\dim V''_{j-1}=
\dim(V''_j/V''_{j-1}).
\end{aligned}
\]
Therefore, for every threshold sequence $\mathbf c$
and every family
$\boldsymbol\mu=(\mu_1,\ldots,\mu_r)$
as in the statement, \eqref{eq:e-value} yields
\[
\begin{aligned}
\mathrm e(\cV',\mathbf c,\boldsymbol\mu)
&=
\sum_{j=1}^{r}
(c_j-c_{j+1})\Hh_{\mu_j}(V'_j)
+
\sum_{j=1}^{r}
c_j\dim(V'_j/V'_{j-1})\\
&=
\sum_{j=1}^{r}
(c_j-c_{j+1})\Hh_{\mu_j}(V''_j)
+
\sum_{j=1}^{r}
c_j\dim(V''_j/V''_{j-1})\\
&=
\mathrm e(\cV'',\mathbf c,\boldsymbol\mu).
\end{aligned}
\]

Finally, suppose that $\cV'<\cV$.
Then there exists $j_0\in\{1,\ldots,r\}$ such that
$V'_{j_0}$ is a proper subspace of $V_{j_0}$.
Hence
\[
\dim V''_{j_0}
=
\dim V'_{j_0}
<
\dim V_{j_0}.
\]
Since $V''_{j_0}\leq V_{j_0}$, it follows that
$V''_{j_0}$ is also a proper subspace of $V_{j_0}$.
Thus $\cV''<\cV$, completing the proof.
\end{proof}

\subsection{The support condition in the residual-sum count}\label{subsec:residual-correction}

The following example shows that the layer-frequency conditions
in \cite[Definition~4.3]{FGK} alone do not imply the counting
bound in \cite[Lemma~4.5 and Corollary~4.7]{FGK}.
These conditions prescribe the labels only on the layers
$(D^{c_{j+1}},D^{c_j}]$, leaving the elements above $D^{c_1}$
unrestricted when $c_1<1$. In the example below, these elements
produce at least $D^{(\log2)/2}$ distinct classes modulo
$\langle\one\rangle$, although the entropy exponent in the
asserted bound is zero. Thus the labels above $D^{c_1}$ must
also be controlled.

\paragraph{A counterexample without the support condition.}
We construct a complete entropy system and a sequence of regular sets for which the conditions in \cite[Definition~4.3]{FGK} hold but the bound in \cite[Lemma~4.5]{FGK} fails.

Consider the entropy system
$(\cV,\mathbf c,\boldsymbol\mu)$ given by
$$
\cV:\quad V_0=\langle\one\rangle<V_1=\Q^2,
\qquad
\mathbf c=\left(\frac12,\frac14\right),
\qquad
\boldsymbol\mu=(\mu_1),
$$
where
$$
\mu_1(\mathbf0)=1,
\qquad
\mu_1(\omega)=0
\quad
\bigl(\omega\in\{0,1\}^2\setminus\{\mathbf0\}\bigr).
$$
Both spaces are spanned by points of $\{0,1\}^2$,
the terminal space $V_1$ is non-degenerate, and
$\dim(V_1/V_0)=1$.

Let $N$ be a sufficiently large positive integer divisible
by four, and put
$$
D=e^N,
\qquad
x_j=\lfloor e^j\rfloor\quad(1\leq j\leq N),
\qquad
B=\{x_j:N/4<j\leq N\}.
$$
Since $e^{j-1}<x_j\leq e^j$, the integers $x_j$ are
strictly increasing. For $N/4<j\leq N$, we have
$j-1\geq N/4$, so
$$
D^{1/4}\leq e^{j-1}<x_j\leq e^j\leq D.
$$
Thus $B\subseteq(D^{1/4},D]\cap\Z$.

Fix a real number $t\in[N/4,N]$.
There are exactly $\lfloor t\rfloor-N/4$ integers $j$
with $N/4<j\leq t$, and each satisfies
$x_j\leq e^j\leq e^t$.
If an index $j>t$ also satisfies $x_j\leq e^t$, then
$e^{j-1}<x_j\leq e^t$ implies $j<t+1$.
Thus the only possible additional index is
$j=\lfloor t\rfloor+1$, and consequently
$$
\lfloor t\rfloor-\frac N4
\leq
\#\bigl(B\cap(D^{1/4},e^t]\bigr)
\leq
\lfloor t\rfloor-\frac N4+1,
$$
and therefore
$$
\left|
\#\bigl(B\cap(D^{1/4},e^t]\bigr)
-\left(t-\frac N4\right)
\right|\leq1.
$$

Let $1/4\leq\alpha\leq\beta\leq1$. Then
$N/4\leq\alpha N\leq\beta N\leq N$, so the preceding
inequality applies at both $t=\alpha N$ and $t=\beta N$.
Since $D=e^N$, we have
$$
\#\bigl(B\cap(D^\alpha,D^\beta]\bigr)
=
\#\bigl(B\cap(D^{1/4},e^{\beta N}]\bigr)
-
\#\bigl(B\cap(D^{1/4},e^{\alpha N}]\bigr).
$$
Subtracting
$(\beta-\alpha)\log D
=(\beta N-N/4)-(\alpha N-N/4)$
and applying the triangle inequality, we obtain
$$
\begin{aligned}
\left|
\#\bigl(B\cap(D^\alpha,D^\beta]\bigr)
-(\beta-\alpha)\log D
\right|
&\leq
\left|
\#\bigl(B\cap(D^{1/4},e^{\beta N}]\bigr)
-\left(\beta N-\frac N4\right)
\right|\\
&\qquad+
\left|
\#\bigl(B\cap(D^{1/4},e^{\alpha N}]\bigr)
-\left(\alpha N-\frac N4\right)
\right|\\
&\quad\leq1+1=2.
\end{aligned}
$$
Thus $B$ satisfies the hypothesis of
\cite[Lemma~4.5]{FGK}.
Moreover,
$$
B\cap(D^{1/4},D^{1/2}]
=\{x_j:N/4<j\leq N/2\},
\qquad
B\cap(D^{1/2},D]
=\{x_j:N/2<j\leq N\}.
$$
These two sets have $N/4$ and $N/2$ elements,
respectively.

Following \cite[Definition~4.3]{FGK}, let
$\mathcal L_{\cV,\mathbf c,\boldsymbol\mu}(B)$ denote
the set of all cosets
$$
\left(
\sum_{\omega\in\{0,1\}^2}
\omega\sum_{x\in B_\omega}x
\right)+V_0
\in\Q^2/V_0,
$$
where $(B_\omega)_{\omega\in\{0,1\}^2}$ ranges over
all partitions
$B=\cup_{\omega\in\{0,1\}^2}B_\omega$
with empty parts allowed, satisfying
$$
\#\bigl(B_\omega\cap(D^{1/4},D^{1/2}]\bigr)
=
\mu_1(\omega)\#(B\cap(D^{1/4},D^{1/2}])
\qquad(\omega\in\{0,1\}^2).
$$
For each subset $S\subseteq B\cap(D^{1/2},D]$, define
a partition of $B$ by
$$
B_{(1,0)}=S,\qquad
B_{\mathbf0}=B\setminus S,\qquad
B_{(0,1)}=B_{\one}=\varnothing.
$$
Since $S\cap(D^{1/4},D^{1/2}]=\varnothing$, all elements
of $B\cap(D^{1/4},D^{1/2}]$ belong to $B_{\mathbf0}$.
Together with $\mu_1(\mathbf0)=1$, this verifies the
frequency conditions.

Since $B_{(0,1)}=B_{\one}=\varnothing$, these two parts
contribute zero to the vector sum. Using
$B_{\mathbf0}=B\setminus S$ and $B_{(1,0)}=S$, we obtain
$$
\begin{aligned}
\sum_{\omega\in\{0,1\}^2}
\omega\sum_{x\in B_\omega}x
&=
\mathbf0\sum_{x\in B\setminus S}x
+(1,0)\sum_{x\in S}x=
\left(\sum_{x\in S}x,\,0\right).
\end{aligned}
$$
The partition satisfies the required frequency conditions.
Therefore, by the definition of
$\mathcal L_{\cV,\mathbf c,\boldsymbol\mu}(B)$, we have
$$
\left(\sum_{x\in S}x,\,0\right)+V_0
\in\mathcal L_{\cV,\mathbf c,\boldsymbol\mu}(B).
$$

To show that different choices of $S$ give different
classes, Let $S,T\subseteq B\cap(D^{1/2},D]$ with $S\neq T$, and put
$$
j=\max\{\ell:x_\ell\in(S\setminus T)\cup(T\setminus S)\}.
$$
Without loss of generality, assume that
$x_j\in S\setminus T$. By the maximality of $j$,
$T\setminus S\subseteq\{x_\ell:N/2<\ell<j\}.$
Moreover,
$$
\sum_{N/2<\ell<j}x_\ell
<
\sum_{\ell=0}^{j-1}e^\ell
=
\frac{e^j-1}{e-1}
<
e^j-1
\leq x_j.
$$
Consequently,
$$
\begin{aligned}
\sum_{x\in S}x-\sum_{x\in T}x
&=
\sum_{x\in S\setminus T}x
-
\sum_{x\in T\setminus S}x
\geq
x_j-\sum_{N/2<\ell<j}x_\ell
>0.
\end{aligned}
$$
Since $V_0=\{(t,t):t\in\Q\}$, the difference
$
\left(
\sum_{x\in S}x-\sum_{x\in T}x,\,0
\right)
$
does not belong to $V_0$. Thus distinct subsets $S\subseteq B\cap(D^{1/2},D]$
give distinct elements of
$\mathcal L_{\cV,\mathbf c,\boldsymbol\mu}(B)$.
Since
$$
B\cap(D^{1/2},D]=\{x_j:N/2<j\leq N\}
$$
has $N/2$ elements, it has exactly $2^{N/2}$ subsets.
Each of these subsets yields a class in
$\mathcal L_{\cV,\mathbf c,\boldsymbol\mu}(B)$, and
the classes are pairwise distinct. Therefore,
$$
\#\mathcal L_{\cV,\mathbf c,\boldsymbol\mu}(B)
\geq
2^{\#(B\cap(D^{1/2},D])}
=
2^{N/2}.
$$

On the other hand, $\mu_1$ is concentrated at
$\mathbf0\in V_0$. Hence $\mu_1(V_0)=1$ and
$$
\Hh_{\mu_1}(V_0)
=
-\mu_1(\mathbf0)\log\mu_1(V_0)
=0.
$$
The subflag $\cV'=(V_0,V_0)$ therefore satisfies
$$
\mathrm e(\cV',\mathbf c,\boldsymbol\mu)
=
(c_1-c_2)\Hh_{\mu_1}(V_0)
+c_1\dim(V_0/V_0)
=0.
$$
Since the system and the subflag are independent of
$N$, the bound in \cite[Lemma~4.5]{FGK} would give
$$
2^{N/2}
\leq
\#\mathcal L_{\cV,\mathbf c,\boldsymbol\mu}(B)
\leq
\exp\{CN^{3/4}\}
$$
for some constant $C>0$ independent of $N$.
Taking logarithms and dividing by $N$ yields
$$
\frac{\log2}{2}\leq CN^{-1/4},
$$
which is impossible as $N\to\infty$.

Let $D>1$, let $(\cV,\mathbf c,\boldsymbol\mu)$ be a system, put
$c=c_{r+1}$, and let $B\subseteq(D^c,D]\cap\Z$.
For $1\leq j\leq r$, put
\[
B^{(j)}=B\cap(D^{c_{j+1}},D^{c_j}],
\qquad N_j=\#B^{(j)}.
\]
Let $\cL_{\cV,\mathbf c,\boldsymbol\mu}(B)$ be the set of classes
\[
\left(\sum_{\omega\in\{0,1\}^k}\omega\,\Sigma(B_\omega)\right)
+\langle\one\rangle
\in\Q^k/\langle\one\rangle,
\]
where $(B_\omega)_{\omega\in\{0,1\}^k}$ ranges over partitions of
$B$ satisfying
\begin{equation}\label{eq:partition-compatibility}
\begin{aligned}
\#\bigl(B_\omega\cap(D^{c_{j+1}},D^{c_j}]\bigr)
&=\mu_j(\omega)N_j
&& (1\leq j\leq r),\\
B_\omega\cap(D^{c_1},D]&=\varnothing
&& (\omega\notin V_0).
\end{aligned}
\end{equation}
The first equality is required for every $\omega\in\{0,1\}^k$
and is interpreted as $0=0$ when $N_j=0$.
The second condition implies that the contribution from
$B\cap(D^{c_1},D]$ belongs to $\langle\one\rangle$.

\begin{lemma}\label{lem:residual-sum-bound}
Let $k\geq2$, let $D$ be sufficiently large in terms of $k$, and let
$(\cV,\mathbf c,\boldsymbol\mu)$ be an entropy system in
$\Q^k$, where $\mathbf c=(c_1,\ldots,c_{r+1})$.
Put $c=c_{r+1}$.
Suppose that $B\subseteq(D^c,D]\cap\Z$ satisfies
\[
\left|
\#\bigl(B\cap(D^\alpha,D^\beta]\bigr)
-(\beta-\alpha)\log D
\right|
\leq2(\log D)^{3/4}
\qquad(c\leq\alpha\leq\beta\leq1).
\]
Then, for all sufficiently large $D$,
\[
\#\cL_{\cV,\mathbf c,\boldsymbol\mu}(B)
\leq
\exp\bigl(C_k(\log D)^{3/4}\bigr)
D^{\min_{\cV'\leq\cV}
\mathrm e(\cV',\mathbf c,\boldsymbol\mu)},
\]
where $C_k>0$ and the lower bound on $D$ depend only on $k$.
\end{lemma}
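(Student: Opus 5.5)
The plan is to prove, for each fixed subflag $\cV'\leq\cV$, the bound
\[
\#\cL_{\cV,\mathbf c,\boldsymbol\mu}(B)\leq \exp\bigl(C_k(\log D)^{3/4}\bigr)D^{\mathrm e(\cV',\mathbf c,\boldsymbol\mu)},
\]
and then take the best $\cV'$. We may assume that $\mu_j(\omega)N_j\in\Z$ for all $j,\omega$, since otherwise $\cL$ is empty.

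\emph{Reduction to cube-difference-spanned subflags.} Put
\[
W_j=\Span_\Q\bigl(V'_j\cap\{-1,0,1\}^k\bigr).
\]
Then $\one\in W_0$ and $W_0\leq W_1\leq\cdots\leq W_r$. Moreover $W_j\cap\{-1,0,1\}^k=V'_j\cap\{-1,0,1\}^k$. So, exactly as in the proof of Lemma~\ref{lem:finite-subflags}, $\Hh_{\mu_j}(W_j)=\Hh_{\mu_j}(V'_j)$. Abel summation gives
\[
\sum_j c_j\dim(W_j/W_{j-1})=\sum_j(c_j-c_{j+1})\dim W_j+c_{r+1}\dim W_r-c_1\dim W_0 .
\]
Since $c_j-c_{j+1}\geq0$, $\dim W_j\leq\dim V'_j$ and $\dim W_0=\dim V'_0=1$, this shows $\mathrm e(\mathcal W)\leq\mathrm e(\cV')$. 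It therefore suffices to treat the flags $\mathcal W$. There are only $O_k(1)$ of them, and each $W_j$ has a basis adapted to the filtration that consists of vectors in $\{-1,0,1\}^k$. This bounded-height basis is what makes the lattice count below uniform in $k$ alone.

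\emph{Counting.} Each $b\in B^{(i)}$ receives a label $\omega_b\in V_i\cap\{0,1\}^k$, because $\mu_i$ is supported on $V_i$. Elements above $D^{c_1}$ have labels in $V_0$ by \eqref{eq:partition-compatibility}, so they contribute nothing modulo $\langle\one\rangle$; this is exactly where the support condition enters. For $b\in B^{(i)}$, first record the coset $\omega_b+W_i$. In each layer the frequencies of these cosets are fixed, namely the $\mu_i$-masses of the $W_i$-cosets. Hence the number of coset assignments is at most a multinomial coefficient, which is
\[
\le\exp\bigl(N_i\Hh_{\mu_i}(W_i)+O_k(\log N_i)\bigr).
\]
Choosing a fixed cube representative $\rho$ in each coset, we have $\omega_b=\rho+v_b$ with $v_b\in W_i\cap\{-1,0,1\}^k$. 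Given the coset data, the class of $x=\sum_b b\,\omega_b$ is therefore a fixed vector plus $\sum_i\sum_{b\in B^{(i)}}b\,v_b$ modulo $\langle\one\rangle$.

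Next, use coordinates in the adapted basis, modulo $\langle\one\rangle\leq W_0$. The component of this sum in the graded piece $W_j/W_{j-1}$ receives contributions only from layers $i\geq j$, because $v_b\in W_i\leq W_{j-1}$ when $i<j$. All such $b$ satisfy $b\leq D^{c_j}$. So each such coordinate is a rational number with denominator $O_k(1)$ and absolute value $\ll_k D^{c_j}\log D$. This can be arranged triangularly, peeling off the top graded piece first and fixing it before counting the next. The number of possible values is then
\[
\le\prod_j\bigl(O_k(D^{c_j}\log D)\bigr)^{\dim(W_j/W_{j-1})}.
\]
Multiplying the two counts and inserting $N_j=(c_j-c_{j+1})\log D+O\bigl((\log D)^{3/4}\bigr)$ from the hypothesis produces the factor $D^{\mathrm e(\mathcal W,\mathbf c,\boldsymbol\mu)}$. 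The errors are $O_k\bigl((\log D)^{3/4}\bigr)$ from $N_j\Hh\leq N_j\log 2^k$, and $O_k(\log\log D)$ from the $\log$ factors and the multinomial estimate.

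\emph{Main obstacle.} I expect the hardest step to be the triangular lattice count. The coordinates are not independent across graded pieces, so the choice of an adapted $\{-1,0,1\}$-basis of $\mathcal W$ must guarantee two things. First, expressing a vector of $W_i$ in that basis costs only bounded denominators. Second, the coordinates in $W_j/W_{j-1}$ genuinely see only the elements $b\leq D^{c_j}$. Passing from $\cV'$ to $\mathcal W$ is what makes this uniform, since an arbitrary rational $V'_j$ would have unbounded height.
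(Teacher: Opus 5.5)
Your proof is correct, and its counting core matches the paper's. The paper also writes each label as $\omega=P_j(\omega)+Q_j(\omega)$, with $Q_j$ constant on $V'_j$-cosets and $P_j(\omega)\in V'_j$; these play the roles of your $\rho$ and $v_b$. It likewise bounds the coset data by the multinomial $N_j!/\prod_i(N_j\theta_i)!\leq\exp\bigl(N_j\Hh_{\mu_j}(V'_j)\bigr)$ and uses the support condition to discard $B\cap(D^{c_1},D]$. The remaining part is bounded by a coordinate box in a basis adapted to the subflag, using that a coordinate attached to level $j$ sees only elements $\leq D^{c_j}$.

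The genuine difference is how you make the constants depend only on $k$. The paper keeps an arbitrary $\cV'$ with arbitrary bases and projections, so its denominator $M$ and constant $A_{\cV,\cV'}$ are only known to be finite. Uniformity then comes from restricting to the finite family $\mathscr F(\cV)$ of Lemma~\ref{lem:finite-subflags} and from the finiteness of cube-spanned flags. You instead replace $V'_j$ by $W_j=\Span_\Q(V'_j\cap\{-1,0,1\}^k)$. Your Abel-summation check that this keeps the entropy terms and does not increase $\mathrm e$ is right. It also gives a little more: the minimum of $\mathrm e$ is attained on a subflag spanned by cube differences, with explicit height bounds via Cramer's rule, and no choice of class representatives is needed. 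The replacement fits your decomposition naturally, since $v_b=\omega_b-\rho$ already lies in $V'_i\cap\{-1,0,1\}^k\subseteq W_i$.

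Two small remarks. The triangular peeling you single out as the main obstacle is unnecessary. The number of coordinate tuples is at most the product of the per-coordinate ranges, which is exactly what the paper uses, so no independence across graded pieces is required. Also, the multinomial coefficient is at most $\exp\bigl(N_i\Hh_{\mu_i}(W_i)\bigr)$ exactly, so the $O_k(\log N_i)$ term is superfluous.
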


\begin{proof}
If $\cL_{\cV,\mathbf c,\boldsymbol\mu}(B)=\varnothing$,
the conclusion is immediate. Assume that at least one
partition satisfies \eqref{eq:partition-compatibility}.
Fix a subflag $\cV'\leq\cV$, and put
$d_j=\dim V'_j$ for $0\leq j\leq r$.
Then
\[
1=d_0\leq d_1\leq\cdots\leq d_r\leq k.
\]
By successively extending bases, choose a basis
$v_1,\ldots,v_{d_r}$ of $V'_r$ over $\Q$ such that
\[
v_1=\one,
\qquad
V'_j=\Span_{\Q}\{v_1,\ldots,v_{d_j}\}
\qquad(0\leq j\leq r).
\]
For each $1\leq j\leq r$, extend
$v_1,\ldots,v_{d_j}$ to a basis
$v_1,\ldots,v_{d_j},
u_{j,1},\ldots,u_{j,\dim V_j-d_j}$
of $V_j$ over $\Q$.

For each $1\leq j\leq r$, let $P_j$ be the linear map
from $V_j$ to $V'_j$ satisfying
\[
P_j(v_\ell)=v_\ell\quad(1\leq\ell\leq d_j),
\qquad
P_j(u_{j,\ell})=\mathbf0_k
\quad(1\leq\ell\leq\dim V_j-d_j).
\]

Define $Q_j(x)=x-P_j(x)$ for $x\in V_j$.
Let $x,y\in V_j$ and suppose that $Q_j(x)=Q_j(y)$.
Using the linearity of $P_j$, we obtain
\begin{equation}\label{eq1}
x-y=P_j(x)-P_j(y)=P_j(x-y)\in V'_j,
\end{equation}
where the last inclusion follows from $x-y\in V_j$
and $P_j(V_j)\subseteq V'_j$.
Conversely, if $x-y\in V'_j$, then
\begin{equation}\label{eq:residual-Q-coset}
Q_j(x)-Q_j(y)
=
x-y-P_j(x-y)
=
\mathbf0_k.
\end{equation}
Hence $Q_j$ is constant on each coset of $V'_j$ in $V_j$,
and distinct cosets have distinct $Q_j$-values.

Since $\mu_j$ is supported on $V_j\cap\{0,1\}^k$ and is
extended by zero outside this set, we have $\mu_j(\omega)=0$
for $\omega\in\{0,1\}^k\setminus V_j$.
Since $B_\omega\subseteq B$, the definition of $B^{(j)}$
and \eqref{eq:partition-compatibility} give
\[
\begin{aligned}
\#(B_\omega\cap B^{(j)})
&=
\#\bigl(B_\omega\cap(D^{c_{j+1}},D^{c_j}]\bigr)
=
N_j\mu_j(\omega)
=
0
\qquad
(\omega\in\{0,1\}^k\setminus V_j).
\end{aligned}
\]
Hence $B_\omega\cap B^{(j)}=\varnothing$ for
$\omega\in\{0,1\}^k\setminus V_j$.

Since $V_0=\langle\one\rangle$, we have
$V_0\cap\{0,1\}^k=\{\mathbf0_k,\one\}$.
Consequently, by the second condition in
\eqref{eq:partition-compatibility}, we have
\[
\begin{aligned}
\sum_{\omega\in\{0,1\}^k}
\omega\,\Sigma\bigl(B_\omega\cap(D^{c_1},D]\bigr)
=\sum_{\omega\in\{\mathbf0_k,\one\}}
\omega\,\Sigma\bigl(B_\omega\cap(D^{c_1},D]\bigr)
=\Sigma\bigl(B_{\one}\cap(D^{c_1},D]\bigr)\one
\in\langle\one\rangle.
\end{aligned}
\]

For this partition, define
\begin{equation}\label{eq:residual-sum-components}
\begin{aligned}
S_1
&=
\sum_{j=1}^{r}
\sum_{\omega\in V_j\cap\{0,1\}^k}
P_j(\omega)\,
\Sigma(B_\omega\cap B^{(j)}),\\
S_2
&=
\sum_{j=1}^{r}
\sum_{\omega\in V_j\cap\{0,1\}^k}
Q_j(\omega)\,
\Sigma(B_\omega\cap B^{(j)}).
\end{aligned}
\end{equation}
Since $P_j(\omega)+Q_j(\omega)=\omega$, we obtain
\[
\left(
\sum_{\omega\in\{0,1\}^k}
\omega\,\Sigma(B_\omega)
\right)
+\langle\one\rangle
=
S_1+S_2+\langle\one\rangle.
\]

Let $\mathscr X$ be the set of classes
$S_1+\langle\one\rangle$, and let $\mathscr Y$ be the set
of vectors $S_2$, as the partition ranges over those
satisfying \eqref{eq:partition-compatibility}.
Every element of $\cL_{\cV,\mathbf c,\boldsymbol\mu}(B)$
is obtained by adding a vector in $\mathscr Y$ to a class
in $\mathscr X$. Therefore,
\[
\#\cL_{\cV,\mathbf c,\boldsymbol\mu}(B)
\leq
\#\mathscr X\,\#\mathscr Y.
\]

\begin{claim}\label{clm:residual-coset-count}
With $C_k^{(1)}=2k(k-1)\log2$, we have
\[
\#\mathscr Y
\leq
\exp\bigl(C_k^{(1)}(\log D)^{3/4}\bigr)
D^{\sum_{j=1}^{r}
(c_j-c_{j+1})\Hh_{\mu_j}(V'_j)}.
\]
\end{claim}

\begin{proof}[Proof of the claim]
Fix $1\leq j\leq r$.
Let $C_1,\ldots,C_m$ be the distinct cosets of $V'_j$
in $V_j$ having positive $\mu_j$-measure, and put
\[
\theta_i=\mu_j(C_i)
\qquad(1\leq i\leq m).
\]
Since $\mu_j$ is a probability measure supported on
$V_j\cap\{0,1\}^k$, it follows that
\[
\theta_i>0,
\qquad
\sum_{i=1}^{m}\theta_i=1,
\qquad
1\leq m\leq2^k.
\]
By \eqref{eq:coset-entropy} and the concavity of the
logarithm, we have
\begin{equation}\label{eq:residual-coset-entropy-bound}
\begin{aligned}
0\leq\Hh_{\mu_j}(V'_j)
&=
-\sum_{i=1}^{m}
\sum_{\omega\in C_i\cap\{0,1\}^k}
\mu_j(\omega)\log\theta_i\\
&=
-\sum_{i=1}^{m}\theta_i\log\theta_i
=
\sum_{i=1}^{m}\theta_i\log\frac1{\theta_i}\\
&\leq
\log\left(
\sum_{i=1}^{m}\theta_i\frac1{\theta_i}
\right)
=
\log m
\leq k\log2.
\end{aligned}
\end{equation}

For a partition $(B_\omega)_{\omega\in\{0,1\}^k}$
satisfying \eqref{eq:partition-compatibility}, put
\[
T_i
=
\bigcup_{\omega\in C_i\cap\{0,1\}^k}
(B_\omega\cap B^{(j)})
\qquad(1\leq i\leq m).
\]
The sets $T_1,\ldots,T_m$ are pairwise disjoint subsets
of $B^{(j)}$. By \eqref{eq:partition-compatibility},
\begin{equation}\label{eq:residual-coset-cardinality}
\begin{aligned}
\#T_i
&=
\sum_{\omega\in C_i\cap\{0,1\}^k}
\#(B_\omega\cap B^{(j)})=
N_j\sum_{\omega\in C_i\cap\{0,1\}^k}\mu_j(\omega)
=
N_j\theta_i.
\end{aligned}
\end{equation}
Thus $N_j\theta_i\in\Z_{\geq0}$ for every $i$, and
\[
\sum_{i=1}^{m}\#T_i
=
N_j\sum_{i=1}^{m}\theta_i
=
N_j
=
\#B^{(j)}.
\]
Consequently, $B^{(j)}=\bigcup_{i=1}^{m}T_i.$

For every $1\leq i\leq m$, fix
$x_i\in C_i\cap\{0,1\}^k$.
For $\omega\in C_i\cap\{0,1\}^k$, we have
$\omega-x_i\in V'_j$, so \eqref{eq:residual-Q-coset}
gives $Q_j(\omega)=Q_j(x_i).$
If $\omega\in V_j\cap\{0,1\}^k$ belongs to none of
$C_1,\ldots,C_m$, then $\mu_j(\omega)=0$.
Hence \eqref{eq:partition-compatibility} gives
$B_\omega\cap B^{(j)}=\varnothing$.
Since the sets in the definition of each $T_i$ are
pairwise disjoint, it follows that
\[
\begin{aligned}
&\sum_{\omega\in V_j\cap\{0,1\}^k}
Q_j(\omega)\Sigma(B_\omega\cap B^{(j)})
\quad=\sum_{i=1}^{m}Q_j(x_i)
\sum_{\omega\in C_i\cap\{0,1\}^k}
\Sigma(B_\omega\cap B^{(j)})
=\sum_{i=1}^{m}Q_j(x_i)\Sigma(T_i).
\end{aligned}
\]
The vectors $Q_j(x_1),\ldots,Q_j(x_m)$ are fixed.
Therefore, the $j$-th sum in
\eqref{eq:residual-sum-components} defining $S_2$ is
determined by the ordered partition $(T_1,\ldots,T_m)$.

If $N_j=0$, then $B^{(j)}=\varnothing$, and this sum
has the single possible value $\mathbf0_k$.
Suppose that $N_j>0$.
By \eqref{eq:residual-coset-cardinality},
\[
N_j\theta_i\in\Z_{\geq0}
\qquad(1\leq i\leq m),
\qquad
\sum_{i=1}^{m}N_j\theta_i=N_j.
\]
After $T_1,\ldots,T_{i-1}$ have been chosen, there remain
$N_j-\sum_{\ell=1}^{i-1}N_j\theta_\ell$ elements from
which to choose the $N_j\theta_i$ elements of $T_i$.
Thus the number of ordered partitions
$(T_1,\ldots,T_m)$ with the prescribed cardinalities is
\[
\begin{aligned}
\prod_{i=1}^{m}
\binom{N_j-\sum_{\ell=1}^{i-1}N_j\theta_\ell}
{N_j\theta_i}
&=
\prod_{i=1}^{m}
\frac{
\left(N_j-\sum_{\ell=1}^{i-1}N_j\theta_\ell\right)!
}{
(N_j\theta_i)!
\left(N_j-\sum_{\ell=1}^{i}N_j\theta_\ell\right)!
}\\
&=
\frac{N_j!}{\prod_{i=1}^{m}(N_j\theta_i)!},
\end{aligned}
\]
where an empty sum is zero and $0!=1$.
Each such ordered partition determines a single vector
$\sum_{i=1}^{m}Q_j(x_i)\Sigma(T_i).$
Hence the number of possible values of the $j$-th sum
is at most
$\frac{N_j!}{\prod_{i=1}^{m}(N_j\theta_i)!}.$

Note that
\[
1
=
\left(\sum_{i=1}^{m}\theta_i\right)^{N_j}
\geq
\frac{N_j!}{\prod_{i=1}^{m}(N_j\theta_i)!}
\prod_{i=1}^{m}\theta_i^{N_j\theta_i}.
\]
Consequently, by
\eqref{eq:residual-coset-entropy-bound},
\[
\begin{aligned}
\frac{N_j!}{\prod_{i=1}^{m}(N_j\theta_i)!}
&\leq
\prod_{i=1}^{m}\theta_i^{-N_j\theta_i}
=\exp\left(-
N_j\sum_{i=1}^{m}\theta_i\log\theta_i\right)
=\exp\bigl(N_j\Hh_{\mu_j}(V'_j)\bigr).
\end{aligned}
\]
The same upper bound holds when $N_j=0$, since it
equals $1$.

By \eqref{eq:residual-sum-components}, each vector in
$\mathscr Y$ is a sum of one possible value from each
of the $r$ sums. Therefore,
\begin{equation}\label{eq:residual-Y-product}
\begin{aligned}
\#\mathscr Y
&\leq
\prod_{j=1}^{r}
\exp\bigl(N_j\Hh_{\mu_j}(V'_j)\bigr)
=\exp\left(\sum_{j=1}^{r}N_j\Hh_{\mu_j}(V'_j)
\right).
\end{aligned}
\end{equation}

Since
\[
c=c_{r+1}\leq c_{j+1}\leq c_j\leq1,
\qquad
N_j=\#\bigl(B\cap(D^{c_{j+1}},D^{c_j}]\bigr),
\]
the hypothesis with $\alpha=c_{j+1}$ and $\beta=c_j$ gives
$$
\left|
N_j-(c_j-c_{j+1})\log D
\right|
\leq2(\log D)^{3/4}.
$$
Consequently,
\begin{equation}\label{eq:residual-layer-count}
N_j
\leq
(c_j-c_{j+1})\log D+2(\log D)^{3/4}
\qquad(1\leq j\leq r).
\end{equation}

Combining \eqref{eq:residual-layer-count} with
\eqref{eq:residual-coset-entropy-bound} and
$r\leq k-1$, we obtain
\[
\begin{aligned}
\sum_{j=1}^{r}N_j\Hh_{\mu_j}(V'_j)
&\leq
(\log D)
\sum_{j=1}^{r}
(c_j-c_{j+1})\Hh_{\mu_j}(V'_j)+
2(\log D)^{3/4}
\sum_{j=1}^{r}\Hh_{\mu_j}(V'_j)\\
&\leq
(\log D)
\sum_{j=1}^{r}
(c_j-c_{j+1})\Hh_{\mu_j}(V'_j)+
C_k^{(1)}(\log D)^{3/4}.
\end{aligned}
\]
Substituting this estimate into
\eqref{eq:residual-Y-product} yields
\[
\#\mathscr Y
\leq
\exp\bigl(C_k^{(1)}(\log D)^{3/4}\bigr)
D^{\sum_{j=1}^{r}
(c_j-c_{j+1})\Hh_{\mu_j}(V'_j)}.
\]
This proves the claim.
\end{proof}

\begin{claim}\label{clm:residual-subspace-count}
There exists a constant $A_{\cV,\cV'}\geq1$
such that
\[
\#\mathscr X
\leq
A_{\cV,\cV'}(\log D)^{k-1}
D^{\sum_{j=1}^{r}c_j\dim(V'_j/V'_{j-1})}.
\]
\end{claim}

\begin{proof}
If $d_r=1$, then $V'_j=\langle\one\rangle$ for every
$0\leq j\leq r$. Hence $S_1\in\langle\one\rangle$,
so $\#\mathscr X=1$, while
$\sum_{j=1}^{r}c_j\dim(V'_j/V'_{j-1})=0.$
The claim follows with $A_{\cV,\cV'}=1$ for $\log D\geq1$.
Assume that $d_r\geq2$.

For the fixed bases and projections,
\[
\#\left(
\bigcup_{j=1}^{r}P_j(V_j\cap\{0,1\}^k)
\right)
\leq
\sum_{j=1}^{r}\#(V_j\cap\{0,1\}^k)
\leq r2^k.
\]
Every vector in this finite set has rational coordinates
in the basis $v_1,\ldots,v_{d_r}$.
Consequently, there exists an integer $M\geq1$ such that
\begin{equation}\label{eq:residual-coefficient-bounds}
P_j(V_j\cap\{0,1\}^k)
\subseteq
\left\{
\sum_{\ell=1}^{d_r}a_\ell v_\ell
\,\middle|\,
a_\ell\in\frac1M\Z\cap[-M,M]
\quad(1\leq\ell\leq d_r)
\right\}
\end{equation}
for every $1\leq j\leq r$.

For every $1\leq j\leq r$ and
$\omega\in V_j\cap\{0,1\}^k$, we have $P_j(\omega)\in V'_j\subseteq V'_r$ and $\Sigma(B_\omega\cap B^{(j)})\in\Z.$
Hence \eqref{eq:residual-sum-components} gives
$S_1\in V'_r$ for every partition satisfying
\eqref{eq:partition-compatibility}.
Since $v_1,\ldots,v_{d_r}$ form a basis of $V'_r$
over $\Q$, there exist unique rational numbers
$\xi_1,\ldots,\xi_{d_r}$ such that
\begin{equation}\label{eq:residual-S1-coordinates}
S_1=\sum_{\ell=1}^{d_r}\xi_\ell v_\ell.
\end{equation}

By \eqref{eq:residual-sum-components}, $S_1$ is a sum
of the vectors $P_i(\omega)$ multiplied by the integers
$\Sigma(B_\omega\cap B^{(i)})$.
Consequently, \eqref{eq:residual-coefficient-bounds} gives
$M\xi_\ell\in\Z$ for $1\leq\ell\leq d_r.$ For $1\leq j\leq r$ and $d_{j-1}<\ell\leq d_j$, the relations
\[
P_i(\omega)\in V'_i\subseteq V'_{j-1}
=
\Span_{\Q}\{v_1,\ldots,v_{d_{j-1}}\}
\qquad(i<j)
\]
imply that the coefficient of $v_\ell$ in $P_i(\omega)$
is zero whenever $i<j$.
For $i\geq j$ and $a\in B^{(i)}$, we have
\[
0<a\leq D^{c_i}\leq D^{c_j}.
\]

Since the sets $B^{(i)}$ are pairwise disjoint and
$c_{r+1}=c$, it follows that
$\bigcup_{i=j}^{r}B^{(i)}
=B\cap(D^c,D^{c_j}].$
Applying the hypothesis with $\alpha=c$ and $\beta=1$,
we obtain
\begin{equation}\label{eq:residual-tail-count}
\begin{aligned}
\sum_{i=j}^{r}N_i
&=
\#\bigl(B\cap(D^c,D^{c_j}]\bigr)
\leq
(1-c)\log D+2(\log D)^{3/4}
\leq2\log D
\end{aligned}
\end{equation}
for all sufficiently large $D$.

For $d_{j-1}<\ell\leq d_j$, the coefficient of
$v_\ell$ in $P_i(\omega)$ is zero when $i<j$ and has
absolute value at most $M$ when $i\geq j$.
Moreover,
$\Sigma(B_\omega\cap B^{(i)})\geq0$.
Taking the $v_\ell$-coordinate in
\eqref{eq:residual-sum-components} and applying the
triangle inequality, we obtain
\[
\begin{aligned}
|\xi_\ell|
&\leq
M\sum_{i=j}^{r}
\sum_{\omega\in V_i\cap\{0,1\}^k}
\Sigma(B_\omega\cap B^{(i)})
=
M\sum_{i=j}^{r}\Sigma(B^{(i)})\\
&\leq
M D^{c_j}\sum_{i=j}^{r}N_i\\
&\leq
2M D^{c_j}\log D,
\end{aligned}
\]
where the last inequality follows from
\eqref{eq:residual-tail-count}.
Together with $M\xi_\ell\in\Z$, this gives
\begin{equation}\label{eq:residual-coordinate-range}
\xi_\ell
\in
\frac1M\Z
\cap
[-2M D^{c_j}\log D,\,
  2M D^{c_j}\log D]
\qquad(d_{j-1}<\ell\leq d_j).
\end{equation}

The interval in \eqref{eq:residual-coordinate-range}
contains at most
\[
\begin{aligned}
2\left\lfloor2M^2D^{c_j}\log D\right\rfloor+1
\leq
4M^2D^{c_j}\log D+1
\leq
(4M^2+1)D^{c_j}\log D
\end{aligned}
\]
elements of $\frac1M\Z$, since $c_j\geq0$ and
$\log D\geq1$.
Thus each of the $d_j-d_{j-1}$ coordinates
$\xi_{d_{j-1}+1},\ldots,\xi_{d_j}$
has at most $(4M^2+1)D^{c_j}\log D$ possible values.

Since $v_1=\one$, \eqref{eq:residual-S1-coordinates}
gives
\[
S_1+\langle\one\rangle
=
\left(\sum_{\ell=2}^{d_r}\xi_\ell v_\ell\right)
+\langle\one\rangle.
\]
Hence this class is determined by
$(\xi_2,\ldots,\xi_{d_r})$.
Taking the product of the coordinate bounds yields
\begin{equation}\label{eq:residual-X-coordinate-count}
\begin{aligned}
\#\mathscr X
&\leq
\prod_{j=1}^{r}
\left((4M^2+1)D^{c_j}\log D\right)^{d_j-d_{j-1}}\\
&=(4M^2+1)^{d_r-1}
(\log D)^{d_r-1}
D^{\sum_{j=1}^{r}c_j(d_j-d_{j-1})}\\
&\leq
(4M^2+1)^{k-1}
(\log D)^{k-1}
D^{\sum_{j=1}^{r}c_j\dim(V'_j/V'_{j-1})}.
\end{aligned}
\end{equation}
Here we used
\[
\sum_{j=1}^{r}(d_j-d_{j-1})=d_r-1\leq k-1,
\qquad
d_j-d_{j-1}=\dim(V'_j/V'_{j-1}).
\]
Taking $A_{\cV,\cV'}=(4M^2+1)^{k-1}$ proves the claim.
\end{proof}

Combining Claims~\ref{clm:residual-coset-count} and
\ref{clm:residual-subspace-count}, and using
\eqref{eq:e-value}, we obtain
\begin{equation}\label{eq:residual-fixed-subflag-bound}
\begin{aligned}
\#\cL_{\cV,\mathbf c,\boldsymbol\mu}(B) \leq
\#\mathscr X\,\#\mathscr Y
\leq
A_{\cV,\cV'}(\log D)^{k-1}
\exp\bigl(C_k^{(1)}(\log D)^{3/4}\bigr)
D^{\mathrm e(\cV',\mathbf c,\boldsymbol\mu)}.
\end{aligned}
\end{equation}

By Lemma~\ref{lem:finite-subflags},
\[
\min_{\cV'\leq\cV}
\mathrm e(\cV',\mathbf c,\boldsymbol\mu)
=
\min_{\cV'\in\mathscr F(\cV)}
\mathrm e(\cV',\mathbf c,\boldsymbol\mu).
\]
Every space in $\cV$ is spanned by a subset of
$\{0,1\}^k$, and $r\leq k-1$.
Thus only finitely many flags $\cV$ occur for fixed $k$.
For each such flag, fix the finite family
$\mathscr F(\cV)$ supplied by
Lemma~\ref{lem:finite-subflags}.
Then fix the bases and projections for every
$\cV'\in\mathscr F(\cV)$.

There are only finitely many pairs
$(\cV,\cV')$ under consideration.
Consequently, there exists $A_k\geq1$, depending only
on $k$, such that $A_{\cV,\cV'}\leq A_k \; (\cV'\in\mathscr F(\cV))$
for every such flag $\cV$.
Choosing a subflag in $\mathscr F(\cV)$ that minimizes
$\mathrm e(\cV',\mathbf c,\boldsymbol\mu)$ and applying
\eqref{eq:residual-fixed-subflag-bound} gives
\[
\begin{aligned}
\#\cL_{\cV,\mathbf c,\boldsymbol\mu}(B)
&\leq
A_k(\log D)^{k-1}
\exp\bigl(C_k^{(1)}(\log D)^{3/4}\bigr)
D^{\min_{\cV'\leq\cV}
\mathrm e(\cV',\mathbf c,\boldsymbol\mu)}.
\end{aligned}
\]

Finally, since $k$ is fixed,
\[
\log A_k+(k-1)\log\log D
=
o\bigl((\log D)^{3/4}\bigr).
\]
Hence, for all sufficiently large $D$,
\[
A_k(\log D)^{k-1}
\leq
\exp\bigl((\log D)^{3/4}\bigr).
\]
Taking $C_k=C_k^{(1)}+1$, we conclude that
\[
\#\cL_{\cV,\mathbf c,\boldsymbol\mu}(B)
\leq
\exp\bigl(C_k(\log D)^{3/4}\bigr)
D^{\min_{\cV'\leq\cV}
\mathrm e(\cV',\mathbf c,\boldsymbol\mu)}.
\]
The constant $C_k$ and the lower bound on $D$ depend
only on $k$. This proves the lemma.
\end{proof}

\subsection{Comparison of the thresholds}

The following proposition relates the probabilistic threshold
$\beta_k$ to the entropy thresholds $\widetilde\gamma_k$
and $\gamma_k$.

\begin{proposition}\label{prop:threshold-comparison}
For every integer $k\geq2$, we have
$$
\widetilde\gamma_k\leq\beta_k\leq\gamma_k.
$$
\end{proposition}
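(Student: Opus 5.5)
The proof splits into the two inequalities, which are proved by opposite probabilistic arguments: an upper bound via a first-moment count, and a lower bound via the FGK construction.

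\emph{Upper bound $\beta_k\leq\gamma_k$.}
Fix $c>\gamma_k$ with $c<1$. We show that $\Pp(m(\mathcal A\cap(D^c,D])\geq k)\not\to1$; in fact we aim to show this probability tends to $0$.

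First restrict to the event that $\mathcal A\cap(D^c,D]$ satisfies the regularity hypothesis of Lemma~\ref{lem:residual-sum-bound}. Its complement has probability $o(1)$ by Chebyshev or Bernstein bounds for sums of independent Bernoulli$(1/m)$ variables, together with a union over a mesh of $(\alpha,\beta)$.

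On this event, suppose $C_1,\ldots,C_k\subseteq B$ are distinct with equal sums. Encode each $b\in B$ by its label $\omega(b)=(\one_{b\in C_i})_i\in\{0,1\}^k$. The equal-sum condition says that $\sum_\omega\omega\,\Sigma(B_\omega)\in\langle\one\rangle$.

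Following FGK Section~4, we extract from the labelling a flag $\cV$ and thresholds $\mathbf c$ with $c_{r+1}\le c$. The $V_j$ are the spans of the labels occurring below successive scales, and the $\mu_j$ are the empirical label frequencies on the layers. These are rounded to a finite grid, so that only $D^{o(1)}$ choices of data arise. We normalize via Lemma~\ref{lem:threshold-normalization} so that labels above $D^{c_1}$ lie in $V_0$; this is exactly the support condition in \eqref{eq:partition-compatibility}. Non-degeneracy of $V_r$ comes from the $C_i$ being distinct.

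Since $c_{r+1}>\gamma_k$ forces the entropy condition to fail, some $\cV'\le\cV$ has $\mathrm e(\cV')<\mathrm e(\cV)=\sum c_j\dim(V_j/V_{j-1})$. The loss can be made uniform, at least $\delta>0$, by compactness over the finitely many flags and the subflag families $\mathscr F(\cV)$ of Lemma~\ref{lem:finite-subflags}.

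We then run the first-moment computation. The expected number of $B$ admitting a compatible partition whose class is $0$ is bounded by the number of possible classes, controlled by Lemma~\ref{lem:residual-sum-bound} applied with $\cV'$, divided by the size $\asymp D^{\mathrm e(\cV)}$ of the ambient lattice box in $V_r/\langle\one\rangle$. More precisely, one conditions on all of $\mathcal A$ except a generic layer and uses equidistribution of the residual sum. This yields $D^{-\delta+o(1)}$, which tends to $0$.

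\emph{Lower bound $\widetilde\gamma_k\leq\beta_k$.}
Given a strict system with $c_{r+1}=c$, we verify the hypotheses of \cite[Proposition~5.5]{FGK}: the strict gap is uniform over the finite family $\mathscr F(\cV)$, and by Lemma~\ref{lem:threshold-normalization} we may take $c_1=1$, so the support issue disappears. That proposition, via a second-moment argument on the number of compatible labellings with zero class, then gives $m(\mathcal A\cap(D^{c'},D])\ge k$ with probability $\to1$ for every $c'<c$. Taking suprema gives the inequality.

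\emph{Main obstacle.}
The hardest step is the extraction of an entropy system from an arbitrary equal-sum configuration in the upper bound. One must choose the scales $c_j$ and round the frequencies $\mu_j$ while preserving exact integrality in \eqref{eq:partition-compatibility}. One must also ensure that the union bound over rounding choices costs only $\exp(O((\log D)^{3/4}))$. I would first try discretizing the thresholds to a mesh of width $(\log D)^{-1/4}$ and absorbing the boundary elements into the error term.
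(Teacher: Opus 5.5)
\textbf{The main gap: the probability estimate in the upper bound.} The problem lies at the one step that converts the count of residual classes into a probability. ``Number of classes divided by the size of the box'' is only a heuristic, and the mechanism you offer for it does not work. The partition $(B_\omega)$ is chosen existentially after all of $\mathcal A$ is revealed, so you cannot fix a labelling and then randomize a layer. Moreover, the only source of a hitting probability of order $D^{-c_j}$ is a single element of size $\asymp D^{c_j}$ whose value is forced.

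The paper makes the count rigorous by a pivot reconstruction, which your proposal lacks.
\begin{itemize}
\item Extract the flag greedily. At step $j$, take $\omega^j\notin V_{j-1}$ whose part $B_{\omega^j}$ has the largest maximum $K_j$. Set $c_j=1+\lceil\log K_j-\log D\rceil/\log D$, so that $\ee^{-1}D^{c_j}<K_j\leq D^{c_j}$, and take $c_{h+1}=c$ exactly. With only $c_{r+1}\leq c$, as you wrote, you could not conclude $c_{r+1}>\gamma_k$.
\item Delete the pivots to obtain $B'$, and let $\mu_j$ be the exact label frequencies on the layers of $B'$.
\item The equal-sum identity becomes $\sum_jK_j\omega^j+L\in\langle\one\rangle$ with $L\in\cL_\Lambda(B')$. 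Since $\one,\omega^1,\ldots,\omega^h$ are independent, the triple $(\Lambda,B',L)$ determines $B$.
\item The product form of the law of $\mathcal A$ gives $\Pp(\mathcal A_D=B)=\Pp(\mathcal A_D=B')\prod_j(K_j-1)^{-1}\leq(2\ee)^hD^{-\sum_jc_j}\Pp(\mathcal A_D=B')$.
\item Sum over $B'$, applying Lemma~\ref{lem:residual-sum-bound} to $B'$; this is why that lemma is stated with $2(\log D)^{3/4}$ regularity. The result is $D^{\min_{\cV'}\mathrm e(\cV')-\mathrm e(\cV)+o(1)}\leq D^{-\eta+o(1)}$, multiplied by $(\log D)^{O_k(1)}$ choices of $\Lambda$.
\end{itemize}
So your ``box'' factor $D^{-\mathrm e(\cV)}$ is really the probability cost of the $h$ pivots. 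This requires a complete flag with one distinguished element per step, and your extraction has no such elements.

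\textbf{Further points.}
\begin{itemize}
\item The support condition in \eqref{eq:partition-compatibility} is automatic from the maximality of $K_1$. Invoking Lemma~\ref{lem:threshold-normalization} there is wrong: rescaling $c_j\mapsto c_j/c_1$ moves the layers relative to $D$ and raises the final threshold.
\item The ``main obstacle'' you name disappears. Exact empirical frequencies have numerators and denominators at most $2\log D$, so there are $(\log D)^{O_k(1)}$ data without any rounding. The compactness gap applies to every complete system with final threshold $c$.
\item In the lower bound, you do not actually verify the hypotheses of \cite[Proposition~5.5]{FGK}. These require $1=c_1>\cdots>c_{r+1}>0$ and $\supp\mu_j=V_j\cap\{0,1\}^k$.
\item The paper gets strict monotonicity from the strict entropy condition: if $c_j=c_{j+1}$, replacing $V_j$ by $V_{j-1}$ gives a proper subflag with gap $\leq0$.
\item It gets full support and positivity by passing to $(1-\zeta)\mu_j+\zeta\sigma_j$, with $\sigma_j$ uniform on $V_j\cap\{0,1\}^k$, and final threshold $b+\zeta$. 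The gap stays at least $g/2$ by continuity over the finite family $\mathscr F(\cV)$.
\end{itemize}
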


\begin{proof}
Fix an integer $k\geq2$.
\setcounter{claim}{0}

\begin{claim}\label{clm:comparison-upper}
$\beta_k\leq\gamma_k$.
\end{claim}

\begin{proof}
The entropy condition holds when all thresholds are zero, so
$\gamma_k\geq0$. If $\gamma_k=1$, the claim follows from
$\beta_k\leq1$. Assume that $\gamma_k<1$.
By the definition of $\beta_k$, it suffices to show that, for
every $c\in(\gamma_k,1)$,
\begin{equation}\label{eq:comparison-upper-vanishing}
\Pp\bigl(m(\mathcal A\cap(D^c,D])\geq k\bigr)=o(1)
\qquad(D\to\infty).
\end{equation}

For a complete entropy system
$(\cV,\mathbf c,\boldsymbol\mu)$ with final threshold $c$,
define
$$
G(\cV,\mathbf c,\boldsymbol\mu)
=
\min_{\cV'\in\mathscr F(\cV)}
\left(
\mathrm e(\cV',\mathbf c,\boldsymbol\mu)
-\mathrm e(\cV,\mathbf c,\boldsymbol\mu)
\right),
$$
where $\mathscr F(\cV)$ is given by
Lemma~\ref{lem:finite-subflags}.
By Lemma~\ref{lem:finite-subflags}, this minimum equals the
minimum over all subflags $\cV'\leq\cV$.
Since $c>\gamma_k$, the definition of $\gamma_k$ implies that
the system does not satisfy the entropy condition.
Hence there exists a subflag $\cV'\leq\cV$ such that
$$
\mathrm e(\cV',\mathbf c,\boldsymbol\mu)
<
\mathrm e(\cV,\mathbf c,\boldsymbol\mu).
$$
Consequently,
$G(\cV,\mathbf c,\boldsymbol\mu)<0$.

For each fixed complete flag $\cV$ with non-degenerate terminal
space, the admissible parameters with final threshold $c$
form a compact set. By the continuity of coset entropy and
the finiteness of $\mathscr F(\cV)$, $G$ is continuous and
therefore attains a strictly negative maximum on this set.
Since only finitely many such cube-spanned flags occur for
fixed $k$, there exists $\eta=\eta(k,c)>0$ such that
\begin{equation}\label{eq:comparison-negative-gap}
\min_{\cV'\leq\cV}
\mathrm e(\cV',\mathbf c,\boldsymbol\mu)
\leq
\mathrm e(\cV,\mathbf c,\boldsymbol\mu)-\eta
\end{equation}
for every complete entropy system with final threshold $c$.

For sufficiently large $D$, put
$$
I_{c,D}=(D^c,D]\cap\Z,
\qquad
\mathcal A_D=\mathcal A\cap I_{c,D}.
$$
Let $\mathcal R_{c,D}$ be the family of sets
$B\subseteq I_{c,D}$ satisfying
$$
\left|
\#\bigl(B\cap(D^\alpha,D^\beta]\bigr)
-(\beta-\alpha)\log D
\right|
\leq(\log D)^{3/4}
\qquad(c\leq\alpha\leq\beta\leq1),
$$
and let $\widetilde{\mathcal R}_{c,D}$ be defined by the
same inequalities with $2(\log D)^{3/4}$ on the right.
By \cite[Lemma~A.5]{FGK},
\begin{equation}\label{eq:comparison-regularity}
\Pp(\mathcal A_D\notin\mathcal R_{c,D})
\ll_c
\exp\left\{-\frac14\sqrt{\log D}\right\}.
\end{equation}
Taking $\alpha=c$ and $\beta=1$ also gives
$$
\#B\leq(1-c)\log D+(\log D)^{3/4}\leq2\log D
\qquad(B\in\mathcal R_{c,D}).
$$

Suppose that $B\in\mathcal R_{c,D}$ contains pairwise distinct
subsets $A_1,\ldots,A_k$ with equal sums.  For each
$\omega=(\omega_1,\ldots,\omega_k)\in\{0,1\}^k$, define
$$
B_\omega=
\left\{
b\in B:
b\in A_t \text{ if and only if } \omega_t=1
\text{ for every }1\leq t\leq k
\right\}.
$$
Every element of $B$
belongs to exactly one $B_\omega$, so these sets form a
partition of $B$, with empty parts allowed.

We apply the extraction of \cite[Section~4.1]{FGK}.
Starting with $V_0=\langle\one\rangle$, at step $j$ choose
$\omega^j\notin V_{j-1}$ with $B_{\omega^j}\neq\varnothing$
so that $\max B_{\omega^j}$ is largest, and put
$$
K_j=\max B_{\omega^j},
\qquad
V_j=V_{j-1}+\Span_{\Q}\{\omega^j\}.
$$
Stop when every vector $\omega$ with $B_\omega\neq\varnothing$
belongs to the current space. Let $h$ be the number of steps
performed. The resulting flag is
$$
\cV:V_0<V_1<\cdots<V_h,
\qquad 1\leq h\leq k-1.
$$

For $1\leq j<h$, we have $\omega^{j+1}\notin V_{j-1}$,
so the maximal choice at step $j$ gives $K_{j+1}\leq K_j$.
Since $\omega^j\in V_j$ and $\omega^{j+1}\notin V_j$,
these two vectors are distinct. The corresponding parts
$B_{\omega^j}$ and $B_{\omega^{j+1}}$ are therefore disjoint,
which implies $K_{j+1}\neq K_j$. Thus
$$
K_1>\cdots>K_h.
$$

For each $1\leq t<u\leq k$, the inequality $A_t\neq A_u$
implies that there exists $\omega\in\{0,1\}^k$ such that $B_\omega\neq\varnothing$ and $\omega_t\neq\omega_u.$
The stopping condition gives $\omega\in V_h$. Therefore, $V_h$ is non-degenerate.

For $1\leq j\leq h$, define
$$
c_j=1+\frac{\lceil\log K_j-\log D\rceil}{\log D},
\qquad
c_{h+1}=c.
$$
Then
\begin{equation}\label{eq:comparison-rounded-thresholds}
1\geq c_1\geq\cdots\geq c_h>c,
\qquad
\ee^{-1}D^{c_j}<K_j\leq D^{c_j}
\quad(1\leq j\leq h).
\end{equation}
Put
$$
B'=B\setminus\{K_1,\ldots,K_h\},
\qquad
B'_\omega=B_\omega\cap B',
\qquad
N_j=\#\bigl(B'\cap(D^{c_{j+1}},D^{c_j}]\bigr),
$$
and define
$$
\mu_j(\omega)=
\begin{cases}
\displaystyle
\frac{\#\bigl(B'_\omega\cap(D^{c_{j+1}},D^{c_j}]\bigr)}{N_j},
&N_j>0,\\[2mm]
\one_{\omega=\mathbf0},&N_j=0
\end{cases}
\qquad(1\leq j\leq h).
$$

For $0\leq j<h$, the maximal choice at step $j+1$ gives
$$
\max B_\omega\leq K_{j+1}\leq D^{c_{j+1}}
\qquad
(\omega\notin V_j,\ B_\omega\neq\varnothing).
$$
For $j=h$, the stopping condition gives
$B_\omega=\varnothing$ whenever $\omega\notin V_h$.
Thus the definition of $\mu_j$ implies
$$
\mu_j(\omega)=0
\qquad
(1\leq j\leq h,\ \omega\in\{0,1\}^k\setminus V_j),
$$
including when $N_j=0$, since $\mathbf0\in V_j$.
Hence each $\mu_j$ is a probability measure on
$V_j\cap\{0,1\}^k$, and
$(\cV,\mathbf c,\boldsymbol\mu)$ is a complete entropy
system with final threshold $c$.
The estimate with $j=0$ also gives
$$
B'_\omega\cap(D^{c_1},D]=\varnothing
\qquad(\omega\in\{0,1\}^k\setminus V_0).
$$
Together with the frequency equalities defining $\mu_j$,
this proves that $(B'_\omega)_\omega$ satisfies
\eqref{eq:partition-compatibility}.

Since $B\in\mathcal R_{c,D}$ and
$\#(B\setminus B')=h\leq k-1$, it follows that
$$
\left|
\#\bigl(B'\cap(D^\alpha,D^\beta]\bigr)
-(\beta-\alpha)\log D
\right|
\leq(\log D)^{3/4}+h
\leq2(\log D)^{3/4}
$$
for $c\leq\alpha\leq\beta\leq1$ and sufficiently large $D$.
Therefore $B'\in\widetilde{\mathcal R}_{c,D}$.

Write
$$
\Lambda=(\cV,\mathbf c,\boldsymbol\mu,\omega^1,\ldots,\omega^h),
$$
where $\cV:V_0<V_1<\cdots<V_h$ is the complete flag constructed above,
$\mathbf c=(c_1,\ldots,c_{h+1})$ is the threshold sequence in
\eqref{eq:comparison-rounded-thresholds}, and
$\boldsymbol\mu=(\mu_1,\ldots,\mu_h)$ is the family of probability
measures defined above, with each $\mu_j$ supported on
$V_j\cap\{0,1\}^k$.
The vectors $\omega^1,\ldots,\omega^h$ are those selected in the
construction, so that
$$
\omega^j\in(V_j\setminus V_{j-1})\cap\{0,1\}^k,
\qquad
V_j=V_{j-1}+\Span_{\Q}\{\omega^j\}
\qquad(1\leq j\leq h).
$$
Thus fixing $\Lambda$ fixes the flag, the threshold sequence,
the probability measures, and the selected vectors.
For every $B'\subseteq I_{c,D}$, write
$$
\cL_\Lambda(B')=\cL_{\cV,\mathbf c,\boldsymbol\mu}(B').
$$

Fix one such $\Lambda$, and let $\mathcal B_\Lambda$ be the family
of sets $B\in\mathcal R_{c,D}$ for which the preceding
construction produces $\Lambda$ from some $k$ distinct
subsets of equal sum. For each $B\in\mathcal B_\Lambda$,
choose one such tuple $(A_1,\ldots,A_k)$, with associated
$B'$ and $(B'_\omega)_\omega$, and define
$$
L=\sum_{\omega\in\{0,1\}^k}
\omega\,\Sigma(B'_\omega)+V_0
\in\cL_\Lambda(B').
$$

For each $1\leq t\leq k$, the elements of $A_t$ consist
of the selected elements $K_j$ with $\omega_t^j=1$ and
the elements of $B'_\omega$ with $\omega_t=1$.
These sets are pairwise disjoint, so
$$
\Sigma(A_t)
=
\sum_{j=1}^hK_j\omega_t^j
+
\sum_{\omega\in\{0,1\}^k}
\omega_t\,\Sigma(B'_\omega).
$$
Since
$\Sigma(A_1)=\cdots=\Sigma(A_k)$, it follows that we obtain
$$
\begin{aligned}
\sum_{j=1}^hK_j\omega^j
+\sum_{\omega\in\{0,1\}^k}\omega\,\Sigma(B'_\omega)
&=\bigl(\Sigma(A_1),\ldots,\Sigma(A_k)\bigr)
=\Sigma(A_1)\one\in V_0.
\end{aligned}
$$

Therefore, we have
\begin{equation}\label{eq:comparison-pivot-equation}
\sum_{j=1}^hK_j\omega^j+V_0=-L.
\end{equation}

For fixed $\Lambda$ and $L$, the linear independence of
$\one,\omega^1,\ldots,\omega^h$ implies that
\eqref{eq:comparison-pivot-equation} determines at most
one tuple $(K_1,\ldots,K_h)$. Since
$B=B'\cup\{K_1,\ldots,K_h\}$, each pair $(B',L)$
determines at most one set $B\in\mathcal B_\Lambda$.
Consequently, for each fixed $B'$, there are at most
$\#\cL_\Lambda(B')$ sets $B\in\mathcal B_\Lambda$
with residual set $B'$, as in the proof of
\cite[Proposition~4.4]{FGK}.

For every $T\subseteq I_{c,D}$, by independence and $\Pp(i\in\mathcal A)=1/i$,
$$
\begin{aligned}
\Pp(\mathcal A_D=T)
=\prod_{i\in T}\frac1i
  \prod_{i\in I_{c,D}\setminus T}\left(1-\frac1i\right)
=\prod_{i\in I_{c,D}}\left(1-\frac1i\right)
  \prod_{i\in T}\frac1{i-1},
\end{aligned}
$$
where the second equality uses
$1/i=(1-1/i)/(i-1)$.
Since $B'=B\setminus\{K_1,\ldots,K_h\}$, we obtain
$$
\begin{aligned}
\Pp(\mathcal A_D=B)
&=\Pp(\mathcal A_D=B')
  \prod_{j=1}^h\frac1{K_j-1}
\leq
(2\ee)^hD^{-\sum_{j=1}^hc_j}
\Pp(\mathcal A_D=B'),
\end{aligned}
$$
where the inequality follows from $K_j-1\geq K_j/2$
and \eqref{eq:comparison-rounded-thresholds}.
Hence Lemma~\ref{lem:residual-sum-bound} gives
$$
\begin{aligned}
\sum_{B\in\mathcal B_\Lambda}\Pp(\mathcal A_D=B)
&\leq
(2\ee)^hD^{-\sum_{j=1}^hc_j}
\sum_{B'\in\widetilde{\mathcal R}_{c,D}}
\#\cL_\Lambda(B')\,\Pp(\mathcal A_D=B')\\
&\leq
(2\ee)^h\exp\{C_k(\log D)^{3/4}\}
D^{\min_{\cV'\leq\cV}
\mathrm e(\cV',\mathbf c,\boldsymbol\mu)
-\sum_{j=1}^hc_j},
\end{aligned}
$$
Note that
$$
\sum_{B'\in\widetilde{\mathcal R}_{c,D}}
\Pp(\mathcal A_D=B')
=
\Pp(\mathcal A_D\in\widetilde{\mathcal R}_{c,D})
\leq1.
$$
Since $\cV$ is complete, \eqref{eq:full-e} gives
$\mathrm e(\cV,\mathbf c,\boldsymbol\mu)=\sum_{j=1}^hc_j$.
Hence \eqref{eq:comparison-negative-gap} implies
$$
\min_{\cV'\leq\cV}
\mathrm e(\cV',\mathbf c,\boldsymbol\mu)
-\sum_{j=1}^hc_j
\leq-\eta.
$$
Substituting this into the preceding estimate, we obtain
$$
\sum_{B\in\mathcal B_\Lambda}\Pp(\mathcal A_D=B)
\leq
(2\ee)^h\exp\{C_k(\log D)^{3/4}\}D^{-\eta},
$$
where $C_k$ depends only on $k$.

Every $B\in\mathcal R_{c,D}$ satisfying $m(B)\geq k$
belongs to at least one $\mathcal B_\Lambda$.
There are $(\log D)^{O_k(1)}$ possible data $\Lambda$,
and $h\leq k-1$. Thus the preceding bound and
\eqref{eq:comparison-regularity} give
$$
\begin{aligned}
\Pp\bigl(m(\mathcal A_D)\geq k\bigr)
&\leq
\Pp(\mathcal A_D\notin\mathcal R_{c,D})
+\sum_\Lambda\sum_{B\in\mathcal B_\Lambda}
\Pp(\mathcal A_D=B)\\
&\ll_{k,c}
D^{-\eta}\exp\{C_k(\log D)^{3/4}\}
+\exp\left\{-\frac14\sqrt{\log D}\right\}=o(1),
\end{aligned}
$$
where $C_k$ is a sufficiently large constant depending only on $k$.
The last equality follows from $\eta>0$ and
$(\log D)^{3/4}=o(\log D)$.
Since $c\in(\gamma_k,1)$ was arbitrary, this establishes
\eqref{eq:comparison-upper-vanishing} and proves
$\beta_k\leq\gamma_k$.
\end{proof}

\begin{claim}\label{clm:comparison-lower}
$\widetilde\gamma_k\leq\beta_k$.
\end{claim}

\begin{proof}
Let $(\cV,\mathbf c,\boldsymbol\mu)$ satisfy the strict
entropy condition. Then $c_1>0$, since otherwise all
thresholds and all entropy gaps would vanish.
By Lemma~\ref{lem:threshold-normalization}, we may assume
$c_1=1$ without decreasing the final threshold.
Put $b=c_{r+1}$. It suffices to prove that $b\leq\beta_k$.

By Lemma~\ref{lem:finite-subflags} and the strict entropy condition, we have
$$
g=
\min_{\substack{
\cV'\in\mathscr F(\cV)\\
\cV'<\cV}}
\left(
\mathrm e(\cV',\mathbf c,\boldsymbol\mu)
-\mathrm e(\cV,\mathbf c,\boldsymbol\mu)
\right)>0.
$$
The same lemma therefore gives
\begin{equation}\label{eq:comparison-strict-gap}
\mathrm e(\cV',\mathbf c,\boldsymbol\mu)
\geq
\mathrm e(\cV,\mathbf c,\boldsymbol\mu)+g
\qquad(\cV'<\cV).
\end{equation}

We first show that the thresholds are strictly decreasing.
Suppose that $c_j=c_{j+1}$ for some $1\leq j\leq r$.
Replace $V_j$ by $V_{j-1}$ and leave all other spaces
unchanged, obtaining a proper subflag $\cV'<\cV$.
By \eqref{eq:e-value},
$$
\mathrm e(\cV',\mathbf c,\boldsymbol\mu)
-\mathrm e(\cV,\mathbf c,\boldsymbol\mu)
=
\begin{cases}
0,&j<r,\\[1mm]
-c_r\dim(V_r/V_{r-1}),&j=r.
\end{cases}
$$
Both cases contradict \eqref{eq:comparison-strict-gap}.
Hence
$$
1=c_1>c_2>\cdots>c_r>b\geq0.
$$

For each $1\leq j\leq r$, let $\sigma_j$ be the uniform
probability measure on $V_j\cap\{0,1\}^k$.
For $0\leq\zeta<\min\{1,c_r-b\}$, define
$$
\mu_j^{(\zeta)}=(1-\zeta)\mu_j+\zeta\sigma_j,
\qquad
\boldsymbol\mu^{(\zeta)}
=(\mu_1^{(\zeta)},\ldots,\mu_r^{(\zeta)}),
\qquad
\mathbf c^{(\zeta)}=(c_1,\ldots,c_r,b+\zeta).
$$
For each proper subflag $\cV'\in\mathscr F(\cV)$, the
difference
$
\mathrm e(\cV',\mathbf c^{(\zeta)},
\boldsymbol\mu^{(\zeta)})
-
\mathrm e(\cV,\mathbf c^{(\zeta)},
\boldsymbol\mu^{(\zeta)})
$
is continuous in $\zeta$ and is at least $g$ at $\zeta=0$
by \eqref{eq:comparison-strict-gap}.
Since $\mathscr F(\cV)$ is finite, we may fix
$0<\zeta<\min\{1,c_r-b\}$ sufficiently small that all
these differences are at least $g/2$.

For an arbitrary proper subflag $\cV'<\cV$,
Lemma~\ref{lem:finite-subflags} supplies a proper subflag
$\cV''\in\mathscr F(\cV)$ with the same $\mathrm e$-value. Hence
$$
\begin{aligned}
\mathrm e(\cV',\mathbf c^{(\zeta)},
\boldsymbol\mu^{(\zeta)})
=\mathrm e(\cV'',\mathbf c^{(\zeta)},
\boldsymbol\mu^{(\zeta)})
\geq
\mathrm e(\cV,\mathbf c^{(\zeta)},
\boldsymbol\mu^{(\zeta)})+\frac g2.
\end{aligned}
$$
Moreover, the choice of $\zeta$ gives
$$
1=c_1>\cdots>c_r>b+\zeta>0.
$$

Since $\sigma_j$ is uniform on $V_j\cap\{0,1\}^k$,
for every $\omega\in V_j\cap\{0,1\}^k$ we have
$$
\mu_j^{(\zeta)}(\omega)
=(1-\zeta)\mu_j(\omega)
+\frac{\zeta}{\#(V_j\cap\{0,1\}^k)}
\geq
\frac{\zeta}{\#(V_j\cap\{0,1\}^k)}>0,
$$
where we used $\mu_j(\omega)\geq0$ and $0<\zeta<1$.

Since $V_r$ is non-degenerate, the fixed system
$(\cV,\mathbf c^{(\zeta)},\boldsymbol\mu^{(\zeta)})$
satisfies conditions \textup{(i)}--\textup{(iii)} of
\cite[Proposition~5.5]{FGK}. That proposition therefore gives
$$
\Pp\left(
m\bigl(\mathcal A\cap[D^{b+\zeta},D]\bigr)\geq k
\right)=1-o(1).
$$

The sets $\mathcal A\cap[D^{b+\zeta},D]$ and
$\mathcal A\cap(D^{b+\zeta},D]$ can differ only at
$D^{b+\zeta}$. This point belongs to $\mathcal A$ with
probability at most $D^{-(b+\zeta)}=o(1)$.
Hence
$$
\begin{aligned}
\Pp\left(
m\bigl(\mathcal A\cap(D^{b+\zeta},D]\bigr)\geq k
\right)
&\geq
\Pp\left(
m\bigl(\mathcal A\cap[D^{b+\zeta},D]\bigr)\geq k
\right)
-D^{-(b+\zeta)}.
\end{aligned}
$$
The probability on the right tends to $1$ by the preceding
application of \cite[Proposition~5.5]{FGK}, while
$D^{-(b+\zeta)}\to0$ because $b+\zeta>0$. Therefore
$$
\lim_{D\to\infty}
\Pp\left(
m\bigl(\mathcal A\cap(D^{b+\zeta},D]\bigr)\geq k
\right)=1.
$$
Since $b+\zeta<1$, this exponent belongs to the set whose
supremum defines $\beta_k$. Thus
$$
\beta_k\geq b+\zeta\geq b.
$$
The original strict entropy system was arbitrary, and
normalization did not decrease its final threshold.
Therefore every final threshold in the definition of
$\widetilde\gamma_k$ is at most $\beta_k$.
Taking the supremum proves
$\widetilde\gamma_k\leq\beta_k$.

\end{proof}

Combining Claims~\ref{clm:comparison-upper}
and~\ref{clm:comparison-lower} proves the proposition.
\end{proof}

\section{Strict entropy inequalities and affine subset sums}\label{sec:strictification}

\subsection{Refinement to complete flags}

The upper-bound extraction of FGK \cite{FGK} produces
complete flags, whereas the optimization defining $\gamma_k$ is taken over
all entropy systems and therefore allows flags with arbitrary dimension
increments $\dim(V_j/V_{j-1})\geq 1.$
We remove this discrepancy by refining each flag into a complete flag
without changing the relevant $e$-values.

\begin{lemma}\label{lem:completion}
Let $(\cV,\mathbf c,\boldsymbol\mu)$ be an entropy system. There exists a
complete entropy system $(\cW,\widehat{\mathbf c},\widehat{\boldsymbol\mu})$
with the same terminal space $V_r$ and final threshold such that, for every
subflag $\cW'\leq\cW$, there exists a subflag $\cV'\leq\cV$ satisfying
\begin{equation*}
\mathrm e(\cW',\widehat{\mathbf c},\widehat{\boldsymbol\mu})
=
\mathrm e(\cV',\mathbf c,\boldsymbol\mu).
\end{equation*}
Moreover, $\mathrm e(\cW,\widehat{\mathbf c},\widehat{\boldsymbol\mu})
=\mathrm e(\cV,\mathbf c,\boldsymbol\mu).$
If $(\cV,\mathbf c,\boldsymbol\mu)$ satisfies the entropy condition,
then $(\cW,\widehat{\mathbf c},\widehat{\boldsymbol\mu})$ also satisfies it.
\end{lemma}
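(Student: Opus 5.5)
The plan is to refine each step $V_{j-1}<V_j$ of $\cV$ into a chain of one-dimensional steps, all carrying the same threshold $c_j$. Only the last space of each chain gets the measure $\mu_j$. Every threshold difference inside a chain is then zero, so the intermediate spaces contribute no entropy terms, and the dimension terms telescope. We then check that restricting a subflag of $\cW$ to the original levels gives a subflag of $\cV$ with exactly the same $\mathrm e$-value.

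\emph{Construction.} Fix $1\leq j\leq r$ and put $d_j=\dim(V_j/V_{j-1})\geq1$. Since $V_j$ is spanned by points of $\{0,1\}^k$ and $V_{j-1}\leq V_j$, we can choose cube points $\omega^{j,1},\ldots,\omega^{j,d_j}\in V_j\cap\{0,1\}^k$ whose images form a basis of $V_j/V_{j-1}$. To do this, repeatedly add a spanning cube point of $V_j$ not lying in the current space. We set
\[
W_{j,i}=V_{j-1}+\Span_{\Q}\{\omega^{j,1},\ldots,\omega^{j,i}\}\qquad(0\leq i\leq d_j),
\]
so that $W_{j,0}=V_{j-1}$ and $W_{j,d_j}=V_j$. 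Each $W_{j,i}$ is cube-spanned, because $V_{j-1}$ is cube-spanned by induction.

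Let $\cW$ be the concatenation of these chains, indexed in lexicographic order $(j,i)$ with $1\leq i\leq d_j$. This flag is complete, its spaces are pairwise distinct, and its terminal space is $V_r$, which is non-degenerate. Give level $(j,i)$ the threshold $\widehat c_{(j,i)}=c_j$, and keep the final threshold $c_{r+1}$. These thresholds are nonincreasing and lie in $[0,1]$. Put $\widehat\mu_{(j,d_j)}=\mu_j$. For $i<d_j$, let $\widehat\mu_{(j,i)}$ be the point mass at $\mathbf 0$, which is a probability measure on $W_{j,i}\cap\{0,1\}^k$. Hence $(\cW,\widehat{\mathbf c},\widehat{\boldsymbol\mu})$ is a complete entropy system.

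\emph{Comparison of $\mathrm e$-values.} Let $\cW'=(W'_{j,i})$ be any subflag of $\cW$, and define $V'_j=W'_{j,d_j}$ for $1\leq j\leq r$, with $V'_0=\langle\one\rangle$. Then $V'_j\leq W_{j,d_j}=V_j$, and the $V'_j$ are increasing because $\cW'$ is. Hence $\cV'\leq\cV$.

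In \eqref{eq:e-value} for $\cW'$, the coefficient of the entropy term at level $(j,i)$ with $i<d_j$ is $c_j-c_j=0$. At level $(j,d_j)$ the coefficient is $c_j-c_{j+1}$, multiplying $\Hh_{\mu_j}(V'_j)$; here, for $j<r$, the next threshold in $\cW$ is $\widehat c_{(j+1,1)}=c_{j+1}$. The dimension terms belonging to the $j$-th block all carry the factor $c_j$, and they telescope:
\[
\sum_{i=1}^{d_j}\dim(W'_{j,i}/W'_{j,i-1})=\dim V'_j-\dim V'_{j-1},
\]
since $W'_{j,0}$ is the last space of the previous block, namely $V'_{j-1}$ (or $\langle\one\rangle$ when $j=1$). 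Therefore
\[
\mathrm e(\cW',\widehat{\mathbf c},\widehat{\boldsymbol\mu})=\mathrm e(\cV',\mathbf c,\boldsymbol\mu).
\]
Applying the same computation to $\cW'=\cW$, which yields $\cV'=\cV$, gives the equality of the full $\mathrm e$-values.

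\emph{Entropy condition.} If $(\cV,\mathbf c,\boldsymbol\mu)$ satisfies the entropy condition, then for every $\cW'\leq\cW$ we get
\[
\mathrm e(\cW')=\mathrm e(\cV')\geq\mathrm e(\cV)=\mathrm e(\cW).
\]

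The only delicate points are bookkeeping. First, the indexing must correctly link the threshold $c_{j+1}$ across consecutive blocks. Second, the intermediate spaces must be cube-spanned and pairwise distinct, which holds by the choice of cube-point bases. I expect no genuine obstacle beyond stating this indexing cleanly. Strictness is not claimed here, since repeated thresholds make some proper subflags have gap $0$.
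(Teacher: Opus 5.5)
Your proposal is correct and follows the paper's proof essentially step for step: each block is refined using cube points that give a basis of $V_j/V_{j-1}$, every level in the block carries the threshold $c_j$, $\mu_j$ sits at the top level of the block, and a subflag $\cW'$ is restricted to $V'_j=W'_{m_j}$, so the in-block entropy coefficients vanish and the dimension terms telescope. The only cosmetic difference is that you fix point masses at $\mathbf 0$ on the inserted levels, whereas the paper allows arbitrary probability measures there.
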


\begin{proof}
Put
\[
d_j=\dim(V_j/V_{j-1}),
\qquad
m_j=d_1+\cdots+d_j,
\qquad
m_0=0 .
\]
Set $W_0=V_0=\langle\one\rangle$.
For each $j$, choose $\omega_{j,1},\ldots,\omega_{j,d_j}\in V_j\cap\{0,1\}^k$
whose images form a basis of $V_j/V_{j-1}$.
Define
\[
W_{m_{j-1}+\ell}=
V_{j-1}
+
\operatorname{Span}_{\mathbb Q}
\{\omega_{j,1},\ldots,\omega_{j,\ell}\},
\qquad
1\leq \ell\leq d_j,
\]
where
$$
\operatorname{Span}_{\mathbb Q}
\{\omega_{j,1},\ldots,\omega_{j,\ell}\}
=
\left\{
\sum_{t=1}^{\ell}a_t\omega_{j,t}:a_t\in\mathbb Q
\right\}.$$
Since the images of
$\omega_{j,1},\ldots,\omega_{j,d_j}$
form a basis of $V_j/V_{j-1}$, they span the quotient $V_j/V_{j-1}$. Hence
$$
V_j=V_{j-1}+\operatorname{Span}_{\mathbb Q}
\{\omega_{j,1},\ldots,\omega_{j,d_j}\}.$$
By the definition of $W_{m_{j-1}+\ell}$ and $m_j=m_{j-1}+d_j$, we obtain
\begin{equation}\label{eq:completion-top-space} W_{m_j}=V_j \qquad(0\leq j\leq r).
\end{equation}

Moreover, for every $1\leq \ell\leq d_j$,
$\dim W_{m_{j-1}+\ell}-\dim W_{m_{j-1}+\ell-1}
=1.$
This follows because the images of
$\omega_{j,1},\ldots,\omega_{j,d_j}$
form a basis of $V_j/V_{j-1}$, so that each
$\omega_{j,\ell}$ adds exactly one new dimension modulo
$W_{m_{j-1}+\ell-1}$.

Hence, every step in the refined sequence increases the dimension by
exactly one, and therefore
\[
\cW:
\quad
W_0<W_1<\cdots<W_{m_r}
\]
is a complete flag. Since each inserted vector
$\omega_{j,\ell}$ belongs to $\{0,1\}^k$, every $W_i$ is spanned by cube
points. Thus $\cW$ is cube-spanned.

Define the new threshold sequence by
$\widehat c_\ell=c_j$ for
$1\leq j\leq r$ and $m_{j-1}<\ell\leq m_j$, and set
$\widehat c_{m_r+1}=c_{r+1}$.
For the measures, define
$\widehat\mu_{m_j}=\mu_j$ for $1\leq j\leq r$,
and choose arbitrary probability measures $\widehat\mu_\ell$ supported on
$W_\ell\cap\{0,1\}^k$ at the inserted levels
$m_{j-1}<\ell<m_j$. By construction,
\begin{equation}\label{eq:completion-threshold-difference}
\widehat c_\ell-\widehat c_{\ell+1}
=
\begin{cases}
0,
& m_{j-1}<\ell<m_j,\\[2mm]
c_j-c_{j+1},
& \ell=m_j,
\end{cases}
\qquad (1\leq j\leq r).
\end{equation}

Now let $\cW'\leq\cW$ be a subflag of $\cW$, and define
$V'_j=W'_{m_j}$ for $0\leq j\leq r$.
By \eqref{eq:completion-top-space}, we have
$W_{m_j}=V_j$. Since $\cW'\leq\cW$, we have
$W'_{m_j}\leq W_{m_j}$, and therefore
$V'_j=W'_{m_j}\leq W_{m_j}=V_j$.
Hence $\cV'\leq\cV$.

By \eqref{eq:completion-threshold-difference}, the entropy contribution
from the $j$-th block is
\begin{equation}\label{eq:completion-entropy-term}
\begin{aligned}
&\sum_{\ell=m_{j-1}+1}^{m_j}
(\widehat c_\ell-\widehat c_{\ell+1})
\Hh_{\widehat\mu_\ell}(W'_\ell)=
(\widehat c_{m_j}-\widehat c_{m_j+1})
\Hh_{\widehat\mu_{m_j}}(W'_{m_j})
=
(c_j-c_{j+1})
\Hh_{\mu_j}(V'_j).
\end{aligned}
\end{equation}

For the dimension contribution, $\widehat c_\ell=c_j$ gives
\begin{align}
\sum_{\ell=m_{j-1}+1}^{m_j}
\widehat c_\ell
\dim(W'_\ell/W'_{\ell-1})
&=
c_j
\sum_{\ell=m_{j-1}+1}^{m_j}
\dim(W'_\ell/W'_{\ell-1}) \notag\\
&=
c_j\dim(W'_{m_j}/W'_{m_{j-1}}) \notag\\
&=
c_j\dim(V'_j/V'_{j-1}).
\label{eq:completion-dimension-term}
\end{align}
Here the second equality follows from
$$
\dim(W'_{m_j})-\dim(W'_{m_{j-1}})
=\sum_{\ell=m_{j-1}+1}^{m_j}
\bigl(\dim W'_\ell-\dim W'_{\ell-1}\bigr).$$
Combining the entropy and dimension contributions in
\eqref{eq:completion-entropy-term} and
\eqref{eq:completion-dimension-term}, and using the definition
\eqref{eq:e-value} of the $e$-value, we obtain
\begin{equation}\label{eq:completion-equality-proof}
\mathrm e(\cW',\widehat{\mathbf c},\widehat{\boldsymbol\mu})
=
\mathrm e(\cV',\mathbf c,\boldsymbol\mu).
\end{equation}

Taking $\cW'=\cW$ in \eqref{eq:completion-equality-proof} gives
$\mathrm e(\cW,\widehat{\mathbf c},\widehat{\boldsymbol\mu})
=\mathrm e(\cV,\mathbf c,\boldsymbol\mu).$
Assume now that $(\cV,\mathbf c,\boldsymbol\mu)$ satisfies the
entropy condition. For $\cW'\leq\cW$ and the corresponding
subflag $\cV'\leq\cV$, \eqref{e1} and
\eqref{eq:completion-equality-proof} give
\[
\mathrm e(\cW',\widehat{\mathbf c},\widehat{\boldsymbol\mu})
\geq
\mathrm e(\cW,\widehat{\mathbf c},\widehat{\boldsymbol\mu}).
\]
Hence $(\cW,\widehat{\mathbf c},\widehat{\boldsymbol\mu})$ satisfies the
entropy condition.
\end{proof}

\subsection{Strict systems with zero final threshold}

Lemma~\ref{lem:completion} allows us to work with complete flags
while preserving the entropy condition. The following lemma
constructs a strict entropy system with final threshold zero
on such a flag.

\begin{lemma}\label{lem:strict-seed}
Let
$\mathscr V\colon
\langle\mathbf 1\rangle=V_0<V_1<\cdots<V_h$
be a complete flag in $\mathbb Q^k$.
Assume that each $V_j$ is spanned by points of $\{0,1\}^k$
and that the terminal space $V_h$ is non-degenerate.
Then there exist a threshold sequence
$\mathbf c^0=(c_1^0,\ldots,c_{h+1}^0)$
and probability measures
$\boldsymbol\mu^{(0)}=(\mu^{(0)}_1,\ldots,\mu^{(0)}_h)$,
with each $\mu^{(0)}_j$ supported on $V_j\cap\{0,1\}^k$,
such that
\[
1=c_1^0>c_2^0>\cdots>c_{h+1}^0=0
\]
and $(\mathscr V,\mathbf c^0,\boldsymbol\mu^{(0)})$ satisfies
the strict entropy condition.
The construction depends only on $\mathscr V$ and $k$.
\end{lemma}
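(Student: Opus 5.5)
The plan is to take each $\mu^{(0)}_j$ uniform on three well-chosen cube points and the thresholds geometric, $c^0_j=\lambda^{j-1}$ for $1\le j\le h$ and $c^0_{h+1}=0$, with $\lambda>0$ small depending only on $k$. Strictness then comes from an Abel summation in which the first level where a subflag is deficient dominates all later levels.

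\emph{Reformulating the gap.} Since $\cV$ is complete, $\dim V_j=j+1$. For a subflag $\cV'\leq\cV$ put $\delta_j=\dim V_j-\dim V'_j\ge 0$, so $\delta_0=0$ and $\dim(V'_j/V'_{j-1})=1+\delta_{j-1}-\delta_j$. Substituting into \eqref{eq:e-value} and \eqref{eq:full-e} and summing by parts gives
\[
\mathrm e(\cV',\mathbf c,\boldsymbol\mu)-\mathrm e(\cV,\mathbf c,\boldsymbol\mu)
=\sum_{j=1}^{h}(c_j-c_{j+1})\bigl(\Hh_{\mu_j}(V'_j)-\delta_j\bigr)-c_{h+1}\delta_h .
\]
With $c_{h+1}=0$ the last term disappears. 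Every term with $\delta_j=0$ also vanishes, because then $V'_j=V_j$ and $\Hh_{\mu_j}(V_j)=0$. If $\cV'<\cV$, let $j_0$ be the least index with $\delta_{j_0}>0$. Then $V'_{j_0-1}=V_{j_0-1}\le V'_{j_0}$, and comparing dimensions gives $\delta_{j_0}=1$. So $V'_{j_0}$ is a hyperplane of $V_{j_0}$ containing $V_{j_0-1}$, and since $\dim V_{j_0}/V_{j_0-1}=1$, we get $V'_{j_0}=V_{j_0-1}$.

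\emph{Choice of measures.} Completeness and cube-spannedness give a cube point $\omega^j\in(V_j\setminus V_{j-1})\cap\{0,1\}^k$. Then $\one-\omega^j\in V_j\cap\{0,1\}^k$ as well. Let $\mu^{(0)}_j$ be uniform on $\{\mathbf 0,\omega^j,\one-\omega^j\}$. Modulo $V_{j-1}$ these three points are $0$, $\omega^j$ and $-\omega^j$, which lie in three distinct cosets. Hence $\Hh_{\mu^{(0)}_{j_0}}(V'_{j_0})=\Hh_{\mu^{(0)}_{j_0}}(V_{j_0-1})=\log 3>1$. The key point is that the complement map $\omega\mapsto\one-\omega$ supplies the third coset. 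Without it, only two cosets would be available, giving entropy at most $\log 2<1$.

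\emph{Dominance of the first deficient level.} The $j_0$ term equals $(c_{j_0}-c_{j_0+1})(\log 3-1)$. For $j>j_0$ each term is at least $-(c_j-c_{j+1})\delta_j\ge-(k-1)(c_j-c_{j+1})$, and these telescope to a total of at least $-(k-1)c_{j_0+1}$. The gap is therefore at least
\[
(\log3-1)(c_{j_0}-c_{j_0+1})-(k-1)c_{j_0+1}.
\]
If $j_0=h$, this is $(\log3-1)c_h>0$. Otherwise, with $c_j=\lambda^{j-1}$, it equals $\lambda^{j_0-1}\bigl[(\log3-1)(1-\lambda)-(k-1)\lambda\bigr]$, which is positive once $\lambda<(\log3-1)/(\log3-1+k-1)$.

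This yields $1=c^0_1>\cdots>c^0_h>c^0_{h+1}=0$ and the strict entropy condition, and the construction depends only on $\cV$ and $k$. The main obstacle is the first-level entropy bound $\Hh>\delta_{j_0}=1$. It is the only place where a specific geometric input, the complement symmetry of the cube, is needed. Everything else is bookkeeping: the Abel summation and the rapid decay of the thresholds. Non-degeneracy of $V_h$ is not used in this argument.
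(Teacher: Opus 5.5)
Your proof is correct and is essentially the paper's argument. It uses the same measures, uniform on $\{\mathbf 0,\omega^j,\one-\omega^j\}$, which give coset entropy $\log 3$ at the first deficient level, the same summation by parts, and the same domination of later levels by geometrically decaying thresholds; the only difference is immaterial (the paper makes the gaps $c^0_j-c^0_{j+1}$ geometric with ratio $(\log3-1)/(4k)$, while you make the thresholds themselves geometric). One small correction to your closing remark: non-degeneracy of $V_h$ is used, because for $k\geq2$ it forces $h\geq1$, and when $h=0$ the requirements $c^0_1=1$ and $c^0_{h+1}=0$ cannot both hold.
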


\begin{proof}
Since $k\geq2$ and $V_h$ is non-degenerate, $h\geq1$.
For every $1\leq j\leq h$, choose a point
$\omega_j\in V_j\cap\{0,1\}^k$
such that $\omega_j\notin V_{j-1}.$
Since $\one\in V_0\subset V_j$ and
$\omega_j\in V_j\cap\{0,1\}^k$, we have
$\mathbf0_k,\omega_j,\one-\omega_j\in V_j\cap\{0,1\}^k$.
These three points are distinct. Otherwise, one of the relations
$\omega_j=\mathbf0_k$, $\one-\omega_j=\mathbf0_k$, or
$\omega_j=\one-\omega_j$ would imply
$\omega_j\in V_{j-1}$, contradicting the choice of $\omega_j$.

Define $\mu^{(0)}_j$ to be the probability measure on $\mathbb Q^k$ given by
\begin{equation}\label{lemma3-1}
\mu^{(0)}_j(x)=
\begin{cases}
1/3,
&x\in\{\mathbf0_k,\omega_j,\one-\omega_j\},\\
0,
&\text{otherwise}.
\end{cases}
\end{equation}
The three points belong to distinct cosets of $V_{j-1}$.
Indeed, $\omega_j\notin V_{j-1}$, while
$\one-\omega_j\notin V_{j-1}$ because $\one\in V_{j-1}$.
If $2\omega_j-\one\in V_{j-1}$, then $\omega_j\in V_{j-1}$,
a contradiction. Thus no difference between two of the three
points belongs to $V_{j-1}$.
Since the three points $\mathbf0_k,\omega_j,\one-\omega_j$
belong to distinct cosets of $V_{j-1}$, the measure $\mu^{(0)}_j$ assigns probability
$1/3$ to each of these cosets. Hence,
$\mu^{(0)}_j(V_{j-1}+x)=\frac13$
for every $x\in\{\mathbf0_k,\omega_j,\one-\omega_j\}.$
Therefore, by \eqref{eq:coset-entropy}, we have
\begin{equation}\label{eq:log3-entropy}
\Hh_{\mu^{(0)}_j}(V_{j-1})
=
-3\cdot\frac13\log\frac13
=
\log3 .
\end{equation}

Set $\vartheta=\frac{\log3-1}{4k}$ and define
$\lambda_j=
\frac{1-\vartheta}{1-\vartheta^h}\vartheta^{j-1}$ for $1\leq j\leq h.$ Then
\[
\begin{aligned}
\sum_{j=1}^{h}\lambda_j
=\frac{1-\vartheta}{1-\vartheta^h}
\sum_{j=1}^{h}\vartheta^{j-1}
=\frac{1-\vartheta}{1-\vartheta^h}
\cdot
\frac{1-\vartheta^h}{1-\vartheta}
=1 .
\end{aligned}
\]
Define the threshold sequence by
$c_j^0=\sum_{t=j}^{h}\lambda_t$ for $1\leq j\leq h$ and $c_{h+1}^0=0.$ It follows that
$c_1^0=1$ and $c_j^0-c_{j+1}^0=\lambda_j.$
We will show that $(\cV,\mathbf c^0,\boldsymbol\mu^{(0)})$
satisfies the strict entropy condition.

Let $\cV'\leq\cV$ be a subflag. Define $q_j=\dim(V_j/V'_j)$ for $1\leq j\leq h$, and set $q_0=0$.
Since $\cV$ is complete, we have
$\dim(V_j/V_{j-1})=1$. Moreover,
\[
q_j=\dim(V_j/V'_j)=\dim V_j-\dim V'_j,
\]
and hence $\dim V'_j=\dim V_j-q_j.$
Therefore,
\[
\begin{aligned}
\dim(V'_j/V'_{j-1})
&=
(\dim V_j-q_j)-(\dim V_{j-1}-q_{j-1})\\
&=
1-(q_j-q_{j-1}).
\end{aligned}
\]

Using the definitions \eqref{eq:e-value} and \eqref{eq:full-e} of the
$e$-value, together with
$c_j^0-c_{j+1}^0=\lambda_j$, we obtain
\begin{equation}\label{eq:seed-gap}
\begin{aligned}[c]
&\mathrm{e}(\mathscr{V}', \mathbf{c}^0, \boldsymbol{\mu}^{(0)})
- \mathrm{e}(\mathscr{V}, \mathbf{c}^0, \boldsymbol{\mu}^{(0)})\\
&=
\sum_{j=1}^{h} (c_j^0 - c_{j+1}^0) \mathbb{H}_{\mu^{(0)}_j}(V'_j)
+
\sum_{j=1}^{h}
c_j^0
\left(
\dim(V'_j / V'_{j-1})
- \dim(V_j / V_{j-1})
\right)\\
&=
\sum_{j=1}^{h} \lambda_j \mathbb{H}_{\mu^{(0)}_j}(V'_j)
-
\sum_{j=1}^{h} c_j^0 (q_j - q_{j-1})\\
&=
\sum_{j=1}^{h} \lambda_j \mathbb{H}_{\mu^{(0)}_j}(V'_j)
-
\sum_{j=1}^{h} \lambda_j q_j.
\end{aligned}
\end{equation}

Here, the last equality follows from the identity
\[
\begin{aligned}
\sum_{j=1}^{h}c_j^0(q_j-q_{j-1})
&=c_1^0q_1+c_2^0q_2-c_2^0q_1+\cdots
+c_h^0q_h-c_h^0q_{h-1}\\
&=(c_1^0-c_2^0)q_1
+(c_2^0-c_3^0)q_2
+\cdots
+(c_{h-1}^0-c_h^0)q_{h-1}
+c_h^0q_h\\
&=
\sum_{j=1}^{h}(c_j^0-c_{j+1}^0)q_j
=
\sum_{j=1}^{h}\lambda_jq_j,
\end{aligned}
\]
where we used $c_{h+1}^0=0$.

It remains to prove that the right-hand side of
\eqref{eq:seed-gap} is positive whenever $\cV'<\cV.$
Let $j_0$ be the smallest index such that $V'_{j_0}\neq V_{j_0}.$ By the choice of $j_0$,
$V'_{j_0-1}=V_{j_0-1}.$
Since $\cV$ is complete, we have
$\dim(V_{j_0}/V_{j_0-1})=1.$
Together with $V_{j_0-1}\leq V'_{j_0}\leq V_{j_0},$
this implies that $V'_{j_0}=V_{j_0-1}$ or $
V'_{j_0}=V_{j_0}.$
The latter is impossible by the definition of $j_0$, and hence $V'_{j_0}=V_{j_0-1}.$
Consequently,
\[
q_{j_0}
=
\dim(V_{j_0}/V'_{j_0})
=
\dim(V_{j_0}/V_{j_0-1})
=
1.
\]

Hence, by \eqref{eq:log3-entropy},
\[
\Hh_{\mu^{(0)}_{j_0}}(V'_{j_0})
=
\Hh_{\mu^{(0)}_{j_0}}(V_{j_0-1})
=
\log3 .
\]
Therefore, the $j_0$-th term in \eqref{eq:seed-gap} is
\[
\lambda_{j_0}
\bigl(
\Hh_{\mu^{(0)}_{j_0}}(V'_{j_0})-q_{j_0}
\bigr)
=
\lambda_{j_0}(\log3-1).
\]

For $t>j_0$, since $0\leq \mu^{(0)}_t(V'_t+x)\leq1,$
the definition \eqref{eq:coset-entropy} of the Shannon entropy gives $\Hh_{\mu^{(0)}_t}(V'_t)\geq0.$
Moreover,
\[
q_t=\dim(V_t/V'_t)\leq\dim(V_t/V_0)=t\leq h.
\]
Hence, $\Hh_{\mu^{(0)}_t}(V'_t)-q_t\geq-h.$

Since $j_0$ is the smallest index such that
$V'_{j_0}\neq V_{j_0}$, we have
$V'_j=V_j$ for $0\leq j<j_0$.
Thus
$q_j=\dim(V_j/V'_j)=0$ for $0\leq j<j_0$,
and $\Hh_{\mu^{(0)}_j}(V'_j)=\Hh_{\mu^{(0)}_j}(V_j)=0$ for $1\leq j<j_0$, by \eqref{lemma3-1}.
Therefore, all terms with $j<j_0$ in \eqref{eq:seed-gap} vanish. Combining these estimates with \eqref{eq:seed-gap}, we obtain
\[
\begin{aligned}
\mathrm e(\cV',\mathbf c^0,\boldsymbol\mu^{(0)})
-\mathrm e(\cV,\mathbf c^0,\boldsymbol\mu^{(0)})
&=\sum_{j=1}^{h}
\lambda_j\bigl(\Hh_{\mu^{(0)}_j}(V'_j)-q_j\bigr)\\
&=\lambda_{j_0}
\bigl(
\Hh_{\mu^{(0)}_{j_0}}(V'_{j_0})-q_{j_0}
\bigr)
+\sum_{t=j_0+1}^{h}
\lambda_t
\bigl(
\Hh_{\mu^{(0)}_t}(V'_t)-q_t
\bigr)
\\
&\geq
\lambda_{j_0}(\log3-1)
-
h\sum_{t=j_0+1}^{h}\lambda_t .
\end{aligned}
\]

By the definition of $\lambda_j$, we have $\lambda_{j_0+m}=\lambda_{j_0}\vartheta^m $ for $1\leq m\leq h-j_0.$
Hence,
\[
\sum_{t=j_0+1}^{h}\lambda_t
=
\lambda_{j_0}\sum_{m=1}^{h-j_0}\vartheta^m
\leq
\lambda_{j_0}\frac{\vartheta}{1-\vartheta}.
\]
Therefore,
\[
\mathrm e(\cV',\mathbf c^0,\boldsymbol\mu^{(0)})
-
\mathrm e(\cV,\mathbf c^0,\boldsymbol\mu^{(0)})
\geq
\lambda_{j_0}
\left(
\log3-1-\frac{h\vartheta}{1-\vartheta}
\right).
\]

Finally, since $\cV$ is complete and
$\dim V_0=1$, we have $\dim V_h=1+h.$
As $V_h\leq\mathbb Q^k$, it follows that $h\leq k-1<k.$ Since $\vartheta=\frac{\log3-1}{4k}<\frac14,$
we obtain
\[
\frac{h\vartheta}{1-\vartheta}
<\frac{k \vartheta}{1-\vartheta}
=\frac{\log3-1}{4(1-\vartheta)}
<
\log3-1 .
\]

Consequently, $\mathrm e(\cV',\mathbf c^0,\boldsymbol\mu^{(0)})
>\mathrm e(\cV,\mathbf c^0,\boldsymbol\mu^{(0)})$
for every proper subflag $\cV'<\cV$.
Therefore, $(\cV,\mathbf c^0,\boldsymbol\mu^{(0)})$
satisfies the strict entropy condition.
\end{proof}

\subsection{Concavity and strict entropy inequalities}
To mix systems, we use the finite measures obtained by multiplying
each probability measure by the width of its layer.

Let $S\subseteq V$ be a finite subset of a vector space $V$, and let
$U\le V$ be a subspace. Let $\mathbf 0=(0)_{\omega\in S}$ denote the
zero measure on $S$. For a finite nonnegative measure
$\mu=(\mu(\omega))_{\omega\in S}$ on $S$, put
$\lambda=\sum_{\omega\in S}\mu(\omega)$. If $\lambda>0$, define
\begin{equation}\label{defi-U}
\mathfrak H_U(\mu)=\lambda\Hh_{\mu/\lambda}(U),
\end{equation}
where $\mu/\lambda$ is
the probability measure on $S$ given by
$\left(\frac{\mu}{\lambda}\right)(A)
=\frac{1}{\lambda}\sum_{\omega\in A}\mu(\omega)$ for $A\subseteq S.$
Define $\mathfrak H_U(\mathbf 0)=0$.

The following lemma establishes
the concavity and positive homogeneity of this extension to
finite nonnegative measures.

\begin{lemma}\label{lem:perspective}
For any finite nonnegative measures $\mu$ and $\mu'$ on $S$ and
every $t\in[0,1]$, we have
\begin{equation*}
\mathfrak H_U\bigl(t\mu+(1-t)\mu'\bigr)
\ge
t\mathfrak H_U(\mu)+(1-t)\mathfrak H_U(\mu').
\end{equation*}
Moreover, for every finite nonnegative measure $\mu$ on $S$ and every
real number $a\ge0$,
$\mathfrak H_U(a\mu)=a\mathfrak H_U(\mu).$
\end{lemma}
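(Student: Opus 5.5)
The plan is to rewrite $\mathfrak H_U$ as a single explicit sum that is visibly homogeneous and concave. For a finite nonnegative measure $\mu$ on $S$, write $\mu(U+x)=\sum_{y\in S\cap(U+x)}\mu(y)$ and let $\lambda=\mu(S)$. If $\lambda>0$, then $(\mu/\lambda)(U+x)=\mu(U+x)/\lambda$. Substituting into \eqref{eq:coset-entropy} and \eqref{defi-U} gives
\[
\mathfrak H_U(\mu)
=-\sum_{\omega\in S}\mu(\omega)\log\frac{\mu(U+\omega)}{\lambda}
=\sum_{\omega\in S}\mu(\omega)\log\frac{\mu(S)}{\mu(U+\omega)},
\]
where terms with $\mu(\omega)=0$ are read as zero. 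For $\mu=\mathbf 0$ every term vanishes, so this formula also matches the convention $\mathfrak H_U(\mathbf 0)=0$. Positive homogeneity follows at once. Multiplying $\mu$ by $a>0$ leaves every ratio $\mu(S)/\mu(U+\omega)$ unchanged and multiplies every weight by $a$. The case $a=0$ reduces to $\mathfrak H_U(\mathbf 0)=0$.

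For concavity, I would group the terms by cosets. Let $C_1,\dots,C_m$ be the distinct classes $S\cap(U+\omega)$ and put $p_i=\mu(C_i)$, so that $\mathfrak H_U(\mu)=\sum_i p_i\log\bigl(\sum_\ell p_\ell/p_i\bigr)$. This is $F(p)=\Phi(\sum_i p_i)-\sum_i\Phi(p_i)$ with $\Phi(x)=x\log x$, which is exactly the perspective of the Shannon entropy. The map $\mu\mapsto p$ is linear, so it suffices to show that $F$ is concave on $[0,\infty)^m$.

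I would prove this concavity via the log-sum inequality. For fixed $i$, the function $(p_i,s)\mapsto p_i\log(s/p_i)$, with $s=\sum_\ell p_\ell$, is the negative of the relative-entropy perspective $p\log(p/s)$. That function is jointly convex by the log-sum inequality \cite[Chapter~2]{CoverThomas2006}, so each summand is jointly concave in $(p_i,s)$. Composing with the linear map $p\mapsto(p_i,\sum_\ell p_\ell)$ keeps it concave, and summing over $i$ gives concavity of $F$. As a self-contained alternative, one could check directly that
\[
t\,p_i\log\frac{s}{p_i}+(1-t)\,p'_i\log\frac{s'}{p'_i}\leq\bigl(tp_i+(1-t)p'_i\bigr)\log\frac{ts+(1-t)s'}{tp_i+(1-t)p'_i},
\]
which is the two-term log-sum inequality.

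The main obstacle is boundary bookkeeping rather than the analysis. I need to make sure the $0\log0=0$ convention is consistent when some $p_i$ or $\lambda$ vanishes, including the case where $\mu$ or $\mu'$ is the zero measure. It also has to hold when a coset carries mass under $t\mu+(1-t)\mu'$ but not under one of the two measures separately. The log-sum inequality with these conventions covers all such cases, and the cases $t\in\{0,1\}$ are trivial.
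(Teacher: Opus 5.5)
Your proposal is correct. It shares with the paper the reduction to coset masses, the explicit formula $\mathfrak H_U(\mu)=\lambda\log\lambda-\sum_i x_i\log x_i$ (your $F(p)=\Phi(\sum_i p_i)-\sum_i\Phi(p_i)$), and the homogeneity argument. The difference is how concavity of $F$ is proved.

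The paper restricts to a segment and first assumes every coset mass of both measures is positive. It computes $g''(s)=(\sum_i d_i)^2/R(s)-\sum_i d_i^2/r_i(s)$ and shows $g''\le 0$ by Cauchy--Schwarz. It then handles vanishing coset masses, including zero measures, by perturbing to $\mu+\xi\mu_*$ and $\mu'+\xi\mu_*$ and letting $\xi\to0$, using continuity of the explicit formula.

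You instead split $F$ into the terms $p_i\log(s/p_i)$. Each is the negative of the perspective $p\log(p/s)$ composed with the linear map $p\mapsto(p_i,\sum_\ell p_\ell)$, so the two-term log-sum inequality gives concavity. This route is shorter and treats the boundary directly. Since $p_i\le s$ always, no term is ever infinite. With the convention that a term with $p_i=0$ vanishes (including when $s=0$), the log-sum inequality holds as stated, so you need no perturbation or continuity step.

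The paper's route is self-contained calculus, with the Cauchy--Schwarz step acting as an infinitesimal log-sum inequality. Its cost is the separate limiting argument to reach the boundary.
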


\begin{proof}
If $S=\varnothing$, every measure on $S$ is zero and both assertions
follow from the definition. Assume $S\neq\varnothing$.
Let $C_1,\ldots,C_m$ be the distinct cosets of $U$ that intersect
$S$. For each $1\le i\le m$, put $x_i=\sum_{\omega\in C_i\cap S}\mu(\omega) .$
Then $\lambda=\sum_{i=1}^{m}x_i.$
Suppose first that $\lambda>0$. If $\omega\in C_i\cap S$, then
$U+\omega=C_i$, and hence
$$
\left(\frac{\mu}{\lambda}\right)(U+\omega)
=\sum_{\eta\in C_i\cap S}\frac{\mu(\eta)}{\lambda}
=\frac{x_i}{\lambda}.
$$
Applying \eqref{eq:coset-entropy} to $\mu/\lambda$ and grouping
the terms according to the cosets $C_1,\ldots,C_m$, we obtain
\begin{align*}
\Hh_{\mu/\lambda}(U)
&=
-\sum_{i=1}^{m}
\sum_{\omega\in C_i\cap S}
\frac{\mu(\omega)}{\lambda}
\log
\left(
\sum_{\eta\in C_i\cap S}
\frac{\mu(\eta)}{\lambda}
\right)\\
&=
-\sum_{i=1}^{m}
\sum_{\omega\in C_i\cap S}
\frac{\mu(\omega)}{\lambda}
\log\frac{x_i}{\lambda}\\
&=
-\sum_{i=1}^{m}
\left(
\sum_{\omega\in C_i\cap S}
\frac{\mu(\omega)}{\lambda}
\right)
\log\frac{x_i}{\lambda}\\
&=
-\sum_{i=1}^{m}
\frac{x_i}{\lambda}
\log\frac{x_i}{\lambda}.
\end{align*}

By the definition of $\mathfrak H_U$,
\begin{align}
\mathfrak H_U(\mu)
=\lambda\Hh_{\mu/\lambda}(U)
=-\sum_{i=1}^{m}x_i\bigl(\log x_i-\log\lambda\bigr)
=\lambda\log\lambda-\sum_{i=1}^{m}x_i\log x_i.
\label{eq:perspective-explicit}
\end{align}

The last expression in \eqref{eq:perspective-explicit} is also zero
when $\mu=\mathbf0$, by the convention $0\log0=0$. It therefore represents
$\mathfrak H_U$ on the whole nonnegative orthant and is continuous
there, since $u\log u$ tends to zero as $u$ tends to zero from above.

We next prove the concavity inequality. Let
$\mu'=(\mu'(\omega))_{\omega\in S}$ be another finite nonnegative measure on
$S$, and define $y_i=\sum_{\omega\in C_i\cap S}\mu'(\omega)$ for
$1\leq i\leq m$. First assume that $x_i>0$ and $y_i>0$ for every $i$.

For $s\in[0,1]$, set $d_i=x_i-y_i$ and
$$
r_i(s)=sx_i+(1-s)y_i=y_i+sd_i
$$
for $1\le i\le m$, and define
$R(s)=\sum_{i=1}^{m}r_i(s)$. Then $r_i(s)>0$,
$r_i'(s)=d_i$, and $R'(s)=\sum_{i=1}^{m}d_i$.

By the definitions of $x_i$, $y_i$, and $r_i(s)$, we have, for
$1\le i\le m$,
\begin{align}
\bigl(s\mu+(1-s)\mu'\bigr)(C_i\cap S)
&=
\sum_{\omega\in C_i\cap S}
\bigl(s\mu(\omega)+(1-s)\mu'(\omega)\bigr)
=sx_i+(1-s)y_i
=r_i(s).
\label{eq:mixed-coset-sum}
\end{align}
Since $C_1\cap S,\ldots,C_m\cap S$ form a partition of $S$,
\begin{align}
\sum_{\omega\in S}
\bigl(s\mu(\omega)+(1-s)\mu'(\omega)\bigr)
=\sum_{i=1}^{m}
\sum_{\omega\in C_i\cap S}
\bigl(s\mu(\omega)+(1-s)\mu'(\omega)\bigr)
=\sum_{i=1}^{m}r_i(s)
=R(s).
\label{eq:mixed-total-sum}
\end{align}

Define
$g(s)=\mathfrak H_U\bigl(s\mu+(1-s)\mu'\bigr)$ for $s\in[0,1]$.
Applying \eqref{eq:perspective-explicit} to the measure
$s\mu+(1-s)\mu'$, and using \eqref{eq:mixed-coset-sum} and
\eqref{eq:mixed-total-sum}, we obtain
\begin{align*}
g(s)
&=
\mathfrak H_U\bigl(s\mu+(1-s)\mu'\bigr) \\
&=
\left(
\sum_{\omega\in S}
\bigl(s\mu(\omega)+(1-s)\mu'(\omega)\bigr)
\right)
\log
\left(
\sum_{\omega\in S}
\bigl(s\mu(\omega)+(1-s)\mu'(\omega)\bigr)
\right)
\\
&\quad-
\sum_{i=1}^{m}
\bigl(s\mu+(1-s)\mu'\bigr)(C_i\cap S)
\log
\bigl(s\mu+(1-s)\mu'\bigr)(C_i\cap S)
\\
&=R(s)\log R(s)-\sum_{i=1}^{m}r_i(s)\log r_i(s).
\end{align*}
Differentiating with respect to $s$ gives
\begin{align*}
g'(s)
&=R'(s)\bigl(\log R(s)+1\bigr)
-\sum_{i=1}^{m}
r_i'(s)\bigl(\log r_i(s)+1\bigr)\\
&=\left(\sum_{i=1}^{m}d_i\right)
\bigl(\log R(s)+1\bigr)
-\sum_{i=1}^{m}
d_i\bigl(\log r_i(s)+1\bigr)\\
&=\left(\sum_{i=1}^{m}d_i\right)\log R(s)
-\sum_{i=1}^{m}d_i\log r_i(s).
\end{align*}
Differentiating once more, we obtain
\begin{align}
g''(s)=\left(\sum_{i=1}^{m}d_i\right)
\frac{R'(s)}{R(s)}-\sum_{i=1}^{m}
d_i\frac{r_i'(s)}{r_i(s)}
=\frac{\left(\sum_{i=1}^{m}d_i\right)^2}{R(s)}
-\sum_{i=1}^{m}\frac{d_i^2}{r_i(s)}.
\label{eq:line-second-derivative}
\end{align}

Since $r_i(s)>0$ for $1\le i\le m$, the Cauchy--Schwarz inequality
gives
\begin{align}
\left(\sum_{i=1}^{m}d_i\right)^2
&=
\left(
\sum_{i=1}^{m}
\sqrt{r_i(s)}
\frac{d_i}{\sqrt{r_i(s)}}
\right)^2 \notag\\
&\le
\left(\sum_{i=1}^{m}r_i(s)\right)
\left(\sum_{i=1}^{m}\frac{d_i^2}{r_i(s)}\right)=R(s)\sum_{i=1}^{m}\frac{d_i^2}{r_i(s)}.
\label{eq:line-cauchy}
\end{align}
Since $R(s)>0$, it follows from \eqref{eq:line-cauchy} that 
$$
\frac{\left(\sum_{i=1}^{m}d_i\right)^2}{R(s)}
\le
\sum_{i=1}^{m}\frac{d_i^2}{r_i(s)}.
$$
Combining this inequality with \eqref{eq:line-second-derivative}, we
obtain $g''(s)\le0$ for $0\le s\le1.$
Therefore, $g$ is concave on $[0,1]$, and hence
$$
g(t)\ge tg(1)+(1-t)g(0)
$$
for $0\le t\le1.$
Moreover,
$g(t)=\mathfrak H_U\bigl(t\mu+(1-t)\mu'\bigr)$,
$g(1)=\mathfrak H_U(\mu)$, and $g(0)=\mathfrak H_U(\mu')$. Therefore,
\[
\mathfrak H_U\bigl(t\mu+(1-t)\mu'\bigr)\ge
t\mathfrak H_U(\mu)+(1-t)\mathfrak H_U(\mu')
\]
whenever $x_i>0$ and
$y_i>0$ for every $1\le i\le m$.

For arbitrary nonnegative measures $\mu,\mu'$, choose the measure $\mu_*$ on $S$
with $\mu_*(\omega)=1$ for every $\omega\in S$. For $\xi>0$, put
$\mu^{(\xi)}=\mu+\xi\mu_*$ and $(\mu')^{(\xi)}=\mu'+\xi\mu_*$.
All their coset totals are positive, so the preceding argument gives
\[
\mathfrak H_U\bigl(t\mu^{(\xi)}+(1-t)(\mu')^{(\xi)}\bigr)
\geq t\mathfrak H_U(\mu^{(\xi)})+(1-t)\mathfrak H_U((\mu')^{(\xi)}).
\]
Letting $\xi\to0$ and using continuity proves the concavity inequality
for every pair of nonnegative measures, including zero measures.

It remains to prove the homogeneity property. Let $a\ge0$. If $a=0$
or $\mu=\mathbf 0$, then $a\mu=\mathbf 0$, and hence
$$
\mathfrak H_U(a\mu)=\mathfrak H_U(\mathbf 0)=0=a\mathfrak H_U(\mu).
$$

Assume now that $a>0$ and $\mu\ne\mathbf 0$. Since $\mu(\omega)\ge0$ for
every $\omega\in S$, we have
$\lambda=\sum_{\omega\in S}\mu(\omega)>0$. The measure
$a\mu=(a\mu(\omega))_{\omega\in S}$ satisfies
$$
\sum_{\omega\in S}a\mu(\omega)
=a\sum_{\omega\in S}\mu(\omega)=a\lambda.
$$
Moreover, since $a>0$ and
$\lambda>0$, we have $\frac{a\mu}{a\lambda}=\frac{\mu}{\lambda}.$
Consequently, by the definition of $\mathfrak H_U$,
\[
\mathfrak H_U(a\mu)
=a\lambda\Hh_{a\mu/(a\lambda)}(U)
=a\lambda\Hh_{\mu/\lambda}(U)
=a\bigl(\lambda\Hh_{\mu/\lambda}(U)\bigr)
=a\mathfrak H_U(\mu).
\]
Therefore, $\mathfrak H_U(a\mu)=a\mathfrak H_U(\mu)$ for every finite
nonnegative measure $\mu$ on $S$ and every $a\ge0$.
\end{proof}

We now combine the preceding lemmas to obtain strict entropy
inequalities from the entropy condition. The construction refines
the flag and allows the final threshold to be replaced by any
prescribed smaller positive value.

\begin{theorem}\label{thm:strictification}
Let $(\cV,\mathbf c,\boldsymbol\mu)$ satisfy the entropy condition,
where $\cV=(V_0,\ldots,V_r)$,
$\mathbf c=(c_1,\ldots,c_{r+1})$, $c_1=1$, and
$b=c_{r+1}>0$. Then, for every $c^\ast\in(0,b)$, there exist
a complete flag
$\cW:V_0=W_0<W_1<\cdots<W_h=V_r$
with $\{V_0,\ldots,V_r\}\subseteq\{W_0,\ldots,W_h\}$,
a threshold sequence
$\widetilde{\mathbf c}
=(\widetilde c_1,\ldots,\widetilde c_{h+1})$, and a family of
probability measures
$\widetilde{\boldsymbol\mu}
=(\widetilde\mu_1,\ldots,\widetilde\mu_h)$ such that
$\widetilde c_1=1$, $\widetilde c_{h+1}=c^\ast$, and
$(\cW,\widetilde{\mathbf c},\widetilde{\boldsymbol\mu})$
satisfies the strict entropy condition.
\end{theorem}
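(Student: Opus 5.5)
The plan is to pass to a complete flag and then take a convex combination, in suitably homogenized coordinates, of the given (non-strict) system with the strict seed of Lemma~\ref{lem:strict-seed}.

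\textbf{Step 1: pass to a complete flag.} Apply Lemma~\ref{lem:completion} to $(\cV,\mathbf c,\boldsymbol\mu)$. This gives a complete system $(\cW,\widehat{\mathbf c},\widehat{\boldsymbol\mu})$ with the following properties:
\begin{itemize}[leftmargin=2.2em]
\item $\cW\colon V_0=W_0<\cdots<W_h=V_r$, and by construction $W_{m_j}=V_j$, so $\{V_0,\ldots,V_r\}\subseteq\{W_0,\ldots,W_h\}$;
\item $\widehat c_1=c_1=1$ and $\widehat c_{h+1}=b$;
\item the entropy condition still holds.
\end{itemize}
Since $\cW$ is complete, cube-spanned, and has non-degenerate terminal space $V_r$, Lemma~\ref{lem:strict-seed} applies on the same flag. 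It gives a strict system $(\cW,\mathbf c^0,\boldsymbol\mu^{(0)})$ with $1=c^0_1>\cdots>c^0_{h+1}=0$.

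\textbf{Step 2: linearize the gap.} For any system on $\cW$, write $\lambda_j=c_j-c_{j+1}$ and $\nu_j=\lambda_j\mu_j$, a finite nonnegative measure on $W_j\cap\{0,1\}^k$. For a subflag $\cW'\leq\cW$, put $q_j=\dim(W_j/W'_j)$ with $q_0=0$. Completeness gives $\dim(W'_j/W'_{j-1})=1-(q_j-q_{j-1})$. Summing by parts as in \eqref{eq:seed-gap}, but now keeping the final threshold, gives
\[
\mathrm e(\cW',\mathbf c,\boldsymbol\mu)-\mathrm e(\cW,\mathbf c,\boldsymbol\mu)
=\sum_{j=1}^{h}\bigl(\mathfrak H_{W'_j}(\nu_j)-\lambda_jq_j\bigr)-c_{h+1}q_h .
\]
Here $\mathfrak H$ is defined in \eqref{defi-U}. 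In the variables $(\nu_1,\ldots,\nu_h,\lambda_1,\ldots,\lambda_h,c_{h+1})$, every term is linear except the $\mathfrak H$ terms, which are concave by Lemma~\ref{lem:perspective}. Note also that $c_1=c_{h+1}+\sum_j\lambda_j$.

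\textbf{Step 3: mix.} Set $t=c^\ast/b\in(0,1)$ and define
\[
\nu_j=t\,\widehat\lambda_j\widehat\mu_j+(1-t)\lambda^0_j\mu^{(0)}_j,\qquad
\widetilde\lambda_j=t\,\widehat\lambda_j+(1-t)\lambda^0_j,\qquad
\widetilde c_{h+1}=tb+(1-t)\cdot0=c^\ast .
\]
Then set $\widetilde c_j=c^\ast+\sum_{i\geq j}\widetilde\lambda_i$ and $\widetilde\mu_j=\nu_j/\widetilde\lambda_j$. These are well defined for the following reasons:
\begin{itemize}[leftmargin=2.2em]
\item $\lambda^0_j>0$ and $1-t>0$, so $\widetilde\lambda_j>0$. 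This matters because the inserted levels of the completion have $\widehat\lambda_j=0$.
\item The total mass of $\nu_j$ is $\widetilde\lambda_j$, so each $\widetilde\mu_j$ is a probability measure, and it is supported on $W_j\cap\{0,1\}^k$.
\item $\widetilde c_1=t\cdot1+(1-t)\cdot1=1$, and the thresholds are strictly decreasing.
\end{itemize}
For every proper $\cW'<\cW$, concavity and positive homogeneity from Lemma~\ref{lem:perspective}, applied termwise in the displayed gap formula, give
\[
\text{gap}(\widetilde{\ })\;\geq\; t\cdot\text{gap}(\widehat{\ })+(1-t)\cdot\text{gap}(0)\;>\;0,
\]
because the first gap is at least $0$ and the second is strictly positive.

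\textbf{Main obstacle.} I expect the main risk to be the bookkeeping in the gap identity. One must check that the boundary term $-c_{h+1}q_h$ is handled correctly, and that using $\mathfrak H$ of the unnormalized $\nu_j$ exactly reproduces $(c_j-c_{j+1})\Hh_{\mu_j}(W'_j)$, including levels where $\widehat\lambda_j=0$. Once that identity is verified, strictness follows directly from the concavity of Lemma~\ref{lem:perspective}.
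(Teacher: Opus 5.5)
Your proposal is correct and follows the paper's proof essentially step for step: completion via Lemma~\ref{lem:completion}, the seed of Lemma~\ref{lem:strict-seed} on the same complete flag, mixing with $t=c^\ast/b$ through the layer-weighted measures $\lambda_j\mu_j$, and the concavity and homogeneity of Lemma~\ref{lem:perspective}. The only cosmetic difference is that you mix the summed-by-parts gap, and your boundary term $-c_{h+1}q_h$ is correct; the paper instead mixes the $\mathrm e$-values themselves, obtaining an inequality for each subflag $\cW'$ and an equality for $\cW$, and then subtracts.
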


\begin{proof}
By Lemma~\ref{lem:completion}, there exist a complete flag
\[
\cW:\quad
W_0<W_1<\cdots<W_h
\]
of $\cV$, a threshold sequence $\mathbf c^{(1)}
=(c^{(1)}_1,\ldots,c^{(1)}_{h+1}),$
and probability measures
$\boldsymbol\mu^{(1)}=(\mu^{(1)}_1,\ldots,\mu^{(1)}_h)$
such that $(\cW,\mathbf c^{(1)},\boldsymbol\mu^{(1)})$
satisfies the entropy condition with $c^{(1)}_1=1$
and $c^{(1)}_{h+1}=b.$ Therefore, by \eqref{e1}, for every subflag $\cW'\leq \cW$,
\begin{equation}\label{eq:entropy-gap-nonstrict}
\mathrm e(\cW',\mathbf c^{(1)},\boldsymbol\mu^{(1)})-\mathrm e(\cW,\mathbf c^{(1)},\boldsymbol\mu^{(1)})\geq 0 .
\end{equation}

On the other hand, by Lemma~\ref{lem:strict-seed}, the same complete
flag $\cW$ admits a strict entropy system
$(\cW,\mathbf c^{(0)},\boldsymbol\mu^{(0)})$,
where
$\mathbf c^{(0)}
=
(c^{(0)}_1,\ldots,c^{(0)}_{h+1})$
and
$\boldsymbol\mu^{(0)}
=
(\mu^{(0)}_1,\ldots,\mu^{(0)}_h)$,
such that
$c^{(0)}_1=1$,
$c^{(0)}_{h+1}=0$,
and
$c^{(0)}_j-c^{(0)}_{j+1} > 0$
for every $1\leq j\leq h$.
Since this system satisfies the strict entropy condition, every subflag $\cW'<\cW$ satisfies
\begin{equation}\label{eq:entropy-gap-strict}
\mathrm e(\cW',\mathbf c^{(0)},\boldsymbol\mu^{(0)})
-\mathrm e(\cW,\mathbf c^{(0)},\boldsymbol\mu^{(0)})
>0 .
\end{equation}

Now define $t=\frac{c^\ast}{b}$. Since $0<c^\ast<b$, we have $0<t<1$. For $i\in\{0,1\}$ and $1\leq j\leq h$, put
$\lambda^{(i)}_j=c^{(i)}_j-c^{(i)}_{j+1}$.
Define the sequence
$\widetilde{\mathbf c}
=(\widetilde c_1,\ldots,\widetilde c_{h+1})$ by
\begin{equation}\label{eq:threshold-mixture}
\widetilde c_j=
t c^{(1)}_j+(1-t)c^{(0)}_j
\qquad(1\leq j\leq h+1).
\end{equation}

Let $\widetilde{\lambda}_j=\widetilde c_j-\widetilde c_{j+1}.$
Then
\begin{align}
\widetilde{\lambda}_j
=t(c^{(1)}_j-c^{(1)}_{j+1})+(1-t)(c^{(0)}_j-c^{(0)}_{j+1})
=t\lambda^{(1)}_j+(1-t)\lambda^{(0)}_j .
\label{eq:lambda-mixture}
\end{align}
The entropy system $(\cW,\mathbf c^{(1)},\boldsymbol\mu^{(1)})$
has non-increasing thresholds, so
$\lambda^{(1)}_j\geq0$.
Moreover, $\lambda^{(0)}_j>0$ and $1-t>0$. Hence
$\widetilde{\lambda}_j>0.$
Consequently,
$\widetilde c_1>\widetilde c_2>\cdots>\widetilde c_{h+1}$.
By \eqref{eq:threshold-mixture}, we have
$$
\widetilde c_1=tc^{(1)}_1+(1-t)c^{(0)}_1=1
$$
and 
$$
\widetilde c_{h+1}=tc^{(1)}_{h+1}+(1-t)c^{(0)}_{h+1}=
tb=c^\ast.
$$
Therefore,
$\widetilde{\mathbf c}=(\widetilde c_1,\ldots,\widetilde c_{h+1})$ has final threshold $c^\ast$.

We next define the measures. For every $1\leq j\leq h$ and every
$\omega\in W_j\cap\{0,1\}^k$, set
\begin{equation}\label{eq:measure-mixture}
\widetilde\mu_j(\omega)
=
\frac{
t\lambda^{(1)}_j\mu^{(1)}_j(\omega)
+
(1-t)\lambda^{(0)}_j\mu^{(0)}_j(\omega)
}
{\widetilde\lambda_j}.
\end{equation}

We have
$\lambda^{(1)}_j\geq0$,
$\lambda^{(0)}_j>0$,
and
$\mu^{(i)}_j(\omega)\geq0$ for $i\in\{0,1\}$.
Together with $0<t<1$, this implies
$$
t\lambda^{(1)}_j\mu^{(1)}_j(\omega)
+(1-t)\lambda^{(0)}_j\mu^{(0)}_j(\omega)
\geq0 .
$$
Therefore, $\widetilde\mu_j(\omega)\geq0.$
Furthermore, since $\mu^{(0)}_j$ and $\mu^{(1)}_j$ are probability
measures on $W_j\cap\{0,1\}^k$, we have
\[
\sum_{\omega\in W_j\cap\{0,1\}^k}\mu^{(0)}_j(\omega)
=\sum_{\omega\in W_j\cap\{0,1\}^k}\mu^{(1)}_j(\omega)
=1 .
\]
Therefore,
\begin{align*}
\sum_{\omega\in W_j\cap\{0,1\}^k}
\widetilde\mu_j(\omega)
&=\frac{
t\lambda^{(1)}_j
\sum_{\omega\in W_j\cap\{0,1\}^k}
\mu^{(1)}_j(\omega)
+(1-t)\lambda^{(0)}_j
\sum_{\omega\in W_j\cap\{0,1\}^k}
\mu^{(0)}_j(\omega)
}{\widetilde\lambda_j}\\
&=\frac{t\lambda^{(1)}_j+(1-t)\lambda^{(0)}_j
}{\widetilde\lambda_j}=1,
\end{align*}
where the last equality follows from \eqref{eq:lambda-mixture}. Thus each $\widetilde\mu_j$ is a probability measure.

It remains to prove the strict entropy condition. Let $\cW'\leq\cW$ be an arbitrary
subflag. From \eqref{eq:measure-mixture}, we have
$$
\widetilde\lambda_j\widetilde\mu_j=t\lambda^{(1)}_j\mu^{(1)}_j
+(1-t)\lambda^{(0)}_j\mu^{(0)}_j
$$
as finite nonnegative measures.

Applying Lemma~\ref{lem:perspective} with
$\mu=\lambda^{(1)}_j\mu^{(1)}_j$,
$\mu'=\lambda^{(0)}_j\mu^{(0)}_j$, and $U=W'_j$, and using
\[
\mathfrak H_{W'_j}(\lambda^{(i)}_j\mu^{(i)}_j)
=\lambda^{(i)}_j\Hh_{\mu^{(i)}_j}(W'_j)
\]
for $i\in\{0,1\}$, together with
\[
\mathfrak H_{W'_j}(\widetilde\lambda_j\widetilde\mu_j)
=\widetilde\lambda_j\Hh_{\widetilde\mu_j}(W'_j),
\]
we obtain
\begin{align}
\widetilde\lambda_j
\Hh_{\widetilde\mu_j}(W'_j)
\geq&
t\lambda^{(1)}_j
\Hh_{\mu^{(1)}_j}(W'_j)
+
(1-t)\lambda^{(0)}_j
\Hh_{\mu^{(0)}_j}(W'_j).
\label{eq:entropy-concavity}
\end{align}

Using the definition \eqref{eq:e-value} of the $\mathrm e$-value and
\eqref{eq:threshold-mixture},
\eqref{eq:lambda-mixture}, and
\eqref{eq:entropy-concavity}, we obtain

{\small
\begin{align}
\mathrm e(\cW',\widetilde{\mathbf c},
\widetilde{\boldsymbol\mu})
&=\sum_{j=1}^{h}\widetilde\lambda_j
\Hh_{\widetilde\mu_j}(W'_j)
+\sum_{j=1}^{h}\widetilde c_j
\dim(W'_j/W'_{j-1})
\nonumber\\
&\geq \sum_{j=1}^{h}
\left(t\lambda^{(1)}_j\Hh_{\mu^{(1)}_j}(W'_j)
+(1-t)\lambda^{(0)}_j\Hh_{\mu^{(0)}_j}(W'_j)\right)\nonumber+
\sum_{j=1}^{h}
\left(t c^{(1)}_j+(1-t)c^{(0)}_j\right)\dim(W'_j/W'_{j-1}) \nonumber\\
&=
t\,\mathrm e(\cW',\mathbf c^{(1)},
\boldsymbol\mu^{(1)})
+
(1-t)\,\mathrm e(\cW',\mathbf c^{(0)},
\boldsymbol\mu^{(0)}).
\label{eq:e-mixture-lower}
\end{align}
}

For the full flag $\cW$, each of the measures
$\mu_j^{(i)}$ and $\widetilde\mu_j$ is supported on
$W_j\cap\{0,1\}^k$. Therefore, $\Hh_{\mu_j^{(i)}}(W_j)=0$ and $\Hh_{\widetilde\mu_j}(W_j)=0 .$
Hence
\begin{align}
\mathrm e(\cW,\widetilde{\mathbf c},
\widetilde{\boldsymbol\mu})
=t\,
\mathrm e(\cW,\mathbf c^{(1)},
\boldsymbol\mu^{(1)})+(1-t)\,
\mathrm e(\cW,\mathbf c^{(0)},
\boldsymbol\mu^{(0)}).
\label{eq:e-full-mixture}
\end{align}

Subtracting \eqref{eq:e-full-mixture} from
\eqref{eq:e-mixture-lower}, we obtain
\begin{align}
&
\mathrm e(\cW',\widetilde{\mathbf c},
\widetilde{\boldsymbol\mu})
-\mathrm e(\cW,\widetilde{\mathbf c},
\widetilde{\boldsymbol\mu})
\nonumber \geq t
\Big(
\mathrm e(\cW',\mathbf c^{(1)},
\boldsymbol\mu^{(1)})
-\mathrm e(\cW,\mathbf c^{(1)},
\boldsymbol\mu^{(1)})
\Big)\nonumber\\
&\quad+
(1-t)
\Big(
\mathrm e(\cW',\mathbf c^{(0)},
\boldsymbol\mu^{(0)})
-\mathrm e(\cW,\mathbf c^{(0)},
\boldsymbol\mu^{(0)})
\Big).
\label{eq:gap-mixture}
\end{align}
If $\cW'=\cW$, the difference is zero. Suppose now that
$\cW'<\cW$. Since $1-t>0$,
\eqref{eq:entropy-gap-nonstrict} and
\eqref{eq:entropy-gap-strict} give $\mathrm e(\cW',\widetilde{\mathbf c},
\widetilde{\boldsymbol\mu})
>\mathrm e(\cW,\widetilde{\mathbf c},
\widetilde{\boldsymbol\mu}).$
Therefore, the minimum of the $\mathrm e$-value is attained uniquely at
the full flag $\cW$.

Hence $(\cW,\widetilde{\mathbf c},
\widetilde{\boldsymbol\mu})$
satisfies the strict entropy condition, $\cW$ is a complete refinement
of $\cV$, and $\widetilde c_1=1,\widetilde c_{h+1}=c^\ast.$
This completes the proof.
\end{proof}

\subsection{Entropy thresholds and a uniform gap}

We now use Theorem~\ref{thm:strictification} to show that the
entropy condition and the strict entropy condition define the
same threshold. Proposition~\ref{prop:threshold-comparison}
then identifies this common value with $\beta_k$.

\begin{proof}[Proof of Theorem~\ref{thm:entropy-equality}]
Since the strict entropy condition implies the entropy condition, it follows that
$\widetilde\gamma_k\leq\gamma_k$.
Let $\mathbf e_1,\ldots,\mathbf e_k$ be the standard basis of
$\Q^k$, and put
\[
U_j=\Span_{\Q}\{\one,\mathbf e_1,\ldots,\mathbf e_j\}
\qquad(0\leq j\leq k-1).
\]
Applying Lemma~\ref{lem:strict-seed} to the complete flag
$U_0<\cdots<U_{k-1}=\Q^k$ gives
$\widetilde\gamma_k\geq0$.
Thus the case $\gamma_k=0$ is immediate.

Suppose that $\gamma_k>0$, and fix $c^\ast\in(0,\gamma_k)$.
By the definition of $\gamma_k$, there exists an entropy system
$(\cV,\mathbf c,\boldsymbol\mu)$ satisfying the entropy condition
with $c_{r+1}>c^\ast$.
Define
\[
\widehat{\mathbf c}=(\widehat c_1,\ldots,\widehat c_{r+1}),
\qquad
\widehat c_j=\frac{c_j}{c_1}.
\]
Since $1\geq c_1\geq c_{r+1}>0$,
Lemma~\ref{lem:threshold-normalization} implies that
$(\cV,\widehat{\mathbf c},\boldsymbol\mu)$ satisfies the entropy
condition, with
\[
\widehat c_1=1,
\qquad
\widehat c_{r+1}
=\frac{c_{r+1}}{c_1}
\geq c_{r+1}>c^\ast.
\]
Theorem~\ref{thm:strictification} therefore yields a system satisfying
the strict entropy condition with final threshold $c^\ast$.
Hence $c^\ast\leq\widetilde\gamma_k$.
Taking the supremum over $c^\ast\in(0,\gamma_k)$ gives
$\gamma_k\leq\widetilde\gamma_k$, and consequently
$\widetilde\gamma_k=\gamma_k$.

Finally, Proposition~\ref{prop:threshold-comparison} gives
$\widetilde\gamma_k\leq\beta_k\leq\gamma_k$.
Therefore, we have 
\[
\widetilde\gamma_k=\beta_k=\gamma_k,
\]
as required.
\end{proof}

For later use, if $c\in(0,1]$, define
\begin{equation}\label{eq:Theta-def}
\Theta_k(c)=
\sup_{\substack{(\cV,\mathbf c,\boldsymbol\mu)\ \mathrm{complete}\\c_{r+1}=c}}\ \min_{\cV'\le\cV}\left(\mathrm e(\cV',\mathbf c,\boldsymbol\mu)-\mathrm e(\cV,\mathbf c,\boldsymbol\mu)\right).
\end{equation}

\begin{corollary}\label{cor:uniform-gap}
Fix $k\geq2$ and $\delta>0$. There exists $\eta=\eta(k,\delta)>0$
such that $\Theta_k(c)\leq-\eta$
for every $c\in[\beta_k+\delta,1].$
Equivalently, every complete system
$(\cV,\mathbf c,\boldsymbol\mu)$ satisfying
$c_{r+1}\in[\beta_k+\delta,1]$ admits a proper subflag
$\cV'<\cV$ such that
\[
\mathrm e(\cV',\mathbf c,\boldsymbol\mu)
\leq
\mathrm e(\cV,\mathbf c,\boldsymbol\mu)-\eta.
\]
\end{corollary}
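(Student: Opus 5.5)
The argument combines Theorem~\ref{thm:entropy-equality}, which gives $\gamma_k=\beta_k$, with a compactness argument of the kind used in Claim~\ref{clm:comparison-upper}. Two issues must be handled. First, the negative gap has to be uniform in $c$ over the interval $[\beta_k+\delta,1]$, not merely for each fixed $c$. Second, the gap has to be realised by a \emph{proper} subflag. The second point is automatic: the full flag gives gap exactly $0$, so any subflag with a strictly negative gap is necessarily proper. By the same remark, $\Theta_k(c)\leq-\eta$ is equivalent to the stated form, since $\min_{\cV'\leq\cV}$ differs from the minimum over proper subflags only when the latter is nonnegative.

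\emph{Step 1: pointwise negativity.} Fix $c\in[\beta_k+\delta,1]$ and a complete system $(\cV,\mathbf c,\boldsymbol\mu)$ with $c_{r+1}=c$. Since $c>\beta_k=\gamma_k$, the system fails the entropy condition. Hence there is a subflag $\cV'$ with negative gap, and $G(\cV,\mathbf c,\boldsymbol\mu):=\min_{\cV'\leq\cV}\bigl(\mathrm e(\cV',\mathbf c,\boldsymbol\mu)-\mathrm e(\cV,\mathbf c,\boldsymbol\mu)\bigr)<0$. When $c=1$ there is a small subtlety: $\gamma_k$ is a supremum and might equal $1$. This cannot happen here, because $\beta_k+\delta\leq1$ forces $\gamma_k=\beta_k<1$, so $c>\gamma_k$ holds throughout the range.

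\emph{Step 2: uniformity by compactness.} Fix one of the finitely many complete cube-spanned flags $\cV$ with non-degenerate terminal space. Consider the parameter set $K_\cV$ consisting of pairs $(\mathbf c,\boldsymbol\mu)$ with $1\geq c_1\geq\cdots\geq c_{r+1}$, $c_{r+1}\in[\beta_k+\delta,1]$, and each $\mu_j$ a probability vector on the finite set $V_j\cap\{0,1\}^k$. This is a closed and bounded subset of a finite-dimensional space, hence compact.

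By Lemma~\ref{lem:finite-subflags}, $G$ equals the minimum over the finite family $\mathscr F(\cV)$. Each $\mathrm e(\cV',\cdot,\cdot)$ is continuous on $K_\cV$: it is a polynomial combination of the $c_j$ with coset entropies $\Hh_{\mu_j}(V'_j)$, which are continuous in $\mu_j$ by the convention $0\log0=0$ (compare \eqref{eq:perspective-explicit}). Therefore $G$ is continuous on $K_\cV$ and strictly negative everywhere by Step~1. It attains a maximum $-\eta_\cV<0$.

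Set $\eta=\min_\cV\eta_\cV>0$, the minimum being over the finitely many flags. Then $\Theta_k(c)\leq-\eta$ for all $c$ in the range.

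\emph{Main obstacle.} The delicate step is continuity of $G$ together with compactness of the domain. The domain must be closed, so the constraints are taken as non-strict inequalities on the $c_j$ and probability vectors are allowed to have zero entries. The minimum over possibly infinitely many subflags must first be reduced, via Lemma~\ref{lem:finite-subflags}, to a finite minimum of continuous functions; otherwise continuity of $G$ would be unclear. Once these two points are in place, the rest is bookkeeping.
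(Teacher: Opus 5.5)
Your proposal is correct and follows essentially the same route as the paper: use $\gamma_k=\beta_k$ from Theorem~\ref{thm:entropy-equality} to get $G<0$ pointwise, reduce the minimum over subflags to the fixed finite family $\mathscr F(\cV)$ of Lemma~\ref{lem:finite-subflags} to get continuity, and take the maximum of $G$ over the compact parameter set for each of the finitely many complete cube-spanned flags. Your explicit remark that a subflag with strictly negative gap must be proper, since the full flag has gap $0$, is a point the paper leaves implicit. Your aside about $\gamma_k$ possibly equalling $1$ is harmless but unnecessary, because $c\geq\beta_k+\delta>\beta_k=\gamma_k$ already holds throughout the range.
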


\begin{proof}
We may assume that $\beta_k+\delta\leq1$, since otherwise the interval
$[\beta_k+\delta,1]$ is empty.
For a complete system $(\cV,\mathbf c,\boldsymbol\mu)$, define
\[
G(\cV,\mathbf c,\boldsymbol\mu)
=
\min_{\cV'\leq\cV}
\left(
\mathrm e(\cV',\mathbf c,\boldsymbol\mu)
-
\mathrm e(\cV,\mathbf c,\boldsymbol\mu)
\right).
\]
By Lemma~\ref{lem:finite-subflags}, the minimum defining $G$
is attained. Taking $\cV'=\cV$ gives
$G(\cV,\mathbf c,\boldsymbol\mu)\leq0$.
Moreover, $G(\cV,\mathbf c,\boldsymbol\mu)=0$ if and only if
\eqref{e1} holds for every subflag $\cV'\leq\cV$,
which is precisely the entropy condition.

By Theorem~\ref{thm:entropy-equality}, $\gamma_k=\beta_k$. Hence every
complete system with $c_{r+1}\geq\beta_k+\delta$ satisfies $G(\cV,\mathbf c,\boldsymbol\mu)<0.$
Indeed, equality would imply that the system satisfies the entropy
condition, and therefore
\[
\gamma_k\geq c_{r+1}\geq\beta_k+\delta>\beta_k,
\]
a contradiction.

Since $\{0,1\}^k$ is finite, only finitely many complete cube-spanned
flags can occur. For each fixed complete flag $\cV$, the admissible
parameters $(\mathbf c,\boldsymbol\mu)$ with
$c_{r+1}\in[\beta_k+\delta,1]$ form a closed and bounded subset of a
finite-dimensional Euclidean space, and hence a compact set. By
Lemma~\ref{lem:finite-subflags}, the minimum defining $G$ may be taken
over the fixed finite family $\mathscr F(\cV)$. Since the $\mathrm e$-value is continuous
in $\mathbf c$ and $\boldsymbol\mu$, the function $G$ is continuous.

By compactness and continuity, the quantity
\[
M_{k,\delta}=
\max_{\substack{
(\cV,\mathbf c,\boldsymbol\mu)\ \mathrm{complete}\\
c_{r+1}\in[\beta_k+\delta,1]
}}
G(\cV,\mathbf c,\boldsymbol\mu)
\]
is well defined. Since every complete system occurring in this maximum
satisfies $G(\cV,\mathbf c,\boldsymbol\mu)<0,$ we have $M_{k,\delta}<0$. Set
$\eta=-M_{k,\delta}>0$. Then $G(\cV,\mathbf c,\boldsymbol\mu)\leq-\eta$
for every complete system with
$c_{r+1}\in[\beta_k+\delta,1]$.

\end{proof}

\subsection{Affine subset-sum estimates}
\label{subsec:affine-window-stability}
Throughout this subsection, all sets under consideration are sets of
positive integers, and we retain the notation $\Sigma(B)=\sum_{b\in B}b$.

Fix an integer \(k\geq 2\), a real number \(D>1\), and a real number
\(c\). Recall that \(\mathcal A\) is the logarithmic random set defined in
Subsection~\ref{subsec:close-divisors}.

Let $\tau=(\tau_2,\ldots,\tau_k)\in\mathbb R^{k-1},$
where each \(\tau_s\) is a given real number, and let \(W\geq0\)
be a real number. We define
\(\mathcal N_{k,c,D}(\tau,W)\) to be the event that there exist
\(k\) pairwise distinct subsets $A_1,\ldots,A_k\subseteq\mathcal A\cap(D^c,D]$
such that
\begin{equation*}
\left|
\Sigma(A_s)-\Sigma(A_1)-\tau_s
\right|
\leq W
\qquad(2\leq s\leq k).
\end{equation*}

We now apply the uniform entropy gap to subset sums with prescribed
differences. The following theorem bounds the probability of
$\mathcal N_{k,c,D}(\tau,W)$ uniformly in $c\geq\beta_k+\delta$
and $\tau$, provided that $1+W=D^{o(1)}$.

\begin{theorem}\label{thm:affine}
Fix an integer $k\geq2$ and a real number $\delta>0$. Then there exists a constant $\eta=\eta(k,\delta)>0$
such that the following estimate holds as $D\to\infty$. For all real numbers $c$ satisfying $c\geq\beta_k+\delta,$
all vectors $\tau=(\tau_2,\ldots,\tau_k)\in\R^{k-1},$
and every nonnegative function $W=W(D)$ satisfying $1+W=D^{o(1)}$, one has
\begin{equation*}
\Pp\bigl(\mathcal N_{k,c,D}(\tau,W)\bigr)
\leq
D^{-\eta+o(1)}
+
O_k\!\left(
\exp\left\{-\frac14\sqrt{\log D}\right\}
\right).
\end{equation*}
For each fixed function $W=W(D)$ satisfying the stated condition,
the $o(1)$ term is uniform in $c$ and $\tau$. More precisely, the exponent denoted by $o(1)$ may be taken to be
\[
O_{k,\delta}\!\left(
(\log D)^{-1/4}+\frac{\log(2+W)}{\log D}
\right),
\]
with an implied constant independent of $c$, $\tau$, and $W$.
\end{theorem}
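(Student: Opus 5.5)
The plan is to run the upper-bound argument of Claim~\ref{clm:comparison-upper} again, with three changes: the exact equation $\Sigma(A_1)=\cdots=\Sigma(A_k)$ is replaced by the affine window, the uniform gap of Corollary~\ref{cor:uniform-gap} replaces the gap $\eta(k,c)$, and the count of pivot tuples is enlarged by a factor polynomial in $1+W$.

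\textbf{Step 1 (reduction).} We may assume $c\leq 1$; otherwise the interval is empty and the event is impossible. Let $\eta=\eta(k,\delta/2)$ be given by Corollary~\ref{cor:uniform-gap}. Split according to whether $\mathcal A_D=\mathcal A\cap(D^c,D]$ lies in $\mathcal R_{c,D}$. By \cite[Lemma~A.5]{FGK}, the exceptional event has probability $O(\exp\{-\frac14\sqrt{\log D}\})$. This bound must hold uniformly in $c$; for $c$ in a compact range this should follow from the proof of that lemma, and I would check that point.

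\textbf{Step 2 (extraction).} Fix $B\in\mathcal R_{c,D}$ containing distinct $A_1,\ldots,A_k$ that satisfy the windows. Perform the extraction exactly as before. This yields a complete flag $\cV$ with non-degenerate $V_h$, pivots $K_1>\cdots>K_h$, rounded thresholds $c_j$ with final threshold $c$, measures $\mu_j$, and a residual set $B'\in\widetilde{\mathcal R}_{c,D}$ whose partition satisfies \eqref{eq:partition-compatibility}.

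The identity for $\Sigma(A_t)$ now gives
\[
\sum_{j}K_j\omega^j+\sum_\omega\omega\,\Sigma(B'_\omega)=\Sigma(A_1)\one+\boldsymbol\tau+\boldsymbol\epsilon,
\]
where $\boldsymbol\tau=(0,\tau_2,\ldots,\tau_k)$ and $\|\boldsymbol\epsilon\|_\infty\leq W$.

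Fix $\Lambda$ and the class $L\in\cL_\Lambda(B')$, and choose a representative of $L$. Since $\one,\omega^1,\ldots,\omega^h$ are independent, we can complete them to a basis and pass to coordinates modulo $\langle\one\rangle$. This determines the real vector $(K_1,\ldots,K_h)$ up to an error vector in a box of side $O_k(W)$ centred at a point depending only on $(\Lambda,L,\tau)$; the constants depend only on the finitely many possible $\omega$'s. Because the $K_j$ are integers, each pair $(B',L)$ yields at most $O_k((1+W)^{h})\leq O_k((1+W)^{k-1})$ sets $B$.

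\textbf{Step 3 (summation).} Proceed as in Claim~\ref{clm:comparison-upper}:
\[
\sum_{B\in\mathcal B_\Lambda}\Pp(\mathcal A_D=B)\ll_k (1+W)^{k-1}\exp\{C_k(\log D)^{3/4}\}\,D^{\min_{\cV'}\mathrm e(\cV',\mathbf c,\boldsymbol\mu)-\sum_j c_j}.
\]
Here Lemma~\ref{lem:residual-sum-bound} is used; its constants depend only on $k$, so it is uniform in $c$. Since $c\geq\beta_k+\delta$, Corollary~\ref{cor:uniform-gap} shows the exponent is at most $-\eta$. Summing over the $(\log D)^{O_k(1)}$ choices of $\Lambda$ gives
\[
D^{-\eta}\exp\bigl\{O_k\bigl((\log D)^{3/4}+\log(2+W)+\log\log D\bigr)\bigr\},
\]
which is $D^{-\eta+O((\log D)^{-1/4}+\log(2+W)/\log D)}$, as claimed. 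Nothing in this bound depends on $\tau$.

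\textbf{Main obstacle.} The delicate point is uniformity in $c$, in particular $c$ close to $1$. There the layer widths $(1-c)\log D$ may be small compared with $(\log D)^{3/4}$, and the number of $\Lambda$ must still be controlled. For the count of $\Lambda$: the possible thresholds $c_j$ lie in $\frac1{\log D}\Z\cap[0,1]$, and each $\mu_j$ takes values in $\frac1{N_j}\Z$ with $N_j\leq 2\log D$. This gives $(\log D)^{O_k(1)}$ choices uniformly in $c$. For the regularity estimate, I would re-derive it directly from Poisson-type tail bounds for $\#(\mathcal A\cap(D^\alpha,D^\beta])$ over a grid of $O(\log D)$ pairs $(\alpha,\beta)$. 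Those bounds do not depend on $c$.
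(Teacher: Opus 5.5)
Your proposal is correct and follows the paper's proof: the same extraction producing a complete flag, rounded thresholds and a residual set in $\widetilde{\mathcal R}_{c,D}$, then Lemma~\ref{lem:residual-sum-bound}, the uniform gap of Corollary~\ref{cor:uniform-gap}, and the $(\log D)^{O_k(1)}$ count of the data $\Lambda$. The only difference is in bookkeeping. The paper splits the window event into exact integer defect vectors $\mathbf r$, each of which determines at most one pivot tuple, and pays $(2W+1)^{k-1}$ in a union bound; you instead count the $O_k((1+W)^h)$ integer pivot tuples in a box around a centre fixed by $(\Lambda,L,\tau)$, which gives the same loss.
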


\begin{proof}
If $c\geq1$, then $(D^c,D]=\varnothing$ and the event is empty.
It is therefore enough to consider
$\beta_k+\delta\leq c<1$.
Put
\[
\mathcal I_{c,D}=(D^c,D]\cap\Z,
\qquad
\mathcal A_D=\mathcal A\cap\mathcal I_{c,D}.
\]
For $\mathbf r=(r_2,\ldots,r_k)\in\Z^{k-1}$, let
$\mathcal F_{k,c,D}(\mathbf r)$ be the event that there exist
pairwise distinct subsets $A_1,\ldots,A_k\subseteq\mathcal A_D$
satisfying
\begin{equation}\label{eq:exact-affine-defects}
\Sigma(A_s)-\Sigma(A_1)=r_s
\qquad(2\leq s\leq k).
\end{equation}
Put $\lambda_{\mathbf r}=(0,r_2,\ldots,r_k)\in\Z^k$.

Let $\mathcal R_{c,D}$ be the event that
\[
\left|
\#\bigl(\mathcal A_D\cap(D^\alpha,D^\beta]\bigr)
-(\beta-\alpha)\log D
\right|
\leq(\log D)^{3/4}
\qquad(c\leq\alpha\leq\beta\leq1).
\]
By \cite[Lemma~A.5]{FGK},
\begin{equation}\label{eq:affine-regularity-error}
\Pp(\mathcal R_{c,D}^{\complement})
\ll_k\exp\left\{-\frac14\sqrt{\log D}\right\},
\end{equation}
where $\mathcal R_{c,D}^{\complement}$ denotes the complement
of the event $\mathcal R_{c,D}$.

This estimate is uniform for $c\geq\beta_k+\delta$, since this
lower bound is a fixed positive number.

Fix a realization $B=\mathcal A_D$ in $\mathcal R_{c,D}$ and a
corresponding tuple $A_1,\ldots,A_k$ satisfying
\eqref{eq:exact-affine-defects}. For every $\omega\in\{0,1\}^k$,
define
\[
B_\omega
=
\left\{b\in B:
\bigl(1_{b\in A_1},\ldots,1_{b\in A_k}\bigr)=\omega
\right\},
\]
where $1_{b\in A_s}$ equals $1$ when $b\in A_s$ and $0$ otherwise.
In particular,
$B_{\mathbf0_k}=B\setminus(A_1\cup\cdots\cup A_k)$, and the
sets $B_\omega$ form a partition of the whole set $B$.
By the definitions,
\begin{equation}\label{eq:affine-venn-identity}
\sum_{\omega\in\{0,1\}^k}\omega\,\Sigma(B_\omega)
=
\bigl(\Sigma(A_1),\ldots,\Sigma(A_k)\bigr),
\end{equation}
and combining \eqref{eq:affine-venn-identity} with
\eqref{eq:exact-affine-defects} gives
\begin{equation}\label{eq:affine-quotient}
\sum_{\omega\in\{0,1\}^k}\omega\,\Sigma(B_\omega)
+\langle\one\rangle
=
\lambda_{\mathbf r}+\langle\one\rangle.
\end{equation}

Use the construction of \cite[Section~4.1]{FGK}.
Starting with $V_0=\langle\one\rangle$, at step $j$ choose
$\omega^j\notin V_{j-1}$ with $B_{\omega^j}\neq\varnothing$
for which $B_{\omega^j}$ is largest, and put
\[
V_j=V_{j-1}+\Span_{\Q}\{\omega^j\},
\qquad K_j=\max B_{\omega^j}.
\]
Stop when every $\omega$ with $B_\omega\neq\varnothing$ belongs
to the last space. The resulting flag
$\cV:V_0<\cdots<V_h$ is complete, with $1\leq h\leq k-1$,
and $\one,\omega^1,\ldots,\omega^h$ are linearly independent.
Its terminal space is non-degenerate, since for each $s\neq t$
the inequality $A_s\neq A_t$ supplies an element in a set
$B_\omega$ with $\omega_s\neq\omega_t$.

Set
\[
c_j=1+\frac{\lceil\log K_j-\log D\rceil}{\log D}
\quad(1\leq j\leq h),
\qquad c_{h+1}=c.
\]
Then
\[
1\geq c_1\geq\cdots\geq c_h>c,
\qquad
\ee^{-1}D^{c_j}<K_j\leq D^{c_j}.
\]
Delete $K_j$ from $B_{\omega^j}$ for each $j$, leaving sets
$B'_\omega$, and put
\[
B'=B\setminus\{K_1,\ldots,K_h\}.
\]
Define $\mu_j(\omega)$ as the proportion of
$B'\cap(D^{c_{j+1}},D^{c_j}]$ lying in $B'_\omega$.
If this intersection is empty, take $\mu_j$ to be concentrated
at $\mathbf0_k$.
For $j<h$, the choice of $\omega^{j+1}$ implies that
$B'_\omega\cap(D^{c_{j+1}},D^{c_j}]\neq\varnothing$
only if $\omega\in V_j$.
For $j=h$, the same conclusion follows from the stopping condition,
since every vector indexing a nonempty set $B_\omega$ belongs to $V_h$.
The same construction gives
$B'_\omega\cap(D^{c_1},D]=\varnothing$ when $\omega\notin V_0$.
Thus $(\cV,\mathbf c,\boldsymbol\mu)$ is a complete system, and
the partition $(B'_\omega)_\omega$ satisfies
\eqref{eq:partition-compatibility}.

Write
\[
\Lambda=(\cV,\mathbf c,\boldsymbol\mu,\omega^1,\ldots,\omega^h),
\qquad
\cL_\Lambda(B')=\cL_{\cV,\mathbf c,\boldsymbol\mu}(B').
\]
Since $B\in\mathcal R_{c,D}$, we have $\#B\leq2\log D$ for
large $D$. Each $c_j$ is drawn from a set of $O(\log D)$ values,
and every numerator and denominator in the empirical measures is
at most $2\log D$. The number of possible choices of $\Lambda$
is therefore $(\log D)^{O_k(1)}$, as in
\cite[Lemma~4.2]{FGK}. This bound is uniform in $c$ and
$\mathbf r$.

For fixed $\Lambda$, let $\mathscr F_\Lambda(\mathbf r)$ be the
family of sets $B\subseteq\mathcal I_{c,D}$ in
$\mathcal R_{c,D}$ having a tuple with these data and defects
$\mathbf r$. Choose one such tuple for each $B$ in this family.
The residual class
\[
L=\sum_{\omega\in\{0,1\}^k}\omega\,\Sigma(B'_\omega)
+\langle\one\rangle
\]
belongs to $\cL_\Lambda(B')$, and \eqref{eq:affine-quotient} gives
\begin{equation}\label{eq:affine-reconstruction}
\sum_{j=1}^hK_j\omega^j+\langle\one\rangle
=
\lambda_{\mathbf r}+\langle\one\rangle-L.
\end{equation}
For fixed $B'$, $L$, and $\mathbf r$,
\eqref{eq:affine-reconstruction} determines at
most one tuple $(K_1,\ldots,K_h)$.
Indeed, two such tuples would satisfy
\[
\sum_{j=1}^h(K_j-\widetilde K_j)\omega^j
\in\langle\one\rangle,
\]
and linear independence gives $K_j=\widetilde K_j$ for every $j$.
Thus the number of sets $B\in\mathscr F_\Lambda(\mathbf r)$
with a fixed residual set $B'$ is at most $\#\cL_\Lambda(B')$.
This bound counts classes, not the partitions representing them.

For every $T\subseteq\mathcal I_{c,D}$, independence gives
\[
\Pp(\mathcal A_D=T)
=
\prod_{i\in\mathcal I_{c,D}}\left(1-\frac1i\right)
\prod_{i\in T}\frac1{i-1}.
\]
For large $D$, all indices in $\mathcal I_{c,D}$ exceed $1$.
Since $B=B'\cup\{K_1,\ldots,K_h\}$ is a disjoint union,
\[
\begin{aligned}
\Pp(\mathcal A_D=B)
&=\Pp(\mathcal A_D=B')\prod_{j=1}^h\frac1{K_j-1}\\
&\leq(2\ee)^hD^{-\sum_{j=1}^hc_j}
\Pp(\mathcal A_D=B').
\end{aligned}
\]
The constant is uniform because $K_j>D^c\geq D^{\beta_k+\delta}$.

Deleting at most $h$ elements from $B$ gives
\[
\left|
\#\bigl(B'\cap(D^\alpha,D^\beta]\bigr)
-(\beta-\alpha)\log D
\right|
\leq(\log D)^{3/4}+h\leq2(\log D)^{3/4}.
\]
Let $\widetilde{\mathcal R}_{c,D}$ denote the family of sets
satisfying the last bound for all $c\leq\alpha\leq\beta\leq1$.
It follows that
\[
\begin{aligned}
\sum_{B\in\mathscr F_\Lambda(\mathbf r)}\Pp(\mathcal A_D=B)
&\ll_k D^{-\sum_{j=1}^hc_j}
\sum_{B'\in\widetilde{\mathcal R}_{c,D}}
\#\cL_\Lambda(B')\Pp(\mathcal A_D=B')\\
&\leq
\exp\bigl(O_k((\log D)^{3/4})\bigr)
D^{e_*(\Lambda)-\sum_{j=1}^hc_j},
\end{aligned}
\]
where
\[
e_*(\Lambda)=\min_{\cV'\leq\cV}
\mathrm e(\cV',\mathbf c,\boldsymbol\mu).
\]
Here the last inequality uses Lemma~\ref{lem:residual-sum-bound}
and $\sum_{B'\in\widetilde{\mathcal R}_{c,D}}
\Pp(\mathcal A_D=B')\leq1$.
In particular, no conditional distribution for the residual set
is being assumed.

By Corollary~\ref{cor:uniform-gap}, there is a fixed
$\eta=\eta(k,\delta)>0$ such that
\[
e_*(\Lambda)
\leq\mathrm e(\cV,\mathbf c,\boldsymbol\mu)-\eta
=\sum_{j=1}^hc_j-\eta.
\]
Summing over the $(\log D)^{O_k(1)}$ possible data gives
\begin{equation}\label{eq:exact-affine-fiber-bound}
\Pp\bigl(\mathcal F_{k,c,D}(\mathbf r)\cap\mathcal R_{c,D}\bigr)
\leq D^{-\eta+o(1)},
\end{equation}
where $o(1)=O_{k,\delta}((\log D)^{-1/4})$ is uniform in $c$ and
$\mathbf r$.

For $\tau\in\R^{k-1}$ and $W\geq0$, put
\[
\mathcal D(\tau,W)
=
\left\{\mathbf r=(r_2,\ldots,r_k)\in\Z^{k-1}:
|r_s-\tau_s|\leq W\ (2\leq s\leq k)\right\}.
\]
An interval of length $2W$ contains at most $2W+1$ integers, so
\begin{equation}\label{eq:number-affine-defects}
\#\mathcal D(\tau,W)\leq(2W+1)^{k-1}.
\end{equation}
By the definitions,
\[
\mathcal N_{k,c,D}(\tau,W)
=
\bigcup_{\mathbf r\in\mathcal D(\tau,W)}
\mathcal F_{k,c,D}(\mathbf r).
\]
Consequently, \eqref{eq:affine-regularity-error},
\eqref{eq:exact-affine-fiber-bound}, and
\eqref{eq:number-affine-defects} give
\[
\begin{aligned}
\Pp\bigl(\mathcal N_{k,c,D}(\tau,W)\bigr)
&\leq\Pp(\mathcal R_{c,D}^{\complement})
+\sum_{\mathbf r\in\mathcal D(\tau,W)}
\Pp\bigl(\mathcal F_{k,c,D}(\mathbf r)\cap\mathcal R_{c,D}\bigr)\\
&\leq(2W+1)^{k-1}D^{-\eta+o(1)}
+O_k\left(\exp\left\{-\frac14\sqrt{\log D}\right\}\right)\\
&=D^{-\eta+o(1)}
+O_k\left(\exp\left\{-\frac14\sqrt{\log D}\right\}\right),
\end{aligned}
\]
since $1+W=D^{o(1)}$ and $k$ is fixed.
More precisely, the $o(1)$ term in the last line may be taken to be
\[
O_{k,\delta}\!\left(
(\log D)^{-1/4}
+
\frac{\log(2+W)}{\log D}
\right),
\]
where the implied constant is independent of $c$, $\tau$, and $W$.
\end{proof}

\section{Arithmetic setup and affine reduction}
\label{sec:arithmetic-setup}

Fix an integer $k\geq2$. Unless otherwise stated, all implied
constants may depend on $k$, on the fixed exponent $a$, and on fixed
auxiliary parameters. We prove the upper bound in
Theorem~\ref{thm:main} for every fixed exponent satisfying
\begin{equation}\label{eq:supercritical-a}
a>\frac{\beta_k}{1-\beta_k}.
\end{equation}

By the definition of $\beta_k$, we have $\beta_k\leq\beta_2$.
Moreover, \cite[Section~3.4]{FGK} gives $\beta_2=1-1/\log3$, and hence
$\frac{\beta_2}{1-\beta_2}=\log3-1<\frac12.$
Suppose that $a\geq1/2$. Choose $a_0$ such that
$\frac{\beta_k}{1-\beta_k}<a_0<\frac12.$
For all sufficiently large $n$, $(\log n)^{-a}\leq(\log n)^{-a_0}.$
Hence, if divisors $d_1<\cdots<d_k\mid n$ satisfy
$$
d_k\leq d_1\bigl(1+(\log n)^{-a}\bigr),
$$
then they also satisfy 
$$
d_k\leq d_1\bigl(1+(\log n)^{-a_0}\bigr).
$$
Consequently, the density-zero assertion for
$a_0$ implies the corresponding assertion for $a$. It is therefore
enough to assume
\begin{equation}\label{eq:a-less-half}
0<a<\frac12.
\end{equation}

Throughout the arithmetic arguments, $X$ is a sufficiently large positive integer. We first work on $(X/2,X]$. Define
\begin{equation}\label{eq:bin-parameters}
K=(\log X)^a,
\qquad
i_1=\left\lfloor K(\log\log X)^3\right\rfloor,
\qquad
D=\left\lfloor\frac{K\log X}{2\log\log\log X}\right\rfloor.
\end{equation}
As $X\to\infty$, $i_1=(\log X)^{a+o(1)}$ and $D=(\log X)^{1+a+o(1)}.$
In particular, $2\leq i_1<D$ for all sufficiently large $X$.

For every integer $i$ with $i_1\leq i\leq D$, define
$\cP_i=\bigl(\ee^{i/K},\ee^{(i+1)/K}\bigr].$
Also put $y=\ee^{i_1/K},z=\ee^{(D+1)/K}.$
The intervals $\cP_i$ are pairwise disjoint and consecutive, and therefore $\bigcup_{i=i_1}^{D}\cP_i=(y,z].$ Moreover, 
\begin{equation}\label{equ-1}
\frac{i_1}{K}=(\log\log X)^3+O(K^{-1})    
\end{equation}
and 
$$
\frac{D+1}{K}=\frac{\log X}{2\log\log\log X}+O(K^{-1}).
$$
Hence
\begin{equation}\label{eq:y-z-asymp}
y=\exp\bigl((\log\log X)^3+o(1)\bigr),\qquad
z=X^{1/(2\log\log\log X)+o(1)}.
\end{equation}

A prime $p$ is called \emph{medium} if $p\in(y,z]$, and
\emph{external} otherwise. For a prime $p$ and a positive integer $d$,
put
\[
v_p(d)=\max\{\nu\in\Z_{\geq0}:p^\nu\mid d\}.
\]
Thus $p^{v_p(d)}\mid d$ and $p^{v_p(d)+1}\nmid d$.
For every positive integer $d$, define
\[
m(d)=\prod_{\substack{p\mid d\\p\ \mathrm{medium}}}p^{v_p(d)},
\qquad
r(d)=\prod_{\substack{p\mid d\\p\ \mathrm{external}}}p^{v_p(d)}.
\]
Unique prime factorization gives
\begin{equation}\label{eq:external-medium-decomp}
d=r(d)m(d),
\end{equation}
where $\gcd(r(d),m(d))=1$, and this decomposition is unique.
All products and sums indexed by $p$ below are over primes.
For a medium prime $p\in\cP_i$, put $\iota(p)=i$.
For a positive integer $n$ whose prime divisors all lie in $(y,z]$,
define
\[
S_K(n)=\sum_{p\mid n}v_p(n)\iota(p).
\]
For every positive integer $d$, write
$\Omega(d)=\sum_{p\mid d}v_p(d)$.
The empty sums are zero and the empty products are one.

For a positive integer $n$, define its \emph{occupied-bin set} by
\[
I(n)
=\left\{
i\in[i_1,D]\cap\Z:
\text{ there exists }p\in\cP_i\text{ with }p\mid n
\right\}.
\]

For every $i_1\leq i\leq D$, define
\begin{equation}\label{eq:R-i-def}
R_i=\sum_{p\in\cP_i}\frac1p.
\end{equation}
For an absolute constant $c_0>0$, we have
\[
\sum_{u<p\leq v}\frac1p
=\log\frac{\log v}{\log u}
+O\bigl(\exp(-c_0\sqrt{\log u})\bigr)
\qquad(2\leq u<v),
\]
by the quantitative prime number theorem in
\cite[Chapter~6]{MontgomeryVaughan} and partial summation.
Applying this with $u=\ee^{i/K}$ and $v=\ee^{(i+1)/K}$ yields
\[
R_i=\log\left(1+\frac1i\right)
+O\bigl(\exp(-c_0\sqrt{i/K})\bigr).
\]
By \eqref{equ-1} and
$\log D=(1+a+o(1))\log\log X$, we have
\[
\sup_{i_1\leq i\leq D}
 i^2\exp(-c_0\sqrt{i/K})
\leq D^2\exp(-c_0\sqrt{i_1/K})=o(1).
\]
Therefore, uniformly for $i_1\leq i\leq D$,
\begin{equation}\label{eq:R-i-asymp}
R_i=\frac1i+O(i^{-2}).
\end{equation}

\subsection{The affine medium-part equation}

The next proposition converts closeness of divisors into an affine
relation among the weighted bin sums of their medium parts. The
translation in this affine relation is determined entirely by the
external parts.

\begin{proposition}\label{prop:AMR}
Let $d_1,\ldots,d_k$ be positive integers, and write
$d_s=r_sm_s$, where $r_s=r(d_s)$ and $m_s=m(d_s)$ for
$1\leq s\leq k$. Suppose that, for some $\Delta\geq0$,
\begin{equation*}
\max_{1\leq r,s\leq k}
|\log d_r-\log d_s|
\leq\Delta.
\end{equation*}
Then, for every $2\leq s\leq k$,
\begin{equation}\label{eq:AMR}
\left|
S_K(m_s)-S_K(m_1)
+
K\log\frac{r_s}{r_1}
\right|
\leq
K\Delta+\Omega(m_s)+\Omega(m_1).
\end{equation}
\end{proposition}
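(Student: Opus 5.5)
The approach is to compare $\log m_s$ with $S_K(m_s)/K$ prime by prime, and then to use the additivity of $\log$ over the factorization \eqref{eq:external-medium-decomp}.

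\emph{Step 1.} Fix a positive integer $m$ all of whose prime factors are medium. Each such prime $p$ lies in $\cP_{\iota(p)}=(\ee^{\iota(p)/K},\ee^{(\iota(p)+1)/K}]$, so
\[
\iota(p)< K\log p\leq \iota(p)+1,
\]
that is, $0\le K\log p-\iota(p)<1$. Multiplying by $v_p(m)$ and summing over $p\mid m$ gives
\[
0\leq K\log m - S_K(m)\leq \Omega(m),
\]
since $\sum_{p\mid m}v_p(m)=\Omega(m)$. Write $K\log m=S_K(m)+\theta(m)$ with $\theta(m)\in[0,\Omega(m)]$. This holds for each $m_s=m(d_s)$, which by definition has only medium prime factors; when $m=1$ both sides are zero.

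\emph{Step 2.} By \eqref{eq:external-medium-decomp}, $\log d_s=\log r_s+\log m_s$. Hence
\[
K(\log d_s-\log d_1)=K\log\frac{r_s}{r_1}+S_K(m_s)-S_K(m_1)+\theta(m_s)-\theta(m_1).
\]
The hypothesis gives $|K(\log d_s-\log d_1)|\le K\Delta$. Rearranging and applying the triangle inequality yields
\[
\Bigl|S_K(m_s)-S_K(m_1)+K\log\frac{r_s}{r_1}\Bigr|\le K\Delta+|\theta(m_s)-\theta(m_1)|.
\]
Since both $\theta$ values are nonnegative, $|\theta(m_s)-\theta(m_1)|\le\max(\theta(m_s),\theta(m_1))\le\Omega(m_s)+\Omega(m_1)$. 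This is \eqref{eq:AMR}, and in fact a slightly sharper form of it.

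The argument has no genuine obstacle. The only points needing care are the endpoint convention of the half-open bins, which gives the strict lower bound $\iota(p)<K\log p$, and the fact that $S_K$ is defined only on integers supported on $(y,z]$. This is exactly why the bound is applied to $m_s$ rather than to $d_s$.
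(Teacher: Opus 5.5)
Your proof is correct and is essentially the paper's argument: the bin inequalities give $K\log m=S_K(m)+\theta(m)$ with $0\le\theta(m)\le\Omega(m)$, and then $\log d_s=\log r_s+\log m_s$ together with the triangle inequality yields the bound (your remark that $|\theta(m_s)-\theta(m_1)|\le\max(\theta(m_s),\theta(m_1))$ even gives a slightly sharper bound than the paper's $\rho_K(m_s)+\rho_K(m_1)$). One small slip is harmless: $\iota(p)<K\log p\le\iota(p)+1$ means $0<K\log p-\iota(p)\le1$, not $0\le K\log p-\iota(p)<1$ as you wrote, but either version gives $\theta(m)\in[0,\Omega(m)]$, so nothing downstream changes.
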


\begin{proof}
Let $p$ be a medium prime. Since the prime bins $\cP_i$ are pairwise
disjoint, there is a unique index $i$ such that $p\in\cP_i$, and by
definition $\iota(p)=i$. From $\cP_i=\bigl(\ee^{i/K},\ee^{(i+1)/K}\bigr],$
we have 
$$
\ee^{i/K}<p\leq\ee^{(i+1)/K}.
$$
Taking logarithms and multiplying by $K>0$ gives
$$
i<K\log p\leq i+1.
$$
Since $\iota(p)=i$, it follows that
\[
0<K\log p-\iota(p)\leq1.
\]
We therefore define $$\rho(p)=K\log p-\iota(p).$$
Then $0<\rho(p)\leq1,$ and $$K\log p=\iota(p)+\rho(p).$$

Let $m$ be a positive integer whose prime divisors all lie in $(y,z]$.
By unique prime factorization, we have $m=\prod_{p\mid m}p^{v_p(m)}.$
Taking logarithms, we obtain
$$\log m=\sum_{p\mid m}v_p(m)\log p.$$ Multiplying both sides by $K$ gives
\begin{align}
K\log m
&=
\sum_{p\mid m}v_p(m)\bigl(\iota(p)+\rho(p)\bigr) \notag \\
&=\sum_{p\mid m}v_p(m)\iota(p)+\sum_{p\mid m}v_p(m)\rho(p) \notag \\
&=S_K(m)+\rho_K(m),
\label{equ-2}
\end{align}
where $\rho_K(m)=\sum_{p\mid m}v_p(m)\rho(p).$
Since $0<\rho(p)\leq1$, it follows that
\begin{equation*}
0\leq\rho_K(m)
\leq
\sum_{p\mid m}v_p(m)
=
\Omega(m).
\end{equation*}

Fix $s\in\{2,\ldots,k\}$. Combining the preceding identities, we have
\begin{align*}
S_K(m_s)-S_K(m_1)
+
K\log\frac{r_s}{r_1}
&=
K\log\frac{m_s}{m_1}
-
\bigl(\rho_K(m_s)-\rho_K(m_1)\bigr)
+
K\log\frac{r_s}{r_1}\\
&=
K\left(
\log\frac{d_s}{d_1}
-
\log\frac{r_s}{r_1}
\right)
-
\bigl(\rho_K(m_s)-\rho_K(m_1)\bigr)
+
K\log\frac{r_s}{r_1}\\
&=
K\log\frac{d_s}{d_1}
-
\bigl(\rho_K(m_s)-\rho_K(m_1)\bigr).
\end{align*}
Here, the first equality follows from \eqref{equ-2}. The second equality follows from $m_s=d_s/r_s$ and $m_1=d_1/r_1$.

Taking absolute values and applying the triangle inequality gives
\begin{align*}
\left|
S_K(m_s)-S_K(m_1)
+
K\log\frac{r_s}{r_1}
\right|
&\leq
K\left|\log\frac{d_s}{d_1}\right|
+
\rho_K(m_s)+\rho_K(m_1)\\
&\leq
K\Delta+\Omega(m_s)+\Omega(m_1).
\end{align*}
This proves \eqref{eq:AMR}.
\end{proof}

Let $X$ be a sufficiently large positive integer, and let
$\mathbf n$ be chosen uniformly from $\{1,\ldots,X\}$. Thus, $\Pp(\mathbf n=n)=\frac1X$ for $1\leq n\leq X.$
For every positive integer $n$,
\[
\Omega(n)=\sum_p v_p(n)=\sum_p\sum_{\substack{j\geq1\\p^j\mid n}}1.
\]
Indeed, if $v_p(n)=\nu$, then $p^j\mid n$ holds precisely for $1\leq j\leq\nu$, so the prime $p$ contributes $\nu$ terms to the last sum.

By linearity of expectation, 
$$
\E\Omega(\mathbf n)=\sum_{p^j\leq X}
\Pp\bigl(p^j\mid\mathbf n\bigr).
$$
Since exactly $\lfloor X/p^j\rfloor$ integers in $\{1,\ldots,X\}$
are divisible by $p^j$, it follows that
$$
\Pp\bigl(p^j\mid\mathbf n\bigr)
=\frac{\lfloor X/p^j\rfloor}{X}
\leq \frac1{p^j}.
$$
Consequently, $\E\Omega(\mathbf n) \leq \sum_{p^j\leq X}\frac1{p^j}.$
By Mertens' theorem, we have
\[
\sum_{p\leq X}\frac1p
=
\log\log X+O(1),
\]
while $\sum_p\sum_{j\geq2}\frac1{p^j}=\sum_p\frac1{p(p-1)}=O(1).$
It follows that 
$$
\E\Omega(\mathbf n)
\leq \log\log X+O(1).
$$
Markov's inequality therefore gives
\begin{equation}\label{eq:Omega-good}
\begin{aligned}
\Pp\bigl(
\Omega(\mathbf n)>(\log\log X)^2
\bigr) \leq \frac{\E\Omega(\mathbf n)}{(\log\log X)^2}
\leq \frac{\log\log X+O(1)}{(\log\log X)^2}
\ll \frac1{\log\log X} =o(1).
\end{aligned}
\end{equation}
Thus, all but $o(X)$ integers $n\leq X$ satisfy
$\Omega(n)\leq(\log\log X)^2$.

Fix such an integer $n\in(X/2,X]$, and suppose that
$d_1<\cdots<d_k\mid n$ satisfy $d_k
\leq d_1\bigl(1+(\log n)^{-a}\bigr).$
Set 
$$
\Delta=\max_{1\leq r,s\leq k}|\log d_r-\log d_s|.
$$
Since the divisors are ordered, it follows that
\[
\Delta=\log\frac{d_k}{d_1} \leq
\log\bigl(1+(\log n)^{-a}\bigr)
\ll(\log X)^{-a}.
\]
Since $K=(\log X)^a$ in \eqref{eq:bin-parameters}, this gives $K\Delta\ll1.$

Write $d_s=r_sm_s$ according to
\eqref{eq:external-medium-decomp}. Since $m_s\mid d_s\mid n$,
\[
\Omega(m_s)
\leq
\Omega(n)
\leq
(\log\log X)^2
\qquad
(1\leq s\leq k).
\]
Define the affine target determined by the external parts by
\begin{equation}\label{eq:external-target}
\tau(r_1,\ldots,r_k)=\left(
-K\log\frac{r_2}{r_1}, \ldots,-K\log\frac{r_k}{r_1}\right) \in\R^{k-1}.
\end{equation}
Choose a sufficiently large constant $C_{a,k}>0$ and set
$W=
C_{a,k}\bigl(1+(\log\log X)^2\bigr).$
Proposition~\ref{prop:AMR} then gives, for every $2\leq s\leq k$,
\[
\left|
S_K(m_s)-S_K(m_1)
-
\tau_s(r_1,\ldots,r_k)
\right|
\leq W.
\]
Equivalently,
\[
\bigl(
S_K(m_2)-S_K(m_1),
\ldots,
S_K(m_k)-S_K(m_1)
\bigr)
\in
\tau(r_1,\ldots,r_k)+[-W,W]^{k-1}.
\]
Since $D=(\log X)^{1+a+o(1)},$ we have $\log W=o(\log D).$
Therefore, $W=D^{o(1)}.$

We finally relate the lower endpoint $i_1$ of the medium-bin interval
to the parameter $D$. For sufficiently large $X$, define
\begin{equation}\label{eq:cX-definitions}
c_X=\frac{\log i_1}{\log D},
\qquad
\widehat c_X=\frac{\log(i_1-1)}{\log D}.
\end{equation}

Thus, $D^{c_X}=i_1$ and $D^{\widehat c_X}=i_1-1.$
From \eqref{eq:bin-parameters}, $i_1=(\log X)^{a+o(1)}$ and $D=(\log X)^{1+a+o(1)}.$
Consequently,
\[
c_X
=
\frac{\log i_1}{\log D}
=
\frac{a}{1+a}+o(1).
\]
Moreover, since $i_1\to\infty$,
\[
\log(i_1-1)
=
\log i_1+\log\left(1-\frac1{i_1}\right)
=
\log i_1+o(1),
\]
and therefore $\widehat c_X=\frac{a}{1+a}+o(1).$

Since $\beta_k<1$, the assumption
\eqref{eq:supercritical-a} is equivalent to
\begin{equation}\label{eq:conversion}
\frac{a}{1+a}>\beta_k.
\end{equation}
Define $\delta=\frac14\left(\frac{a}{1+a}-\beta_k\right)>0.$
Since both $c_X$ and $\widehat c_X$ converge to
$a/(1+a)=\beta_k+4\delta$, for all sufficiently large $X$, we have
\begin{equation}\label{eq:cX-supercritical}
c_X,\widehat c_X
\geq
\beta_k+2\delta.
\end{equation}

Finally, $D^{\widehat c_X}=i_1-1$, so
$(D^{\widehat c_X},D]\cap\Z=[i_1,D]\cap\Z.$

\subsection{Squarefree medium parts and bin occupancy}

We now show that, outside a negligible set of integers, the medium part
$m(n)$ is squarefree and no two medium prime divisors of $n$ lie in the same
bin.

\begin{lemma}\label{lem:MOP}
For all but $o(X)$ integers $n\leq X$, the following properties hold:
\begin{enumerate}[label=\textup{(\roman*)},leftmargin=2.2em]
\item for every $i\in[i_1,D]\cap\Z$, at most one prime
$p\in\cP_i$ divides $n$;
\item if $p$ is a medium prime dividing $n$, then $p^2\nmid n$.
\end{enumerate}
\end{lemma}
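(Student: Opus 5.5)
We bound the number of exceptional integers by a union bound over pairs of primes in a common bin and over squares of medium primes. For (ii), an integer $n\leq X$ violating it is divisible by $p^2$ for some medium prime $p>y$. The number of such $n$ is at most
\[
\sum_{p>y}\frac{X}{p^2}\ll\frac{X}{y},
\]
which is $o(X)$ by \eqref{eq:y-z-asymp}, since $y\to\infty$.

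For (i), an integer violating it is divisible by $pq$ for two distinct primes $p<q$ lying in a common bin $\cP_i$, with $i_1\leq i\leq D$. The number of $n\leq X$ divisible by $pq$ is at most $X/(pq)$. Summing over pairs in each bin gives at most
\[
X\sum_{i=i_1}^{D}\frac12\Bigl(\sum_{p\in\cP_i}\frac1p\Bigr)^2
=\frac X2\sum_{i=i_1}^{D}R_i^2 .
\]
By \eqref{eq:R-i-asymp}, $R_i\ll 1/i$ uniformly in $i_1\leq i\leq D$. Hence this is
\[
\ll X\sum_{i\geq i_1}i^{-2}\ll X/i_1 .
\]
Since $i_1=(\log X)^{a+o(1)}\to\infty$, this is $o(X)$.

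Adding the two bounds proves the lemma. The only input requiring care is the uniform estimate $R_i\ll1/i$. This is already \eqref{eq:R-i-asymp}, derived from the prime number theorem with error term; it needs $i_1/K=(\log\log X)^3$ to be large enough that the error $\exp(-c_0\sqrt{i/K})$ is dominated by $i^{-2}$, and this was verified there. So I expect no real obstacle.

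The products $pq\leq z^2\leq X$ need not be checked: the bound $\lfloor X/(pq)\rfloor\leq X/(pq)$ holds regardless of their size.
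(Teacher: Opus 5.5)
Your proposal is correct and matches the paper's proof essentially verbatim. For (i) you union-bound over pairs $p<q$ in a common bin, giving $\frac12\sum_{i}R_i^2\ll 1/i_1$ via \eqref{eq:R-i-asymp}. For (ii) you union-bound over squares of medium primes, giving $\sum_{p>y}p^{-2}\ll 1/y$. The paper phrases this in terms of a uniformly random $\mathbf n\le X$, but the counting is identical.
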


\begin{proof}
Let $\mathbf n$ be chosen uniformly from $\{1,\ldots,X\}$. We first estimate the probability that property \textup{(i)} fails.
For a fixed $i\in[i_1,D]\cap\Z$, we have
\begin{align*}
\Pp\bigl(
\exists\,p,q\in\cP_i,\ p<q,\ pq\mid\mathbf n
\bigr)
&\leq \sum_{\substack{p,q\in\cP_i\\p<q}}
\Pp(pq\mid\mathbf n)=\sum_{\substack{p,q\in\cP_i\\p<q}}
\frac{\lfloor X/(pq)\rfloor}{X}\\
&\leq
\sum_{\substack{p,q\in\cP_i\\p<q}}
\frac1{pq}\\
&\leq
\frac12
\left(
\sum_{p\in\cP_i}\frac1p
\right)^2
=
\frac12R_i^2.
\end{align*}
The equality in the first line holds
because there are exactly $\lfloor X/(pq)\rfloor$ multiples of $pq$
in $\{1,\ldots,X\}$. The next inequality follows from
$\lfloor X/(pq)\rfloor\leq X/(pq)$. Finally,
\[
\sum_{\substack{p,q\in\cP_i\\p<q}}\frac1{pq}
=
\frac12
\left[
\left(\sum_{p\in\cP_i}\frac1p\right)^2
-
\sum_{p\in\cP_i}\frac1{p^2}
\right]
\leq
\frac12
\left(\sum_{p\in\cP_i}\frac1p\right)^2,
\]
and the last equality uses the definition
$R_i=\sum_{p\in\cP_i}p^{-1}$.

Summing over all medium bins and using
$R_i\ll i^{-1}$ from \eqref{eq:R-i-asymp}, we obtain
\begin{align*}
\Pp\bigl(\text{\textup{(i)} fails}\bigr)
&\leq
\frac12\sum_{i=i_1}^{D}R_i^2\ll
\sum_{i=i_1}^{\infty}\frac1{i^2}\ll \frac1{i_1}=o(1),
\end{align*}
because $i_1\to\infty$ as $X\to\infty$.

We next consider property \textup{(ii)}. If it fails, then
$p^2\mid\mathbf n$ for some medium prime $p$. Hence, again by the
union bound,
\begin{align*}
\Pp\bigl(\text{\textup{(ii)} fails}\bigr)
&\leq \sum_{p\ \mathrm{medium}}
\Pp(p^2\mid\mathbf n)
\leq \sum_{p\ \mathrm{medium}}\frac1{p^2}\\
&\leq \sum_{m>y}\frac1{m^2} \ll
\frac1y
=o(1),
\end{align*}
where $y\to\infty$ by \eqref{eq:y-z-asymp}.
Thus the probability that either property fails is $o(1)$.

Let $\mathcal M_X$ denote the set of integers $n\leq X$ for which at
least one of \textup{(i)} and \textup{(ii)} fails. Since $\mathbf n$
is chosen uniformly from $\{1,\ldots,X\}$, we have
$$
\Pp(\mathbf n\in\mathcal M_X)=\frac{|\# \mathcal M_X|}{X}.
$$
The preceding estimates show that
$\Pp(\mathbf n\in\mathcal M_X)=o(1)$. Therefore,
\[
|\# \mathcal M_X|
=
X\,\Pp(\mathbf n\in\mathcal M_X)
=
o(X).
\]
Hence both \textup{(i)} and \textup{(ii)} hold for all but $o(X)$
integers $n\leq X$.
\end{proof}

Assume that $n$ satisfies the conclusions of
Lemma~\ref{lem:MOP}. For each $i\in I(n)$, let $p_i$ denote the unique
prime in $\cP_i$ dividing $n$. By Lemma~\ref{lem:MOP}\textup{(ii)},
each $p_i$ occurs in the prime factorization of $n$ with exponent one.

For every subset $A\subseteq I(n)$, define $m_A=\prod_{i\in A}p_i,$
where the empty product is understood to be $1$.  Conversely, every divisor $m\mid n$ supported on medium primes is of this form for a unique subset
\begin{equation}\label{eq:A-m-definition}
A(m)=\{i\in I(n):p_i\mid m\}.
\end{equation}

Since $m=\prod_{i\in A(m)}p_i$, it follows that each prime $p_i$ with
$i\in A(m)$ occurs in $m$ with exponent one. Using
$\iota(p_i)=i$, we obtain
\begin{equation}\label{eq:medium-subset-sum}
S_K(m)
=
\sum_{i\in A(m)}\iota(p_i)
=
\sum_{i\in A(m)}i
=
\Sigma(A(m)).
\end{equation}

\subsection{Repeated medium parts}

A close-divisor witness with fewer than $k$ distinct medium parts
necessarily contains two divisors with the same medium part. We show
that this can occur only on a density-zero set, using only the
sparsity of pairs of close external divisors.

\begin{lemma}\label{lem:EPS}
Let $\delta_X=(\log X)^{-a}.$
For each fixed constant $C>0$, let $\mathcal B_X(C)$ denote the set of
integers $n\leq X$ having two distinct divisors $r<t$ such that both
$r$ and $t$ are supported entirely on external primes and
$t\leq(1+C\delta_X)r$. Then
\[
\#\mathcal B_X(C)=o(X)
\qquad\text{as }X\to\infty.
\]
\end{lemma}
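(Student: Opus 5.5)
We bound the density of $\mathcal B_X(C)$ by a first-moment count over pairs $(r,t)$ of external-supported integers with $r<t\leq(1+C\delta_X)r$. Such a pair can divide $n$ only if $\lcm(r,t)\mid n$. Hence
\[
\frac{\#\mathcal B_X(C)}{X}\leq\sum_{\substack{r<t\leq(1+C\delta_X)r\\ r,t\ \text{external}}}\frac1{\lcm(r,t)}.
\]
We write $g=\gcd(r,t)$, $r=gr'$, $t=gt'$, with $\gcd(r',t')=1$. Then $\lcm(r,t)=gr't'$ and $r'<t'\leq(1+C\delta_X)r'$, and all of $g,r',t'$ are supported on external primes. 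The sum factors as
\[
\Bigl(\sum_{g\ \text{ext},\,g\leq X}\frac1g\Bigr)\Bigl(\sum_{\substack{r'<t'\leq(1+C\delta_X)r'\\ r't'\leq X}}\frac1{r't'}\Bigr).
\]
The condition $t'\neq r'$ together with closeness forces $t'-r'\geq1$, so $r'\geq 1/(C\delta_X)=(\log X)^a/C$.

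For the first factor, we note that the external primes are those $\leq y$ or $>z$. Mertens' theorem gives
\[
\sum_{g\ \text{ext}}\frac1g\leq\prod_{p\leq y}\Bigl(1-\frac1p\Bigr)^{-1}\prod_{z<p\leq X}\Bigl(1-\frac1p\Bigr)^{-1}\ll\log y\cdot\frac{\log X}{\log z}\ll(\log\log X)^3\log\log\log X,
\]
by \eqref{eq:y-z-asymp}. The same external weight also appears, more crudely, in the second factor. We treat the second factor by the standard close-pair bound: for fixed $r'$, the number of $t'\in(r',(1+C\delta_X)r']$ is at most $C\delta_Xr'+1\leq 2C\delta_X r'$, since $r'\geq1/(C\delta_X)$. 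This bound alone would give $\sum_{r'}\delta_X/r'\ll\delta_X\log X$, which is too large. We must therefore use that $t'$ is also external.

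The second factor is the \emph{main obstacle}. It splits as follows.

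\textbf{Both $r',t'\leq X^{\epsilon}$ and $y$-smooth.} Since $r'\geq(\log X)^a/C$, we use Rankin's trick to count $y$-smooth numbers up to $u$. The number of $y$-smooth $r'$ is at most $u^{\sigma}\prod_{p\leq y}(1-p^{-\sigma})^{-1}$ with $\sigma$ slightly below $1$. Summing $\delta_X r'$ weighted by $1/(r't')\approx 1/r'^2$ over $y$-smooth $r'\geq(\log X)^a/C$ then gives $\delta_X\cdot(\log X)^{o(1)}$, because $\log y=(\log\log X)^3$ and the count of $y$-smooth numbers up to $u$ is $u^{o(1)}$ only for $u$ quasi-polynomial in $y$. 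We choose $\sigma=1-1/\log y$ so that the Euler product is $\ll\log y$ and the total is $\ll\delta_X(\log y)^{O(1)}$.

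\textbf{At least one of $r',t'$ has a prime factor $>z$.} Here we use coprimality: the two large primes are distinct. For a large prime $P\mid t'$ we write $t'=Pt''$ and count $P$ in an interval of length $\delta_Xr'/t''$ using Brun--Titchmarsh. This gains a factor $1/\log(\delta_X r'/t'')$, and $\log z\gg\log X/\log\log\log X$ is available. I would try to show that this contributes $\ll\delta_X(\log X)\cdot(\log\log\log X)/\log z\cdot(\log\log X)^{O(1)}$. The remaining issue is whether the loss $\log X/\log z$ times the gcd factor is beaten by $\delta_X=(\log X)^{-a}$. It is, since all losses are powers of $\log\log X$.

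Combining the two cases with the gcd factor gives a total of $(\log X)^{-a}(\log\log X)^{O(1)}=o(1)$, which proves the lemma.
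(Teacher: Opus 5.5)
Your setup is exactly the paper's: a first moment over pairs, with $g=\gcd(r,t)$, $r=gr'$, $t=gt'$ and $1/\lcm(r,t)=1/(gr't')$. The problem is that the ``main obstacle'' you then identify does not exist, and the case analysis you build around it is not a complete argument. Your estimate $\sum_{r'}\delta_X/r'\ll\delta_X\log X$ comes from summing over all $r'\leq X$. But $r'=r/g$ is itself supported on external primes, so the sum over $r'$ obeys the same bound you already proved for the $g$-sum:
\[
\sum_{r'}\frac1{r'}\leq Z_X=\prod_{\substack{p\leq X\\ p\ \mathrm{external}}}\Bigl(1-\frac1p\Bigr)^{-1}\ll(\log y)\frac{\log X}{\log z}\ll(\log\log X)^3\log\log\log X .
\]
For the inner sum, note that $r'$ is an integer and $t'>r'$. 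Hence
\[
\sum_{r'<t'\leq(1+C\delta_X)r'}\frac1{t'}\leq\frac{\lfloor C\delta_Xr'\rfloor}{r'}\leq C\delta_X,
\]
and this uses no information about $t'$ at all. Combining the two bounds gives
\[
\Pp\bigl(\mathbf n\in\mathcal B_X(C)\bigr)\leq C\delta_XZ_X^2\ll(\log X)^{-a}(\log\log X)^6(\log\log\log X)^2=o(1),
\]
which is precisely the paper's proof. Primes $>z$ dividing $r'$ cost only the factor $\log X/\log z\ll\log\log\log X$ in the Euler product, so Brun--Titchmarsh has nothing to gain.

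As written, your replacement argument also has genuine holes.
\begin{itemize}
\item The two cases are not exhaustive: pairs of $y$-smooth $r',t'$ with one of them exceeding $X^{\eps}$ lie in neither.
\item The Rankin-trick discussion never produces a usable estimate. It is not needed either: summing $\delta_X/r'$ over $y$-smooth $r'$ already gives $\delta_X\prod_{p\leq y}(1-1/p)^{-1}\ll\delta_X\log y$.
\item The large-prime case is only announced (``I would try to show''), not proved.
\end{itemize}
These points could be patched, but the right repair is to drop the case split and restrict the $r'$-sum to external integers as above.
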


\begin{proof}
Let
\[
\mathcal S_X=
\left\{
m\in\N:
\text{every prime divisor }p\text{ of }m
\text{ satisfies }p\leq y\text{ or }z<p\leq X
\right\}.
\]
Thus $\mathcal S_X$ is the set of positive integers supported on
external primes not exceeding $X$. In particular, $1\in\mathcal S_X$.
Define $Z_X=\sum_{m\in\mathcal S_X}\frac1m.$
Let
\[
\mathcal Q_X=
\left\{
p\leq X:
p\text{ is an external prime}
\right\}.
\]
By the definition of $\mathcal S_X$, every $m\in\mathcal S_X$ has a
unique representation $m=\prod_{p\in\mathcal Q_X}p^{\nu_p}$ with $\nu_p\in\Z_{\geq0}.$
Therefore, by unique prime factorization and the geometric-series identity, we have 
\begin{align*}
Z_X
&=\sum_{(\nu_p)_{p\in\mathcal Q_X}}
\frac{1}{\displaystyle\prod_{p\in\mathcal Q_X}p^{\nu_p}}
=\sum_{(\nu_p)_{p\in\mathcal Q_X}}
\prod_{p\in\mathcal Q_X}\frac1{p^{\nu_p}}\\
&=\prod_{p\in\mathcal Q_X}\left(\sum_{\nu=0}^{\infty}\frac1{p^\nu}\right)
=\prod_{p\in\mathcal Q_X}
\left(1-\frac1p\right)^{-1}\\
&=\prod_{\substack{p\leq X\\p\ \mathrm{external}}}
\left(1-\frac1p\right)^{-1}.
\end{align*}

The external primes not exceeding $X$ are precisely the primes
satisfying $p\leq y$ or $z<p\leq X$. Hence
\[
Z_X
=
\prod_{p\leq y}
\left(1-\frac1p\right)^{-1}
\prod_{z<p\leq X}
\left(1-\frac1p\right)^{-1}.
\]

By Mertens' product theorem, as $T\to\infty$, we have
$$
\prod_{p\leq T}
\left(1-\frac1p\right)^{-1}
=\ee^\gamma\log T\bigl(1+o(1)\bigr),
$$
where $\gamma$ is the Euler--Mascheroni constant. Hence
$\prod_{p\leq y}
\left(1-\frac1p\right)^{-1} \ll \log y.$
Moreover,
\begin{align*}
\prod_{z<p\leq X}
\left(1-\frac1p\right)^{-1}
=\frac{\displaystyle\prod_{p\leq X}\left(1-\frac1p\right)^{-1}}{\displaystyle\prod_{p\leq z}\left(1-\frac1p\right)^{-1}}
\ll\frac{\log X}{\log z}.
\end{align*}
Therefore, $Z_X \ll (\log y)\frac{\log X}{\log z}.$

By the definitions of $y$, $z$, $i_1$, and $D$ in \eqref{eq:bin-parameters} and \eqref{eq:y-z-asymp}, we have $\log y \ll(\log\log X)^3,$
while
\[
\log z
=
\frac{D+1}{K}
=
\frac{\log X}{2\log\log\log X}
+
O(K^{-1}).
\]
In particular, for all sufficiently large $X$,
$\log z \gg \frac{\log X}{\log\log\log X},$
and hence 
$$
\frac{\log X}{\log z} \ll \log\log\log X.
$$
Combining these estimates gives
\[
Z_X
\ll
(\log\log X)^3\log\log\log X.
\]

Let $\mathbf n$ be chosen uniformly from $\{1,\ldots,X\}$. Since 
$$
\Pp\bigl(\mathbf n\in\mathcal B_X(C)\bigr)=\frac{|\# \mathcal B_X(C)|}{X},
$$
it is enough to prove that
$\Pp(\mathbf n\in\mathcal B_X(C))=o(1)$.
Suppose that $\mathbf n\in\mathcal B_X(C)$, and choose corresponding external divisors $r<t$. Put
\[
g=\gcd(r,t),\qquad r=gu,\qquad t=gv.
\]
Thus $g,u,v\in\mathcal S_X$. Moreover,
$(u,v)=1$ and $u<v\leq(1+C\delta_X)u$.
Since $r,t\mid\mathbf n$, we have $guv\mid\mathbf n$.
Hence
\[
\Pp(guv\mid\mathbf n)
=
\frac{\lfloor X/(guv)\rfloor}{X}
\leq
\frac1{guv}.
\]

By the union bound,
\begin{align*}
\Pp\bigl(\mathbf n\in\mathcal B_X(C)\bigr)
&\leq
\sum_{\substack{
g,u,v\in\mathcal S_X\\
(u,v)=1,\ u<v\leq(1+C\delta_X)u
}}\frac1{guv}\\
&\leq \sum_{g\in\mathcal S_X}\frac1g \sum_{u\in\mathcal S_X}\frac1u \sum_{\substack{
v\in\mathcal S_X\\u<v\leq(1+C\delta_X)u}}
\frac1v.
\end{align*}

For each fixed $u\in\mathcal S_X$, we have
$$
\sum_{\substack{
v\in\mathcal S_X\\u<v\leq(1+C\delta_X)u
}}\frac1v \leq \frac{\lfloor C\delta_Xu\rfloor}{u}
\leq C\delta_X.
$$
Therefore,
\begin{align}
\Pp\bigl(\mathbf n\in\mathcal B_X(C)\bigr)
& \leq C\delta_X
\left(\sum_{m\in\mathcal S_X}\frac1m\right)^2
=C\delta_XZ_X^2 \notag\\
& \ll_C
(\log X)^{-a}
(\log\log X)^6
(\log\log\log X)^2
=o(1) \notag.
\end{align}
Since $\mathbf n$ is uniformly distributed on $\{1,\ldots,X\}$, it follows that
\[
\frac{|\#\mathcal B_X(C)|}{X}
=
\Pp\bigl(\mathbf n\in\mathcal B_X(C)\bigr)
=
o(1),
\]
and hence $|\#\mathcal B_X(C)|=o(X)$.
\end{proof}

\begin{corollary}\label{cor:DMC}
Fix $C>0$. For all but $o(X)$ integers $n\leq X$, every collection of divisors $d_1<\cdots<d_k\mid n$ satisfying
$d_k\leq(1+C\delta_X)d_1$
has pairwise distinct medium parts. More precisely, write
$d_s=r_sm_s(1\leq s\leq k),$
where $r_s$ is supported on external primes and $m_s$ is supported on
medium primes. Then $m_1,\ldots,m_k$ are pairwise distinct.
\end{corollary}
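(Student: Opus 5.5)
The corollary follows from Lemma~\ref{lem:EPS} by showing that a repeated medium part produces two distinct close divisors supported on external primes. I will take the exceptional set to be $\mathcal B_X(C)$ together with the integers $n\le X$ with $n=1$. Fix $n\le X$ outside this set, and suppose $d_1<\cdots<d_k\mid n$ satisfy $d_k\le(1+C\delta_X)d_1$. Write $d_s=r_sm_s$ using the unique decomposition \eqref{eq:external-medium-decomp}.

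Suppose, for contradiction, that $m_s=m_t$ for some $s<t$. Then $r_s=d_s/m_s$ and $r_t=d_t/m_t$. Since $d_s<d_t$ and $m_s=m_t$, we get $r_s<r_t$. Both are divisors of $n$, because $r_s\mid d_s\mid n$ and likewise for $r_t$, and both are supported on external primes by definition. Moreover
\[
\frac{r_t}{r_s}=\frac{d_t}{d_s}\le\frac{d_k}{d_1}\le 1+C\delta_X .
\]
Hence $r:=r_s$ and $t:=r_t$ are two distinct external-supported divisors of $n$ with $t\le(1+C\delta_X)r$. This means $n\in\mathcal B_X(C)$, which is a contradiction. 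Therefore $m_1,\dots,m_k$ are pairwise distinct.

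By Lemma~\ref{lem:EPS}, $\#\mathcal B_X(C)=o(X)$, so the exceptional set has size $o(X)$. The argument holds uniformly for every collection of divisors of a non-exceptional $n$, because the exceptional set does not depend on the choice of the divisors.

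The only point needing care is the uniqueness and coprimality of the factorization $d=r(d)m(d)$. This guarantees that $m_s=m_t$ forces $r_s<r_t$ exactly rather than merely up to common factors, and it is already recorded in \eqref{eq:external-medium-decomp}. I do not expect any real obstacle; the corollary is a direct reformulation of Lemma~\ref{lem:EPS}.
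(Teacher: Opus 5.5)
Your proposal is correct and is essentially the paper's own proof: a repeated medium part $m_s=m_t$ forces $r_s<r_t$ with $r_t/r_s=d_t/d_s\le 1+C\delta_X$, which places $n$ in $\mathcal B_X(C)$, and Lemma~\ref{lem:EPS} gives $\#\mathcal B_X(C)=o(X)$. The extra exclusion of $n=1$ is harmless but unnecessary. You also reuse the letter $t$ both as an index and as the larger divisor; that clash is only cosmetic.
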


\begin{proof}
Let $n\leq X$ satisfy $n\notin\mathcal B_X(C)$. Suppose, to the
contrary, that $m_i=m_j$ for some $1\leq i<j\leq k$. Since $d_i<d_j$, we have $r_i<r_j$.
Moreover,
\[
\frac{r_j}{r_i}
=
\frac{d_j/m_j}{d_i/m_i}
=
\frac{d_j}{d_i}
\leq
\frac{d_k}{d_1}
\leq
1+C\delta_X.
\]
Thus $r_i$ and $r_j$ are two distinct divisors of $n$, both supported
on external primes, such that $r_j\leq(1+C\delta_X)r_i.$
By the definition of $\mathcal B_X(C)$, this implies
$n\in\mathcal B_X(C)$, a contradiction. Hence
$m_1,\ldots,m_k$ are pairwise distinct. Since
$|\mathcal B_X(C)|=o(X)$ by Lemma~\ref{lem:EPS}, the result follows.
\end{proof}

\section{The range \texorpdfstring{$Q\leq X^{1-\eps}$}{Q at most X to the power 1-epsilon}}\label{sec:nonsat}

Let $n\in(X/2,X]$ admit divisors $d_1<\cdots<d_k\mid n$
satisfying $d_k\leq d_1\bigl(1+(\log n)^{-a}\bigr).$
For $1\leq s\leq k$, write $d_s=r_sm_s,$
where $r_s$ is supported on external primes and $m_s$ is supported on medium primes.
In this section, we consider the case in which
$m_1,\ldots,m_k$ are pairwise distinct.

Let $Q$ be the least positive integer divisible by each of
$r_1,\ldots,r_k$; that is,
\begin{equation}\label{eq:Q-def}
Q=
\min\left\{
t \in\N:
r_s\mid t
\text{ for every }1\leq s\leq k
\right\}.
\end{equation}
Since $r_s\mid d_s\mid n$ for every $1\leq s\leq k$, we have
$Q\mid n$. Moreover, every prime divisor of $Q$ is external.

For fixed external parts, the affine reduction allows us to apply
Theorem~\ref{thm:affine}. The following lemma is obtained by
summing the resulting estimates over the external tuples satisfying $Q\leq X^{1-\eps}$.

\begin{lemma}\label{lem:nonsat}
Let $\eps>0$ be a sufficiently small constant depending only on $k$.
The number of integers $n\in(X/2,X]$ for which there exist divisors
$d_1<\cdots<d_k\mid n$ satisfying
$d_k\leq d_1\bigl(1+(\log n)^{-a}\bigr),$
whose medium parts $m_1,\ldots,m_k$ are pairwise distinct and for which
the integer $Q$ defined in \eqref{eq:Q-def} satisfies $Q\leq X^{1-\eps},$
is $o(X)$ as $X\to\infty$.
\end{lemma}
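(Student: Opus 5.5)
The plan is a union bound over the external data, followed by a transfer to the logarithmic random set and an application of Theorem~\ref{thm:affine}.

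\textbf{Reduction.} By Lemma~\ref{lem:MOP} and \eqref{eq:Omega-good}, we may discard $o(X)$ integers and assume three things: $m(n)$ is squarefree, each bin contains at most one prime divisor of $n$, and $\Omega(n)\le(\log\log X)^2$. Fix $n$ and a witness $d_1<\cdots<d_k$. Proposition~\ref{prop:AMR} and \eqref{eq:medium-subset-sum} give
\[
|\Sigma(A_s)-\Sigma(A_1)-\tau_s|\le W \qquad (2\le s\le k),
\]
where $A_s=A(m_s)\subseteq I(n)$ and $\tau=\tau(r_1,\ldots,r_k)$. The sets $A_s$ are pairwise distinct because the $m_s$ are.

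Write $n=Qn'$. Since $Q\mid n$, the prime divisors of $Q$ are external, and the $m_s$ divide the medium part of $n$, which equals the medium part of $n'$. So $n$ lies in the exceptional set only if $Q\mid n$ for some admissible tuple $(r_1,\ldots,r_k)$ with lcm $Q\le X^{1-\eps}$, and in addition the occupied-bin set $I(n/Q)$ contains $k$ distinct subsets satisfying the affine inequalities with target $\tau(r_1,\ldots,r_k)$. The target depends only on the tuple. Summing over tuples, the count is at most
\[
\sum_{(r_s)}\#\bigl\{n'\le X/Q:\ I(n')\ \text{lies in the event}\ \mathcal N(\tau,W)\bigr\}.
\]

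\textbf{Key transfer step.} For $n'$ uniform in $[1,X/Q]$ with $X/Q\ge X^{\eps}$, and since $z=X^{o(1)}$, the events $\{p\mid n'\}$ for medium primes behave almost independently. By a standard sieve comparison (fundamental lemma / Kubilius model), the distribution of $I(n')$ is close to that of the random set in which each bin $i$ is independently occupied with probability $\approx R_i\approx 1/i$, by \eqref{eq:R-i-asymp}. The total variation error is $o(1)$, or better, the law is dominated up to a constant factor. This is the model $\mathcal A\cap[i_1,D]=\mathcal A\cap(D^{\widehat c_X},D]$, and by \eqref{eq:cX-supercritical} we have $\widehat c_X\ge\beta_k+2\delta$. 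Here $W=D^{o(1)}$, so Theorem~\ref{thm:affine}, which is uniform in $\tau$ and $c$, gives probability $\ll D^{-\eta+o(1)}+\exp(-\tfrac14\sqrt{\log D})$.

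I would implement the transfer as a multiplicative upper bound. For a fixed occupied set $T$ of bins, one has $\#\{n'\le Y: I(n')=T\}\ll Y\prod_{i\in T}R_i\prod_{i\notin T}(1-R_i)$ uniformly over $T$ with $\#T\le(\log\log X)^2$. This follows from Brun's or Selberg's sieve, using $\prod_{p\in\text{used}}p\le z^{(\log\log X)^2}\le Y^{1/2}$. To obtain it, enumerate the primes chosen in $T$ and sieve out the remaining medium primes. Summing this bound over the $T$ in the event gives $Y\cdot\Pp(\text{model event})$ up to $O(1)$. One way to obtain $\ll$ rather than a $(1+o(1))$-approximation is to compare the multiplicative weights directly, using $R_i=1/i+O(i^{-2})$ and $\sum_i i^{-2}<\infty$.

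\textbf{Summing over tuples, the main obstacle.} The total is then
\[
\ll X\bigl(D^{-\eta/2}+e^{-\frac14\sqrt{\log D}}\bigr)\sum_{(r_s)}\frac1Q.
\]
It remains to bound $\sum 1/Q$ over $k$-tuples of external integers with lcm $Q$. Each $Q$ has at most $\tau(Q)^k$ tuples of divisors, which is too lossy in general. It is better to use the restriction $\Omega(n)\le(\log\log X)^2$ together with the fact that the $r_s$ are pairwise within the factor $e^{\Delta}$ of each other modulo the $m_s$.

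First try: bound the number of tuples for each $Q$ by $2^{k\,\omega(Q)}$. Then $\sum_Q 2^{k\omega(Q)}/Q$ over external $Q$ is $\ll((\log y)\log X/\log z)^{2^k}=(\log\log X)^{O_k(1)}$, as in Lemma~\ref{lem:EPS}. This is negligible against $D^{-\eta/2}=(\log X)^{-\eta(1+a)/2+o(1)}$. The term $e^{-\frac14\sqrt{\log D}}$ also beats polylog-in-$\log\log X$ factors, since $\sqrt{\log D}\asymp\sqrt{\log\log X}$. That is just barely enough, and if it fails I would replace the regularity error by an exact model computation restricted to regular sets. The delicate points I expect are the uniformity of the sieve transfer in $Q$ (this is where $Q\le X^{1-\eps}$ is used) and the control of the external-tuple sum. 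The range $Q>X^{1-\eps}$ is left to Section~\ref{sec:sat}.
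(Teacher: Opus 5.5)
Your outline matches the paper's proof. You discard the exceptions of Lemma~\ref{lem:MOP} and \eqref{eq:Omega-good}, write $n=Q\ell$ with $I(n)=I(\ell)$, and get the affine condition from Proposition~\ref{prop:AMR}. You then transfer to $\mathcal A\cap(D^{\widehat c_X},D]$, apply Theorem~\ref{thm:affine} uniformly in $\tau$, and bound $\sum 1/Q$ by an Euler product of size $\exp(O_k(\log\log\log X))$. Your tuple count $2^{k\omega(Q)}$ is wrong at $p^\nu\,\|\,Q$ with $\nu\geq2$, where there are $(\nu+1)^k-\nu^k$ exponent vectors. The local factor is still $1+O_k(1/p)$, so the bound survives, and $\tau(Q)^k$ is not too lossy either. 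The real gap is the transfer step, which you flag as delicate but do not actually supply.

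\textbf{Why an $o(1)$ total-variation error fails.} That error is multiplied by $\sum 1/Q$, which is genuinely a positive power of $\log\log X$: the primes $p\leq y$ alone contribute $\gg(\log y)^{2^k-1}$. So the transfer error must be $o\bigl((\log\log X)^{-C_k}\bigr)$.

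\textbf{Why your multiplicative bound fails.} The bound $\#\{n'\leq Y: I(n')=T\}\ll Y\prod_{i\in T}R_i\prod_{i\notin T}(1-R_i)$ is false uniformly over $\#T\leq(\log\log X)^2$. Its justification also fails: $z=X^{1/(2\log\log\log X)+o(1)}$, so $z^{(\log\log X)^2}$ vastly exceeds $X$, not $Y^{1/2}$. For a concrete failure, take $T$ with $\ee^{\Sigma(T)/K}\in(Y/y,Y/2]$, which needs only $\#T\approx\log Y/\log z=O(\log\log\log X)$ bins. Then every cofactor $n'/\prod_{i\in T}p_i$ is below $y$ and so automatically free of medium primes. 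The count is therefore $\gg Y\prod_{i\in T}R_i$, which exceeds your bound by the factor $\prod_{i\notin T}(1-R_i)^{-1}\asymp D/i_1\to\infty$.

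\textbf{What is needed instead.} The sieve applies only when the medium part of $n'$ is at most, say, $Y^{1/2}$. The remaining $n'$ need a smooth-number bound of strength $u^{-cu}$, where $u=\log(X/Q)/\log z\geq(2\eps+o(1))\log\log\log X$. A bound of type $\ee^{-cu}=(\log\log X)^{-O(\eps)}$ would not beat $\sum 1/Q$ for small $\eps$. By contrast, $u^{-u}\leq\exp\{-c_\eps(\log\log\log X)\log\log\log\log X\}$ does.

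The paper gets exactly this from Ford's joint Poisson approximation \cite{FordPoisson}, with error $\ll\sum_{p>y}p^{-2}+u^{-u}$. It then couples $1-\ee^{-R_i}$ with $1/i$ coordinatewise, at cost $\ll 1/i_1$. This is precisely where the hypothesis $Q\leq X^{1-\eps}$ enters, and it is the ingredient missing from your proposal.
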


\begin{proof}
\setcounter{claim}{0}
By Lemma~\ref{lem:MOP} and \eqref{eq:Omega-good}, it suffices to
count those integers in the statement that satisfy both conclusions
of Lemma~\ref{lem:MOP} and
\begin{equation}\label{eq:nonsat-good-integers}
\Omega(n)\leq(\log\log X)^2,
\end{equation}
since the remaining integers number $o(X)$.

Fix an ordered tuple $(r_1,\ldots,r_k)$ of external parts, and put
$Q=\lcm(r_1,\ldots,r_k)$. Assume that $Q\leq X^{1-\eps}$.
Every integer $n$ under consideration is uniquely expressible as
$n=Q\ell$, where
$\ell\leq N=\lfloor X/Q\rfloor$. For sufficiently large $X$,
\begin{equation}\label{eq:N}
N\geq\frac XQ-1\geq X^\eps-1\geq\frac12X^\eps.
\end{equation}
Since every prime divisor of $Q$ is external, it follows that 
\begin{equation}\label{eq:nonsat-occupancy-equality}
I(n)=I(\ell).
\end{equation}

Put
\[
\tau=(\tau_2,\ldots,\tau_k)
=
\left(-K\log\frac{r_2}{r_1},\ldots,-K\log\frac{r_k}{r_1}\right),
\qquad
W=C_{a,k}\bigl(1+(\log\log X)^2\bigr),
\]
where $C_{a,k}>0$ is sufficiently large. In particular,
$W=D^{o(1)}$.
Let $\mathfrak G_\tau$ be the family of sets
$B\subseteq[i_1,D]\cap\Z$ containing pairwise distinct subsets
$A_1,\ldots,A_k$ such that
\begin{equation}\label{eq:nonsat-affine-condition}
\left|\Sigma(A_s)-\Sigma(A_1)-\tau_s\right|
\leq W
\qquad(2\leq s\leq k).
\end{equation}

\begin{claim}\label{clm:nonsat-affine}
For every $n=Q\ell$ under consideration,
$I(\ell)\in\mathfrak G_\tau$.
\end{claim}

\begin{proof}
For $1\leq s\leq k$, let $A_s=A(m_s)$ be the subset corresponding
to $m_s$ in \eqref{eq:A-m-definition}.
Lemma~\ref{lem:MOP} and \eqref{eq:medium-subset-sum} give
\[
A_s\subseteq I(n)=I(\ell),
\qquad
S_K(m_s)=\Sigma(A_s).
\]
Since $m_1,\ldots,m_k$ are pairwise distinct, it follows that $A_1,\ldots,A_k$ are  pairwise distinct.

The closeness condition implies
\[
\Delta=\max_{1\leq r,s\leq k}|\log d_r-\log d_s|
=\log\frac{d_k}{d_1}
\leq\log\bigl(1+(\log n)^{-a}\bigr)
\ll(\log X)^{-a}.
\]
Thus $K\Delta\ll1$, since $K=(\log X)^a$.
Moreover, \eqref{eq:nonsat-good-integers} gives
$\Omega(m_s)\leq\Omega(n)\leq(\log\log X)^2$.
By Proposition~\ref{prop:AMR},
\[
\left|
S_K(m_s)-S_K(m_1)+K\log\frac{r_s}{r_1}
\right|
\leq K\Delta+\Omega(m_s)+\Omega(m_1)
\leq W.
\]
Substituting $S_K(m_s)=\Sigma(A_s)$ and
$\tau_s=-K\log(r_s/r_1)$ proves
\eqref{eq:nonsat-affine-condition}.
\end{proof}

For a sufficiently small constant $c_\eps>0$, define
\begin{equation}\label{eq:nonsat-error}
E_X=
\sum_{p>y}\frac1{p^2}
+\frac1{i_1}
+\exp\left\{
-c_\eps(\log\log\log X)\log\log\log\log X
\right\}.
\end{equation}

\begin{claim}\label{clm:nonsat-fixed-tuple}
There exists $\eta=\eta(k,\delta)>0$ such that the number of
integers $n$ under consideration is at most
\begin{equation}\label{eq:fixed-external-count}
\frac XQ
\left(
D^{-\eta+o(1)}
+O_k\left(\exp\left\{-\frac14\sqrt{\log D}\right\}\right)
+O(E_X)
\right),
\end{equation}
uniformly in $(r_1,\ldots,r_k)$ with $Q\leq X^{1-\eps}$.
\end{claim}

\begin{proof}
Let $\mathbf t$ be uniformly distributed on $\{1,\ldots,N\}$.
By \eqref{eq:bin-parameters} and $z=\ee^{(D+1)/K}$,
\[
\frac{\log X}{2\log\log\log X}
<\log z
\leq\frac{\log X}{2\log\log\log X}+\frac1K.
\]
Together with \eqref{eq:N}, this gives $z<N$ for sufficiently
large $X$ and
\[
u=\frac{\log N}{\log z}
\geq
\frac{\eps\log X-\log2}
{\dfrac{\log X}{2\log\log\log X}+\dfrac1K}
=(2\eps+o(1))\log\log\log X.
\]
Consequently, choosing $c_\eps$ sufficiently small, we obtain
\begin{equation}\label{eq:u-error-bound}
u^{-u}\leq
\exp\left\{
-c_\eps(\log\log\log X)\log\log\log\log X
\right\}.
\end{equation}
These estimates are uniform in the fixed external parts.

For $i_1\leq i\leq D$, put
$X_i=\#\{p\in\cP_i:p\mid\mathbf t\}$,
and let $Y_i$ be independent Poisson random variables with means
$R_i$. Thus
\[
\Pp(Y_i=m)=\exp(-R_i)\frac{R_i^m}{m!}
\qquad(m\in\Z_{\geq0}).
\]
The prime sets in distinct bins are disjoint, and all their primes
are at most $z$. Empty bins may be omitted.
Ford's joint Poisson approximation
\cite[Theorem~1]{FordPoisson} therefore gives
\begin{align}
&\left|
\Pp\bigl(I(\mathbf t)\in\mathfrak G_\tau\bigr)
-\Pp\left(
\{i\in[i_1,D]\cap\Z:Y_i>0\}\in\mathfrak G_\tau
\right)
\right| \notag\\
&\qquad\ll
\sum_{i=i_1}^{D}
\frac{\displaystyle\sum_{p\in\cP_i}p^{-2}}{1+R_i}
+u^{-u}
\ll\sum_{p>y}\frac1{p^2}+u^{-u}.
\label{eq:nonsat-poisson-comparison}
\end{align}

By \eqref{eq:R-i-asymp},
\[
\Pp(Y_i>0)
=1-\exp(-R_i)
=R_i+O(R_i^2)
=\frac1i+O(i^{-2}).
\]
The indicators $\one_{\{Y_i>0\}}$ are independent, as are
$\one_{\{i\in\mathcal A\}}$, and
$\Pp(i\in\mathcal A)=1/i$.
Coupling the indicators coordinate by coordinate yields
\begin{align}
&\left|
\Pp\left(
\{i\in[i_1,D]\cap\Z:Y_i>0\}\in\mathfrak G_\tau
\right)
-\Pp\left(
\mathcal A\cap[i_1,D]\in\mathfrak G_\tau
\right)
\right| \notag\\
&\qquad\leq
\sum_{i=i_1}^{D}
\left|1-\exp(-R_i)-\frac1i\right|
\ll\sum_{i=i_1}^{\infty}\frac1{i^2}
\ll\frac1{i_1}.
\label{eq:nonsat-bernoulli-comparison}
\end{align}
Combining \eqref{eq:u-error-bound},
\eqref{eq:nonsat-poisson-comparison}, and
\eqref{eq:nonsat-bernoulli-comparison}, we obtain
\begin{equation}\label{eq:nonsat-model-comparison}
\Pp\bigl(I(\mathbf t)\in\mathfrak G_\tau\bigr)
\leq
\Pp\left(\mathcal A\cap[i_1,D]\in\mathfrak G_\tau\right)
+O(E_X).
\end{equation}

Since $D^{\widehat c_X}=i_1-1$, we have
\[
\mathcal A\cap[i_1,D]
=\mathcal A\cap(D^{\widehat c_X},D].
\]
By the definitions of $\mathfrak G_\tau$ and
$\mathcal N_{k,c,D}(\tau,W)$,
\[
\Pp\left(\mathcal A\cap[i_1,D]\in\mathfrak G_\tau\right)
=\Pp\bigl(\mathcal N_{k,\widehat c_X,D}(\tau,W)\bigr).
\]
Moreover, \eqref{eq:cX-supercritical} gives
$\widehat c_X\geq\beta_k+2\delta$ for sufficiently large $X$,
and $W=D^{o(1)}$.
Combining \eqref{eq:nonsat-model-comparison} with
Theorem~\ref{thm:affine}, applied with the fixed parameter
$\delta>0$, gives a constant $\eta=\eta(k,\delta)>0$ such that
\begin{equation}\label{eq:fixed-external-prob}
\Pp\bigl(I(\mathbf t)\in\mathfrak G_\tau\bigr)
\leq
D^{-\eta+o(1)}
+O_k\left(\exp\left\{-\frac14\sqrt{\log D}\right\}\right)
+O(E_X).
\end{equation}
All estimates are uniform in the fixed external parts, since
the comparison bounds hold for every event and the estimate in
Theorem~\ref{thm:affine} is uniform in $\tau$.

By Claim~\ref{clm:nonsat-affine}, the number of integers $n=Q\ell$
under consideration is at most
\[
\#\{t\in\N:t\leq N,\ I(t)\in\mathfrak G_\tau\}
=N\Pp\bigl(I(\mathbf t)\in\mathfrak G_\tau\bigr).
\]
Using $N\leq X/Q$ and \eqref{eq:fixed-external-prob} proves
\eqref{eq:fixed-external-count}.
\end{proof}

\begin{claim}\label{clm:nonsat-external-sum}
For each tuple $(r_1,\ldots,r_k)$, let $Q$ be defined by
\eqref{eq:Q-def}. Then
\begin{equation}\label{eq:external-budget}
\sum_{\substack{
r_1,\ldots,r_k\geq1\\
p\mid r_1\cdots r_k\ \Longrightarrow\
p\leq X,\ p\notin(y,z]
}}
\frac1Q
\leq
\exp\bigl(O_k(\log\log\log X)\bigr)
=D^{o(1)}.
\end{equation}
\end{claim}

\begin{proof}
By \eqref{eq:Q-def}, for every prime $p$, we have
$v_p(Q)=\max\{v_p(r_1),\ldots,v_p(r_k)\}.$ Therefore,
unique prime factorization gives
\begin{align}
&\sum_{\substack{
r_1,\ldots,r_k\geq1\\
p\mid r_1\cdots r_k\ \Longrightarrow\
p\leq X,\ p\notin(y,z]
}}\frac1Q=
\prod_{\substack{p\leq X\\p\notin(y,z]}}
\left(
\sum_{\alpha_1,\ldots,\alpha_k\geq0}
p^{-\max\{\alpha_1,\ldots,\alpha_k\}}
\right).
\label{eq:external-tuple-product}
\end{align}
For each integer $\nu\geq1$, there are $(\nu+1)^k$ tuples
$(\alpha_1,\ldots,\alpha_k)\in\Z_{\geq0}^k$ satisfying
$\max\{\alpha_1,\ldots,\alpha_k\}\leq\nu,$
and there are $\nu^k$ such tuples for which
$\max\{\alpha_1,\ldots,\alpha_k\}\leq\nu-1$.
Hence the number of tuples satisfying
$\max\{\alpha_1,\ldots,\alpha_k\}=\nu$
is $(\nu+1)^k-\nu^k$. Hence, for every prime $p\geq2$, we have
\begin{align*}
\sum_{\alpha_1,\ldots,\alpha_k\geq0}
p^{-\max\{\alpha_1,\ldots,\alpha_k\}}
&=1+\sum_{\nu\geq1}\frac{(\nu+1)^k-\nu^k}{p^\nu}\\
&\leq1+\frac1p
\sum_{\nu\geq1}\frac{(\nu+1)^k-\nu^k}{2^{\nu-1}}
=1+O_k\left(\frac1p\right).
\end{align*}
Thus \eqref{eq:external-tuple-product} and $1+x\leq\exp(x)$ give
\begin{align}
&\sum_{\substack{
r_1,\ldots,r_k\geq1\\
p\mid r_1\cdots r_k\ \Longrightarrow\
p\leq X,\ p\notin(y,z]
}}
\frac1Q \leq
\exp\left\{
O_k\left(
\sum_{p\leq y}\frac1p+\sum_{z<p\leq X}\frac1p
\right)
\right\}.
\label{eq:external-budget-first}
\end{align}
By Mertens' theorem, we have 
\[
\sum_{p\leq y}\frac1p=\log\log y+O(1),
\qquad
\sum_{z<p\leq X}\frac1p
=\log\log X-\log\log z+O(1).
\]
The definitions of $y$ and $z$ imply
\[
\log\log y=O(\log\log\log X),
\qquad
\log\log X-\log\log z
=O(\log\log\log\log X).
\]
Substituting these estimates into \eqref{eq:external-budget-first}
proves \eqref{eq:external-budget}, since
$\log\log\log X=o(\log D)$.
\end{proof}

Let $M_X$ denote the total number of integers under consideration.
Summing \eqref{eq:fixed-external-count} over all external tuples
with $Q\leq X^{1-\eps}$ and applying
Claim~\ref{clm:nonsat-external-sum}, we obtain
\begin{equation}\label{eq:nonsat-final-bound}
\frac{M_X}{X}
\ll
\exp\bigl(O_k(\log\log\log X)\bigr)
\left(
D^{-\eta+o(1)}
+\exp\left\{-\frac14\sqrt{\log D}\right\}
+E_X
\right).
\end{equation}

Since $\log D=(1+a+o(1))\log\log X$, we have
$\log\log\log X=o(\sqrt{\log D})$. Hence
\begin{align*}
\exp\bigl(O_k(\log\log\log X)\bigr)D^{-\eta+o(1)}
&=D^{-\eta+o(1)}=o(1),\\
\exp\left\{
O_k(\log\log\log X)-\frac14\sqrt{\log D}
\right\}
&=o(1).
\end{align*}
Moreover, \eqref{eq:nonsat-error}, \eqref{eq:bin-parameters},
and \eqref{eq:y-z-asymp} imply
\begin{align*}
\exp\bigl(O_k(\log\log\log X)\bigr)E_X
&\ll
\exp\left\{
-(\log\log X)^3+O_k(\log\log\log X)
\right\}
+(\log X)^{-a+o(1)}\\
&+
\exp\left\{
O_k(\log\log\log X)
-c_\eps(\log\log\log X)\log\log\log\log X
\right\}=o(1).
\end{align*}
Substituting these estimates into \eqref{eq:nonsat-final-bound}
gives $M_X=o(X)$.
Adding the $o(X)$ exceptional integers excluded at the beginning
completes the proof.
\end{proof}

\section{The range \texorpdfstring{$Q>X^{1-\eps}$}{Q greater than X to the power 1-epsilon}}\label{sec:sat}

Let $X\geq2$ and $S\subseteq\N$. Define
\[
\mathcal H_X(S)
=
\sum_{\substack{n\in S\\X/2<n\leq X}}\frac1n.
\]
Since $\frac1X\leq\frac1n\leq\frac2X$ for $X/2<n\leq X),$ it follows that
summing over $n\in S\cap(X/2,X]$ yields
\begin{equation}\label{eq:harmonic-density}
\frac{\#\bigl(S\cap(X/2,X]\bigr)}{X}
\leq
\mathcal H_X(S)
\leq
\frac{2\,\#\bigl(S\cap(X/2,X]\bigr)}{X}.
\end{equation}

Let $n\in(X/2,X]$ admit divisors $d_1<\cdots<d_k\mid n$
satisfying $d_k\leq d_1\bigl(1+(\log n)^{-a}\bigr).$
For $1\leq s\leq k$, write $d_s=r_sm_s$
as in \eqref{eq:external-medium-decomp}. Let $Q$ be the least positive
integer divisible by $r_1,\ldots,r_k$, as in \eqref{eq:Q-def}. We now consider the case
\begin{equation}\label{eq:saturated-range}
Q>X^{1-\eps}.
\end{equation}
All external prime powers occurring in any of
$r_1,\ldots,r_k$ are included in $Q$; in particular, common prime
factors are not removed.

We restrict attention to the case in which
$m_1,\ldots,m_k$ are pairwise distinct. The case of repeated medium
parts has already been treated in Corollary~\ref{cor:DMC}. By
\eqref{eq:harmonic-density}, it is enough to show that the integers
considered in this section have reciprocal sum $o(1)$ on $(X/2,X]$.

In Subsections~\ref{subsec:selected-tuple}--\ref{subsec:weighted-reconstruction},
we restrict to integers satisfying both conclusions of
Lemma~\ref{lem:MOP} and $\Omega(n)\leq(\log\log X)^2$.
The omitted integers form a set of cardinality $o(X)$ by
Lemma~\ref{lem:MOP} and \eqref{eq:Omega-good}, and contribute $o(1)$ to the reciprocal sum
on $(X/2,X]$. They are included again in Lemma~\ref{lem:saturated}.

\subsection{Divisor tuples, residual classes and regularity}
\label{subsec:selected-tuple}

For each integer $n$ under consideration, choose one tuple among those
satisfying the close-divisor condition, having pairwise distinct medium
parts, and satisfying $Q>X^{1-\eps}$. To make the choice unique, choose
successively the smallest possible values of $d_1,\ldots,d_k$ among
these tuples. For $1\leq t\leq k$, put
\[
A_t=A(m_t)\subseteq I(n).
\]
By \eqref{eq:medium-subset-sum},
$S_K(m_t)=\Sigma(A_t)$, and the sets $A_1,\ldots,A_k$ are
pairwise distinct. For every $\omega\in\{0,1\}^k$, define
\[
B_\omega
=
\left\{i\in I(n):
(1_{i\in A_1},\ldots,1_{i\in A_k})=\omega\right\}.
\]
These sets partition $I(n)$, including
$B_{\mathbf0_k}=I(n)\setminus(A_1\cup\cdots\cup A_k)$.

Use the construction described in the proof of Theorem~\ref{thm:affine},
which is the construction of \cite[Section~4.1]{FGK}.
Starting with $V_0=\langle\one\rangle$, select at step $j$ a vector
$\omega^j\notin V_{j-1}$ with $B_{\omega^j}\neq\varnothing$
for which $\max B_{\omega^j}$ is largest, and put
\[
J_j=\max B_{\omega^j},
\qquad
V_j=V_{j-1}+\Span_{\Q}\{\omega^j\}.
\]
The procedure stops when every vector indexing a nonempty set
$B_\omega$ belongs to the last space. It produces a complete flag
\[
\cV:\quad
\langle\one\rangle=V_0<V_1<\cdots<V_h,
\qquad 1\leq h\leq k-1.
\]
Its terminal space is non-degenerate, because $A_t\neq A_u$ for
$t\neq u$. Define
\[
c_j=1+\frac{\lceil\log J_j-\log D\rceil}{\log D}
\quad(1\leq j\leq h),
\qquad c_{h+1}=\widehat c_X.
\]
Then $1\geq c_1\geq\cdots\geq c_h>\widehat c_X$.

Remove $J_j$ from $B_{\omega^j}$ for each $j$, writing
\[
B'_\omega=
\begin{cases}
B_{\omega^j}\setminus\{J_j\},&\omega=\omega^j\ (1\leq j\leq h),\\
B_\omega,&\omega\notin\{\omega^1,\ldots,\omega^h\},
\end{cases}
\qquad
A'=I(n)\setminus\{J_1,\ldots,J_h\}.
\]
For $1\leq j\leq h$, put
$N_j=\#\bigl(A'\cap(D^{c_{j+1}},D^{c_j}]\bigr)$ and define
\begin{equation}\label{eq:empirical-measures}
\mu_j(\omega)=
\begin{cases}
\displaystyle
\frac{\#\bigl(B'_\omega\cap(D^{c_{j+1}},D^{c_j}]\bigr)}{N_j},
&N_j>0,\\[2mm]
1,&N_j=0,\ \omega=\mathbf0_k,\\
0,&N_j=0,\ \omega\neq\mathbf0_k.
\end{cases}
\end{equation}
As in the proof of Theorem~\ref{thm:affine}, every $\mu_j$ is
supported on $V_j\cap\{0,1\}^k$, and the partition
$(B'_\omega)_\omega$ satisfies \eqref{eq:partition-compatibility}.
Put
\begin{equation}\label{eq:Lambda-definition}
\Lambda=(\cV,\mathbf c,\boldsymbol\mu,\omega^1,\ldots,\omega^h).
\end{equation}
Here $\cV:V_0<V_1<\cdots<V_h$ is the complete flag constructed from
the selected divisor tuple,
$\mathbf c=(c_1,\ldots,c_{h+1})$ is the threshold sequence defined above
with $c_{h+1}=\widehat c_X$, and
$\boldsymbol\mu=(\mu_1,\ldots,\mu_h)$ is the family of probability
measures in \eqref{eq:empirical-measures}.
The selected vectors satisfy
$\omega^j\in(V_j\setminus V_{j-1})\cap\{0,1\}^k$ and
$V_j=V_{j-1}+\Span_{\Q}\{\omega^j\}$ for $1\leq j\leq h$.
Fixing $\Lambda$ means fixing all these objects.

For a set $A\subseteq[i_1,D]\cap\Z$, write $A\in\mathcal R$ if
\[
\left|
\#\bigl(A\cap(D^\alpha,D^\beta]\bigr)
-(\beta-\alpha)\log D
\right|\leq(\log D)^{3/4}
\qquad(\widehat c_X\leq\alpha\leq\beta\leq1).
\]
Write $A\in\widetilde{\mathcal R}$ if the same inequalities hold
with $2(\log D)^{3/4}$ in place of $(\log D)^{3/4}$.
If $I(n)\in\mathcal R$, then $\#I(n)\leq2\log D$ for large $D$.
The thresholds are chosen from $O(\log D)$ values, and all numerators
and denominators in \eqref{eq:empirical-measures} are at most
$2\log D$. Consequently, in this range there are at most
$(\log D)^{O_k(1)}$ possible choices of $\Lambda$, as in
\cite[Lemma~4.2]{FGK}.

Since $(D^{\widehat c_X},D]\cap\Z=[i_1,D]\cap\Z$,
it follows that \cite[Lemma~A.5]{FGK} gives
\begin{equation}\label{eq:regularity-failure}
\Pp\bigl(\mathcal A\cap[i_1,D]\notin\mathcal R\bigr)
\ll_k\exp\left\{-\frac14\sqrt{\log D}\right\}.
\end{equation}

Define the residual class by
\begin{equation}\label{eq:canonical-L}
L=
\left(\sum_{\omega\in\{0,1\}^k}\omega\,\Sigma(B'_\omega)\right)
+\langle\one\rangle
\in\Q^k/\langle\one\rangle.
\end{equation}
Its $t$-th coordinate before taking the quotient is
$\sum_{\omega_t=1}\Sigma(B'_\omega)$.

The following lemma establishes the properties of the selected
indices and the residual partition. It expresses $L$ in terms
of the medium-part sums and shows that $I(n)\in\mathcal R$
implies $A'\in\widetilde{\mathcal R}$.

\begin{lemma}\label{lem:residual-interface}
With the notation above, the following statements hold.

\begin{enumerate}[label=\textup{(\roman*)},leftmargin=2.3em]
\item For every $1\leq j\leq h$,
\begin{equation}\label{eq:residual-pivot-scale}
\ee^{-1}D^{c_j}<J_j\leq D^{c_j}.
\end{equation}

\item The sets $B'_\omega$, $\omega\in\{0,1\}^k$, are pairwise
disjoint and have union $A'$. They satisfy the compatibility
conditions in \eqref{eq:partition-compatibility}, with $B=A'$.

\item In the quotient space $\Q^k/\langle\one\rangle$, one has
\begin{equation}\label{eq:residual-identity}
L
=
\left(
\bigl(S_K(m_1),\ldots,S_K(m_k)\bigr)
-
\sum_{j=1}^hJ_j\omega^j
\right)
+
\langle\one\rangle.
\end{equation}

\item If $I(n)\in\mathcal R$, then
$A'\in\widetilde{\mathcal R}$ for all sufficiently large $D$.
\end{enumerate}
\end{lemma}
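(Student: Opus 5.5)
The four parts are independent verifications, and I will take them in the order stated, reusing the arguments already given for Claim~\ref{clm:comparison-upper} and Theorem~\ref{thm:affine}.

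For (i), I set $\ell_j=\lceil\log J_j-\log D\rceil\in\Z$. Then $c_j=1+\ell_j/\log D$ and $D^{c_j}=D\ee^{\ell_j}$. From $\ell_j-1<\log J_j-\log D\leq\ell_j$ we get $D\ee^{\ell_j-1}<J_j\leq D\ee^{\ell_j}$, which is \eqref{eq:residual-pivot-scale}. The monotonicity $c_1\geq\cdots\geq c_h$ comes from $J_1>\cdots>J_h$, exactly as in Claim~\ref{clm:comparison-upper}. The strict bound $c_h>\widehat c_X$ comes from $J_h\geq i_1>D^{\widehat c_X}$ together with $J_h\leq D^{c_h}$.

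For (ii), the sets $B_\omega$ partition $I(n)$, and each $J_j$ lies in exactly one of them, namely $B_{\omega^j}$. The $\omega^j$ are distinct, since $\omega^j\in V_j\setminus V_{j-1}$. Hence removing $J_j$ from $B_{\omega^j}$ leaves disjoint sets whose union is $I(n)\setminus\{J_1,\ldots,J_h\}=A'$. For the compatibility conditions \eqref{eq:partition-compatibility} with $B=A'$, the frequency equalities hold by the definition \eqref{eq:empirical-measures} of $\mu_j$ (with the convention $0=0$ when $N_j=0$). The support statements follow from the maximal choice of the pivots. If $\omega\notin V_{j}$ and $B_\omega\neq\varnothing$, then $\max B_\omega\leq J_{j+1}\leq D^{c_{j+1}}$ by the choice at step $j+1$ and by (i); for $j=h$ the stopping rule gives $B_\omega=\varnothing$ instead. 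This shows that $\mu_j$ is supported on $V_j$ and that $B'_\omega\cap(D^{c_1},D]=\varnothing$ for $\omega\notin V_0$, using $j=0$.

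For (iii), I compute the $t$-th coordinate of $\sum_\omega\omega\,\Sigma(B_\omega)$, which equals $\Sigma(A_t)=S_K(m_t)$ by \eqref{eq:medium-subset-sum}. Since $\Sigma(B_\omega)=\Sigma(B'_\omega)+\sum_{j:\omega^j=\omega}J_j$, this gives
$\sum_\omega\omega\,\Sigma(B'_\omega)=(S_K(m_1),\ldots,S_K(m_k))-\sum_j J_j\omega^j$ as an exact vector identity, and \eqref{eq:residual-identity} follows on passing to the quotient.

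For (iv), removing $h\leq k-1$ points changes each count $\#(A\cap(D^\alpha,D^\beta])$ by at most $h$. The bound $(\log D)^{3/4}+h\leq2(\log D)^{3/4}$ then holds for large $D$. One must also check that $A'\subseteq[i_1,D]\cap\Z$, which is clear.

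None of these steps is a real obstacle. The only point needing care is the support condition in (ii), particularly the top layer $(D^{c_1},D]$ and the boundary case $j=h$. I would handle both by quoting the pivot-maximality inequality explicitly, rather than by appeal to the earlier proofs.
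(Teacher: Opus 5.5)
Your proposal is correct and follows the paper's proof essentially step for step: the ceiling inequality $\log J_j\leq c_j\log D<\log J_j+1$ gives (i), and deleting the pivots plus the maximal choice at step $j+1$ gives (ii), with the stopping rule covering $j=h$ and the case $j=0$ covering the top layer $(D^{c_1},D]$. The coordinatewise identity $\Sigma(A_t)=\sum_{\omega_t=1}\Sigma(B_\omega)$ with $B_{\omega^j}=B'_{\omega^j}\cup\{J_j\}$ gives (iii), and the perturbation by at most $h\leq k-1$ points gives (iv); your extra verification of $c_h>\widehat c_X$ via $J_h\geq i_1>i_1-1=D^{\widehat c_X}$ is also correct, though the paper records that inequality just before the lemma rather than inside its proof.
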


\begin{proof}
Note that
$$
c_j=1+
\frac{\lceil\log J_j-\log D\rceil}{\log D}.
$$

Since
\[
\log J_j-\log D
\leq
\lceil\log J_j-\log D\rceil
<
\log J_j-\log D+1,
\]
we have
\[
\log J_j
\leq
c_j\log D
<
\log J_j+1.
\]
Exponentiating these inequalities gives $J_j\leq D^{c_j}<\ee J_j,$
which is equivalent to \eqref{eq:residual-pivot-scale}.

By their definition, the sets $B_\omega$,
$\omega\in\{0,1\}^k$, are pairwise disjoint and have union $I(n)$.
For each selected vector $\omega^j$, only the element $J_j$ is
removed from $B_{\omega^j}$, while all other sets remain unchanged.
Therefore, the sets $B'_\omega$ are pairwise disjoint and
\[
\bigcup_{\omega\in\{0,1\}^k}B'_\omega
=
I(n)\setminus\{J_1,\ldots,J_h\}
=
A'.
\]
For each $j$ with $N_j>0$, multiplying
\eqref{eq:empirical-measures} by $N_j$ gives the first condition in
\eqref{eq:partition-compatibility}. If $N_j=0$, both sides vanish.
For $j<h$, every $\omega\notin V_j$ with $B_\omega\neq\varnothing$
satisfies 
$$
\max B_\omega\leq J_{j+1}\leq D^{c_{j+1}}
$$
by the
maximal choice at the next step. For $j=h$, every vector indexing a
nonempty set belongs to $V_h$. These facts prove that $\mu_j$ is
supported on $V_j\cap\{0,1\}^k$.
Similarly, if $\omega\notin V_0$ and $B_\omega\neq\varnothing$, then
\[
\max B_\omega\leq J_1\leq D^{c_1},
\]
which gives the second
condition in \eqref{eq:partition-compatibility}. This proves
\textup{(ii)}.

We next prove \textup{(iii)}. By the definition of $B_\omega$, for
each $1\leq s\leq k$, we have
$$
A_s=\bigcup_{\substack{\omega\in\{0,1\}^k\\\omega_s=1}}
B_\omega,
$$
where the union is disjoint. Hence
\begin{equation}\label{eq:S-before-removal}
S_K(m_s)
=
\Sigma(A_s)
=
\sum_{\substack{\omega\in\{0,1\}^k\\\omega_s=1}}
\sum_{i\in B_\omega}i
\qquad
(1\leq s\leq k).
\end{equation}

For every $1\leq j\leq h$, we have
$B_{\omega^j}=B'_{\omega^j}\cup\{J_j\},$
where the union is disjoint. If
$\omega\notin\{\omega^1,\ldots,\omega^h\}$, then
$B_\omega=B'_\omega$. It follows from
\eqref{eq:S-before-removal} that
\[
\bigl(S_K(m_1),\ldots,S_K(m_k)\bigr)
=
\sum_{\omega\in\{0,1\}^k}
\omega\sum_{i\in B'_\omega}i
+
\sum_{j=1}^hJ_j\omega^j.
\]

Passing to $\Q^k/\langle\one\rangle$ and using
\eqref{eq:canonical-L}, we obtain
\[
L
=
\left(
\bigl(S_K(m_1),\ldots,S_K(m_k)\bigr)
-
\sum_{j=1}^hJ_j\omega^j
\right)
+
\langle\one\rangle,
\]
which proves \eqref{eq:residual-identity}.

Finally, suppose that $I(n)\in\mathcal R$. For every
$\widehat c_X\leq\alpha\leq\beta\leq1$, deleting
$J_1,\ldots,J_h$ changes the number of elements in
$(D^\alpha,D^\beta]$ by at most $h$. Therefore, for every
$\widehat c_X\leq\alpha\leq\beta\leq1$ and all sufficiently large
$D$,
\begin{align*}
\left|
\#\bigl(A'\cap(D^\alpha,D^\beta]\bigr)
-
(\beta-\alpha)\log D
\right|
&\leq
\left|
\#\bigl(I(n)\cap(D^\alpha,D^\beta]\bigr)
-
(\beta-\alpha)\log D
\right|
+h\\
&\leq
(\log D)^{3/4}+k-1\\
&\leq
2(\log D)^{3/4},
\end{align*}
where the last inequality follows since $k$ is fixed. Hence
$A'\in\widetilde{\mathcal R}$, proving \textup{(iv)}.
\end{proof}

\subsection{Harmonic normalization}
We next estimate the sums of reciprocal weights arising from
medium and external prime factors. The following lemma gives
the required normalization factors and bounds the weighted
contribution of sets $A\notin\mathcal R$.

\begin{lemma}\label{lem:harmonic-normalization}
Let $R_i$ be defined by \eqref{eq:R-i-def}, let $\mathcal R$
be the family defined above, and put
$Z_R=\prod_{i=i_1}^D(1+R_i).$
Then the following statements hold as $X\to\infty$.

\begin{enumerate}[label=\textup{(\roman*)},leftmargin=2.3em]
\item We have
\[
\sum_{A\subseteq[i_1,D]\cap\Z}\prod_{i\in A}R_i
=Z_R
=(1+o(1))\frac D{i_1}.
\]

\item We have
\begin{equation}\label{eq:irregular-medium-weight}
\sum_{\substack{
A\subseteq[i_1,D]\cap\Z\\
A\notin\mathcal R
}}
\prod_{i\in A}R_i
\ll
Z_R\left(
\exp\left\{-\frac14\sqrt{\log D}\right\}
+\frac1{i_1}
\right).
\end{equation}

\item The quantity
\begin{equation}\label{eq:Zext-harmonic}
\mathcal Z_{\rm ext}^{(k)}
=
\prod_{\substack{
p\leq X\\
p\notin(y,z]
}}
\left(
\sum_{\nu=0}^{\infty}\frac{(\nu+1)^k}{p^\nu}
\right)
\end{equation}
satisfies
\[
\mathcal Z_{\rm ext}^{(k)}
\leq
\exp\bigl(O_k(\log\log\log X)\bigr)
=D^{o(1)}.
\]
\end{enumerate}
\end{lemma}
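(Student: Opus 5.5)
The plan is to prove the three parts separately. Each one reduces to an earlier estimate or to a direct product expansion.

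\emph{Part (i).} Expanding the product $\prod_{i=i_1}^D(1+R_i)$ over subsets gives the first identity immediately. For the asymptotic, I take logarithms and use \eqref{eq:R-i-asymp}:
\[
\log Z_R=\sum_{i=i_1}^D\log(1+R_i)=\sum_{i=i_1}^D\log\Bigl(1+\frac1i\Bigr)+O\Bigl(\sum_{i\geq i_1}i^{-2}\Bigr).
\]
The first sum telescopes to $\log\frac{D+1}{i_1}$, and the error term is $O(1/i_1)=o(1)$. Together these give $Z_R=(1+o(1))D/i_1$.

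\emph{Part (ii).} The key observation is that the normalized weight $Z_R^{-1}\prod_{i\in A}R_i\prod_{i\notin A}1$ is exactly the law of a random set $\mathcal A'\subseteq[i_1,D]\cap\Z$ with independent memberships of probability $R_i/(1+R_i)$. The left side of \eqref{eq:irregular-medium-weight} is therefore $Z_R\,\Pp(\mathcal A'\notin\mathcal R)$. Since $R_i/(1+R_i)=1/i+O(i^{-2})$, the same coordinatewise coupling used in \eqref{eq:nonsat-bernoulli-comparison} compares $\mathcal A'$ with $\mathcal A\cap[i_1,D]$ at total variation cost $\ll\sum_{i\geq i_1}i^{-2}\ll1/i_1$. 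Applying \eqref{eq:regularity-failure} then gives the bound $\exp\{-\frac14\sqrt{\log D}\}+1/i_1$. The point to check is that \eqref{eq:regularity-failure} applies with $c=\widehat c_X$, which is bounded below by $\beta_k+2\delta$; this is the uniformity already asserted there via \cite[Lemma~A.5]{FGK}.

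\emph{Part (iii).} This follows the proof of Claim~\ref{clm:nonsat-external-sum}. For each prime $p\geq2$,
\[
\sum_{\nu\geq0}(\nu+1)^kp^{-\nu}\leq1+\frac1p\sum_{\nu\geq1}(\nu+1)^k2^{1-\nu}=1+O_k(1/p).
\]
Using $1+x\leq e^x$, Mertens' theorem, and the estimates $\log\log y=O(\log\log\log X)$ and $\log\log X-\log\log z=O(\log\log\log\log X)$, I obtain $\mathcal Z^{(k)}_{\rm ext}\leq\exp(O_k(\log\log\log X))$. This equals $D^{o(1)}$ because $\log D\asymp\log\log X$.

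The main obstacle is part (ii), specifically identifying the harmonic weight as a product probability measure and controlling the coupling error. The rest is routine.
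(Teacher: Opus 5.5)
Your proposal is correct and follows essentially the same route as the paper: the product expansion and logarithmic asymptotic for (i), the identification of the normalized weights with an independent random set of inclusion probabilities $R_i/(1+R_i)$ and a coordinatewise coupling to $\mathcal A\cap[i_1,D]$ for (ii), and the bound $1+O_k(1/p)$ combined with Mertens' theorem for (iii). The only cosmetic difference is in (i): you compare $\log(1+R_i)$ with $\log(1+1/i)$ and telescope to $\log((D+1)/i_1)$, whereas the paper compares it directly with $1/i$ and sums the harmonic series.
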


\begin{proof}
\textup{(i)}
Expanding the finite product gives
\[
Z_R
=\prod_{i=i_1}^D(1+R_i)
=\sum_{A\subseteq[i_1,D]\cap\Z}\prod_{i\in A}R_i.
\]
By \eqref{eq:R-i-asymp},
\[
\log(1+R_i)
=R_i+O(R_i^2)
=\frac1i+O(i^{-2})
\qquad(i_1\leq i\leq D).
\]
Consequently,
\begin{align*}
\log Z_R
=\sum_{i=i_1}^D\log(1+R_i)
=\sum_{i=i_1}^D\frac1i
+O\left(\sum_{i=i_1}^D\frac1{i^2}\right)
=\log\frac D{i_1}+O(i_1^{-1}).
\end{align*}
Since $i_1\to\infty$, it follows that
\[
Z_R
=\frac D{i_1}\exp\bigl(O(i_1^{-1})\bigr)
=(1+o(1))\frac D{i_1}.
\]

\medskip
\textup{(ii)}
Let $\mathcal B\subseteq[i_1,D]\cap\Z$ be a random set such that
the events $i\in\mathcal B$ are independent and
$\Pp(i\in\mathcal B)=\frac{R_i}{1+R_i}.$
For every $A\subseteq[i_1,D]\cap\Z$, independence gives
\begin{align*}
\Pp(\mathcal B=A)
&=
\prod_{i\in A}\frac{R_i}{1+R_i}
\prod_{\substack{i_1\leq i\leq D\\i\notin A}}
\frac1{1+R_i}=\frac1{Z_R}\prod_{i\in A}R_i.
\end{align*}
Hence
\[
\sum_{\substack{
A\subseteq[i_1,D]\cap\Z\\
A\notin\mathcal R
}}
\prod_{i\in A}R_i
=
Z_R\Pp(\mathcal B\notin\mathcal R).
\]

By \eqref{eq:R-i-asymp},
\[
\frac{R_i}{1+R_i}
=R_i+O(R_i^2)
=\frac1i+O(i^{-2}).
\]
The membership events of $\mathcal A$ are independent and
$\Pp(i\in\mathcal A)=1/i$.
A coordinatewise coupling therefore gives
\begin{align*}
&\left|
\Pp(\mathcal B\notin\mathcal R)
-\Pp\bigl(\mathcal A\cap[i_1,D]\notin\mathcal R\bigr)
\right|
\leq
\sum_{i=i_1}^D
\left|\frac{R_i}{1+R_i}-\frac1i\right|
\ll\sum_{i=i_1}^{\infty}\frac1{i^2}
\ll\frac1{i_1}.
\end{align*}
Combining this estimate with \eqref{eq:regularity-failure},
we obtain
\[
\Pp(\mathcal B\notin\mathcal R)
\ll
\exp\left\{-\frac14\sqrt{\log D}\right\}
+\frac1{i_1}.
\]
Multiplication by $Z_R$ proves
\eqref{eq:irregular-medium-weight}.

\medskip
\textup{(iii)}
For every prime $p\geq2$,
\begin{align*}
1\leq
\sum_{\nu=0}^{\infty}\frac{(\nu+1)^k}{p^\nu}
&=1+\sum_{\nu=1}^{\infty}\frac{(\nu+1)^k}{p^\nu}\\
&\leq
1+\frac1p
\sum_{\nu=1}^{\infty}\frac{(\nu+1)^k}{2^{\nu-1}}
=1+O_k\left(\frac1p\right).
\end{align*}
Therefore,
\begin{align*}
0\leq\log\mathcal Z_{\rm ext}^{(k)}
&=
\sum_{\substack{p\leq X\\p\notin(y,z]}}
\log\left(
\sum_{\nu=0}^{\infty}\frac{(\nu+1)^k}{p^\nu}
\right)\ll_k
\sum_{p\leq y}\frac1p+\sum_{z<p\leq X}\frac1p.
\end{align*}
By Mertens' theorem,
\[
\sum_{p\leq y}\frac1p+\sum_{z<p\leq X}\frac1p
=
\log\log y+\log\log X-\log\log z+O(1).
\]
The definitions in \eqref{eq:bin-parameters} give
\[
\log\log y=O(\log\log\log X),
\qquad
\log\log X-\log\log z
=O(\log\log\log\log X).
\]
Thus
\[
0\leq\log\mathcal Z_{\rm ext}^{(k)}
=O_k(\log\log\log X)
=o(\log D),
\]
where the last equality follows from
$\log D=(1+a+o(1))\log\log X$.
Exponentiating completes the proof.
\end{proof}

\subsection{Control of external prime powers}

We next remove a further exceptional set involving the external prime
factors of $n$.  Recall that
$y=\ee^{i_1/K}$ and $z=\ee^{(D+1)/K}$,
so that the medium primes are those in $(y,z]$, while the external
primes are those not belonging to this interval.

\begin{lemma}\label{lem:external-good}
Let $\eps>0$ be fixed.  For all but $o(X)$ integers $n\leq X$, we have
\begin{equation}\label{eq:small-external-good}
\prod_{p\leq y}p^{v_p(n)}
\leq
X^{\eps/4}
\end{equation}
and
\begin{equation}\label{eq:large-squarefree}
p^2\nmid n
\qquad
\text{for every prime $p>z$}.
\end{equation}
\end{lemma}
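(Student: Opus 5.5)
The plan is to treat the two conditions separately, bounding each exceptional set by $o(X)$. As in Lemma~\ref{lem:MOP}, let $\mathbf n$ be uniform on $\{1,\ldots,X\}$.

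\emph{The squarefree condition \eqref{eq:large-squarefree}.} This is a direct union bound:
\[
\Pp\bigl(\exists\,p>z:\ p^2\mid\mathbf n\bigr)\leq\sum_{p>z}\frac1{p^2}\ll\frac1z=o(1),
\]
since $z\to\infty$ by \eqref{eq:y-z-asymp}.

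\emph{The smooth-part condition \eqref{eq:small-external-good}.} Write $s(n)=\prod_{p\leq y}p^{v_p(n)}$ for the $y$-smooth part of $n$. I would bound this by a first-moment (Rankin-type) argument. If $s(\mathbf n)>X^{\eps/4}$, then $\mathbf n$ is divisible by some $y$-smooth integer $s$ with $s>X^{\eps/4}$. Hence
\[
\Pp\bigl(s(\mathbf n)>X^{\eps/4}\bigr)\leq\sum_{\substack{s>X^{\eps/4}\\ p\mid s\Rightarrow p\leq y}}\frac1s
\leq X^{-\sigma\eps/4}\sum_{p\mid s\Rightarrow p\leq y}s^{\sigma-1}
= X^{-\sigma\eps/4}\prod_{p\leq y}\bigl(1-p^{\sigma-1}\bigr)^{-1}
\]
for any $0<\sigma<1$.

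\emph{Choice of $\sigma$.} I would take $\sigma=1/\log y$, so that $p^{\sigma}\leq \ee$ for every $p\leq y$. Then $\log(1-p^{\sigma-1})^{-1}\leq \ee/p+O(p^{-2})$, and by Mertens' theorem the product is at most $\exp(O(\log\log y))=(\log y)^{O(1)}$. The prefactor is
\[
X^{-\sigma\eps/4}=\exp\Bigl(-\frac{\eps\log X}{4\log y}\Bigr)=\exp\Bigl(-\frac{\eps\log X}{4(\log\log X)^3(1+o(1))}\Bigr),
\]
using $\log y=(\log\log X)^3+o(1)$ from \eqref{eq:y-z-asymp}. Since $\log\log y=O(\log\log\log X)$, this prefactor overwhelms $(\log y)^{O(1)}$, and the whole bound is $o(1)$.

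\emph{Main step.} The main thing to get right is this Rankin parameter choice and the check that $\log X/(\log\log X)^3$ beats $\log\log y$. The remaining bookkeeping is routine: multiply the two $o(1)$ probabilities by $X$ and take the union of the two exceptional sets, which gives $o(X)$ exceptions in total.
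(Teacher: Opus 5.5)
Your argument is correct. The part on \eqref{eq:large-squarefree} is the same union bound the paper uses. For \eqref{eq:small-external-good}, however, you take a different route.

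The paper takes a first moment of the \emph{logarithm} of the $y$-smooth part. From $\sum_{n\le X}v_p(n)=\sum_{j\ge1}\lfloor X/p^j\rfloor$ it gets
\[
\frac1X\sum_{n\le X}\log\Bigl(\prod_{p\le y}p^{v_p(n)}\Bigr)\le\sum_{p\le y}\frac{\log p}{p-1}\ll\log y .
\]
Markov's inequality then bounds the exceptional proportion by $O_\eps(\log y/\log X)=O_\eps\bigl((\log\log X)^3/\log X\bigr)$. This needs only $\log y=o(\log X)$ and involves no parameter choice.

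Your Rankin argument instead bounds the tail $\sum 1/s$ over $y$-smooth $s>X^{\eps/4}$. It gives the much stronger estimate $\exp\bigl(-\eps\log X/(4\log y)+O(\log\log y)\bigr)$. The price is the Euler-product bookkeeping, plus the slightly stronger requirement that $\eps\log X/(4\log y)$ dominate a constant multiple of $\log\log y$. That requirement holds here with enormous room. Since only $o(X)$ is needed, either approach suffices, and the paper's is the more economical.

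One small point of presentation: $p^\sigma\le\ee$ alone does not keep $2^{\sigma-1}$ below $1$, since $\ee/2>1$. For the first few primes, use instead that $p^\sigma\to1$ as $y\to\infty$ (for instance $2^{\sigma-1}\to\frac12$). With that, the bound $\prod_{p\le y}(1-p^{\sigma-1})^{-1}\le(\log y)^{O(1)}$ is fine.
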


\begin{proof}
We first consider the prime factors of $n$ not exceeding $y$.  For
every positive integer $n$, we have
$$
\log\left(\prod_{p\leq y}p^{v_p(n)}\right)
=\sum_{p\leq y}v_p(n)\log p.
$$
For each fixed prime $p$, we have
\[
\begin{aligned}
\sum_{n\leq X}v_p(n)
=\sum_{j\geq1}
\left\lfloor\frac{X}{p^j}\right\rfloor.
\end{aligned}
\]
Therefore,
\[
\begin{aligned}
\frac1X
\sum_{n\leq X}
\log\left(\prod_{p\leq y}p^{v_p(n)}\right)
&=\frac1X
\sum_{p\leq y}\log p
\sum_{n\leq X}v_p(n)
=\frac1X
\sum_{p\leq y}\log p
\sum_{j\geq1}
\left\lfloor\frac{X}{p^j}\right\rfloor\\
&\leq
\sum_{p\leq y}\log p
\sum_{j\geq1}\frac1{p^j}
=
\sum_{p\leq y}\frac{\log p}{p-1}.
\end{aligned}
\]

Since $p\geq2$, we have $\frac1{p-1}\leq\frac2p.$
Hence Mertens' estimate gives
\[
\sum_{p\leq y}\frac{\log p}{p-1}
\ll
\sum_{p\leq y}\frac{\log p}{p}
\ll
\log y.
\]
It follows from the preceding calculation that
\[
\sum_{n\leq X}
\log\left(\prod_{p\leq y}p^{v_p(n)}\right)
\ll
X\log y.
\]

Let $\mathcal E_y=\left\{
n\leq X:
\prod_{p\leq y}p^{v_p(n)}>X^{\eps/4}
\right\}.$
For every $n\in\mathcal E_y$, taking logarithms gives
\[
\log\left(\prod_{p\leq y}p^{v_p(n)}\right)
>\frac{\eps}{4}\log X.
\]
Therefore,
\[
\begin{aligned}
\frac{\eps}{4}\log X\,|\# \mathcal E_y|
\leq\sum_{n\in\mathcal E_y}
\log\left(\prod_{p\leq y}p^{v_p(n)}\right)
\leq
\sum_{n\leq X}
\log\left(\prod_{p\leq y}p^{v_p(n)}\right)
\ll X\log y.
\end{aligned}
\]
Consequently, $\frac{|\# \mathcal E_y|}{X}
\ll_{\eps} \frac{\log y}{\log X}.$
By \eqref{eq:y-z-asymp},
\[
\log y
=
(\log\log X)^3+o(1)
=
o(\log X).
\]
Thus $\frac{|\#  \mathcal E_y|}{X}=o(1),$
and hence $|\#  \mathcal E_y|=o(X).$
This proves that \eqref{eq:small-external-good} holds for all but
$o(X)$ integers $n\leq X$.

We next consider primes exceeding $z$. If
\eqref{eq:large-squarefree} fails, then there exists a prime $p>z$
such that $p^2\mid n$.  By the union bound,
\[
\begin{aligned}
\frac1X
\#\left\{
n\leq X:
p^2\mid n
\text{ for some prime $p>z$}
\right\}
\leq \sum_{p>z} \frac1X
\left\lfloor\frac{X}{p^2}\right\rfloor
\leq \sum_{p>z}\frac1{p^2}
\ll \frac1z.
\end{aligned}
\]
Since $z\to\infty$ by \eqref{eq:y-z-asymp}, the last quantity is
$o(1)$.  Thus \eqref{eq:large-squarefree} also fails for only $o(X)$
integers.  Combining the two exceptional sets proves the lemma.
\end{proof}

For the rest of this section, we assume that $n$ satisfies both
conclusions of Lemma~\ref{lem:external-good}.  This excludes only
$o(X)$ integers.  Since every $n\in(X/2,X]$ satisfies $1/n\leq2/X$,
the excluded integers contribute at most
\[
\frac{2}{X}\,o(X)=o(1).
\]

Let $\lcm(m_1,\ldots,m_k)$ denote the least common multiple of
$m_1,\ldots,m_k$.
Recall from \eqref{eq:Q-def} that
$Q=\lcm(r_1,\ldots,r_k)$, where $r_t$ is the external part of $d_t$.
In particular, $Q\mid n$ and every prime divisor of $Q$ is external.
Therefore,
$\gcd\bigl(Q,\lcm(m_1,\ldots,m_k)\bigr)=1.$

Since $Q\mid n$ and $m_t\mid n$ for every $1\leq t\leq k$, we also
have $\lcm(m_1,\ldots,m_k)\mid n.$
It follows that $Q\lcm(m_1,\ldots,m_k)\mid n.$
Using $n\leq X$ and $Q>X^{1-\eps}$, we obtain
\begin{equation}\label{eq:medium-lcm-small}
\lcm(m_1,\ldots,m_k)
\leq
\frac{n}{Q}
<
X^\eps.
\end{equation}

Define the linear map $f:\R^k\to\R^{k-1}$ by
\[
f(x_1,\ldots,x_k)
=
(x_2-x_1,\ldots,x_k-x_1).
\]
For every prime $p>z$ dividing $Q$, define
\begin{equation}\label{eq:u-p-def}
u(p)=f\bigl(v_p(r_1),\ldots,v_p(r_k)\bigr).
\end{equation}
Since $Q\mid n$ and $p^2\nmid n$ by
\eqref{eq:large-squarefree}, we have $v_p(Q)=1$.  Moreover,
$r_t\mid Q$ for every $1\leq t\leq k$.  Hence $v_p(r_t)\in\{0,1\}
(1\leq t\leq k),$
and therefore $u(p)\in f \bigl(\{0,1\}^k\bigr).$
Let
\[
E=\Span\{u(p):p>z,\ p\mid Q\}
\subseteq
\R^{k-1},
\qquad s=\dim E.
\]
We use the same symbol $f$ for the induced map on
$\Q^k/\langle\one\rangle$, defined by
$f(x+\langle\one\rangle)=f(x)$.
This is well defined because $f(t\one)=\mathbf0_{k-1}$ for every
$t\in\Q$.
We also define
\begin{equation}\label{eq:SL-TL}
S_L
=
\sum_{\substack{p>z\\p\mid Q}}\log p
\qquad\text{and}\qquad
T_L
=
\sum_{\substack{p>z\\p\mid Q}}u(p)\log p.
\end{equation}

We first estimate $S_L$.  Every prime divisor of $Q$ is either at
most $y$ or greater than $z$.  In addition, every prime $p>z$
dividing $Q$ occurs in $Q$ with exponent one.  Consequently,
\[
\log Q
=
\sum_{p\leq y}v_p(Q)\log p
+
S_L.
\]
Since $Q\mid n$, we have $v_p(Q)\leq v_p(n)$ for every prime $p$.
It follows from \eqref{eq:small-external-good} that
\[
\sum_{p\leq y}v_p(Q)\log p
\leq
\sum_{p\leq y}v_p(n)\log p
\leq
\frac{\eps}{4}\log X.
\]
Together with $Q>X^{1-\eps}$, this gives
\begin{equation}\label{eq:SL-lower}
S_L
\geq
\left(1-\frac{5\eps}{4}\right)\log X.
\end{equation}

We next estimate $T_L$.  Since each $r_t$ is supported on external
primes, the linearity of $f$ gives
\[
\begin{aligned}
T_L=f(\log r_1,\ldots,\log r_k)
-\sum_{p\leq y}
f\bigl(v_p(r_1),\ldots,v_p(r_k)\bigr)\log p.
\end{aligned}
\]
Using $d_t=r_tm_t$, we obtain
\[
\begin{aligned}
T_L
=f(\log d_1,\ldots,\log d_k)
-f(\log m_1,\ldots,\log m_k)
-\sum_{p\leq y}
f\bigl(v_p(r_1),\ldots,v_p(r_k)\bigr)\log p.
\end{aligned}
\]

We estimate the three terms on the right.  Since
$d_1<\cdots<d_k$ and $d_k\leq d_1\bigl(1+(\log n)^{-a}\bigr),$
it follows that each coordinate of $f(\log d_1,\ldots,\log d_k)$ satisfies
\[
0
\leq
\log\frac{d_t}{d_1}
\leq
\log\bigl(1+(\log n)^{-a}\bigr)
\ll
(\log X)^{-a}.
\]

Throughout this section, for
$\mathbf x=(x_1,\ldots,x_{k-1})\in\R^{k-1}$, we write
$\|\mathbf x\|
=\left(
\sum_{j=1}^{k-1}x_j^2
\right)^{1/2}.$
Thus $\|\cdot\|$ denotes the Euclidean norm on $\R^{k-1}$.
We use the same notation for the Euclidean norm on other real coordinate
spaces.
Hence
\begin{equation}\label{eq-1}
\bigl\| f(\log d_1,\ldots,\log d_k)\bigr\|
\ll_k
(\log X)^{-a}.
\end{equation}

By \eqref{eq:medium-lcm-small}, we have 
\[
1
\leq
m_t
\leq
\lcm(m_1,\ldots,m_k)
<
X^\eps
\qquad
(1\leq t\leq k).
\]
Thus $|\log\frac{m_t}{m_1}|<\eps\log X$ for $2\leq t\leq k,$
and consequently,
\begin{equation}\label{eq-2}
\bigl\| f(\log m_1,\ldots,\log m_k)\bigr\|
\ll_k
\eps\log X.
\end{equation}

Finally, since $r_t\mid Q\mid n$, condition
\eqref{eq:small-external-good} gives
\[
0
\leq
\sum_{p\leq y}v_p(r_t)\log p
\leq
\frac{\eps}{4}\log X
\qquad
(1\leq t\leq k).
\]
For every $2\leq t\leq k$, the corresponding coordinate of
$\sum_{p\leq y}
f\bigl(v_p(r_1),\ldots,v_p(r_k)\bigr)\log p$
is $\sum_{p\leq y}
\bigl(v_p(r_t)-v_p(r_1)\bigr)\log p.$
This is the difference of two numbers belonging to
$[0,\frac{\eps}{4}\log X]$.  Hence
\begin{equation}\label{eq-3}
\left\|
\sum_{p\leq y}
f\bigl(v_p(r_1),\ldots,v_p(r_k)\bigr)\log p
\right\|
\ll_k
\eps\log X.
\end{equation}
Combining \eqref{eq-1}, \eqref{eq-2}, and \eqref{eq-3} yields
\[
\|T_L\|
\ll_k
(\log X)^{-a}+\eps\log X.
\]
Since $\eps>0$ is fixed, it follows that $(\log X)^{-a}\leq\eps\log X$ for all
sufficiently large $X$, and hence 
\begin{equation}\label{eq:TL-small}
\|T_L\|
\ll_k
\eps\log X.
\end{equation}

\subsection{Rank of the augmented external vectors}

All spans and dimensions in this and the following subsections are
taken over $\R$. Rational spaces generated by cube points are identified
with their real linear spans when used with $f$. Their dimensions do
not change under this identification.
For $x=(x_1,\ldots,x_{k-1})\in E$, the notation $\binom{1}{x}$ means the block column vector
\[
\binom{1}{x}
=
(1,x_1,\ldots,x_{k-1})^{\mathsf T}
\in
\R\oplus E,
\]
where $\R\oplus E
=\left\{
(\xi,x_1,\ldots,x_{k-1})^{\mathsf T}:
\xi\in\R,\
(x_1,\ldots,x_{k-1})\in E
\right\}.$

By definition, the vectors $u(p)$ with $p>z$ and $p\mid Q$
span $E$. The following lemma shows that the columns
$\binom{1}{u(p)}$ span $\R\oplus E$, so their span has dimension $s+1$.

\begin{lemma}\label{lem:augmented-rank}
If $\eps>0$ is sufficiently small in terms of $k$, then
\begin{equation*}
\dim\Span
\left\{
\binom{1}{u(p)}:
p>z,\ p\mid Q
\right\}
=
s+1.
\end{equation*}
\end{lemma}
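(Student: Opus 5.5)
The columns $\binom{1}{u(p)}$ all lie in $\R\oplus E$, which has dimension $s+1$, and their projections to $E$ span $E$ by the definition of $E$. So the span has dimension either $s+1$ or $s$, and the plan is to rule out the value $s$ by contradiction.

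\emph{Step 1: what a dimension drop means.} Suppose the span has dimension $s$. Then the columns lie in a hyperplane of $\R\oplus E$. This hyperplane cannot contain $(1,\mathbf 0)$, since otherwise together with the $E$-parts the columns would span everything. Hence the hyperplane is the graph of a linear functional. Concretely, there is $\phi\in E^*$ such that
\[
\phi(u(p))=1\qquad\text{for every prime } p>z \text{ with } p\mid Q.
\]

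\emph{Step 2: compare $S_L$ with $T_L$.} Apply $\phi$ to $T_L$ from \eqref{eq:SL-TL}. This gives $\phi(T_L)=\sum_{p>z,\,p\mid Q}\log p=S_L$. Extend $\phi$ to $\R^{k-1}$ with the same operator norm. Then \eqref{eq:TL-small} yields
\[
S_L\leq \|\phi\|\,\|T_L\|\ll_k \|\phi\|\,\eps\log X,
\]
while \eqref{eq:SL-lower} gives $S_L\geq(1-\tfrac{5\eps}4)\log X$. A contradiction follows once $\eps$ is small relative to $1/\|\phi\|$. So the whole argument reduces to bounding $\|\phi\|$ uniformly in terms of $k$ alone.

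\emph{Step 3: the uniform bound on $\|\phi\|$, the main obstacle.} Every $u(p)$ lies in the finite set $F=f(\{0,1\}^k)$, which has at most $2^k$ elements and depends only on $k$.
\begin{itemize}
\item Choose among the $u(p)$ a basis $w_1,\ldots,w_s$ of $E$ drawn from $F$.
\item $\phi$ is then determined by $\phi(w_i)=1$ for $1\le i\le s$.
\item There are only finitely many possible pairs consisting of a subspace $E$ spanned by elements of $F$ and a basis from $F$.
\item For each such pair, the functional on $E$ taking value $1$ on the basis has some finite norm.
\end{itemize}
Taking the maximum over these finitely many configurations gives $\|\phi\|\leq M_k$, with $M_k$ depending only on $k$. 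Then choosing $\eps$ with $C_kM_k\eps<1-\tfrac{5\eps}{4}$ gives the contradiction, and this is where ``$\eps$ sufficiently small in terms of $k$'' enters.

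\emph{Degenerate case.} We must check that the set of primes $p>z$ dividing $Q$ is nonempty, so that the statement is meaningful. This holds because $S_L>0$ by \eqref{eq:SL-lower} when $\eps<4/5$. If $s=0$, every $u(p)=\mathbf0$. The span is then $\R\oplus\{0\}$, of dimension $1=s+1$, so the claim holds directly; equivalently, $S_L\leq\|T_L\|$ is not needed there.
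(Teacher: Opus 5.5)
Your proposal is correct and follows the paper's proof. The paper likewise shows that if the dimension drops to $s$, the span is the graph of a linear functional $\psi$ on $E$ with $\psi(u(p))=1$; it then combines $\psi(T_L)=S_L$ with \eqref{eq:TL-small} and \eqref{eq:SL-lower} to reach a contradiction, and it treats $s=0$ and the nonemptiness of $\{p>z: p\mid Q\}$ separately, exactly as you do. The one difference is how the norm of the functional is bounded. The paper obtains the explicit bound $s^{(s+1)/2}$ via Cramer's rule and Hadamard's inequality on a basis drawn from $\{-1,0,1\}^{k-1}$. Your finiteness argument over linearly independent subsets of $f(\{0,1\}^k)$ gives a non-explicit constant $M_k$ instead, which is equally sufficient here.
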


\begin{proof}
Put
\[
H=\Span\left\{
\binom{1}{u(p)}:p>z,\ p\mid Q \right\} \subseteq \R\oplus E.
\]
Note that $\dim E=s$. Then $\dim(\R\oplus E)=s+1.$
Therefore, $\dim H\leq s+1.$
We first note that there is at least one prime $p>z$ dividing $Q$.
Indeed, by \eqref{eq:SL-lower},
$S_L \geq \left(1-\frac{5\eps}{4}\right)\log X.$
For $\eps<4/5$ and $X>1$, the right-hand side is positive.  Since
$S_L
=\sum_{\substack{p>z\\p\mid Q}}\log p,$
the set of primes $p>z$ dividing $Q$ cannot be empty.

Suppose first that $s=0$.  Then $E=\{\mathbf0_{k-1}\}$, and hence $u(p)=\mathbf0_{k-1}$
for every prime $p>z$ dividing $Q$.  Since at least one such prime
exists, it follows that
$H=\Span\left\{
\binom{1}{\mathbf0_{k-1}}
\right\}.$
Thus $\dim H=1=s+1.$

Assume that $s\geq1$.  Suppose, to the contrary, that $\dim H\leq s.$
Define $\phi:H\longrightarrow E$
by
\[
\phi\binom{\xi}{x}=x
\qquad
\left(
\binom{\xi}{x}\in H
\right).
\]
Thus $\phi$ maps a vector in $\R\oplus E$ to its component in $E$.
Since the vectors $u(p)$ span $E$ by the definition of $E$, the map
$\phi$ is surjective.  Therefore,
\[
\dim H\geq\dim E=s.
\]
Together with the assumed inequality $\dim H\leq s$, this gives $\dim H=s.$
Hence $\phi$ is a surjective linear map between two
finite-dimensional vector spaces of the same dimension.  It follows
that $\phi$ is an isomorphism.

For every $x\in E$, define $\psi(x)$ by
\[
\phi^{-1}(x)
=
\binom{\psi(x)}{x}.
\]
 Since $\phi^{-1}$ is
linear, the map $\psi:E \longrightarrow \R$
is a linear functional.

For every prime $p>z$ dividing $Q$, the vector
$\binom{1}{u(p)}$ belongs to $H$ and satisfies $\phi\binom{1}{u(p)}=u(p).$
Since $\phi^{-1}(u(p))$ is uniquely determined, we have
\[
\phi^{-1}(u(p))
=
\binom{1}{u(p)}.
\]
Comparing the first coordinates gives $\psi(u(p))=1$ for every prime $p>z$ dividing $Q$.

We next bound the operator norm of $\psi$ by a constant depending
only on $k$.  By \eqref{eq:large-squarefree}, if $p>z$ divides $Q$, then
$p^2\nmid n$.  Since $r_t\mid Q\mid n$, it follows that $v_p(r_t)\in\{0,1\}$ for
$1\leq t\leq k.$
By the definition of $u(p)$,
\[
u(p)
=
\bigl(
v_p(r_2)-v_p(r_1),
\ldots,
v_p(r_k)-v_p(r_1)
\bigr),
\]
and hence $u(p)\in\{-1,0,1\}^{k-1}.$

Choose $w_1,\ldots,w_s
\in \{u(p):p>z,\ p\mid Q\}$
that form a basis of $E$.  Since $\psi(u(p))=1$ for every prime
 $p>z$ dividing $Q$, we have
$\psi(w_j)=1(1\leq j\leq s).$
For $x\in E$, write uniquely
$x=\sum_{j=1}^s c_jw_j.$
By the linearity of $\psi$,
$\psi(x)=\sum_{j=1}^s c_j.$

Let $M$ be the $(k-1)\times s$ matrix whose columns are
$w_1,\ldots,w_s$.  Since $\rank M=s$, there is a nonsingular
$s\times s$ submatrix $M_0$ obtained by selecting $s$ rows of $M$.
Let $x_0\in\R^s$ consist of the corresponding coordinates of $x$.
Then
\[
M_0
\begin{pmatrix}
c_1\\
\vdots\\
c_s
\end{pmatrix}
=x_0.
\]
For $1\leq j\leq s$, let $M_{0,j}$ be obtained from $M_0$ by
replacing its $j$-th column by $x_0$.  Cramer's rule gives
$c_j=\frac{\det M_{0,j}}{\det M_0}.$ Since the entries of $M_0$ belong to $\{-1,0,1\}$ and $M_0$ is
nonsingular, $\det M_0$ is a nonzero integer.  Hence
$|\det M_0|\geq1.$

Let the columns of $M_{0,j}$ be
$a_1,\ldots,a_s$.  By construction, the $j$-th column is
$a_j=x_0$.  For every $i\neq j$, the column $a_i$ is a column of
$M_0$.  Since all entries of $M_0$ belong to $\{-1,0,1\}$, we have
\[
\|a_i\|^2
=
\sum_{j=1}^s |(a_i)_j|^2
\leq
s,
\]
and hence $\|a_i\|\leq\sqrt{s}.$

Hadamard's inequality states that the absolute value of the
determinant of a square matrix is at most the product of the
Euclidean norms of its columns.  Therefore,
\[
\begin{aligned}
|\det M_{0,j}|
&\leq
\prod_{i=1}^s\|a_i\|=
\|x_0\| \prod_{\substack{1\leq i\leq s\\i\neq j}}\|a_i\|\\
&\leq
\|x_0\|(\sqrt{s})^{s-1}.
\end{aligned}
\]

Since $\|x_0\|\leq\|x\|$, it follows that $|c_j|
\leq s^{(s-1)/2}\|x\| $ for $1\leq j\leq s.$
Therefore,
\[
|\psi(x)|
\leq
\sum_{j=1}^s|c_j|
\leq
s^{(s+1)/2}\|x\|.
\]
Since $s\leq k-1$, there exists a constant $C_k>0$, depending only
on $k$, such that
\[
|\psi(x)|
\leq
C_k\|x\|
\qquad
(x\in E).
\]

Using \eqref{eq:SL-TL}, the linearity of $\psi$, and
$\psi(u(p))=1$, we obtain
\[
\begin{aligned}
\psi(T_L)
=\psi\left(
\sum_{\substack{p>z\\p\mid Q}}
u(p)\log p
\right)=
\sum_{\substack{p>z\\p\mid Q}}
\psi(u(p))\log p
=\sum_{\substack{p>z\\p\mid Q}}
\log p=S_L.
\end{aligned}
\]
Since $S_L>0$, it follows that
\[
S_L
=
|\psi(T_L)|
\leq
C_k\|T_L\|.
\]
By \eqref{eq:TL-small}, there is a constant $C'_k>0$, depending only
on $k$, such that $S_L
\leq
C'_k\eps\log X.$
On the other hand, \eqref{eq:SL-lower} gives $$
S_L \geq \left(1-\frac{5\eps}{4}
\right)\log X.
$$
Choose $\eps>0$ sufficiently small in terms of $k$ so that $C'_k\eps
<1-\frac{5\eps}{4}.$
The last two inequalities contradict each other.  Hence the
assumption $\dim H\leq s$ is false.

Therefore, $\dim H>s.$
Since $\dim H\leq s+1$, we conclude that $\dim H=s+1.$
\end{proof}

\subsection{A basis adapted to the target spaces}
For $1\leq j\leq h \leq k-1$, write $\omega^j
=(\omega^j_1,\ldots,\omega^j_k)
\in \{0,1\}^k.$
Put
\[
v_j
=
f(\omega^j)
=
\bigl(
\omega^j_2-\omega^j_1,
\ldots,
\omega^j_k-\omega^j_1
\bigr)
\in
\{-1,0,1\}^{k-1},
\]
and let $U=\Span\{v_1,\ldots,v_h\}
\subseteq \R^{k-1}.$

The vectors $v_1,\ldots,v_h$ are linearly independent.  Suppose that
$b_1,\ldots,b_h\in\R$ satisfy
$\sum_{j=1}^h b_jv_j=\mathbf 0_{k-1},$
where $\mathbf 0_{k-1}$ denotes the zero vector in $\R^{k-1}$.
Since $v_j=f(\omega^j)$ and $f$ is linear, we have
$$f\left(\sum_{j=1}^h b_j\omega^j\right)
=\mathbf 0_{k-1}.$$

Write $\sum_{j=1}^h b_j\omega^j
=(x_1,\ldots,x_k)\in\R^k.$
By the definition of $f$, the preceding equality is equivalent to
\[
(x_2-x_1,\ldots,x_k-x_1)
=\mathbf 0_{k-1}
=(0,\ldots,0).
\]
Hence $x_1=x_2=\cdots=x_k.$ Therefore,
$\sum_{j=1}^h b_j\omega^j
=x_1\one,$
and therefore
\[
-x_1\one+\sum_{j=1}^h b_j\omega^j
=\mathbf 0_k.
\]
The vectors $\one,\omega^1,\ldots,\omega^h$ are linearly
independent. Consequently,
$x_1=b_1=\cdots=b_h=0.$
Thus $v_1,\ldots,v_h$ are linearly independent.  Since they span
$U$, they form a basis of $U$, and hence $\dim U=h.$

Set $s=\dim E$ and $q=\dim(E\cap U).$
The dimension formula gives
\begin{equation}\label{dim-E+U}
\dim(E+U)
=\dim E+\dim U-\dim(E\cap U)
=s+h-q.
\end{equation}

We next choose a basis of $E$ from the vectors $u(p)$ and
extend it to a basis of $E+U$ using suitable $v_j$.
The following lemma also gives a basis of $\R\oplus(E+U)$
by adjoining a first coordinate equal to $1$ and adding
one further column.

\begin{lemma}\label{lem:pivot-basis}
There exist vectors $u_0,u_1,\ldots,u_s
\in
\{u(p):p>z,\ p\mid Q\}$
and a set $J\subseteq\{1,\ldots,h\}$ satisfying the following
properties.
\begin{enumerate}[label=\textup{(\roman*)},leftmargin=2.3em]
\item
The vectors $u_1,\ldots,u_s$ form a basis of $E$, and the columns
$\binom{1}{u_0},
\binom{1}{u_1},
\ldots,
\binom{1}{u_s}$
form a basis of $\R\oplus E$.

\item
The cosets $v_j+E$ for $j\in J$
form a basis of $(E+U)/E$.
\end{enumerate}

For every such choice,
$|J|=h-q.$
Moreover, the columns
\begin{equation}\label{eq:joint-columns}
\binom{1}{u_0},
\binom{1}{u_1},
\ldots,
\binom{1}{u_s},
\binom{1}{v_j}
\quad
(j\in J)
\end{equation}
form a basis of $\R\oplus(E+U)$.  Consequently, if
$F=\{1,\ldots,h\}\setminus J,$
then $|F|=q.$
\end{lemma}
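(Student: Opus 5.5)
Part (i) comes straight from Lemma~\ref{lem:augmented-rank}. The set $G=\{u(p):p>z,\ p\mid Q\}$ spans $E$, so we may choose $u_1,\ldots,u_s\in G$ forming a basis of $E$. By Lemma~\ref{lem:augmented-rank}, the columns $\binom{1}{u(p)}$ span the $(s+1)$-dimensional space $\R\oplus E$. The $s$ columns $\binom{1}{u_1},\ldots,\binom{1}{u_s}$ are linearly independent, since their $E$-components are. Hence their span is a proper subspace of $\R\oplus E$, and some $\binom{1}{u(p)}$ lies outside it; call this vector $u_0$. Adjoining $\binom{1}{u_0}$ gives $s+1$ independent columns, which is a basis of $\R\oplus E$.

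For part (ii), we use that $E+U=E+\Span\{v_1,\ldots,v_h\}$. The cosets $v_1+E,\ldots,v_h+E$ therefore span $(E+U)/E$, and we may extract a subfamily indexed by some $J$ that forms a basis. For any such choice, $|J|=\dim((E+U)/E)=\dim(E+U)-s=h-q$ by \eqref{dim-E+U}. Then $|F|=h-|J|=q$ is immediate.

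It remains to prove the basis claim for \eqref{eq:joint-columns}. There are $(s+1)+(h-q)$ columns, all lying in $\R\oplus(E+U)$, and this space has dimension $1+s+h-q$. So it suffices to prove linear independence. Suppose
\[
\sum_{i=0}^s a_i\binom{1}{u_i}+\sum_{j\in J}b_j\binom{1}{v_j}=0.
\]
Project the $E+U$ component to $(E+U)/E$. The $u_i$ vanish there, so $\sum_{j\in J}b_j(v_j+E)=0$, and hence all $b_j=0$ by (ii). What remains is a relation among the columns in (i), which gives $a_i=0$.

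The only step needing any care is the choice of $u_0$ in (i). Everything else is linear algebra with dimension counting, so I expect no real obstacle once Lemma~\ref{lem:augmented-rank} is available.
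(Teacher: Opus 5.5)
Your proof is correct and follows essentially the same route as the paper. Both arguments take a basis of $E$ from the vectors $u(p)$ and use Lemma~\ref{lem:augmented-rank} to adjoin one further column $\binom{1}{u_0}$, with $u_0=u(p)$ for a suitable $p$. Both then extract $J$ from the spanning cosets $v_j+E$ and prove independence of the joint columns by killing the $b_j$ modulo $E$, killing the $a_i$ via (i), and finishing with the dimension count $1+s+h-q$.
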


\begin{proof}
By the definition of $E$, choose
$u_1,\ldots,u_s
\in
\{u(p):p>z,\ p\mid Q\}$
that form a basis of $E$. Then the columns $\binom{1}{u_1},
\ldots,
\binom{1}{u_s}$
are linearly independent.
By Lemma~\ref{lem:augmented-rank}, there exists 
$$u_0\in\{u(p):p>z,\ p\mid Q\}$$
such that $\binom{1}{u_0},
\binom{1}{u_1},
\ldots,
\binom{1}{u_s}$
are linearly independent.  Since
$\dim(\R\oplus E)=s+1,$ it follows that
these columns form a basis of $\R\oplus E$.

Since $v_1,\ldots,v_h$ form a basis of $U$, the cosets
$v_1+E,\ldots,v_h+E$ span $(E+U)/E$.  Choose
$J\subseteq\{1,\ldots,h\}$ such that $v_j+E\; (j\in J)$ form a basis of $(E+U)/E$.  By \eqref{dim-E+U}, we have
\[
\begin{aligned}
|J|=\dim\bigl((E+U)/E\bigr)=
\dim(E+U)-\dim E=h-q.
\end{aligned}
\]

We now prove that the columns in \eqref{eq:joint-columns} form a
basis of $\R\oplus(E+U)$.  Suppose that
\[
\sum_{i=0}^s a_i\binom{1}{u_i}
+
\sum_{j\in J}b_j\binom{1}{v_j}
=
\mathbf 0_k.
\]
Comparing the last $k-1$ coordinates, we obtain
\[
\sum_{i=0}^s a_iu_i
+
\sum_{j\in J}b_jv_j
=
\mathbf 0_{k-1}.
\]
Since $u_i\in E$ for every $0\leq i\leq s$, it follows that
$$\sum_{j\in J}b_jv_j=-\sum_{i=0}^s a_iu_i
\in E.$$

The cosets $v_j+E$, $j\in J$, are linearly independent in
$(E+U)/E$.  Therefore, the only linear combination of the vectors
$v_j$, $j\in J$, that belongs to $E$ is the trivial one.  Hence
$b_j=0$ for every $j\in J$. Therefore,
\[
\sum_{i=0}^s a_i\binom{1}{u_i}
=
\mathbf 0_k.
\]
Since $\binom{1}{u_0},
\binom{1}{u_1},
\ldots,
\binom{1}{u_s}$
form a basis of $\R\oplus E$, we have $a_0=a_1=\cdots=a_s=0.$
Thus the columns in \eqref{eq:joint-columns} are linearly
independent.
The number of these columns is
\[
s+1+|J|
=
s+1+h-q.
\]
On the other hand, by \eqref{dim-E+U},
\[
\dim\bigl(\R\oplus(E+U)\bigr)
=1+\dim(E+U)
=1+s+h-q.
\]
Hence the columns in \eqref{eq:joint-columns} form a basis of
$\R\oplus(E+U)$.

Finally, since $F=\{1,\ldots,h\}\setminus J$
and $|J|=h-q$, we obtain $|F|=q.$
\end{proof}

\subsection{Localization of primes and bin indices}

Retain $K$ and the intervals $\cP_i$ from
Section~\ref{sec:arithmetic-setup}, and retain
\[
W=C_{a,k}\bigl(1+(\log\log X)^2\bigr)
\]
from the definition following Proposition~\ref{prop:AMR}.  Thus
$W=D^{o(1)}$ and $W/K=o(1)$.

Retain $\Lambda$, $A'$, and $L$ from
Subsection~\ref{subsec:selected-tuple}, and the spaces $E,U$, the vectors
$u_0,\ldots,u_s$, and the sets $J,F$ from
Lemma~\ref{lem:pivot-basis}.  Recall that
$J_j=\max B_{\omega^j}$ for $1\leq j\leq h$, and that $p_{J_j}$ is
the unique prime in $\cP_{J_j}$ dividing $n$.

Fix the primes $p_i$ for $i\in A'$ and the pairs
$(J_j,p_{J_j})$ for $j\in F$. Also fix $b\in\mathcal S_X$ and
integers $\alpha_{t,p}$ with
$0\leq\alpha_{t,p}\leq v_p(b)$ for $p\mid b$ and $1\leq t\leq k$.
We consider representations in which the external part of $n$ is
$bq_0\cdots q_s$, where the $q_i$ are distinct primes not dividing $b$,
and in which the external divisor exponents at $p\mid b$ are $\alpha_{t,p}$.
Thus all external prime powers apart from the $q_i$ are fixed.
For $0\leq i\leq s$, the prime $q_i$ is required to satisfy
\[
q_i>z,\qquad q_i\mid Q,\qquad u(q_i)=u_i.
\]
The quantities that may vary are $q_0,\ldots,q_s$, the pairs
$(J_j,p_{J_j})$ for $j\in J$, and the families
$(B'_\omega)_{\omega\in\{0,1\}^k}$ representing the fixed class $L$
in \eqref{eq:canonical-L}.

The basis relations above allow us to compare two admissible
choices of primes and indices. The following lemma bounds the
variation of $\log q_0$ and, after $q_0$ is fixed, that of
$\log q_i$ and $J_j$.

\begin{lemma}\label{lem:joint-localization}
Suppose that
$\bigl(q_0,\ldots,q_s,(J_j,p_{J_j})_{j\in J}\bigr)$ and
$\bigl(\widetilde q_0,\ldots,\widetilde q_s,
(\widetilde J_j,\widetilde p_{\widetilde J_j})_{j\in J}\bigr)$
are obtained from two choices satisfying all the conditions above.  Then
\[
\left|
\log q_0-\log\widetilde q_0
\right|
\ll_k1.
\]
If $q_0=\widetilde q_0$, then
\[
\left|
\log q_i-\log\widetilde q_i
\right|
\ll_k\frac{W}{K}
\qquad
(1\leq i\leq s)
\]
and
\[
|J_j-\widetilde J_j|
\ll_k W
\qquad
(j\in J).
\]
Consequently, for fixed $q_0$, each $J_j$, $j\in J$, has
$O_k(W)$ possible integer values.
\end{lemma}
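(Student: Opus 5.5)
The plan is to turn the data into one linear system whose coefficient columns are exactly the basis \eqref{eq:joint-columns} of Lemma~\ref{lem:pivot-basis}. I will then invert that system with an inverse bounded in terms of $k$ only, as in the Cramer--Hadamard argument of Lemma~\ref{lem:augmented-rank}. The unknowns are $x_i=K\log q_i$ for $0\leq i\leq s$ and $y_j=J_j$ for $j\in J$. Everything else (the primes $p_i$ with $i\in A'$, the pairs $(J_j,p_{J_j})$ for $j\in F$, $b$, the exponents $\alpha_{t,p}$, the class $L$, and the datum $\Lambda$) is fixed.

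\emph{The $(k-1)$-dimensional equation.} I apply $f$ to \eqref{eq:residual-identity}. Since $f$ kills $\langle\one\rangle$,
\[
f\bigl(S_K(m_1),\ldots,S_K(m_k)\bigr)=f(L)+\sum_{j=1}^hJ_jv_j .
\]
On the external side, $r_t$ is the product of $\prod_{p\mid b}p^{\alpha_{t,p}}$ and the $q_i$ with $v_{q_i}(r_t)=1$. Since $u(q_i)=u_i$, this gives
\[
Kf(\log r_1,\ldots,\log r_k)=K\beta_b+\sum_{i=0}^s x_iu_i ,
\]
where $\beta_b=f\bigl(\sum_{p\mid b}\alpha_{t,p}\log p\bigr)_{t}$ is fixed. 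Proposition~\ref{prop:AMR}, with $K\Delta\ll1$ and $\Omega(m_t)\leq(\log\log X)^2$, says that $f(S_K(m))+Kf(\log r)$ has norm $\ll W$. Combining these,
\[
\sum_{i=0}^s x_iu_i+\sum_{j\in J}y_jv_j=\mathbf c_1+O_k(W),
\]
where $\mathbf c_1=-f(L)-K\beta_b-\sum_{j\in F}J_jv_j$ is common to both choices.

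\emph{The scalar equation.} I use $n=b\,q_0\cdots q_s\prod_{i\in A'}p_i\prod_{j=1}^h p_{J_j}$, where only the $q_i$ and the $p_{J_j}$ with $j\in J$ vary, and $\log n=\log X+O(1)$. Together with $K\log p_{J_j}=J_j+O(1)$, this gives
\[
\sum_{i=0}^s x_i+\sum_{j\in J}y_j=c_0+O_k(K),
\]
with $c_0$ fixed.

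\emph{Inverting the full system.} Subtracting the two choices, the difference vector $(\Delta x,\Delta y)$ satisfies $M(\Delta x,\Delta y)^{\mathsf T}=\mathbf e$. Here the columns of $M$ are \eqref{eq:joint-columns}, $|e_0|\ll K$ and $\|(e_1,\ldots,e_{k-1})\|\ll W\leq K$. The right side lies in $\R\oplus(E+U)$ automatically, being a combination of the columns. Since the columns form a basis (Lemma~\ref{lem:pivot-basis}) with entries in $\{-1,0,1\}$, I select a nonsingular square minor and apply Cramer's rule and Hadamard's inequality. This shows every coordinate of $(\Delta x,\Delta y)$ is $\ll_k\|\mathbf e\|\ll_k K$. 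In particular $K|\log q_0-\log\widetilde q_0|\ll_k K$, which is the first claim.

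\emph{The reduced system with $q_0$ fixed.} Now $\Delta x_0=0$ and I discard the scalar equation. The remaining columns $u_1,\ldots,u_s,v_j$ ($j\in J$) form a basis of $E+U$: the $u_i$ with $i\geq1$ are a basis of $E$, and the $v_j+E$ are a basis of $(E+U)/E$. They still have $\{-1,0,1\}$ entries, so the same minor argument bounds the differences by $\ll_k W$. This gives $K|\log q_i-\log\widetilde q_i|\ll_k W$ and $|J_j-\widetilde J_j|\ll_kW$. The $O_k(W)$ count of values of $J_j$ follows at once.

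The main obstacle is bookkeeping, not conceptual. I must check that every term other than $x_i$ and $y_j$ is genuinely fixed by the stated data; in particular $f(L)$ is fixed because $L$ is fixed, and the $J_j$ with $j\in F$ are fixed. I must also check that the inverse bound depends only on $k$ and not on the particular $u_i,v_j$; this holds because there are only finitely many $\{-1,0,1\}$ matrices of size at most $k$.
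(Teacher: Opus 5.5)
Your proposal is correct and follows essentially the same route as the paper. Both proofs take a scalar equation from $n,\widetilde n\in(X/2,X]$ and a $(k-1)$-dimensional equation from Proposition~\ref{prop:AMR} and \eqref{eq:residual-identity}. Both invert it through a nonsingular minor with entries in $\{-1,0,1\}$ of the basis \eqref{eq:joint-columns}, and then, once $q_0$ is fixed, through the basis $u_1,\ldots,u_s,v_j$ $(j\in J)$ of $E+U$. The differences are cosmetic: you use the unscaled variables $K\log q_i$ and $J_j$ instead of $\Delta_i$ and $\delta_j=(J_j-\widetilde J_j)/K$, and a Cramer--Hadamard bound instead of the adjugate bound.
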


\begin{proof}
Let $n$ and $\widetilde n$ correspond to the two choices, and write
\[
d_t=r_tm_t,
\qquad
\widetilde d_t=\widetilde r_t\widetilde m_t
\qquad (1\leq t\leq k)
\]
as in \eqref{eq:external-medium-decomp}. Put
\[
\Delta_i=\log\frac{q_i}{\widetilde q_i}
\quad (0\leq i\leq s),
\qquad
\delta_j=\frac{J_j-\widetilde J_j}{K}
\quad (j\in J).
\]

By Lemma~\ref{lem:MOP}\textup{(ii)} and
\eqref{eq:large-squarefree}, the primes that vary between the two
choices occur with exponent one. All remaining prime powers are
fixed. Therefore,
\[
\frac{n}{\widetilde n}
=
\prod_{i=0}^s\frac{q_i}{\widetilde q_i}
\prod_{j\in J}\frac{p_{J_j}}{\widetilde p_{\widetilde J_j}}.
\]
Since $p_{J_j}\in\cP_{J_j}$ and
$\widetilde p_{\widetilde J_j}\in\cP_{\widetilde J_j}$, we have
\[
0<\log p_{J_j}-\frac{J_j}{K}\leq\frac1K,
\qquad
0<\log\widetilde p_{\widetilde J_j}
  -\frac{\widetilde J_j}{K}\leq\frac1K.
\]
Consequently,
\[
\left|
\log\frac{p_{J_j}}{\widetilde p_{\widetilde J_j}}-\delta_j
\right|\leq\frac1K.
\]
Taking logarithms in the expression for $n/\widetilde n$ and using
$n,\widetilde n\in(X/2,X]$, we obtain
\begin{equation}\label{eq:joint-size-difference}
\left|
\sum_{i=0}^s\Delta_i+\sum_{j\in J}\delta_j
\right|
\leq\log2+\frac{|J|}{K}
\ll_k1.
\end{equation}

Define
\[
\mathbf S=f\bigl(S_K(m_1),\ldots,S_K(m_k)\bigr),
\qquad
\widetilde{\mathbf S}
=f\bigl(S_K(\widetilde m_1),\ldots,S_K(\widetilde m_k)\bigr).
\]
The two choices represent the same class $L$, so
\eqref{eq:residual-identity} gives
\[
\mathbf S=f(L)+\sum_{j=1}^hJ_jv_j,
\qquad
\widetilde{\mathbf S}=f(L)+\sum_{j=1}^h\widetilde J_jv_j.
\]
Since $J_j=\widetilde J_j$ for $j\in F$ and
$\{1,\ldots,h\}=J\mathbin{\dot\cup}F$, it follows that
\begin{equation}\label{eq-4}
\mathbf S-\widetilde{\mathbf S}
=\sum_{j\in J}(J_j-\widetilde J_j)v_j.
\end{equation}

Put $\mathbf b_{\mathrm{ext}}
=\sum_{p\mid b}f(\alpha_{1,p},\ldots,\alpha_{k,p})\log p.$
The integer $b$ and the exponents $\alpha_{t,p}$ are fixed, and
\[
f\bigl(v_{q_i}(r_1),\ldots,v_{q_i}(r_k)\bigr)
=f\bigl(v_{\widetilde q_i}(\widetilde r_1),\ldots,
v_{\widetilde q_i}(\widetilde r_k)\bigr)=u_i
\quad(0\leq i\leq s).
\]
Unique prime factorization and the linearity of $f$ therefore give
\[
\begin{aligned}
f(\log r_1,\ldots,\log r_k)
=\mathbf b_{\mathrm{ext}}+\sum_{i=0}^su_i\log q_i,\qquad
f(\log\widetilde r_1,\ldots,\log\widetilde r_k)
=\mathbf b_{\mathrm{ext}}+\sum_{i=0}^su_i\log\widetilde q_i.
\end{aligned}
\]
Subtracting these identities yields
\begin{equation}\label{eq-5}
f(\log r_1,\ldots,\log r_k)
-f(\log\widetilde r_1,\ldots,\log\widetilde r_k)
=\sum_{i=0}^su_i\Delta_i.
\end{equation}

Define
\[
\begin{aligned}
\mathbf R
=\mathbf S+Kf(\log r_1,\ldots,\log r_k), \qquad
\widetilde{\mathbf R}
=\widetilde{\mathbf S}
  +Kf(\log\widetilde r_1,\ldots,\log\widetilde r_k).
\end{aligned}
\]
The inequalities
\[
d_1<\cdots<d_k,
\qquad
\frac{d_k}{d_1}\leq1+(\log n)^{-a}
\]
imply
\[
\max_{1\leq r,t\leq k}|\log d_r-\log d_t|
=\log\frac{d_k}{d_1}
\leq\log\bigl(1+(\log n)^{-a}\bigr)
\leq(\log n)^{-a}.
\]
For $X\geq4$, the inequalities $n>X/2$ and
$K=(\log X)^a$ give
\[
K\log\frac{d_k}{d_1}
\leq K(\log n)^{-a}\leq2^a.
\]
Thus Proposition~\ref{prop:AMR} and
$\Omega(m_t)\leq\Omega(n)\leq(\log\log X)^2$ yield
\[
\begin{aligned}
\left|
S_K(m_t)-S_K(m_1)+K\log\frac{r_t}{r_1}
\right|
&\leq K\log\frac{d_k}{d_1}+\Omega(m_t)+\Omega(m_1)\\
&\leq2^a+2(\log\log X)^2
\leq W
\end{aligned}
\]
for $2\leq t\leq k$, provided that $C_{a,k}$ is sufficiently
large. These are precisely the coordinate inequalities
$\mathbf R\in[-W,W]^{k-1}$. The same argument gives
$\widetilde{\mathbf R}\in[-W,W]^{k-1}$.

On the other hand, \eqref{eq-4} and \eqref{eq-5} imply
\[
\begin{aligned}
\mathbf R-\widetilde{\mathbf R}
=\sum_{j\in J}(J_j-\widetilde J_j)v_j
  +K\sum_{i=0}^su_i\Delta_i
=K\left(\sum_{i=0}^su_i\Delta_i
  +\sum_{j\in J}v_j\delta_j\right).
\end{aligned}
\]
Since $K>0$, we conclude that
\begin{equation}\label{eq:joint-target-difference}
\sum_{i=0}^su_i\Delta_i+\sum_{j\in J}v_j\delta_j
\in\left[-\frac{2W}{K},\frac{2W}{K}\right]^{k-1}.
\end{equation}

Write $J=\{j_1<\cdots<j_{h-q}\}$ and put
\[
d=s+h-q=\dim(E+U),
\]
where the last equality is \eqref{dim-E+U}. If $d=0$, then
$s=0$ and $J=\varnothing$. In this case,
\eqref{eq:joint-size-difference} gives $|\Delta_0|\ll_k1$,
and there are no further inequalities to prove. Assume henceforth
that $d\geq1$.

By Lemma~\ref{lem:pivot-basis}, the vectors
\[
u_1,\ldots,u_s,v_{j_1},\ldots,v_{j_{h-q}}
\]
form a basis of $E+U$. The matrix with these columns has rank $d$,
so it has a nonsingular $d\times d$ submatrix obtained by selecting
$d$ rows. Let
$\mathcal I=\{t_1<\cdots<t_d\}\subseteq\{1,\ldots,k-1\}$
be their indices. For $w\in\mathbb R^{k-1}$, write
\[
w_{\mathcal I}=(w_{t_1},\ldots,w_{t_d})^{\mathsf T},
\]
and define
\[
N=\bigl(
(u_1)_{\mathcal I},\ldots,(u_s)_{\mathcal I},
(v_{j_1})_{\mathcal I},\ldots,(v_{j_{h-q}})_{\mathcal I}
\bigr).
\]
By construction, $N$ is nonsingular. Moreover, the coordinate
projection $w\mapsto w_{\mathcal I}$ is injective on $E+U$,
if
\[
w=\sum_{i=1}^sa_iu_i+\sum_{\nu=1}^{h-q}b_\nu v_{j_\nu}
\quad\text{and}\quad w_{\mathcal I}=0,
\]
then
\[
N(a_1,\ldots,a_s,b_1,\ldots,b_{h-q})^{\mathsf T}=0,
\]
so all coefficients vanish and $w=0$.

Now define the $(d+1)\times(d+1)$ matrix
\[
M=\left(
\binom{1}{(u_0)_{\mathcal I}},\ldots,
\binom{1}{(u_s)_{\mathcal I}},
\binom{1}{(v_{j_1})_{\mathcal I}},\ldots,
\binom{1}{(v_{j_{h-q}})_{\mathcal I}}
\right).
\]
To prove that $M$ is nonsingular, suppose that
\[
\sum_{i=0}^sa_i\binom{1}{(u_i)_{\mathcal I}}
+\sum_{\nu=1}^{h-q}b_\nu\binom{1}{(v_{j_\nu})_{\mathcal I}}
=0.
\]
The first coordinate gives
$\sum_{i=0}^sa_i+\sum_{\nu=1}^{h-q}b_\nu=0$.
The remaining coordinates, together with the injectivity just
proved, give
\[
\sum_{i=0}^sa_iu_i+\sum_{\nu=1}^{h-q}b_\nu v_{j_\nu}=0.
\]
Hence
\[
\sum_{i=0}^sa_i\binom{1}{u_i}
+\sum_{\nu=1}^{h-q}b_\nu\binom{1}{v_{j_\nu}}=0.
\]
By \eqref{eq:joint-columns}, these columns are linearly
independent. Thus all coefficients vanish, proving that $M$ is
nonsingular.

The entries of $M$ and $N$ belong to $\{-1,0,1\}$, and their
orders are at most $k$. Their determinants are nonzero integers,
so their absolute values are at least one. A cofactor of a matrix
of order $r\leq k$ has absolute value at most $(r-1)!$, by the
determinant expansion. The adjugate formula therefore gives
\[
|(M^{-1})_{\ell m}|\leq(k-1)!,
\qquad
|(N^{-1})_{\ell m}|\leq(k-1)!
\]
for every entry of the respective matrices.

Put
\[
\mathbf x=
(\Delta_0,\ldots,\Delta_s,
\delta_{j_1},\ldots,\delta_{j_{h-q}})^{\mathsf T}.
\]
The first coordinate of $M\mathbf x$ is
$\sum_{i=0}^s\Delta_i+\sum_{j\in J}\delta_j$, and the
remaining coordinates form the vector
\[
\left(\sum_{i=0}^su_i\Delta_i
+\sum_{j\in J}v_j\delta_j\right)_{\mathcal I}.
\]
Thus \eqref{eq:joint-size-difference} and
\eqref{eq:joint-target-difference} imply
\[
M\mathbf x\in[-C_k,C_k]
\times\left[-\frac{2W}{K},\frac{2W}{K}\right]^d
\]
for some $C_k>0$ depending only on $k$. Applying $M^{-1}$ gives
\[
|\Delta_0|\ll_k1+\frac WK\ll_k1,
\]
where the last estimate uses $W/K=o(1)$. Equivalently,
\[
|\log q_0-\log\widetilde q_0|\ll_k1.
\]

If $q_0=\widetilde q_0$, then $\Delta_0=0$, and
\eqref{eq:joint-target-difference}, restricted to the coordinates
in $\mathcal I$, becomes
\[
N(\Delta_1,\ldots,\Delta_s,
\delta_{j_1},\ldots,\delta_{j_{h-q}})^{\mathsf T}
\in\left[-\frac{2W}{K},\frac{2W}{K}\right]^d.
\]
Applying $N^{-1}$ yields
\[
|\Delta_i|\ll_k\frac WK\quad(1\leq i\leq s),
\qquad
|\delta_j|\ll_k\frac WK\quad(j\in J).
\]
By the definitions of $\Delta_i$ and $\delta_j$, these estimates
are precisely
\[
|\log q_i-\log\widetilde q_i|\ll_k\frac WK
\quad(1\leq i\leq s),
\qquad
|J_j-\widetilde J_j|\ll_k W
\quad(j\in J).
\]
For fixed $q_0$, the latter inequality holds for every pair of
possible values of $J_j$. Those values therefore lie in an
interval of length $O_k(W)$, which contains $O_k(W)$ integers
because $W\geq1$.

Finally, in \eqref{eq:residual-identity}, the family
$(B'_\omega)_{\omega\in\{0,1\}^k}$ enters only through $L$.
All subsequent bounds are independent of the particular family
representing this class. The estimates are therefore uniform over
all such families.
\end{proof}

\subsection{Reciprocal sums over prime intervals}
Lemma~\ref{lem:joint-localization} bounds the variation of the
logarithms of the selected primes. We next estimate the sums
of reciprocals of primes in the corresponding intervals,
with bounds independent of their endpoints.

\begin{lemma}\label{lem:prime-windows}
Let $1<A<B$ and $I=(A,B]$.
\begin{enumerate}[label=\textup{(\roman*)},leftmargin=2.2em]
\item
For every fixed $C>0$, if
$\log\frac{B}{A}\leq C$, we have
$$\sum_{\substack{p\in I\cap(z,X]\\ p\ {\rm prime}}}\frac1p
\ll_C
\frac1{\log z}.$$

\item
If $z^{-1/2}\leq\Delta\leq1$ and
$\log\frac{B}{A}\leq\Delta$, then
$$\sum_{\substack{p\in I\cap(z,X]\\ p\ {\rm prime}}}\frac 1p
\ll \frac{\Delta}{\log z}.$$
\end{enumerate}

In both parts, the implied constants are independent of $A$ and
$B$.
\end{lemma}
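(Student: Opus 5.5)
We will reduce both parts to Brun--Titchmarsh combined with partial summation, after first trimming the interval to $I'=(\max\{A,z\},\min\{B,X\}]$. If $I'$ is empty there is nothing to prove. Otherwise write $I'=(A',B']$; then $A'\geq z$ and $\log(B'/A')\leq\log(B/A)$. The idea is that every prime in $I'$ exceeds $z$, so $1/p\leq 1/A'$, and it will suffice to bound the number of primes in $I'$.

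For part (ii), we have $\Delta\leq1$, so $B'\leq \ee A'$ and the length is $B'-A'\leq A'(\ee^{\Delta}-1)\leq 2\Delta A'$. The Brun--Titchmarsh inequality $\pi(x+y)-\pi(x)\leq 2y/\log y$ for $y\geq2$ needs the length to be a power of $A'$, so we use the hypothesis $\Delta\geq z^{-1/2}$. It gives $y=2\Delta A'\geq 2A'^{1/2}$ when $A'\geq z$; more carefully, $\Delta A'\geq z^{-1/2}A'\geq A'^{1/2}$. Hence $\log y\geq\frac12\log A'\geq\frac12\log z$. Therefore the number of primes in $I'$ is $\ll \Delta A'/\log z$, and multiplying by $1/A'$ gives $\ll\Delta/\log z$, uniformly in $A,B$. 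If $y<2$, the interval contains at most one integer. That integer contributes at most $1/A'\leq 1/z$, which is $\ll z^{-1/2}/\log z\leq\Delta/\log z$ for large $z$.

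For part (i), we split $I'$ into at most $\lceil C\rceil+1$ subintervals on each of which $\log(\text{ratio})\leq1$. On each piece we apply the case $\Delta=1$ of the argument above, where the condition $\Delta\geq z^{-1/2}$ is trivial. This gives $\ll 1/\log z$ per piece, hence $\ll_C 1/\log z$ in total. An alternative route is the Mertens estimate $\sum_{A'<p\leq B'}1/p=\log\frac{\log B'}{\log A'}+O(1/\log A')$. Since $\log\frac{\log B'}{\log A'}\leq \log(B'/A')/\log A'\leq C/\log z$, this also yields part (i) directly.

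The main obstacle is the uniformity in (ii) when the window is very short. Here the Mertens error term $O(1/\log A')$ is too large, so Brun--Titchmarsh is essential, and the lower bound $\Delta\geq z^{-1/2}$ is exactly what guarantees that the interval length $\Delta A'$ is at least $A'^{1/2}$. We therefore must keep track of the trimming to $A'\geq z$ in order to use $\log A'\geq\log z$.
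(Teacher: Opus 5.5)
Your proof is correct and matches the paper's. Both trim the interval to $Y=\max\{A,z\}$ and obtain (i) from Mertens' estimate $\sum_{p\le x}1/p=\log\log x+O(1/\log x)$, which is your alternative route; your splitting argument also works. For (ii), both apply Brun--Titchmarsh on $(Y,Y+\eta]$ using $\eta\geq Y\Delta\geq Y^{1/2}$, which is exactly where $\Delta\geq z^{-1/2}$ enters. Your separate case $y<2$ never occurs, since $y=2\Delta A'\geq 2A'^{1/2}\geq 2$.
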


\begin{proof}
If $I\cap(z,X]$ is empty, there is nothing to prove.  Put
$Y=\max\{A,z\}$.

For \textup{(i)}, the inequality $B\leq A\ee^C$ gives
$I\cap(z,X]\subseteq(Y,Y\ee^C]$.  By Mertens' theorem for the sum of reciprocals of primes,
\[
\sum_{p\leq x}\frac1p
=
\log\log x
+
O\left(\frac1{\log x}\right)
\qquad
(x\geq2).
\]
Hence
\[
\begin{aligned}
\sum_{Y<p\leq Y\ee^C}\frac1p
&=\sum_{p\leq Y\ee^C}\frac1p-\sum_{p\leq Y}\frac1p\\
&=
\log\log(Y\ee^C)-\log\log Y
+
O\left(\frac1{\log Y}\right)\\
&=
\log\left(1+\frac{C}{\log Y}\right)
+
O\left(\frac1{\log Y}\right)\\
&\ll_C
\frac1{\log Y}.
\end{aligned}
\]

For \textup{(ii)}, we have
$I\cap(z,X]\subseteq(Y,Y\ee^\Delta]$.  Put
$\eta=Y(\ee^\Delta-1)$, so that $Y\ee^\Delta=Y+\eta$.  Since
$0<\Delta\leq1$, the inequalities
$\ee^\Delta\geq1+\Delta$ and
$\ee^\Delta\leq1+(\ee-1)\Delta$ give
\[
Y\Delta
\leq
\eta
\leq
(\ee-1)Y\Delta.
\]
Moreover, since $Y\geq z$ and $\Delta\geq z^{-1/2}$,
\[
\eta
\geq
Y\Delta
\geq
Yz^{-1/2}
\geq
Y^{1/2}.
\]
Thus $\log\eta\geq\frac12\log Y$.
By the Brun--Titchmarsh inequality in its short-interval form
(see, for example, \cite[Theorem~4.2]{Buthe2014} with constant weight
and modulus $1$), we have
$$\#\{p\text{ prime}:Y<p\leq Y+\eta\}
\ll
\frac{\eta}{\log\eta}.$$
Using $\eta\leq(\ee-1)Y\Delta$ and
$\log\eta\geq\frac12\log Y$, we obtain
\[
\begin{aligned}
\#\{p\text{ prime}:Y<p\leq Y+\eta\}
\ll
\frac{\eta}{\log\eta}
\leq \frac{(\ee-1)Y\Delta}{\frac12\log Y}
\ll
\frac{Y\Delta}{\log Y}.
\end{aligned}
\]

Since $Y\ee^\Delta=Y+\eta$ and $p>Y$ for every prime occurring in
the sum, we obtain
\[
\begin{aligned}
\sum_{Y<p\leq Y\ee^\Delta}\frac1p
=\sum_{Y<p\leq Y+\eta}\frac1p
\leq
\frac1Y\#\{p:Y<p\leq Y+\eta\}
\ll \frac{\Delta}{\log Y}
\leq
\frac{\Delta}{\log z}.
\end{aligned}
\]
\end{proof}

By \eqref{eq:bin-parameters}, \eqref{eq:y-z-asymp}, and the choice
$W=C_{a,k}(1+(\log\log X)^2)$, for all sufficiently large $X$,
\[
z^{-1/2}
\leq
C_k\frac{W}{K}
\leq
1,
\qquad
K\log z=D+1,
\qquad
W=D^{o(1)}.
\]

Every selected prime $q_i$ lies in $(z,X]$.  By
Lemma~\ref{lem:joint-localization}, the possible values of
$\log q_0$ lie in an interval of length $O_k(1)$.  Hence
Lemma~\ref{lem:prime-windows}\textup{(i)} gives
\begin{equation}\label{eq:broad-weight}
\sum_{q_0}\frac1{q_0}
\ll_k
\frac1{\log z}.
\end{equation}

After $q_0$ is fixed, the possible values of $\log q_i$ lie in an
interval of length $O_k(W/K)$ for $1\leq i\leq s$.  Applying
Lemma~\ref{lem:prime-windows}\textup{(ii)} with
$\Delta=C_kW/K$, we obtain
\begin{equation}\label{eq:narrow-weight}
\sum_{q_i}\frac1{q_i}
\ll_k
\frac{W}{K\log z}
=
D^{-1+o(1)}
\qquad
(1\leq i\leq s),
\end{equation}
where $q_0$ is fixed in each sum.

Recall that $R_\ell=\sum_{p\in\cP_\ell}p^{-1}$ and
$R_\ell\ll\ell^{-1}$ by \eqref{eq:R-i-asymp}.  If $j\in J$, then
Lemma~\ref{lem:joint-localization} gives $O_k(W)$ possible values of
$J_j$, while \eqref{eq:residual-pivot-scale} gives
$\ee^{-1}D^{c_j}<J_j\leq D^{c_j}$.  Thus
$R_{J_j}\ll D^{-c_j}$ and
\begin{equation}\label{eq:constrained-medium-weight}
\sum_{J_j}R_{J_j}
\ll_k
WD^{-c_j}
=
D^{-c_j+o(1)}
\qquad
(j\in J).
\end{equation}

If $j\in F$, then
\begin{equation}\label{eq:free-medium-weight}
\sum_{\substack{i_1\leq\ell\leq D\\
\ee^{-1}D^{c_j}<\ell\leq D^{c_j}}}R_\ell
\ll
\sum_{\ee^{-1}D^{c_j}<\ell\leq D^{c_j}}\frac1\ell
\ll
1.
\end{equation}

We now combine these estimates with all dependencies specified.
Fix $\Lambda$ as in \eqref{eq:Lambda-definition}, together with
$E,\mathbf u,J$, a set $A'$, a class $L$, and primes
$p_i\in\cP_i$ for $i\in A'$. Also fix a positive integer
$b\in\mathcal S_X$ and integers
\[
0\leq\alpha_{t,p}\leq v_p(b)
\qquad(p\mid b,\ 1\leq t\leq k).
\]
The integer $b$ and these exponents specify all external prime powers
other than the selected primes $q_0,\ldots,q_s$.
Let $\mathcal T_L$ be the set of tuples
\[
\bigl(q_0,\ldots,q_s,(J_j,p_{J_j})_{j=1}^h\bigr)
\]
that occur for some integer and selected divisor tuple satisfying
the restrictions of the preceding subsection with these fixed data.
In particular, the $q_i$ are distinct, $q_i\nmid b$, and each such
tuple is included once, irrespective of how many partitions
$(B'_\omega)_\omega$ represent $L$.

First, also fix $(J_j,p_{J_j})$ for $j\in F$.
Lemma~\ref{lem:joint-localization} bounds the diameter of the possible
values of $\log q_0$ by $O_k(1)$, uniformly in all the fixed data,
and \eqref{eq:broad-weight} bounds the corresponding reciprocal sum.
For each fixed $q_0$, the tuples of
$q_1,\ldots,q_s$ and $(J_j,p_{J_j})$, $j\in J$, form a subset
of the product of their coordinate projections. Since all weights
are nonnegative, \eqref{eq:narrow-weight} and \eqref{eq:constrained-medium-weight} give an upper bound for their total reciprocal weight that is uniform in $q_0$.
Summing over $q_0$ with weight $1/q_0$ and applying \eqref{eq:broad-weight}, we obtain
\[
\sum
\left(\prod_{i=0}^s\frac1{q_i}\right)
\left(\prod_{j\in J}\frac1{p_{J_j}}\right)
\ll_k
\frac1{\log z}
\left(\frac W{K\log z}\right)^s
\prod_{j\in J}\bigl(WD^{-c_j}\bigr),
\]
where the sum is over the tuples compatible with the fixed free pairs.
If that set is empty, its sum is zero and the same bound holds.
The constant is independent of the locations of all the intervals.

Now sum over $(J_j,p_{J_j})$ for $j\in F$. By
\eqref{eq:free-medium-weight}, their total factor is at most
\[
\prod_{j\in F}
\left(\sum_{\substack{i_1\leq\ell\leq D\\
\ee^{-1}D^{c_j}<\ell\leq D^{c_j}}}R_\ell\right)
\ll_k1.
\]
Choose a constant $C_k^*>0$ large enough to dominate these uniform
bounds, and define
\begin{equation}\label{eq:C-pivot}
\begin{aligned}
C_{\Lambda,E,J}
&=
\frac{C_k^*}{\log z}
\left(\frac W{K\log z}\right)^s
\prod_{j\in J}\bigl(WD^{-c_j}\bigr)
\ll_k
\frac{D^{-s-\sum_{j\in J}c_j+o(1)}}{\log z}.
\end{aligned}
\end{equation}
Then, for every choice of the fixed residual data,
\begin{equation}\label{eq:uniform-prime-sum}
\sum_{\mathcal T_L}
\left(\prod_{i=0}^s\frac1{q_i}\right)
\left(\prod_{j=1}^h\frac1{p_{J_j}}\right)
\leq C_{\Lambda,E,J}.
\end{equation}
Thus $C_{\Lambda,E,J}$ is a uniform upper bound, not a sum depending
on $A'$, $L$, $b$, or the unselected prime powers.
The bound is also uniform in $\mathbf u$, because all its vectors
belong to the fixed finite set $f(\{0,1\}^k)$.
The last inequality in \eqref{eq:C-pivot} uses
$K\log z=D+1$, $W=D^{o(1)}$, and
$s+|J|=\dim(E+U)\leq k-1$.

\subsection{Weighted reconstruction}\label{subsec:weighted-reconstruction}

For fixed $\Lambda$ as in \eqref{eq:Lambda-definition} and
$A'\subseteq[i_1,D]\cap\Z$, put
\[
\cL_\Lambda(A')=\cL_{\cV,\mathbf c,\boldsymbol\mu}(A'),
\]
using the definition in \eqref{eq:partition-compatibility}.
By Lemma~\ref{lem:residual-interface}\textup{(ii)}, every residual
partition constructed above satisfies these conditions.
For $A'\in\widetilde{\mathcal R}$,
Lemma~\ref{lem:residual-sum-bound} gives
\begin{equation}\label{eq:L-entropy-bound}
|\cL_\Lambda(A')|
\leq D^{e_*(\Lambda)+o(1)},
\qquad
e_*(\Lambda)=\min_{\cV'\leq\cV}
\mathrm e(\cV',\mathbf c,\boldsymbol\mu),
\end{equation}
where $o(1)=O_k((\log D)^{-1/4})$ is uniform in $\Lambda$ and $A'$.

Fix $E,u_0,\ldots,u_s$ and $J$ as in
Lemma~\ref{lem:pivot-basis}, put
$\mathbf u=(u_0,\ldots,u_s)$, and let
$\mathscr S_{\Lambda,E,\mathbf u,J}$ be the set of integers
$n\in(X/2,X]$ satisfying both conclusions of
Lemmas~\ref{lem:MOP} and \ref{lem:external-good},
$\Omega(n)\leq(\log\log X)^2$, and $I(n)\in\mathcal R$, for which
the selected divisor tuple has pairwise distinct medium parts,
$Q>X^{1-\eps}$, and data $\Lambda,E,\mathbf u,J$ as above.

We now sum over the residual data using the uniform bound
\eqref{eq:uniform-prime-sum}. The following lemma bounds the
reciprocal sum over $\mathscr S_{\Lambda,E,\mathbf u,J}$
in terms of the residual classes, and then applies the entropy
estimate \eqref{eq:L-entropy-bound}.

\begin{lemma}
With the notation above,
\begin{align}
\sum_{n\in\mathscr S_{\Lambda,E,\mathbf u,J}}\frac1n
&\ll_{a,k}
\mathcal Z_{\rm ext}^{(k)}C_{\Lambda,E,J}
\sum_{A'\in\widetilde{\mathcal R}}
\left(\prod_{i\in A'}R_i\right)
|\cL_\Lambda(A')|,
\label{eq:master-one}\\
&\ll_{a,k}
D^{e_*(\Lambda)+o(1)}
\mathcal Z_{\rm ext}^{(k)}Z_R C_{\Lambda,E,J}.
\label{eq:master-two}
\end{align}
\end{lemma}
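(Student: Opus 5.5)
The plan is to prove \eqref{eq:master-one} by an injective reconstruction of each $n\in\mathscr S_{\Lambda,E,\mathbf u,J}$ from data whose reciprocal weights factor, and then to deduce \eqref{eq:master-two} by bounding the residual sum with \eqref{eq:L-entropy-bound} and Lemma~\ref{lem:harmonic-normalization}\textup{(i)}.

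\textbf{Decomposition of $n$.} Fix $n\in\mathscr S_{\Lambda,E,\mathbf u,J}$ with its selected tuple. By Lemma~\ref{lem:MOP}, the medium part of $n$ is $\prod_{i\in I(n)}p_i$, with each $p_i$ occurring to the first power. Write $I(n)=A'\cup\{J_1,\ldots,J_h\}$. By \eqref{eq:large-squarefree}, the primes $p>z$ dividing $Q$ occur in $n$ to the first power. Among them, choose primes $q_0,\ldots,q_s$ with $u(q_i)=u_i$, taking for instance the smallest such prime in each case so that the choice is canonical.

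Let $b$ be the remaining external part of $n$, namely $n$ divided by its medium part and by $q_0\cdots q_s$. Record the exponents $\alpha_{t,p}=v_p(r_t)$ for $p\mid b$. Record also the residual set $A'$, the primes $p_i$ for $i\in A'$, the class $L\in\cL_\Lambda(A')$ (this membership uses Lemma~\ref{lem:residual-interface}\textup{(ii)}), and the pairs $(J_j,p_{J_j})$ for $j\in F$. Then $n=b\,q_0\cdots q_s\prod_{i\in A'}p_i\prod_{j=1}^h p_{J_j}$, and all factors are pairwise coprime. Hence
\[
\frac1n=\frac1b\prod_{i\in A'}\frac1{p_i}\prod_{i=0}^s\frac1{q_i}\prod_{j=1}^h\frac1{p_{J_j}}.
\]

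\textbf{Summation.} Summing over the variable part $\mathcal T_L$ with all other data fixed, \eqref{eq:uniform-prime-sum} gives at most $C_{\Lambda,E,J}$. This bound is uniform in $A'$, $L$, $b$ and the $\alpha_{t,p}$, so it factors out. The sum over $p_i\in\cP_i$ for $i\in A'$ then gives $\prod_{i\in A'}R_i$.

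For the external part, the sum over $b\in\mathcal S_X$ with exponent choices $0\le\alpha_{t,p}\le v_p(b)$ is
\[
\sum_b\frac1b\prod_{p\mid b}(v_p(b)+1)^k=\prod_{p}\sum_{\nu\ge0}\frac{(\nu+1)^k}{p^\nu},
\]
where $p$ ranges over primes $p\le X$ with $p\notin(y,z]$. This product is exactly $\mathcal Z_{\rm ext}^{(k)}$ from \eqref{eq:Zext-harmonic}.

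Lemma~\ref{lem:residual-interface}\textup{(iv)} gives $A'\in\widetilde{\mathcal R}$, since $I(n)\in\mathcal R$. Finally, the number of classes $L$ for a given $A'$ is at most $|\cL_\Lambda(A')|$. Together these give \eqref{eq:master-one}; the implied constant accounts for the bounded number of ways the pivot/free structure is encoded.

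\textbf{Main obstacle: injectivity of the encoding.} The data must determine $n$ up to at most one integer, or at most $O_k(1)$ integers. The tuples in $\mathcal T_L$ are counted as sets of primes and indices, so once $(A',(p_i),L,b,(\alpha_{t,p}),(J_j,p_{J_j})_{j\in F})$ and an element of $\mathcal T_L$ are fixed, $n$ is the product written above and is therefore determined. It remains to check that each $n$ actually arises with its own data. This holds because the selected tuple is chosen uniquely and the choice of the $q_i$ is canonical. Overcounting is harmless, since it only increases the right-hand side.

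A subtle point is that $\mathcal T_L$ is defined with the external exponents at primes dividing $b$ fixed. The definition of $\mathcal T_L$ in the previous subsection allows exactly this conditioning.

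\textbf{Deduction of \eqref{eq:master-two}.} Insert \eqref{eq:L-entropy-bound}, which bounds $|\cL_\Lambda(A')|$ by $D^{e_*(\Lambda)+o(1)}$ uniformly in $A'$. Then extend the sum over $A'\in\widetilde{\mathcal R}$ to all subsets $A'\subseteq[i_1,D]\cap\Z$. Lemma~\ref{lem:harmonic-normalization}\textup{(i)} gives
\[
\sum_{A'\subseteq[i_1,D]\cap\Z}\prod_{i\in A'}R_i=Z_R,
\]
which yields \eqref{eq:master-two}.
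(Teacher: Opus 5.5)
Your proposal is correct and follows essentially the same route as the paper: the canonical choice of the $q_i$, the factorization of $n$ that makes the data determine $n$, the uniform bound \eqref{eq:uniform-prime-sum} applied for each fixed $(A',L,(p_i),b,(\alpha_{t,p}))$ and then summed over $L\in\cL_\Lambda(A')$, the Euler product giving $\mathcal Z_{\rm ext}^{(k)}$, and finally \eqref{eq:L-entropy-bound} with Lemma~\ref{lem:harmonic-normalization}\textup{(i)}. You leave two small points implicit: your coprimality claim needs the $q_i$ to be pairwise distinct, which holds because the columns $\binom{1}{u_i}$ are linearly independent and so the $u_i$ differ; and the pairs $(J_j,p_{J_j})$ with $j\in F$ are summed inside $C_{\Lambda,E,J}$, not held fixed as outer data.
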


\begin{proof}
For each $n\in\mathscr S_{\Lambda,E,\mathbf u,J}$, retain its
selected divisor tuple. By Lemma~\ref{lem:MOP}, each occupied
interval $\cP_i$, $i\in I(n)$, contains a unique prime $p_i$
dividing $n$, and $v_{p_i}(n)=1$.
The selected vectors $\omega^1,\ldots,\omega^h$ are distinct,
and the sets $B_\omega$ are pairwise disjoint. Thus the indices
$J_j=\max B_{\omega^j}$ are pairwise distinct, and
\[
I(n)=A'\mathbin{\dot\cup}\{J_1,\ldots,J_h\}.
\]
Since $I(n)\in\mathcal R$, Lemma~\ref{lem:residual-interface}
gives
\[
A'\in\widetilde{\mathcal R},
\qquad
L\in\cL_\Lambda(A'),
\]
where $L$ is the class defined in \eqref{eq:canonical-L}.

For $0\leq i\leq s$, choose $q_i$ to be the least prime $q>z$
such that $q\mid Q$ and $u(q)=u_i$. These primes exist by
Lemma~\ref{lem:pivot-basis}. The linear independence of
$\binom{1}{u_0},\ldots,\binom{1}{u_s}$ implies that the vectors
$u_i$ are pairwise distinct, and hence so are the primes $q_i$.
Moreover, $q_i\mid Q\mid n$, so \eqref{eq:large-squarefree}
gives $v_{q_i}(n)=1$.

Put
\[
b=\prod_{\substack{p\mid n,\ p\ \mathrm{external}\\
p\notin\{q_0,\ldots,q_s\}}}p^{v_p(n)}.
\]
Then $b\in\mathcal S_X$, $q_i\nmid b$ for every $i$, and unique
prime factorization gives
\begin{equation}\label{eq:reconstruction-factorization}
n=b\left(\prod_{i\in A'}p_i\right)
\left(\prod_{j=1}^hp_{J_j}\right)
\left(\prod_{i=0}^sq_i\right).
\end{equation}
In particular, the quantities on the right determine $n$ uniquely.

For each $p\mid b$, put
\[
\alpha_{t,p}=v_p(r_t)
\qquad(1\leq t\leq k).
\]
Since $r_t\mid n$, these exponents satisfy
$0\leq\alpha_{t,p}\leq v_p(b)$. For fixed $b$, the number of
possible collections $(\alpha_{t,p})_{p\mid b,\,1\leq t\leq k}$
is therefore at most $\prod_{p\mid b}(v_p(b)+1)^k.$
At a selected prime $q_i$, the exponent vector
\[
\boldsymbol\varepsilon^{(i)}
=\bigl(v_{q_i}(r_1),\ldots,v_{q_i}(r_k)\bigr)
\]
belongs to $\{0,1\}^k$ and satisfies
$f(\boldsymbol\varepsilon^{(i)})=u_i$. Since
\[
\varepsilon_t^{(i)}
=\varepsilon_1^{(i)}+(u_i)_{t-1}
\qquad(2\leq t\leq k),
\]
the first coordinate determines all the others. Consequently,
\[
\#\{\boldsymbol\varepsilon\in\{0,1\}^k:
f(\boldsymbol\varepsilon)=u_i\}\leq2,
\]
and there are at most $2^{s+1}\leq2^k$ possible collections of
the selected exponent vectors.

Now fix $A'$, $L\in\cL_\Lambda(A')$, the primes $p_i$ for
$i\in A'$, the integer $b$, and the exponents $\alpha_{t,p}$.
Let $\mathcal T_L$ be the set of admissible tuples
\[
\bigl(q_0,\ldots,q_s,(J_j,p_{J_j})_{j=1}^h\bigr)
\]
with these fixed data, as in \eqref{eq:uniform-prime-sum}.
That estimate gives
\[
\sum_{\mathcal T_L}
\left(\prod_{i=0}^s\frac1{q_i}\right)
\left(\prod_{j=1}^h\frac1{p_{J_j}}\right)
\leq C_{\Lambda,E,J}.
\]
The same bound holds after restricting to any fixed collection
of selected exponent vectors. Summing over those collections
introduces at most the factor $2^k$.

The set $\mathcal T_L$ contains each tuple once, including when
the same tuple arises from several partitions representing $L$.
Lemma~\ref{lem:joint-localization} and the resulting estimate
\eqref{eq:uniform-prime-sum} hold uniformly over all such
partitions. Thus the summation over $L$ introduces the factor
$|\cL_\Lambda(A')|$, without any additional factor for its
representing partitions.

All admissibility conditions, including $X/2<n\leq X$ and
$q_i\nmid b$, are retained when applying
\eqref{eq:uniform-prime-sum}. After this uniform bound has been
applied, nonnegativity allows us to enlarge the outer sums to
all $p_i\in\cP_i$, all $b\in\mathcal S_X$, and all exponents
$0\leq\alpha_{t,p}\leq v_p(b)$.
Every integer on the left has a collection of the specified data,
and \eqref{eq:reconstruction-factorization} shows that a fixed
collection cannot represent two different integers. Therefore,
\[
\begin{aligned}
\sum_{n\in\mathscr S_{\Lambda,E,\mathbf u,J}}\frac1n
&\ll_{a,k} C_{\Lambda,E,J}
\sum_{A'\in\widetilde{\mathcal R}}
\sum_{L\in\cL_\Lambda(A')}
\left(
\sum_{\substack{p_i\in\cP_i\\i\in A'}}
\prod_{i\in A'}\frac1{p_i}
\right)\left(
\sum_{b\in\mathcal S_X}\frac1b
\prod_{p\mid b}(v_p(b)+1)^k
\right).
\end{aligned}
\]

By \eqref{eq:R-i-def},
\[
\sum_{\substack{p_i\in\cP_i\\i\in A'}}
\prod_{i\in A'}\frac1{p_i}
=\prod_{i\in A'}\left(\sum_{p\in\cP_i}\frac1p\right)
=\prod_{i\in A'}R_i.
\]
Also, unique prime factorization and nonnegativity give
\begin{equation}\label{eq:residual-external-sum}
\begin{aligned}
\sum_{b\in\mathcal S_X}\frac1b
\prod_{p\mid b}(v_p(b)+1)^k
&=\prod_{\substack{p\leq X\\p\ \mathrm{external}}}
\left(\sum_{\nu=0}^{\infty}\frac{(\nu+1)^k}{p^\nu}\right)
=\mathcal Z_{\rm ext}^{(k)},
\end{aligned}
\end{equation}
where the last equality is \eqref{eq:Zext-harmonic}.
Substitution yields
\[
\sum_{n\in\mathscr S_{\Lambda,E,\mathbf u,J}}\frac1n
\ll_{a,k}
\mathcal Z_{\rm ext}^{(k)}C_{\Lambda,E,J}
\sum_{A'\in\widetilde{\mathcal R}}
\left(\prod_{i\in A'}R_i\right)|\cL_\Lambda(A')|,
\]
which proves \eqref{eq:master-one}.
Finally, \eqref{eq:L-entropy-bound} gives
\[
|\cL_\Lambda(A')|\leq D^{e_*(\Lambda)+o(1)}
\qquad(A'\in\widetilde{\mathcal R}),
\]
with $o(1)$ uniform in $A'$. Hence
\[
\begin{aligned}
\sum_{A'\in\widetilde{\mathcal R}}
\left(\prod_{i\in A'}R_i\right)|\cL_\Lambda(A')|
&\leq D^{e_*(\Lambda)+o(1)}
\sum_{A'\subseteq[i_1,D]\cap\Z}\prod_{i\in A'}R_i\\
&=D^{e_*(\Lambda)+o(1)}\prod_{i=i_1}^D(1+R_i)\\
&=D^{e_*(\Lambda)+o(1)}Z_R.
\end{aligned}
\]
Combining this estimate with \eqref{eq:master-one} proves
\eqref{eq:master-two}.
\end{proof}

\subsection{Irregular medium occupancy}
The preceding estimate concerns integers with $I(n)\in\mathcal R$.
We next show that, among the integers under consideration,
those with $I(n)\notin\mathcal R$ contribute $o(1)$ to the
reciprocal sum.

\begin{lemma}\label{lem:irregular-saturated}
Let $\eps>0$ be sufficiently small in terms of $k$.
Let $\mathscr T$ be the set of integers $n\in(X/2,X]$
satisfying the conclusions of Lemmas~\ref{lem:MOP} and
\ref{lem:external-good}, with $I(n)\notin\mathcal R$, for which
there exist divisors $d_1<\cdots<d_k\mid n$ having pairwise
distinct medium parts and satisfying
$d_k\leq d_1\bigl(1+(\log n)^{-a}\bigr)$ and $Q>X^{1-\eps},$
where $Q$ is defined in \eqref{eq:Q-def}. Then
\[
\sum_{n\in\mathscr T}\frac1n=o(1)
\qquad
\text{as }X\to\infty.
\]
\end{lemma}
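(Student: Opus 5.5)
We plan to drop the close-divisor structure entirely and bound the reciprocal sum over all $n\in(X/2,X]$ with $I(n)\notin\mathcal R$, by factoring $n$ into its medium and external parts and invoking Lemma~\ref{lem:harmonic-normalization}(ii). The point is that the weight $Z_R$ attached to medium occupancy patterns must be offset by a gain of order $1/\log z$ coming from the external part. This gain is available because $Q>X^{1-\eps}$ forces a large external prime.

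\emph{Step 1 (factorization).} By Lemma~\ref{lem:MOP}, every $n\in\mathscr T$ factors uniquely as
\[
n=b\prod_{i\in I(n)}p_i,
\]
where $p_i\in\cP_i$ is the unique prime of $\cP_i$ dividing $n$, the $p_i$ occur with exponent one, and $b$ is supported on external primes.

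\emph{Step 2 (a large external prime).} Since $Q\mid n$ and $Q>X^{1-\eps}$, the argument giving \eqref{eq:SL-lower}, combined with Lemma~\ref{lem:external-good}, shows that $b$ has a prime factor $q>z$. We write $b=qb'$ with $b'\in\mathcal S_X$.

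\emph{Step 3 (summation).} Summing $1/n$ over $n\in\mathscr T$, enlarging the ranges by nonnegativity, and letting $A=I(n)$ run over all $A\notin\mathcal R$, we get
\[
\sum_{n\in\mathscr T}\frac1n\leq
\Biggl(\sum_{\substack{A\subseteq[i_1,D]\cap\Z\\ A\notin\mathcal R}}\prod_{i\in A}R_i\Biggr)
\Biggl(\sum_{b\in\mathcal S_X,\ b\leq X}\frac1b\Biggr).
\]
The first factor is at most $Z_R\bigl(\exp\{-\tfrac14\sqrt{\log D}\}+1/i_1\bigr)$ by Lemma~\ref{lem:harmonic-normalization}(ii). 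For the second factor we do not use the full sum $Z_X$. Instead we keep the condition $n\leq X$ coupled with the medium part: since $\prod_{i\in A}p_i\leq X^{o(1)}\cdot$ something is not automatic, we exploit $q>z$ and the size constraint directly. Because $q$ is the only prime that can carry a large size and we need a true logarithmic saving, we sum $1/q$ over primes $q\in(X^{1-2\eps}/(\text{rest}),X/(\text{rest})]$. Given the remaining factors, $n\in(X/2,X]$ localizes $\log q$ to an interval of length $\log 2$. Lemma~\ref{lem:prime-windows}(i) then gives $\sum_q 1/q\ll 1/\log z$.

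\emph{Step 4 (combining).} The remaining factors contribute $\sum_{b'}1/b'\leq Z_X\ll(\log\log X)^3\log\log\log X$, as in the proof of Lemma~\ref{lem:EPS}. Hence the total is at most
\[
\frac{Z_R\,Z_X}{\log z}\Bigl(\exp\{-\tfrac14\sqrt{\log D}\}+\frac1{i_1}\Bigr).
\]
Using $Z_R\asymp D/i_1$, $\log z\asymp D/K$ and $K=(\log X)^a$, we have $Z_R/\log z\asymp K/i_1=(\log\log X)^{-3}$. Therefore the final quantity is at most
\[
(\log\log X)^{-3}\,(\log\log X)^{3}\,\log\log\log X\,\Bigl(\exp\{-\tfrac14\sqrt{\log D}\}+\frac1{i_1}\Bigr)=o(1),
\]
because $\sqrt{\log D}\asymp\sqrt{\log\log X}$ dominates $\log\log\log X$.

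The main obstacle is making Step 3 rigorous: one needs a single prime $q$ whose reciprocal sum is localized to a window of bounded logarithmic length, uniformly in all other data. To choose $q$ canonically, we take the largest prime divisor of $n$ exceeding $z$. Its localization follows from $X/2<n\leq X$ once the other factors are fixed, exactly as for $q_0$ in Lemma~\ref{lem:joint-localization}. This canonical choice also prevents overcounting.
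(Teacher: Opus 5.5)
Your proposal is correct and follows the paper's proof essentially step for step. You factor $n=b'qm$ with $q>z$, whose existence follows from \eqref{eq:small-external-good} and $Q>X^{1-\eps}$, and localize $q$ to a window of logarithmic length $\log 2$ via Lemma~\ref{lem:prime-windows}(i) to gain $1/\log z$; you then combine Lemma~\ref{lem:harmonic-normalization}(ii) with $Z_X\ll(\log\log X)^3\log\log\log X$ and $Z_R/\log z\asymp K/i_1$, exactly as the paper does. The differences are cosmetic: the first display in your Step~3 is a red herring you rightly abandon, and choosing $q$ canonically is unnecessary because overcounting only enlarges an upper bound.
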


\begin{proof}
Fix $n\in\mathscr T$ and a divisor tuple satisfying the conditions in the statement.
We first show that $Q$ has a prime divisor exceeding $z$.
Otherwise, since every prime divisor of $Q$ is external,
every such prime would be at most $y$. As $Q\mid n$,
\eqref{eq:small-external-good} would give
\[
Q
\leq
\prod_{p\leq y}p^{v_p(n)}
\leq
X^{\eps/4}
<
X^{1-\eps},
\]
contrary to the assumption on $Q$. 

Choose a prime $q>z$ dividing $Q$. Since $q\mid Q\mid n$,
condition \eqref{eq:large-squarefree} gives $v_q(n)=1$.
Put $A=I(n)$. By Lemma~\ref{lem:MOP}, for each $i\in A$
there is a unique prime $p_i\in\cP_i$ dividing $n$, and
$v_{p_i}(n)=1$. Define
$m=\prod_{i\in A}p_i$ and $b=\frac{n}{qm}.$
Then $b$ is a positive integer and
$q\nmid b.$
The integer $m$ contains all medium prime factors of $n$.
Consequently, every prime divisor of $b$ is external and
does not exceed $X$.

Recall from the proof of Lemma~\ref{lem:EPS} that
$\mathcal S_X$ is the set of positive integers supported on
external primes not exceeding $X$, and that
\[
Z_X
=
\sum_{b\in\mathcal S_X}\frac1b
=
\prod_{\substack{p\leq X\\p\ \mathrm{external}}}
\left(1-\frac1p\right)^{-1}.
\]
In particular, the integer $b$ defined above belongs to
$\mathcal S_X$.

For fixed $A$, primes $p_i\in\cP_i$ with $i\in A$, and
$b\in\mathcal S_X$, the condition $X/2<bqm\leq X$ gives
$\frac{X}{2bm}<q\leq\frac{X}{bm}.$
The interval $\left(\frac{X}{2bm},\frac{X}{bm}\right]\cap(z,X],$
if nonempty, has a lower endpoint at least $z$ and a ratio of
upper to lower endpoint at most $2$. Hence
Lemma~\ref{lem:prime-windows}\textup{(i)}, with $C=\log2$,
gives
\[
\sum_{\substack{
q>z,\ q\ \mathrm{prime}\\
X/(2bm)<q\leq X/(bm)
}}
\frac1q
\ll
\frac{\log2}{\log z}.
\]

With $m=\prod_{i\in A}p_i$, the preceding factorization gives
\[
\mathscr T
\subseteq
\bigcup_{\substack{
A\subseteq[i_1,D]\cap\Z\\
A\notin\mathcal R
}}
\bigcup_{\substack{p_i\in\cP_i\\i\in A}}
\left\{
bqm
\,\middle|\,
\begin{gathered}
b\in\mathcal S_X,\quad q>z,\quad q\ \mathrm{prime},
X/2<bqm\leq X
\end{gathered}
\right\},
\]
where each $p_i$ is prime. Hence, by
Lemma~\ref{lem:prime-windows}\textup{(i)},
\[
\begin{aligned}
\sum_{n\in\mathscr T}\frac1n
&\leq
\sum_{\substack{
A\subseteq[i_1,D]\cap\Z\\
A\notin\mathcal R
}}
\sum_{\substack{p_i\in\cP_i\\i\in A}}
\sum_{b\in\mathcal S_X}
\sum_{\substack{
q>z,\ q\ \mathrm{prime}\\
X/2<bqm\leq X
}}
\frac1{bqm}\\
&=
\sum_{\substack{
A\subseteq[i_1,D]\cap\Z\\
A\notin\mathcal R
}}
\sum_{\substack{p_i\in\cP_i\\i\in A}}
\left(\prod_{i\in A}\frac1{p_i}\right)
\sum_{b\in\mathcal S_X}\frac1b
\sum_{\substack{
q>z,\ q\ \mathrm{prime}\\
X/(2bm)<q\leq X/(bm)
}}
\frac1q\\
&\ll
\frac{\log2}{\log z}
\left(\sum_{b\in\mathcal S_X}\frac1b\right)
\sum_{\substack{
A\subseteq[i_1,D]\cap\Z\\
A\notin\mathcal R
}}
\prod_{i\in A}
\left(\sum_{\substack{p\in\cP_i\\p\ \mathrm{prime}}}\frac1p\right)\\
&=
\frac{\log2 \; Z_X}{\log z}
\sum_{\substack{
A\subseteq[i_1,D]\cap\Z\\
A\notin\mathcal R
}}
\prod_{i\in A}R_i.
\end{aligned}
\]
The last equality follows from
$$
\sum_{\substack{p_i\in\cP_i\\i\in A}}\prod_{i\in A}\frac1{p_i}
=\prod_{i\in A}\left(\sum_{p\in\cP_i}\frac1p\right)
=\prod_{i\in A}R_i.
$$
Applying \eqref{eq:irregular-medium-weight}, we obtain
\begin{equation}\label{eq:irregular-bound}
\sum_{n\in\mathscr T}\frac1n
\ll_{a,k}
\frac{Z_RZ_X}{\log z}
\left(
\exp\left\{-\frac14\sqrt{\log D}\right\}
+\frac1{i_1}
\right).
\end{equation}

As in the proof of Lemma~\ref{lem:EPS}, Mertens' product
theorem gives
\[
\begin{aligned}
Z_X
&=
\prod_{p\leq y}
\left(1-\frac1p\right)^{-1}
\prod_{z<p\leq X}
\left(1-\frac1p\right)^{-1}
\ll
(\log y)\frac{\log X}{\log z}.
\end{aligned}
\]
Moreover, Lemma~\ref{lem:harmonic-normalization} gives
$Z_R=(1+o(1))D/i_1$. Since
$\log y=\frac{i_1}{K}$ and $\log z=\frac{D+1}{K},$
we have
\[
\frac{Z_R\log y}{\log z}
=
(1+o(1))
\frac{D}{i_1}
\frac{i_1}{D+1}
=
1+o(1).
\]
Consequently,
\[
\frac{Z_RZ_X}{\log z}
\ll
\frac{Z_R\log y}{\log z}
\frac{\log X}{\log z}
\ll
\frac{\log X}{\log z}
\ll
\log\log\log X,
\]
where the last inequality follows from
\eqref{eq:bin-parameters} and $\log z=(D+1)/K$.

Substituting this bound into \eqref{eq:irregular-bound} yields
\[
\sum_{n\in\mathscr T}\frac1n
\ll_{a,k}
(\log\log\log X)
\left(
\exp\left\{-\frac14\sqrt{\log D}\right\}
+\frac1{i_1}
\right).
\]
Finally, \eqref{eq:bin-parameters} gives
$\log D=(1+a+o(1))\log\log X$ and $i_1^{-1}=D^{-a/(1+a)+o(1)}.$
Since $a>0$ is fixed, it follows that
\[
(\log\log\log X)
\exp\left\{-\frac14\sqrt{\log D}\right\}
=o(1),
\qquad
\frac{\log\log\log X}{i_1}=o(1).
\]
Therefore, $\sum_{n\in\mathscr T}\frac1n=o(1),$
as required.
\end{proof}

\subsection{The reciprocal-sum estimate}
We now combine the estimates for $I(n)\in\mathcal R$ and $I(n)\notin\mathcal R$, together with the bounds for the
excluded integers. This gives the required reciprocal-sum
estimate in the range $Q>X^{1-\eps}$ and hence an $o(X)$
bound for the number of such integers.

\begin{lemma}\label{lem:saturated}
Let $\eps>0$ be sufficiently small in terms of $k$.
Let $\mathscr S$ be the set of integers $n\in(X/2,X]$ for which
there exist divisors $d_1<\cdots<d_k\mid n$ having pairwise
distinct medium parts and satisfying
$d_k\leq d_1\bigl(1+(\log n)^{-a}\bigr)$ and $Q>X^{1-\eps},$
where $Q$ is defined in \eqref{eq:Q-def}. Then
\[
\sum_{n\in\mathscr S}\frac1n=o(1),
\qquad
\#\mathscr S=o(X)
\qquad
\text{as }X\to\infty.
\]
\end{lemma}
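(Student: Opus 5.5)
The proof splits $\mathscr S$ into pieces, each with reciprocal sum $o(1)$; the counting claim then follows from the left inequality in \eqref{eq:harmonic-density}.

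\textbf{Step 1: exceptional integers.} First remove the $n$ failing a conclusion of Lemma~\ref{lem:MOP} or Lemma~\ref{lem:external-good}, or having $\Omega(n)>(\log\log X)^2$. By those lemmas and \eqref{eq:Omega-good} there are $o(X)$ such integers. Since $1/n\le 2/X$ on $(X/2,X]$, they contribute $o(1)$ to the reciprocal sum.

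\textbf{Step 2: irregular occupancy.} Among the remaining integers, those with $I(n)\notin\mathcal R$ form a subset of $\mathscr T$. Lemma~\ref{lem:irregular-saturated} gives them reciprocal sum $o(1)$.

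\textbf{Step 3: the main case.} Every remaining $n$ lies in some $\mathscr S_{\Lambda,E,\mathbf u,J}$, using its selected tuple and a choice of $\mathbf u,J$ supplied by Lemma~\ref{lem:pivot-basis}. Here:
\begin{itemize}
\item $\Lambda$ takes $(\log D)^{O_k(1)}$ values;
\item $E$, $\mathbf u$ and $J$ take $O_k(1)$ values, since all vectors lie in the finite set $f(\{0,1\}^k)$.
\end{itemize}
For each piece, apply \eqref{eq:master-two} together with \eqref{eq:C-pivot}, $Z_R=(1+o(1))D/i_1$, $\log z=(D+1)/K$, $D^{\widehat c_X}=i_1-1$ and $\mathcal Z^{(k)}_{\rm ext}=D^{o(1)}$. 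This gives
\[
\sum_{n\in\mathscr S_{\Lambda,E,\mathbf u,J}}\frac1n\ll D^{\,e_*(\Lambda)-s-\sum_{j\in J}c_j+o(1)}\cdot\frac{D}{i_1\log z}.
\]
Since $i_1\log z\asymp i_1D/K$, the last factor is $K/i_1=D^{o(1)}$ (recall $i_1=K(\log\log X)^3$). So it suffices to show
\[
e_*(\Lambda)\le s+\sum_{j\in J}c_j-\eta
\]
for a fixed $\eta>0$. Once this holds, summing over the $(\log D)^{O_k(1)}$ choices of data still gives $o(1)$.

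\textbf{Step 4: the entropy inequality (main obstacle).} Corollary~\ref{cor:uniform-gap} applies, because $c_{h+1}=\widehat c_X\ge\beta_k+2\delta$. It gives
\[
e_*(\Lambda)\le\sum_{j=1}^h c_j-\eta.
\]
We would then need $\sum_{j\in F}c_j\le s$. This does hold: $|F|=q\le s$ by Lemma~\ref{lem:pivot-basis} (as $q=\dim(E\cap U)\le\dim E=s$), and $c_j\le 1$.

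\textbf{Result.} Adding the three contributions, the reciprocal sum over $\mathscr S$ is $o(1)$, and \eqref{eq:harmonic-density} gives $\#\mathscr S=o(X)$.

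The delicate points are checking uniformity of the $o(1)$ exponents in $\Lambda$, and checking that the repeated-medium-part case is excluded by hypothesis rather than needing Corollary~\ref{cor:DMC}.
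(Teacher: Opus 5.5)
Your proposal is correct and follows essentially the same route as the paper: you remove the exceptional integers, place the irregular-occupancy integers in $\mathscr T$ and apply Lemma~\ref{lem:irregular-saturated}, and treat each piece $\mathscr S_{\Lambda,E,\mathbf u,J}$ by combining \eqref{eq:master-two}, \eqref{eq:C-pivot}, $Z_R/\log z=(1+o(1))K/i_1=D^{o(1)}$ and $\mathcal Z^{(k)}_{\rm ext}=D^{o(1)}$ with Corollary~\ref{cor:uniform-gap} and the bound $\sum_{j\in F}c_j\leq|F|=q\leq s$, which gives $D^{-\eta+o(1)}$ per piece before summing over the $(\log D)^{O_k(1)}$ choices of data. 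The only cosmetic difference is that you pass from the reciprocal sum to $\#\mathscr S=o(X)$ via the left inequality in \eqref{eq:harmonic-density}, whereas the paper uses $n\leq X$ directly; these are the same step.
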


\begin{proof}
Let $\mathscr B$ be the set of integers $n\in(X/2,X]$ for
which at least one conclusion of Lemmas~\ref{lem:MOP} and
\ref{lem:external-good} fails, or
$\Omega(n)>(\log\log X)^2.$
These two lemmas and \eqref{eq:Omega-good} give
$\#\mathscr B=o(X)$.

Let $\mathscr T$ be the set defined in Lemma~\ref{lem:irregular-saturated}, and put
$$\mathscr S'=\left\{
n\in\mathscr S\setminus\mathscr B:
I(n)\in\mathcal R
\right\}.$$
If $n\in\mathscr S\setminus\mathscr B$ and
$I(n)\notin\mathcal R$, then $n\in\mathscr T$. Hence
$\mathscr S \subseteq
\mathscr S'\cup\mathscr T\cup\mathscr B.$
Since Lemma~\ref{lem:irregular-saturated} gives
$\sum_{n\in\mathscr T}n^{-1}=o(1)$, it remains to estimate
the sum over $\mathscr S'$.

For every $n\in\mathscr S'$, take the selected divisor tuple
among those satisfying the conditions in the statement.
The preceding construction determines $\Lambda$ and $E$,
and Lemma~\ref{lem:pivot-basis} provides $\mathbf u$ and $J$
such that $n\in\mathscr S_{\Lambda,E,\mathbf u,J}$. Therefore, we have 
\[
\mathscr S'
\subseteq
\bigcup_{\Lambda,E,\mathbf u,J}
\mathscr S_{\Lambda,E,\mathbf u,J},
\]
where the union ranges over all choices arising from this
construction.

Fix one such choice of $\Lambda,E,\mathbf u,J$.
The complete system $(\cV,\mathbf c,\boldsymbol\mu)$ contained
in $\Lambda$ has final threshold $c_{h+1}=\widehat c_X$.
By \eqref{eq:cX-supercritical},
\[
c_{h+1}=\widehat c_X\geq\beta_k+2\delta
\]
for all sufficiently large $X$.
Corollary~\ref{cor:uniform-gap} and the definition of
$e_*(\Lambda)$ in \eqref{eq:L-entropy-bound} give a constant
$\eta=\eta(a,k)>0$, independent of $X$ and $\Lambda$, such that
\begin{equation}\label{eq:e-star-gap}
e_*(\Lambda)
\leq
\mathrm e(\cV,\mathbf c,\boldsymbol\mu)-\eta
=
\sum_{j=1}^h c_j-\eta.
\end{equation}
The equality follows from \eqref{eq:full-e}, since
$\dim(V_j/V_{j-1})=1$ for every $1\leq j\leq h$.

By Lemma~\ref{lem:harmonic-normalization} and
\eqref{eq:bin-parameters},
\begin{align}
\frac{Z_R}{\log z}
&=
(1+o(1))
\frac{D}{i_1}
\frac{K}{D+1}
\notag
=(1+o(1))\frac{K}{i_1}
\notag\\
&=
(1+o(1))(\log\log X)^{-3}=
D^{o(1)}.
\label{eq:normalization-cancel}
\end{align}

Combining \eqref{eq:master-two} and \eqref{eq:C-pivot} with
Lemma~\ref{lem:harmonic-normalization} and
\eqref{eq:normalization-cancel}, we obtain
\[
\begin{aligned}
\sum_{n\in\mathscr S_{\Lambda,E,\mathbf u,J}}\frac1n
&\ll_{a,k}
D^{e_*(\Lambda)+o(1)}
\mathcal Z_{\rm ext}^{(k)}Z_R C_{\Lambda,E,J}\\
&\ll_{a,k}
\mathcal Z_{\rm ext}^{(k)}
\frac{Z_R}{\log z}
D^{e_*(\Lambda)-s-\sum_{j\in J}c_j+o(1)}\\
&=
D^{e_*(\Lambda)-s-\sum_{j\in J}c_j+o(1)},
\end{aligned}
\]
where the last equality uses
$\mathcal Z_{\rm ext}^{(k)}=D^{o(1)}$ and
$Z_R/\log z=D^{o(1)}$.

Recall that $s=\dim E,q=\dim(E\cap U)$, and $F=\{1,\ldots,h\}\setminus J.$ By Lemma~\ref{lem:pivot-basis},
$|J|=h-q$ and $|F|=q\leq s.$

By Definition~\ref{def:system}\textup{(b)}, $1\geq c_1\geq\cdots\geq c_{h+1}\geq0.$
Hence $c_j\leq1$ for every $1\leq j\leq h$, and
\eqref{eq:e-star-gap} implies
\begin{equation}\label{eq:dimension-exponent}
\begin{aligned}
e_*(\Lambda)-s-\sum_{j\in J}c_j
&\leq \sum_{j=1}^h c_j-s-\sum_{j\in J}c_j-\eta
=\sum_{j\in F}c_j-s-\eta\\
&\leq
|F|-s-\eta\\
&\leq-\eta.
\end{aligned}
\end{equation}
Thus
\[
\sum_{n\in\mathscr S_{\Lambda,E,\mathbf u,J}}\frac1n
\ll_{a,k}
D^{-\eta+o(1)}.
\]

The estimate following \eqref{eq:Lambda-definition} gives
at most $(\log D)^{O_k(1)}$ possible choices of $\Lambda$. For each fixed $\Lambda$, there are $O_k(1)$ possible choices
of $(E,\mathbf u,J)$. Indeed, Lemma~\ref{lem:pivot-basis} gives
\[
E=\Span\{u_1,\ldots,u_s\},
\qquad
u_0,\ldots,u_s\in f(\{0,1\}^k),
\qquad
J\subseteq\{1,\ldots,h\}.
\]
Since $s,h\leq k-1$ and $\#f(\{0,1\}^k)\leq2^k$,
the assertion follows.

All the preceding estimates have constants depending only on
$a,k,\eps$ and error terms uniform in $\Lambda,E,\mathbf u,J$.
Summing over these choices, we obtain
\[
\begin{aligned}
\sum_{n\in\mathscr S'}\frac1n
&\leq
\sum_{\Lambda,E,\mathbf u,J}
\sum_{n\in\mathscr S_{\Lambda,E,\mathbf u,J}}\frac1n\\
&\ll_{a,k}
(\log D)^{O_k(1)}D^{-\eta+o(1)}
=D^{-\eta+o(1)}
=o(1).
\end{aligned}
\]
Here we used
$(\log D)^{O_k(1)}=D^{o(1)}$.

Finally, using
$\mathscr S\subseteq\mathscr S'\cup\mathscr T\cup\mathscr B$
and $1/n\leq2/X$ on $(X/2,X]$, we obtain
\[
\begin{aligned}
\sum_{n\in\mathscr S}\frac1n
&\leq
\sum_{n\in\mathscr S'}\frac1n
+
\sum_{n\in\mathscr T}\frac1n
+
\sum_{n\in\mathscr B}\frac1n\\
&\leq
\sum_{n\in\mathscr S'}\frac1n
+
\sum_{n\in\mathscr T}\frac1n
+
\frac{2}{X}\#\mathscr B\\
&=
o(1)+o(1)+\frac{2}{X}\,o(X)
=o(1).
\end{aligned}
\]
Since $n\leq X$ for every $n\in\mathscr S$,
\[
\#\mathscr S
\leq
X\sum_{n\in\mathscr S}\frac1n
=o(X).
\]
This completes the proof.
\end{proof}

\section{Proof of the main theorem}\label{sec:main-proof}
The lower bound \eqref{eq:fgk-lower} reduces the desired equality
to proving $\alpha_k\leq\beta_k/(1-\beta_k)$.
We establish the stronger assertion that $\mathcal E_{a,k}$ has natural density zero for every
$a>\beta_k/(1-\beta_k)$.

\begin{proof}[Proof of Theorem~\ref{thm:main}]
Fix $k\geq2$ and $a$ satisfying \eqref{eq:supercritical-a}.
By the reduction preceding \eqref{eq:a-less-half}, it suffices
first to consider $0<a<1/2$. Choose $\eps>0$, depending only
on $k$, sufficiently small that Lemmas~\ref{lem:nonsat} and
\ref{lem:saturated} both apply. Keep $k$, $a$, and $\eps$ fixed.
Let $\mathcal E_{a,k}$ be the set defined in
\eqref{eq:E-ak-definition}.

\begin{claim}\label{clm:main-dyadic}
For $X\to\infty$, we have
\[
\#\bigl(\mathcal E_{a,k}\cap(X/2,X]\bigr)=o(X).
\]
\end{claim}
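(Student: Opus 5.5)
The plan is to decompose $\mathcal E_{a,k}\cap(X/2,X]$ according to the structure of a close-divisor witness and apply the results of Sections~\ref{sec:arithmetic-setup}--\ref{sec:sat} to each piece.

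First, for $n\in(X/2,X]$ and divisors $d_1<\cdots<d_k\mid n$ with $d_k\le d_1(1+(\log n)^{-a})$, we have $(\log n)^{-a}\le C\delta_X$ for a constant $C=C(a)$, since $\log n\ge\log(X/2)\ge\frac12\log X$ for large $X$; we may take $C=2^a$. Hence the witness satisfies $d_k\le(1+C\delta_X)d_1$.

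Outside the exceptional set of Corollary~\ref{cor:DMC}, which has size $o(X)$, every witness therefore has pairwise distinct medium parts $m_1,\ldots,m_k$. Any remaining $n\in\mathcal E_{a,k}\cap(X/2,X]$ thus possesses a witness with distinct medium parts, and we let $Q=\lcm(r_1,\ldots,r_k)$ as in \eqref{eq:Q-def}. Either $Q\le X^{1-\eps}$, in which case $n$ lies in the set counted in Lemma~\ref{lem:nonsat}, or $Q>X^{1-\eps}$, in which case $n$ lies in the set $\mathscr S$ of Lemma~\ref{lem:saturated}. Both sets have size $o(X)$, with the same $\eps$ fixed at the outset so that both lemmas apply. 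Adding the three contributions gives Claim~\ref{clm:main-dyadic}.

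The only point needing care is the bookkeeping. The witness may differ between cases, and the claim only requires that each $n$ lie in at least one of the three sets. This holds because the lemmas quantify existentially over witnesses with the stated properties.

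After the claim, the theorem follows by dyadic summation. Writing $x$ as the sum of the ranges $(x/2^{j+1},x/2^j]$ and applying the claim for $x/2^j\ge X_0(\epsilon')$ bounds the total by $\epsilon' x+O(X_0)$, so the density of $\mathcal E_{a,k}$ is zero. The reduction preceding \eqref{eq:a-less-half} then covers $a\ge1/2$.

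Density zero for every $a>\beta_k/(1-\beta_k)$ gives $\alpha_k\le\beta_k/(1-\beta_k)$. Indeed, $\mathcal E_{a,k}$ decreases as $a$ increases, so if the inequality failed, density one would hold at some such $a$. Combined with \eqref{eq:fgk-lower}, this yields equality. I expect no serious obstacle here; the main risk is checking uniformity of the $o(X)$ in the dyadic argument, which is routine.
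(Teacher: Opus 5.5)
Your proposal is correct and follows essentially the same route as the paper: exclude the $o(X)$ set $\mathcal B_X(2^a)$ using $(\log n)^{-a}\leq 2^a\delta_X$ on $(X/2,X]$, which yields pairwise distinct medium parts (the paper re-derives this conclusion of Corollary~\ref{cor:DMC} inline rather than citing it). It then splits according to $Q\leq X^{1-\eps}$ or $Q>X^{1-\eps}$ and applies Lemmas~\ref{lem:nonsat} and~\ref{lem:saturated} with one sufficiently small $\eps$; your remarks on dyadic summation and on $\alpha_k$ go beyond the claim but also match the paper's subsequent argument.
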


\begin{proof}
Use the parameters in \eqref{eq:bin-parameters} and the
corresponding medium and external parts. Put
\[
\delta_X=(\log X)^{-a},
\qquad
\mathscr B=\mathcal B_X(2^a)\cap(X/2,X],
\]
where $\mathcal B_X(2^a)$ is defined in
Lemma~\ref{lem:EPS}. Thus $n\in\mathcal B_X(2^a)$ if and only
if $n\leq X$ has distinct divisors $r<t$, both supported on
external primes, such that
$t\leq(1+2^a\delta_X)r.$
Since $a$ is fixed, it follows that Lemma~\ref{lem:EPS} applies with the fixed
constant $C=2^a$ and gives
\[
\#\mathscr B\leq\#\mathcal B_X(2^a)=o(X).
\]

For $X\geq4$ and $n\in(X/2,X]$, we have
\[
\log n>\log X-\log2\geq\frac12\log X.
\]
Since $a>0$, it follows that
\[
(\log n)^{-a}
\leq\left(\frac12\log X\right)^{-a}
=2^a\delta_X.
\]
Now fix
$n\in
\bigl(\mathcal E_{a,k}\cap(X/2,X]\bigr)\setminus\mathscr B.$
By the definition of $\mathcal E_{a,k}$, choose divisors
$d_1<\cdots<d_k\mid n$ satisfying $d_k\leq d_1\bigl(1+(\log n)^{-a}\bigr).$

Write
\[
d_s=r_sm_s,
\qquad r_s=r(d_s),
\qquad m_s=m(d_s)
\qquad(1\leq s\leq k),
\]
as in \eqref{eq:external-medium-decomp}. Then
\[
\frac{d_k}{d_1}
\leq1+(\log n)^{-a}
\leq1+2^a\delta_X.
\]

We show that $m_1,\ldots,m_k$ are pairwise distinct, as in
Corollary~\ref{cor:DMC}. Suppose that $m_i=m_j$ for some
$1\leq i<j\leq k$. Since $d_i<d_j$, we have $r_i<r_j$, and
\[
\frac{r_j}{r_i}
=\frac{d_j/m_j}{d_i/m_i}
=\frac{d_j}{d_i}
\leq\frac{d_k}{d_1}
\leq1+2^a\delta_X.
\]
Both $r_i$ and $r_j$ divide $n$ and are supported on external
primes. Hence $n\in\mathcal B_X(2^a)$. Together with
$n\in(X/2,X]$, this implies $n\in\mathscr B$, a contradiction.
Therefore,
\[
m_i\neq m_j\qquad(1\leq i<j\leq k).
\]

Define $Q$ as in \eqref{eq:Q-def}. Since each $r_s$ divides $n$, it follows that we have
$Q\mid n$. We distinguish two cases.

\medskip
\noindent\textbf{Case 1.} $Q\leq X^{1-\eps}$.
Let $\mathscr U$ be the set of integers counted in
Lemma~\ref{lem:nonsat}. The chosen divisors of $n$ satisfy
\[
d_1<\cdots<d_k\mid n,
\qquad
d_k\leq d_1\bigl(1+(\log n)^{-a}\bigr),
\]
their medium parts are pairwise distinct, and
$Q\leq X^{1-\eps}$. Thus $n\in\mathscr U$, and
Lemma~\ref{lem:nonsat} gives $\#\mathscr U=o(X)$.

\medskip
\noindent\textbf{Case 2.} $Q>X^{1-\eps}$.
Let $\mathscr S$ be the set defined in
Lemma~\ref{lem:saturated}. The same divisors satisfy all the
conditions defining $\mathscr S$, so $n\in\mathscr S$. That lemma gives $\#\mathscr S=o(X)$.

Every $n\in(\mathcal E_{a,k}\cap(X/2,X])\setminus\mathscr B$
belongs to at least one of these two sets. Therefore,
\[
\bigl(\mathcal E_{a,k}\cap(X/2,X]\bigr)\setminus\mathscr B
\subseteq\mathscr U\cup\mathscr S,
\]
and consequently
\[
\begin{aligned}
\#\bigl(\mathcal E_{a,k}\cap(X/2,X]\bigr)
&\leq\#\mathscr B+\#(\mathscr U\cup\mathscr S)\\
&\leq\#\mathscr B+\#\mathscr U+\#\mathscr S=o(X).
\end{aligned}
\]
This proves the claim.
\end{proof}

\begin{claim}\label{clm:main-density}
We have
\[
\lim_{x\to\infty}
\frac{\#\bigl(\mathcal E_{a,k}\cap[1,x]\bigr)}{x}=0.
\]
\end{claim}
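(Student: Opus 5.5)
The plan is a standard dyadic decomposition that passes from the estimate of Claim~\ref{clm:main-dyadic}, valid on $(X/2,X]$, to the full interval $[1,x]$. Fix $\epsilon_0>0$. Claim~\ref{clm:main-dyadic} applies to integer $X$, so for real $x$ we set $X_j=\lfloor x/2^{j}\rfloor$ for $j\geq0$. Then $[1,x]\cap\Z$ is covered by the intervals $(X_j/2,X_j]$ together with a few boundary integers per interval; the rounding loses at most one integer for each $j$. Using $\lfloor\lfloor x/2^j\rfloor/2\rfloor=\lfloor x/2^{j+1}\rfloor$, one checks that
\[
[1,x]\cap\Z\subseteq\bigcup_{j\geq0,\ X_j\geq1}\bigl((X_{j+1},X_j]\cap\Z\bigr),
\]
and each $(X_{j+1},X_j]$ is contained in $(X_j/2,X_j]\cup\{X_{j+1}\}$. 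So the cover by intervals $(X_j/2,X_j]$ misses at most one integer per $j$.

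By Claim~\ref{clm:main-dyadic}, there is $X^*=X^*(\epsilon_0)$ such that
\[
\#\bigl(\mathcal E_{a,k}\cap(X/2,X]\bigr)\leq\epsilon_0X
\qquad(X\geq X^*).
\]
Split the indices $j$ into two groups. The first group consists of the $j$ with $X_j\geq X^*$. Their contribution is at most $\epsilon_0\sum_j x/2^j\leq2\epsilon_0x$, plus at most $O(\log x)$ boundary integers. The second group consists of the $j$ with $X_j<X^*$. These intervals together lie inside $[1,X^*]$ and so contribute at most $X^*$ integers in total. Hence
\[
\#\bigl(\mathcal E_{a,k}\cap[1,x]\bigr)\leq2\epsilon_0x+O(\log x)+X^*.
\]
Dividing by $x$ and letting $x\to\infty$ gives a $\limsup$ of at most $2\epsilon_0$. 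Since $\epsilon_0$ was arbitrary, the limit is $0$.

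It remains to handle the reduction made before \eqref{eq:a-less-half} and to conclude. If $a\geq1/2$, choose $a_0$ with $\beta_k/(1-\beta_k)<a_0<1/2$. For $n$ sufficiently large one has $\mathcal E_{a,k}\subseteq\mathcal E_{a_0,k}$, and the finitely many small $n$ are irrelevant to density, so the case $a\geq1/2$ follows from the case $a_0$. Thus density zero holds for every $a>\beta_k/(1-\beta_k)$. Consequently no such $a$ has $\mathcal E_{a,k}$ of density one, which gives $\alpha_k\leq\beta_k/(1-\beta_k)$. Combined with \eqref{eq:fgk-lower}, this proves Theorem~\ref{thm:main}.

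The only mild obstacle is the bookkeeping with integer $X$ and the boundary points. There is no real difficulty, since the $O(\log x)$ losses are negligible.
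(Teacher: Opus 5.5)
Your argument is correct and is essentially the paper's dyadic decomposition. The paper fixes finitely many scales $N_j=\lfloor N/2^j\rfloor$ with $0\le j<J$, bounds the leftover piece $[1,N_J]$ trivially by $N/2^J$, and lets $J\to\infty$; you instead sum over all scales with an $\epsilon_0$-threshold $X^*$, which is equivalent bookkeeping (and since $(X_{j+1},X_j]\cap\Z=(X_j/2,X_j]\cap\Z$ exactly, your $O(\log x)$ boundary allowance is unnecessary but harmless).
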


\begin{proof}
Fix a positive integer $J$. For every integer $N\geq2^J$, put
\[
N_j=\left\lfloor\frac{N}{2^j}\right\rfloor
\qquad(0\leq j\leq J).
\]
Then $N_0=N$, and, for $0\leq j<J$,
\[
N_{j+1}
=\left\lfloor\frac{N}{2^{j+1}}\right\rfloor
=\left\lfloor\frac{\lfloor N/2^j\rfloor}{2}\right\rfloor
=\left\lfloor\frac{N_j}{2}\right\rfloor.
\]
For an integer $m$, the condition
$m>\lfloor N_j/2\rfloor$ is equivalent to $m>N_j/2$.
Thus
\[
(N_{j+1},N_j]\cap\N
=(N_j/2,N_j]\cap\N.
\]
The sets $[1,N_J]\cap\N$ and
$(N_{j+1},N_j]\cap\N$, $0\leq j<J$, are pairwise disjoint
and have union $[1,N]\cap\N$. It follows that
\[
\begin{aligned}
\#\bigl(\mathcal E_{a,k}\cap[1,N]\bigr)
&=\#\bigl(\mathcal E_{a,k}\cap[1,N_J]\bigr)
+\sum_{j=0}^{J-1}
\#\bigl(\mathcal E_{a,k}\cap(N_j/2,N_j]\bigr)\\
&\leq N_J+
\sum_{j=0}^{J-1}
\#\bigl(\mathcal E_{a,k}\cap(N_j/2,N_j]\bigr).
\end{aligned}
\]

For each fixed $0\leq j<J$, we have
\[
N_j\geq\frac{N}{2^j}-1\longrightarrow\infty
\qquad(N\to\infty).
\]
Applying Claim~\ref{clm:main-dyadic} with $X=N_j$, we obtain
\[
\frac{\#\bigl(\mathcal E_{a,k}\cap(N_j/2,N_j]\bigr)}{N_j}
=o(1)
\qquad(N\to\infty).
\]
Since $J$ is fixed and $N_j/N\leq2^{-j}$, division by $N$
gives
\[
\begin{aligned}
\frac{\#\bigl(\mathcal E_{a,k}\cap[1,N]\bigr)}{N}
&\leq\frac{N_J}{N}
+\sum_{j=0}^{J-1}
\frac{N_j}{N}
\frac{\#\bigl(\mathcal E_{a,k}\cap(N_j/2,N_j]\bigr)}{N_j}\\
&\leq2^{-J}
+\sum_{j=0}^{J-1}2^{-j}
\frac{\#\bigl(\mathcal E_{a,k}\cap(N_j/2,N_j]\bigr)}{N_j}\\
&=2^{-J}+o(1).
\end{aligned}
\]
The last equality holds because the sum has $J$ terms, each
of which tends to zero as $N\to\infty$. Hence
\[
0\leq
\limsup_{N\to\infty}
\frac{\#\bigl(\mathcal E_{a,k}\cap[1,N]\bigr)}{N}
\leq2^{-J}.
\]
This holds for every fixed positive integer $J$. Letting
$J\to\infty$ after taking the upper limit in $N$, we obtain
\[
\lim_{N\to\infty}
\frac{\#\bigl(\mathcal E_{a,k}\cap[1,N]\bigr)}{N}=0,
\]
since the ratios are nonnegative.

For a real number $x\geq1$, put $N=\lfloor x\rfloor$.
Because $\mathcal E_{a,k}\subseteq\N$, we have
\[
\mathcal E_{a,k}\cap[1,x]
=\mathcal E_{a,k}\cap[1,N].
\]
Moreover, $N\to\infty$ and $N/x\to1$ as $x\to\infty$.
Therefore,
\[
\frac{\#\bigl(\mathcal E_{a,k}\cap[1,x]\bigr)}{x}
=\frac{N}{x}
\frac{\#\bigl(\mathcal E_{a,k}\cap[1,N]\bigr)}{N}
\longrightarrow0.
\]
This proves the claim.
\end{proof}

We have proved the required limit for every fixed $a$ satisfying
\eqref{eq:supercritical-a} and $0<a<1/2$. To include $a\geq1/2$,
recall from the reduction preceding \eqref{eq:a-less-half} that
\[
\frac{\beta_k}{1-\beta_k}
\leq\log3-1<\frac12.
\]
Choose a fixed $a_0$ such that
$\frac{\beta_k}{1-\beta_k}<a_0<\frac12.$
For $n\geq3$, we have $\log n>1$ and $a>a_0$, so
\[
(\log n)^{-a}\leq(\log n)^{-a_0}.
\]
It follows directly from the definition of $\mathcal E_{a,k}$
that $\mathcal E_{a,k}\subseteq\mathcal E_{a_0,k}\cup\{2\}.$
Consequently,
\[
0\leq
\frac{\#\bigl(\mathcal E_{a,k}\cap[1,x]\bigr)}{x}
\leq
\frac{\#\bigl(\mathcal E_{a_0,k}\cap[1,x]\bigr)}{x}
+\frac1x
\longrightarrow0.
\]
Thus
\[
\lim_{x\to\infty}
\frac{\#\bigl(\mathcal E_{a,k}\cap[1,x]\bigr)}{x}=0
\]
for every fixed $a>\beta_k/(1-\beta_k)$.

By the definition of $\alpha_k$ in
Subsection~\ref{subsec:close-divisors},
\[
\alpha_k=
\sup\left\{
a\in\R
\,\middle|\,
\lim_{x\to\infty}
\frac{\#\bigl(\mathcal E_{a,k}\cap[1,x]\bigr)}{x}=1
\right\}.
\]
For every $a>\beta_k/(1-\beta_k)$, the preceding limit is zero
rather than one. Therefore,
\[
\left\{
a\in\R
\,\middle|\,
\lim_{x\to\infty}
\frac{\#\bigl(\mathcal E_{a,k}\cap[1,x]\bigr)}{x}=1
\right\}
\subseteq
\left(-\infty,\frac{\beta_k}{1-\beta_k}\right].
\]
Taking the supremum gives
$\alpha_k\leq\frac{\beta_k}{1-\beta_k}.$
Combining this with \eqref{eq:fgk-lower}, we conclude that
\[
\alpha_k=\frac{\beta_k}{1-\beta_k}.
\]
\end{proof}

\appendix
\section{Corrections to the FGK lower-bound argument}
\label{app:fgk-lower-corrections}

This appendix supplies the corrections needed in the application of
\cite[Proposition~5.5]{FGK} in Claim~\ref{clm:comparison-lower}
of Proposition~\ref{prop:threshold-comparison}. We correct the final
summation in the proof of \cite[Proposition~5.10]{FGK}, prove the
lattice estimate of \cite[Proposition~6.2]{FGK} using
\cite[Proposition~5.9]{FGK}, and correct the coordinate indices in
\cite[Lemma~6.3]{FGK}. We then explain how these estimates, together
with \cite[Lemma~6.5]{FGK}, give the conclusion of
\cite[Proposition~5.5]{FGK}.

The estimates preceding the final summation in
\cite[Section~5.4]{FGK}, \cite[Lemma~5.13]{FGK}, and
\cite[Lemma~6.5]{FGK} remain inputs from that paper. Thus the
argument below supplies local corrections to the cited proof.
Its application in the present paper is the inequality
$\widetilde\gamma_k\leq\beta_k$; the opposite inequality is proved
in Claim~\ref{clm:comparison-upper}. All references to numbered
results, equations, and printed pages of FGK refer to the published
version of \cite{FGK}.

Throughout this appendix, fix a system
$(\cV,\mathbf c,\boldsymbol\mu)$ satisfying conditions
\textup{(i)}--\textup{(iii)} of \cite[Proposition~5.5]{FGK}.
In particular, $V_r$ is non-degenerate,
\[
1=c_1>c_2>\cdots>c_{r+1}=c>0,
\qquad
\supp(\mu_j)=V_j\cap\{0,1\}^k\quad(1\leq j\leq r),
\]
and there is an $\eps>0$ such that
\[
\mathrm e(\cV',\mathbf c,\boldsymbol\mu)
\geq\mathrm e(\cV,\mathbf c,\boldsymbol\mu)+\eps
\qquad(\cV'<\cV).
\]
Unless otherwise stated, implied constants may depend on this fixed
system. We retain the logarithmic random set $\mathcal A$, in which
the events $a\in\mathcal A$ are independent and have probability
$1/a$.

\subsection{The final summation in the moment estimate}
\label{app:fgk-moment-gain}

Choose $\kappa>0$ sufficiently small for
\cite[Lemma~5.13]{FGK} to apply, and put
$I_j=(c_{j+1}+\kappa,c_j]$. Let $(\cW,\mathbf b)$ be one of the
saturated adapted pairs in \cite[Section~5.4]{FGK}, and write
\[
\Delta=\delta(\mathbf b)
=\max_{i,j:\,b_i\in I_j}(c_j-b_i).
\]
We use $E(p,\cW,\mathbf b)$ with the meaning assigned to it in
\cite[equation~(5.32)]{FGK}. Equation~(5.35), the interval-length
estimate preceding equation~(5.38), and Lemma~5.13\textup{(b)} of
\cite{FGK} give
\[
E(p,\cW,\mathbf b)
\geq(p-1)\mathrm e(\cV,\mathbf c,\boldsymbol\mu)
  +\frac{(p-1)\eps\Delta}{2}-C(p-1)^2\Delta
\]
for some constant $C>0$. Choose $p>1$ sufficiently close to $1$
that all preceding estimates apply and $C(p-1)\leq\eps/4$.
Then
\begin{equation}\label{eq:fgk-corrected-gain}
E(p,\cW,\mathbf b)
\geq(p-1)\mathrm e(\cV,\mathbf c,\boldsymbol\mu)
  +\frac{(p-1)\eps\Delta}{4}.
\end{equation}
Thus the two terms involving $\eps\Delta$ in
\cite[equation~(5.38), p.~1081]{FGK} must both include the factor
$p-1$.

We next justify the summation following that equation. By
\cite[equation~(5.21)]{FGK}, each coordinate $b_i$ belongs to
\[
1+\frac1{\log D}\Z.
\]
This does not require $\delta(\mathbf b)\log D$ to be an integer.
Fix a flag $\cW$. In the saturated case, its length satisfies
$s=\dim V_r-1\leq k-1$.
For each integer $m\geq0$, define
\[
\mathcal B_m
=
\left\{
\mathbf b:
(\cW,\mathbf b)\text{ is saturated and admissible},\quad
m\leq\delta(\mathbf b)\log D<m+1
\right\}.
\]
If $\mathbf b\in\mathcal B_m$ and $b_i\in I_j$, then
\[
b_i\in
\left(c_j-\frac{m+1}{\log D},\,c_j\right].
\]
This interval contains at most $m+2$ points of the indicated grid.
As $j$ has at most $r$ possible values, each coordinate $b_i$ has
at most $r(m+2)$ possible values. Consequently,
$\#\mathcal B_m\leq \bigl(r(m+2)\bigr)^s$.
Put $\eta=(p-1)\eps/4>0$. It follows that
\[
\begin{aligned}
\sum_{\mathbf b:\,(\cW,\mathbf b)\ {
m saturated}}
D^{-\eta\delta(\mathbf b)}
&\leq\sum_{m\geq0}\#\mathcal B_m\exp(-\eta m)\leq r^s\sum_{m\geq0}(m+2)^s\exp(-\eta m)
\ll1,
\end{aligned}
\]
where all pairs in the sum are required to be admissible.
The extracted flags are generated by $\one$ and vectors in
$\{-1,0,1\}^k$, so their number is bounded in terms of $k$.
Combining this fact with \eqref{eq:fgk-corrected-gain} yields
\begin{equation}\label{eq:fgk-saturated-moment-sum}
\sum_{(\cW,\mathbf b)\ {
m saturated}}
D^{-E(p,\cW,\mathbf b)}
\ll D^{-(p-1)\mathrm e(\cV,\mathbf c,\boldsymbol\mu)}.
\end{equation}
The number of coordinates is $\dim V_r-1$, even when the original
flag $\cV$ is not complete.

Together with \cite[equations~(5.32) and~(5.36)]{FGK},
\eqref{eq:fgk-saturated-moment-sum} completes the final summation
in the proof of \cite[Proposition~5.10]{FGK}. The deduction of
\cite[Proposition~5.9]{FGK} from that moment estimate, given in
\cite[Section~5.2]{FGK}, is unchanged.

\subsection{Elements in residue classes}
\label{app:fgk-residue-supply}

In the proof of \cite[Proposition~6.2, p.~1084]{FGK}, the interval
$\bigl(D^{c(1-\kappa/i)},\delta^{-1}D^{c(1-\kappa/i)}\bigr]$
is asserted to be contained in $I_i(D)\setminus I'_i(D)$.
For fixed $i$, $\kappa$, and $\delta$, however, its upper endpoint
is less than $D^c$ when $D$ is sufficiently large, whereas
$I_i(D)\subset(D^c,D]$.

The probability estimate also requires a longer interval. Indeed,
for fixed $M\geq1$, $0<\delta<1$, and $b\in\Z$, independence gives
\[
\begin{aligned}
&\Pp\bigl(\mathcal A\cap(Y,\delta^{-1}Y]
                 \cap(b+M\Z)=\varnothing\bigr)=
\prod_{\substack{Y<a\leq\delta^{-1}Y\\a\equiv b\pmod M}}
\left(1-\frac1a\right)
=\delta^{1/M}(1+o(1))
\qquad(Y\to\infty).
\end{aligned}
\]
This probability exceeds $\delta/2$ for sufficiently large $Y$.
For the lattice adjustment, we need a supply of random elements
just above $D^c$ in every residue class modulo $M$. The preceding
calculation shows that a fixed multiplicative interval does not give
the required high-probability estimate. We therefore use a longer
interval and record its occupancy bound in the next lemma.

\begin{lemma}\label{lem:fgk-residue-supply}
Fix integers $M,m\geq1$ and a real number $c>0$. There is a
constant $\delta_0=\delta_0(M,m)>0$ such that, for every
$0<\delta\leq\delta_0$ and sufficiently large $D$, the interval
\begin{equation}\label{eq:fgk-new-interval}
J_D=(D^c,\delta^{-32M}D^c]
\end{equation}
contains at least $m$ elements of $\mathcal A$ in each residue
class modulo $M$, with probability at least $1-\delta/2$.
\end{lemma}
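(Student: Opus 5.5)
The plan is to handle each residue class separately and combine them with a union bound over the $M$ classes. Fix $b\in\{0,1,\ldots,M-1\}$. Let $X_b=\#\bigl(\mathcal A\cap J_D\cap(b+M\Z)\bigr)$. This is a sum of independent Bernoulli variables with parameters $1/a$, for $a\in J_D$ and $a\equiv b\pmod M$. Its mean is
\[
\lambda_b=\sum_{\substack{D^c<a\leq\delta^{-32M}D^c\\a\equiv b\pmod M}}\frac1a
=\frac1M\log\bigl(\delta^{-32M}\bigr)+o(1)=32\log\frac1\delta+o(1)
\qquad(D\to\infty).
\]
This follows by comparing the sum with the integral of $dt/(Mt)$; the error is $O(M/D^c)$. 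The point of the exponent $32M$ is that the mean becomes $32\log(1/\delta)$, independent of $M$. For small $\delta$, this is large compared with both $m$ and the $\log(2M/\delta)$ we need to beat.

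Next we bound the lower tail $\Pp(X_b<m)$. The simplest route is a Chernoff bound for sums of independent Bernoulli variables. For $\theta>0$,
\[
\Pp(X_b\leq m-1)\leq \ee^{\theta(m-1)}\prod_a\bigl(1-(1-\ee^{-\theta})/a\bigr)\leq \ee^{\theta m}\exp\bigl(-(1-\ee^{-\theta})\lambda_b\bigr).
\]
Here we used $1-x\leq\ee^{-x}$. Take $\theta=\log 2$. This gives
\[
\Pp(X_b<m)\leq 2^m\exp(-\lambda_b/2)\leq 2^m\delta^{16-o(1)}.
\]
Alternatively, one can compute $\Pp(X_b=j)\leq \lambda_b^j\ee^{-\lambda_b}\prod_a(1-1/a)^{-1}\cdot(\text{const})$ directly, since the product $\prod(1-1/a)$ is $\delta^{32}(1+o(1))$, just as in the calculation preceding the lemma. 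Either way, for $D$ large (depending on $\delta$, $M$, $c$) we get $\Pp(X_b<m)\leq 2^{m+1}\delta^{15}$.

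Then we apply the union bound over the $M$ residue classes:
\[
\Pp(\exists b: X_b<m)\leq M2^{m+1}\delta^{15}.
\]
Choose $\delta_0=\delta_0(M,m)$ so small that $M2^{m+1}\delta_0^{14}\leq 1/2$. Then for every $0<\delta\leq\delta_0$ this probability is at most $\delta/2$, as required. The largeness of $D$ may depend on $\delta$, $c$, $M$ and $m$, which the statement allows.

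The only delicate step is keeping the $o(1)$ term in $\lambda_b$ uniform. It must not erode the margin. This is straightforward because the interval $J_D$ has its lower endpoint tending to infinity while its multiplicative length is fixed for fixed $\delta$. So the main obstacle is essentially bookkeeping: make sure the constant $\delta_0$ depends only on $M$ and $m$ (not on $c$), with all $c$-dependence absorbed into "sufficiently large $D$".
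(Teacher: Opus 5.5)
Your proof is correct and follows the same route as the paper: compute $\lambda_b=32\log(1/\delta)+O(D^{-c})$, apply the exponential-moment (Chernoff) bound with $t=\log 2$ to each residue class, and take a union bound over the $M$ classes. The only difference is cosmetic. The paper thresholds at $\lambda_b/2$, choosing $\delta_0\leq\exp(-m/8)$ so that $m\leq\lambda_b/2$, and obtains $\Pp(X_b<m)\leq\delta^2$. You threshold directly at $m-1$ and absorb the resulting factor $2^m$ into the choice of $\delta_0$.
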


\begin{proof}
For $0\leq b<M$, let
$X_b=\#\bigl(\mathcal A\cap J_D\cap(b+M\Z)\bigr)$ and
$L=\log(1/\delta)$.
Summation over the arithmetic progression gives
\[
\lambda_b=\E X_b
=\sum_{\substack{D^c<a\leq\delta^{-32M}D^c\\a\equiv b\pmod M}}
\frac1a
=32L+O_M(D^{-c}).
\]
Choose
\[
0<\delta_0\leq
\min\left\{\frac14,\frac1{2M},\exp(-m/8)\right\}.
\]
For $0<\delta\leq\delta_0$ and sufficiently large $D$, we have
$\lambda_b\geq16L\geq2m$ for every $b$.

If $X$ is a sum of independent Bernoulli random variables with
mean $\lambda$, then
\[
\E\exp(-tX)\leq
\exp\bigl((\exp(-t)-1)\lambda\bigr)
\qquad(t>0).
\]
Taking $t=\log2$ and applying Markov's inequality, we obtain
\[
\Pp(X\leq\lambda/2)
\leq\exp\left(-\frac{1-\log2}{2}\lambda\right)
\leq\exp(-\lambda/8).
\]
Consequently,
\[
\Pp(X_b<m)\leq\exp(-\lambda_b/8)\leq\delta^2.
\]
The probability that $X_b<m$ for at least one residue class is
at most $M\delta^2\leq\delta/2$, as required.
\end{proof}

\subsection{Sums satisfying the lattice condition}
\label{app:fgk-lattice-adjustment}

The preceding lemma supplies the elements needed to correct residue
classes. We next show how to assign these elements to the values of
a map so that its sum lies in a prescribed lattice. The same argument
controls how many distinct sum classes can merge under this correction.

\begin{lemma}\label{lem:fgk-lattice-adjustment}
Let $\Omega\subset\Z^k$ be finite, with $\mathbf0\in\Omega$ and
$|\Omega|=m$. Let $\Gamma\leq\Z^k$ contain $\one$, and suppose that
$M\Omega\subset\Gamma$ for some integer $M\geq1$. Let $B$ and $K$
be disjoint finite sets of integers, where $K$ contains exactly $m$
elements in each residue class modulo $M$.

Write
$\pi:\Z^k\longrightarrow\Z^k/\langle\one\rangle$
for the quotient map. Given a family $\mathscr F$ of maps
$\psi:B\to\Omega$, define its set of input classes by
\[
S_{\mathrm{in}}
=
\left\{
\pi\!\left(\sum_{a\in B}a\psi(a)\right):
\psi\in\mathscr F
\right\}.
\]
Every $\psi\in\mathscr F$ has an extension
$\widetilde\psi:B\cup K\to\Omega$ for which
$\sum_{a\in B\cup K}a\widetilde\psi(a)\in\Gamma$.
The extensions can be chosen so that their set of output classes
\[
S_{\mathrm{out}}
=
\left\{
\pi\!\left(\sum_{a\in B\cup K}a\widetilde\psi(a)\right):
\psi\in\mathscr F
\right\}
\]
satisfies
\[
|S_{\mathrm{out}}|
\geq \frac{|S_{\mathrm{in}}|}{(mM)^m}.
\]
\end{lemma}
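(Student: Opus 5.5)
The correction will be built from the finite abelian group $G=\Z^k/\Gamma$. Since $M\Omega\subset\Gamma$, every element $\omega\in\Omega$ satisfies $M\bar\omega=0$ in $G$. So the class of $a\omega$ modulo $\Gamma$ depends only on $a\bmod M$, and the obstruction for $\psi$ is
\[
g(\psi)=\sum_{a\in B}a\psi(a)+\Gamma\in G_0,
\]
where $G_0$ is the subgroup of $G$ generated by $\bar\Omega$. The subgroup $G_0$ is generated by $m$ elements of order dividing $M$, so it is a quotient of $(\Z/M\Z)^m$. Write $K_b$ for the $m$ elements of $K$ in residue class $b$ modulo $M$.

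The plan is to use one element of $K_1$ for each nonzero generator. If $1$ is a unit modulo $M$, then $\sum_{j}n_j\bar\omega_j$ with $0\le n_j<M$ is realised by placing $\omega_j$ on elements $a\in K$ with $a\equiv n_j\pmod M$, one element for each $j$. Write $\Omega\setminus\{\mathbf0\}=\{\omega_1,\dots,\omega_{m-1}\}$. Any target $-g(\psi)=\sum_j n_j\bar\omega_j$ is then realised as follows. For each $j$ with $n_j\neq0$, choose an element $a_j\in K_{n_j}$, with distinct elements used for distinct $j$. This is possible because each class contains $m>m-1$ elements. Set $\widetilde\psi(a_j)=\omega_j$ and $\widetilde\psi=\mathbf0$ elsewhere on $K$. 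Then
\[
\sum_{a\in K}a\widetilde\psi(a)\equiv\sum_j n_j\omega_j\pmod\Gamma,
\]
using $M\omega_j\in\Gamma$ again, and so the full sum lies in $\Gamma$.

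To make the choice canonical, fix once and for all a representation $(n_j)_j$ of each element of $G_0$ and a fixed assignment of elements of $K$. The extension of $\psi$ then depends only on $g(\psi)\in G_0$. Call the correction term $\kappa(g)=\sum_{a\in K}a\widetilde\psi(a)$.

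\textbf{Counting.} The output class is $\pi\bigl(\sum_B a\psi(a)\bigr)+\pi(\kappa(g(\psi)))$. The vector $\kappa(g)$ is one of at most $\#G_0\le M^{m}$ fixed vectors. Partition $\mathscr F$ by the value of $g(\psi)$. Within one part, the map from input class to output class is a translation in $\Z^k/\langle\one\rangle$, hence injective. Choose a part containing at least $|S_{\mathrm{in}}|/M^m$ distinct input classes; its images are then at least that many distinct output classes. This gives
\[
|S_{\mathrm{out}}|\ge\frac{|S_{\mathrm{in}}|}{M^m}\ge\frac{|S_{\mathrm{in}}|}{(mM)^m}.
\]
The cruder factor $(mM)^m$ gives slack if one prefers to bound the number of possible correction vectors directly, namely the choices of at most $m$ elements of $K$ and their labels.

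\textbf{Main obstacle.} I expect the hard step to be the existence of the correction, which is where the fixed representation $(n_j)_j$ is needed. Two issues must be checked: that distinct $j$ can always use distinct elements of $K$, and that $\mathbf0\in\Omega$ lets all remaining elements of $K$ be sent to $\mathbf0$. Both are handled by the $m$ elements available in each class. Note that $\one\in\Gamma$ is not needed for existence; it only makes $\Gamma$ compatible with the quotient by $\langle\one\rangle$.
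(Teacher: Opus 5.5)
Your argument is correct and is essentially the paper's proof. The paper picks distinct $a_\omega\in K$ with $a_\omega\equiv -N_\omega\pmod M$, where $N_\omega=\sum_{a\in B,\,\psi(a)=\omega}a$; this is just one particular representation of $-g(\psi)$ in your group $G_0$, and the paper's count likewise bounds the fibres of the input-to-output map by the number of possible correction vectors.

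The only real difference is bookkeeping. Because you make the correction depend only on $g(\psi)$, you get the sharper factor $\#G_0\le M^{m-1}$ in place of the paper's crude bound $|K|^m=(mM)^m$. One small slip: $\Z^k/\Gamma$ itself need not be finite when $\Gamma$ has rank less than $k$, but this is harmless because you only use the finite subgroup $G_0$.
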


\begin{proof}
Fix $\psi\in\mathscr F$ and, for each $\omega\in\Omega$, put
\[
N_\omega
=
\sum_{\substack{a\in B\\ \psi(a)=\omega}}a.
\]
Choose distinct elements $a_\omega\in K$ such that
$a_\omega\equiv-N_\omega\pmod M
~(\omega\in\Omega)$.

This is possible because at most $m$ elements are needed from any
one residue class, while $K$ contains $m$ elements in that class.
Set $\widetilde\psi(a_\omega)=\omega$ and assign $\mathbf0$ to
every unused element of $K$. Then
\[
\sum_{a\in B\cup K}a\widetilde\psi(a)
=
\sum_{\omega\in\Omega}(N_\omega+a_\omega)\omega
\in\Gamma,
\]
because each coefficient $N_\omega+a_\omega$ is divisible by $M$.

Now choose one map $\psi_c\in\mathscr F$ for each input class
$c\in S_{\mathrm{in}}$, and make the preceding choices for these
maps. Let
\[
v_c=\sum_{\omega\in\Omega}a_{\omega,c}\omega,
\qquad
y_c=
\pi\!\left(
\sum_{a\in B\cup K}a\widetilde\psi_c(a)
\right).
\]
By the definition of $c$, we have
$y_c=c+\pi(v_c)$.
The correction vector $v_c$ belongs to
\[
\mathcal T
=
\left\{
\sum_{\omega\in\Omega}a_\omega\omega:
(a_\omega)_{\omega\in\Omega}\in K^\Omega
\right\},
\qquad
|\mathcal T|\leq |K|^m=(mM)^m.
\]
For any fixed output class $y$, every input class $c$ with $y_c=y$
therefore has the form
$c=y-\pi(v)$
for some $v\in\mathcal T$.
Thus at most $(mM)^m$ input classes can produce the same output
class. Extending any remaining maps in $\mathscr F$ as above gives
$|S_{\mathrm{out}}|\geq |S_{\mathrm{in}}|/(mM)^m$.
\end{proof}

We apply these two lemmas to the fixed entropy system. Put
$d=\dim V_r-1$. Since the spaces of $\cV$ are cube-spanned,
we may choose a basis
$\one,\omega^1,\ldots,\omega^d$ of $V_r$ over $\Q$, consisting
of cube points and satisfying
\[
V_j=\Span_{\Q}
\{\one,\omega^1,\ldots,\omega^{\dim V_j-1}\}
\qquad(0\leq j\leq r).
\]
Set
\[
\Gamma=\Z\one+\sum_{t=1}^d\Z\omega^t,
\qquad
\Omega=V_r\cap\{0,1\}^k,
\qquad m=|\Omega|.
\]
The basis coordinates of every $\omega\in\Omega$ are rational.
Since $\Omega$ is finite, clearing their denominators gives an
integer $M\geq1$ such that $M\Omega\subset\Gamma$.

Let $\kappa^*>0$ be the constant in
\cite[Proposition~5.9]{FGK}, decreased if necessary so that
\[
\kappa^*<\min_{1\leq j\leq r}(c_j-c_{j+1}).
\]
Fix $0<\kappa\leq\kappa^*/2$. For $i\geq1$, define
\[
\begin{aligned}
I_i(D)&=\bigcup_{j=1}^r
(D^{c_{j+1}},D^{c_j(1-\kappa/i)}],\\
I'_i(D)&=\bigcup_{j=1}^r
(D^{(c_{j+1}+\kappa^*)(1-\kappa/i)},
 D^{c_j(1-\kappa/i)}].
\end{aligned}
\]
A map $\psi:\mathcal A\cap I_i(D)\to\{0,1\}^k$ is compatible
if $\psi(a)\in V_j$ whenever
$a\in(D^{c_{j+1}},D^{c_j(1-\kappa/i)}]$. It is non-degenerate
if, for every $s\ne t$, there is an $a$ in its domain such that
$\psi(a)_s\ne\psi(a)_t$.
Let $\mathscr L_i(\mathcal A)$ be the set of classes modulo
$\langle\one\rangle$ represented by sums in $\Gamma$ of such
non-degenerate, compatible maps.

We can now combine the unrestricted sum estimate of
\cite[Proposition~5.9]{FGK} with the two lemmas above. The next
proposition gives a lower bound for lattice-constrained classes,
uniformly in the index $i$.

\begin{proposition}\label{prop:fgk-restored-lattice-bound}
There are constants $C_0,\alpha>0$ such that, for every
$0<\delta<1$ and sufficiently large $D$, one has
\begin{equation}\label{eq:fgk-restored-lattice-bound}
\Pp\left(
|\mathscr L_i(\mathcal A)|
\geq C_0\delta^\alpha
D^{(1-\kappa/i)\sum_{j=1}^r c_j\dim(V_j/V_{j-1})}
\right)\geq1-\delta
\end{equation}
for every integer $i\geq1$. The constants and the lower bound
on $D$ are independent of $i$.
\end{proposition}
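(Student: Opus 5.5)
The plan is to split $\mathcal A\cap I_i(D)$ into a bulk part and a thin correction layer just above $D^{c}$. The unrestricted estimate of \cite[Proposition~5.9]{FGK} is applied to the bulk part. Lemma~\ref{lem:fgk-lattice-adjustment} then forces the sums into $\Gamma$, with Lemma~\ref{lem:fgk-residue-supply} supplying the correction elements.

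\emph{Step 1: the two pieces.} Set $J_D=(D^{c},\delta^{-32M}D^{c}]$ as in \eqref{eq:fgk-new-interval}, with $c=c_{r+1}$. For large $D$ we have $J_D\subseteq(D^{c_{r+1}},D^{c_r(1-\kappa/i)}]$ for every $i\geq1$, since $c_r(1-\kappa/i)\geq c_r(1-\kappa)>c_{r+1}$ by the choice of $\kappa\leq\kappa^*/2$. This is the bottom layer, where compatible maps may take any value in $V_r\cap\{0,1\}^k=\Omega$. Let $B=\mathcal A\cap I_i(D)\setminus J_D$, and let $K\subseteq\mathcal A\cap J_D$ consist of exactly $m$ elements from each residue class modulo $M$. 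Lemma~\ref{lem:fgk-residue-supply} gives such a $K$ with probability at least $1-\delta/2$, provided $\delta\leq\delta_0(M,m)$. For larger $\delta$ we replace $\delta$ by $\delta_0$ and absorb the change into $C_0$.

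\emph{Step 2: the bulk estimate.} We apply \cite[Proposition~5.9]{FGK} to the truncated range $I_i(D)\setminus J_D$. This gives, with probability at least $1-\delta/2$, at least $C\delta^{\alpha'}D^{(1-\kappa/i)\sum_j c_j\dim(V_j/V_{j-1})}$ classes modulo $\langle\one\rangle$ of sums $\sum_{a\in B}a\psi(a)$ over non-degenerate compatible maps $\psi$. Elements of $\mathcal A\cap J_D$ outside $K$ receive $\mathbf0$.

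This is the main obstacle. We must check that removing the lowest layer $J_D$, which has only $O(\log(1/\delta))$ expected elements and logarithmic length $O(\log(1/\delta))$, costs at most a factor $\delta^{O(1)}$. We must also check that the bound holds uniformly in $i$. I would first try to show that $I_i(D)\setminus J_D$ has the same form as $I_i(D)$ with the bottom threshold raised from $c$ to $c+O(\log(1/\delta)/\log D)$. Then \cite[Proposition~5.9]{FGK} would apply directly, since its exponent depends only on the upper endpoints $c_j(1-\kappa/i)$. The shift of $c_{r+1}$ enters only through the entropy coefficient $(c_r-c_{r+1})$, and it changes $D^{(\cdot)}$ by a factor $\delta^{O(1)}$. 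Uniformity in $i$ should follow because $1-\kappa/i\in[1-\kappa,1)$ is a compact range. Proposition~5.9 is stated for exactly the scale family $I_i(D)$, $I'_i(D)$ with a gap $\kappa^*$.

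\emph{Step 3: the lattice correction.} On the intersection of the two events, which has probability at least $1-\delta$, we apply Lemma~\ref{lem:fgk-lattice-adjustment} with $\mathscr F$ the family from Step~2. Each extension $\widetilde\psi$ is still compatible, because $K$ lies in the bottom layer where all of $\Omega$ is allowed. It is still non-degenerate, because it agrees with $\psi$ on $B$. Its sum lies in $\Gamma$. Hence $|\mathscr L_i(\mathcal A)|\geq|S_{\mathrm{out}}|\geq|S_{\mathrm{in}}|/(mM)^m$. Since $m$ and $M$ depend only on the fixed system, setting $C_0=C/(mM)^m$ and $\alpha=\alpha'$ gives \eqref{eq:fgk-restored-lattice-bound}. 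The threshold on $D$ is independent of $i$.
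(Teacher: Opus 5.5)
Your overall architecture is the paper's. The bulk classes come from \cite[Proposition~5.9]{FGK}, the set $K\subset\mathcal A\cap J_D$ from Lemma~\ref{lem:fgk-residue-supply}, and the $(mM)^m$ loss from Lemma~\ref{lem:fgk-lattice-adjustment}. The extension stays compatible because $K$ lies in the $j=r$ layer, where all of $\Omega=V_r\cap\{0,1\}^k$ is allowed. Non-degeneracy is kept because the maps on $B$ are unchanged, and the case $\delta>\delta_0$ reduces to $\delta_0$. The problem is Step~2, which as written is not an argument. Raising the bottom threshold to $c+O(\log(1/\delta)/\log D)$ gives a system that moves with $D$. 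Proposition~5.9 concerns one fixed system, and its exponent $\alpha=1/(p-1)$, implied constant and threshold on $D$ all depend on that system and its entropy gap, so you would have to reprove it uniformly. Your heuristic for a $\delta^{O(1)}$ loss is also inconsistent: the output exponent $\sum_j c_j\dim(V_j/V_{j-1})$ does not involve $c_{r+1}$ at all, which enters only the hypotheses. Finally, $[1-\kappa,1)$ is not compact, so uniformity in $i$ needs another justification.

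The obstacle you identify does not exist. The missing observation, which your last sentence of Step~2 nearly states, is that the interval parameter $\kappa^*$ in \cite[Proposition~5.9]{FGK} already keeps the bulk maps away from $J_D$. Apply it at scale $D'=D^{1-\kappa/i}$ with parameter $\kappa^*$. Its maps live on $\mathcal A\cap\bigcup_j\bigl((D')^{c_{j+1}+\kappa^*},\ee^{-1}(D')^{c_j}\bigr]$, and these domains are contained in $I'_i(D)$. Since $\kappa\leq\kappa^*/2$ and $c_{j+1}+\kappa^*<c_j\leq1$, we have $(c_{j+1}+\kappa^*)(1-\kappa/i)\geq c_{j+1}+\kappa^*/2$. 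For fixed $\delta$ and large $D$ we have $\delta^{-32M}<D^{\kappa^*/2}$, hence $J_D\subset(D^c,D^{c+\kappa^*/2})\subset I_i(D)\setminus I'_i(D)$. So take $B=\mathcal A\cap I'_i(D)$ and extend the maps by zero; excluding $J_D$ costs nothing, and no perturbation of $c_{r+1}$ is needed. Uniformity in $i$ follows from $D'\geq D^{1-\kappa}$, so one threshold on $D$ serves every $i$. With this replacement for Step~2, your Steps~1 and~3 go through as in the paper.
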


\begin{proof}
Since $c_{j+1}+\kappa^*<c_j\leq1$ and $\kappa/i\leq\kappa$,
\[
(c_{j+1}+\kappa^*)(1-\kappa/i)
\geq c_{j+1}+\kappa^*-\kappa
\geq c_{j+1}+\kappa^*/2.
\]
Thus $I'_i(D)\subset I_i(D)$. Also,
\[
c_r(1-\kappa/i)
\geq c_r-\kappa>c+\kappa^*/2.
\]
For fixed $\delta$, it follows that the interval $J_D$ in
\eqref{eq:fgk-new-interval} satisfies
\begin{equation}\label{eq:fgk-interval-location}
J_D\subset(D^c,D^{c+\kappa^*/2})
\subset I_i(D)\setminus I'_i(D)
\qquad(i\geq1)
\end{equation}
for sufficiently large $D$, uniformly in $i$.

Apply \cite[Proposition~5.9]{FGK} with
$D'=D^{1-\kappa/i}$, interval parameter $\kappa^*$, and failure
probability $\delta/2$. Its domains
\[
\mathcal A\cap
\bigl((D')^{c_{j+1}+\kappa^*},\exp(-1)(D')^{c_j}\bigr]
\]
are contained in the corresponding intervals of $I'_i(D)$.
Extend the resulting maps by zero to
$B=\mathcal A\cap I'_i(D)$. With probability at least
$1-\delta/2$, their sum classes form a set $S_i$ satisfying
\begin{equation}\label{eq:fgk-unrestricted-sums}
|S_i|\gg\delta^\alpha
D^{(1-\kappa/i)\sum_{j=1}^r c_j\dim(V_j/V_{j-1})},
\qquad\alpha=\frac1{p-1}>0.
\end{equation}
The lower bound on $D$ is independent of $i$, since
$D'\geq D^{1-\kappa}$.

First suppose that $0<\delta\leq\delta_0(M,m)$.
By Lemma~\ref{lem:fgk-residue-supply}, with probability at least
$1-\delta/2$ there is a set $K\subset\mathcal A\cap J_D$
containing exactly $m$ elements in each residue class modulo $M$.
By \eqref{eq:fgk-interval-location}, $B\cap K=\varnothing$.
Lemma~\ref{lem:fgk-lattice-adjustment} therefore supplies at least
$|S_i|/(mM)^m$ classes whose representing sums lie in $\Gamma$.
All added values belong to $V_r\cap\{0,1\}^k$, and $K$ lies in
the interval corresponding to $j=r$. Extending the maps by zero
on the rest of $\mathcal A\cap I_i(D)$ consequently preserves
compatibility. It also preserves non-degeneracy, since the maps
on $B$ are unchanged.

The two events hold simultaneously with probability at least
$1-\delta$. On their intersection,
\[
|\mathscr L_i(\mathcal A)|
\geq\frac{|S_i|}{(mM)^m},
\]
which proves \eqref{eq:fgk-restored-lattice-bound} for
$\delta\leq\delta_0$.
If $\delta_0<\delta<1$, apply this result with $\delta_0$.
Since $\delta_0^\alpha\geq\delta_0^\alpha\delta^\alpha$ and
$1-\delta_0\geq1-\delta$, decreasing $C_0$ by a fixed factor
proves the stated bound for this range as well.
\end{proof}

For integral vectors, equality modulo $\langle\one\rangle$
is equivalent to equality modulo $\Z\one$: an integral vector
of the form $t\one$ necessarily has $t\in\Z$. Moreover,
$\Gamma/\Z\one$ is generated freely by the classes of
$\omega^1,\ldots,\omega^d$. The preceding proposition therefore
gives the quantitative conclusion of \cite[Proposition~6.2]{FGK}.
The factor $(mM)^m$ counts indexed assignments of elements of $K$
to vectors of $\Omega$; it replaces the bound $2^{|K|}$ used in
the original proof.

\subsection{Coordinate bounds}
\label{app:fgk-coordinate-bounds}

The iteration also requires coordinate bounds for the classes
counted in Proposition~\ref{prop:fgk-restored-lattice-bound}.
We identify the relevant scale of each coordinate using the first
space of the flag in which that coordinate can occur.

For $1\leq t\leq d$, define $j(t)$ by
\begin{equation}\label{eq:fgk-coordinate-index}
\dim V_{j(t)-1}-1<t\leq\dim V_{j(t)}-1.
\end{equation}
The subtraction of $1$ accounts for the initial basis vector
$\one$ and corrects the index used in \cite[Section~6.2, p.~1085]{FGK}.
Let $\ell_t$ be the $\omega^t$-coordinate functional with respect
to the basis $\one,\omega^1,\ldots,\omega^d$, and put
\[
C=\max\left\{1,
|\ell_t(\omega)|:\omega\in\Omega,\ 1\leq t\leq d\right\}.
\]
By \eqref{eq:fgk-coordinate-index}, $\ell_t$ vanishes on
$V_{j(t)-1}$. Thus compatibility implies that every representing
sum $x=\sum_{a\in\mathcal A\cap I_i(D)}a\psi(a)$ satisfies
\[
|\ell_t(x)|
\leq C\sum_{\substack{a\in\mathcal A\\
2\leq a\leq D^{(1-\kappa/i)c_{j(t)}}}}a.
\]
Indeed, a value outside $V_{j(t)-1}$ can occur only in an interval
with index $j\geq j(t)$, whose upper endpoint is at most
$D^{(1-\kappa/i)c_{j(t)}}$.

For $\delta>0$ sufficiently small that $r\exp(-2/\delta)\leq\delta$,
\cite[Lemma~A.6]{FGK} gives, with probability at least $1-\delta$,
\[
\sum_{\substack{a\in\mathcal A\\2\leq a\leq
D^{(1-\kappa/i)c_j}}}a
\leq\delta^{-1}D^{(1-\kappa/i)c_j}
\qquad(1\leq j\leq r).
\]
Set
\[
N_j^{(i)}=C\delta^{-1}D^{(1-\kappa/i)c_j}.
\]
Identifying $\Gamma/\Z\one$ with $\Z^d$ by the selected basis,
we obtain
\[
\mathscr L_i(\mathcal A)
\subset\prod_{t=1}^d[-N_{j(t)}^{(i)},N_{j(t)}^{(i)}]
\]
with probability at least $1-\delta$. Furthermore,
$\#\{t:j(t)=j\}=\dim(V_j/V_{j-1})$,
and hence
\begin{equation}\label{eq:fgk-coordinate-product}
\prod_{t=1}^dN_{j(t)}^{(i)}
=C^d\delta^{-d}
D^{(1-\kappa/i)\sum_{j=1}^r c_j\dim(V_j/V_{j-1})}.
\end{equation}
Combining \eqref{eq:fgk-coordinate-product} with
Proposition~\ref{prop:fgk-restored-lattice-bound} gives the
cardinality and coordinate estimates used in
\cite[Proposition~6.4]{FGK}, with probability at least $1-2\delta$.

\subsection{The consequence for the threshold comparison}
\label{app:fgk-threshold-consequence}

We finish by recording how the corrected estimates give the
conclusion required from \cite[Proposition~5.5]{FGK}.
Fix a sufficiently small $\delta>0$. For each $i\geq1$, let
$F_i$ be the event that both the cardinality and coordinate
estimates above hold, and let
$E_i=\{0\notin\mathscr L_i(\mathcal A)\}$.
Then $\Pp(F_i^c)\leq2\delta$. Since a compatible map on
$\mathcal A\cap I_i(D)$ extends by zero to
$\mathcal A\cap I_{i+1}(D)$, the sets
$\mathscr L_i(\mathcal A)$ are increasing. Therefore
$E_{i+1}\subseteq E_i$.

By \cite[Lemma~6.5]{FGK}, with the indices in
\eqref{eq:fgk-coordinate-index}, there is a constant
$\rho=\rho(\delta)>0$, independent of $D$ and $i$, such that
\[
\Pp(E_{i+1}^c\mid E_i\cap F_i)\geq\rho
\qquad
\left(1\leq i\leq\left\lfloor(\log D)^{1/3}\right\rfloor\right)
\]
whenever the conditioning event has positive probability and $D$
is sufficiently large. Consequently,
\[
\begin{aligned}
\Pp(E_{i+1})
&\leq(1-\rho)\Pp(E_i\cap F_i)+\Pp(E_i\cap F_i^c)\leq(1-\rho)\Pp(E_i)+2\rho\delta.
\end{aligned}
\]
Put $T=\lfloor(\log D)^{1/3}\rfloor$. Iterating this inequality
gives
\[
\Pp(E_{T+1})\leq(1-\rho)^T+2\delta.
\]
For fixed $\delta$, the first term tends to zero as $D\to\infty$.

If $E_{T+1}$ does not occur, there is a non-degenerate map
$\psi:\mathcal A\cap I_{T+1}(D)\to\{0,1\}^k$ for which
\[
\sum_{a\in\mathcal A\cap I_{T+1}(D)}a\psi(a)
\in\langle\one\rangle.
\]
For $1\leq t\leq k$, define
\[
A_t=\{a\in\mathcal A\cap I_{T+1}(D):\psi(a)_t=1\}.
\]
The non-degeneracy of $\psi$ implies that the sets $A_t$ are
pairwise distinct, and the preceding vector identity implies that
their sums are equal. Since $I_{T+1}(D)\subset(D^c,D]$, we obtain
\[
\limsup_{D\to\infty}
\Pp\bigl(m(\mathcal A\cap(D^c,D])<k\bigr)\leq2\delta.
\]
Letting $\delta\to0$ proves
\begin{equation}\label{eq:fgk-lower-conclusion}
\Pp\bigl(m(\mathcal A\cap(D^c,D])\geq k\bigr)=1-o(1).
\end{equation}
This is the conclusion of \cite[Proposition~5.5]{FGK} needed
in the present paper; it also implies the version with
$[D^c,D]$.

In Claim~\ref{clm:comparison-lower}, the normalized and perturbed
system has final threshold $c=b+\zeta>0$ and satisfies the
hypotheses fixed at the beginning of this appendix. Applying
\eqref{eq:fgk-lower-conclusion} therefore gives
$\beta_k\geq b+\zeta\geq b$. Taking the supremum over the
original strict entropy systems yields
$\widetilde\gamma_k\leq\beta_k$.
This application uses neither
Theorem~\ref{thm:entropy-equality} nor Theorem~\ref{thm:main}.

\section*{Declaration on the use of AI}

ChatGPT was used for grammar checking, language polishing, and
improving the clarity of the exposition.

In particular, it assisted in revising the proof of
Lemma~\ref{lem:perspective}, including the treatment of cases in
which some coset weights vanish. It also assisted in clarifying
the uniformity and parameter dependence of the error terms in
the proof of Theorem~\ref{thm:affine}, and in revising the derivation
of the lower bound for $u=\log N/\log z$ in the proof of
Lemma~\ref{lem:nonsat}.

We discovered errors in two key lemmas in \cite{FGK},
namely Lemmas~4.5 and~4.6. These errors are addressed in
Lemmas~\ref{lem:residual-sum-bound} and~\ref{lem:finite-subflags}
of our paper, respectively. Both lemmas are used in our proofs.
We then used ChatGPT to review the paper further.
The additional proofs that ChatGPT found to be insufficiently
rigorous are presented in the appendix.

The authors take full responsibility for the accuracy and
originality of the paper.

\section*{Data availability}

No datasets were generated or analyzed in this study.

\end{document}